\documentclass[12pt,dvipsnames]{amsart}
\usepackage[utf8]{inputenc}
\usepackage[T1]{fontenc}
\addtolength{\textwidth}{40mm}
\addtolength{\oddsidemargin}{-20mm}
\addtolength{\evensidemargin}{-20mm}
\usepackage[margin=1.0in,marginparwidth=0.8in]{geometry}
\linespread{1.3}
\usepackage{amsfonts, amsmath, amsthm, amssymb, amscd, amsrefs}
\usepackage{hyperref}
\usepackage{graphicx}
\usepackage{tikz}
\usepackage[all]{xy}
\usepackage{tikz}
\usepackage{color}
\usepackage{mathtools} 
\usepackage{wrapfig}
\usepackage{enumerate}
\usepackage{mathrsfs}
\usepackage{soul}

\numberwithin{equation}{section}
\theoremstyle{definition}

\newtheorem{dfn}[equation]{Definition}
\newtheorem{exa}[equation]{Example}
\newtheorem{rmk}[equation]{Remark}

\newtheorem{prp}[equation]{Proposition}
\newtheorem{thm}[equation]{Theorem}
\newtheorem{lem}[equation]{Lemma}
\newtheorem{cor}[equation]{Corollary}

\newtheorem{nta}[equation]{Notation}

\newtheorem{cns}[equation]{Construction}
\newcommand{\trm}{\textrm}


\newcommand{\K}{{\mathbb K}} 

\newcommand{\C}{{\mathbb C}}
\newcommand{\R}{{\mathbb R}}
\newcommand{\Q}{{\mathbb Q}}
\newcommand{\Z}{{\mathbb Z}}
\newcommand{\N}{{\mathbb N}}
\newcommand{\D}{{\mathbb D}}

\newcommand{\cA}{\mathcal A}  \newcommand{\cC}{\mathcal C}
\newcommand{\cD}{\mathcal D}

 \newcommand{\cM}{\mathcal M} \newcommand{\cN}{\mathcal N}
 \newcommand{\cP}{\mathcal P}

\newcommand{\f}{\mathfrak}

\newcommand{\al}{\alpha}
\newcommand{\be}{\beta}
     \newcommand{\Ga}{{\Gamma}}
\newcommand{\de}{\delta}

    \newcommand{\La}{{\Lambda}}
\newcommand{\te}{\theta}    \newcommand{\Te}{{\Theta}}
\newcommand{\si}{\sigma}     
\newcommand{\om}{\omega}     \newcommand{\Om}{{\Omega}}
\newcommand{\vph}{\varphi}



  \newcommand{\Hom}{\textrm{Hom}}

\newcommand{\tr}{\textrm{tr}}
\newcommand{\ch}{\mathrm{ch}}
\newcommand{\Ka}{\mathrm{Ka}}

\newcommand{\Ch}{\mathrm{Ch}}

\newcommand{\CS}{\textrm{CS}}

\newcommand{\KCS}{\mathrm{KCS}}
\newcommand{\chk}{\text{ch}^{\text{Ka}}}

\newcommand{\odd}{\textrm{odd}}
\newcommand{\even}{\textrm{even}}

\newcommand{\ev}{\textrm{ev}}
\newcommand\DGA{\mathbf{DGA}}
\newcommand\Alg{\mathbf{Alg}}
\newcommand\Fgp{\mathbf{Fgp}}
\newcommand\FgpD{\mathbf{Fgp}_{\mathbb{D}}}
\newcommand\FgpuD{\mathbf{Fgp}_{\mathbb{D}^{\mathrm{u}}}}
\newcommand\Bun{\mathbf{Bun}}
\newcommand\BunD{\mathbf{Bun}_{\nabla\!\!\!\!\nabla}}
\newcommand\Lfb{\mathbf{Lfb}}
\newcommand\LfbD{\mathbf{Lfb}_{\mathcal{D}\!\!\!\!\mathcal{D}}}
\newcommand\loc{\mathrm{loc}}
\newcommand\con{\mathrm{con}}
\newcommand{\Man}{\mathbf{Man}}
\newcommand{\Ring}{\mathbf{Ring}}
\newcommand{\Mod}{\mathbf{Mod}}

\newcommand{\Ab}{\mathbf{Ab}}
\newcommand{\ab}{\mathrm{ab}}
\newcommand{\dR}{\mathrm{dR}}
\newcommand{\op}{\mathrm{op}}

\newcommand\cocartext[1]{\overset{\multimap}{#1}}
\newcommand{\we}{\wedge}      
\newcommand{\tsr}{\otimes}
\newcommand\ctsr{\operatorname{\widehat{\otimes}}}
\newcommand\dtsr{\operatorname{\widehat{\otimes}}}

\newcommand\dun{d^{\mathrm{u}}}
\newcommand\univf{\Omega^{\mathrm{u}}}
\newcommand\uhdr{H^{\mathrm{udR}}}
\newcommand\uchk{\mathrm{ch}^{\mathrm{uKa}}}
\newcommand{\dsum}{\oplus}    \newcommand{\Dsum}{\bigoplus}
      
\newcommand{\srl}{\stackrel}
 \newcommand{\ra}{\rightarrow} \newcommand{\lra}{\longrightarrow}

\newcommand{\wtl}{\widetilde}

\newcommand{\na}{\nabla}
\newcommand{\Cinf}{C^\infty}
\newcommand{\bl}{\bullet}
\newcommand{\isom}{\cong}     
\newcommand{\bmat}{\begin{pmatrix}}  \newcommand{\emat}{\end{pmatrix}}
\newcommand{\barr}{\begin{array}}  \newcommand{\earr}{\end{array}}
\newcommand{\bcd}{\begin{CD}}  \newcommand{\ecd}{\end{CD}}
\newcommand{\beq}{\begin{equation}\begin{aligned}}  \newcommand{\eeq}{\end{aligned}\end{equation}}
\newcommand{\beqs}{\begin{equation*}\begin{aligned}}  \newcommand{\eeqs}{\end{aligned}\end{equation*}}

\newcommand\define[1]{\emph{\textbf{#1}}}
\makeatletter
\newcommand{\dwtl}[1]{{%
  \mathpalette\double@widetilde{#1}%
}}
\newcommand{\double@widetilde}[2]{%
  \sbox\z@{$\m@th#1\widetilde{#2}$}%
  \ht\z@=.55\ht\z@
  \widetilde{\box\z@}%
}
\makeatother

\title{Noncommutative Differential $K$-theory}
\author{Byungdo Park}
\address{Department of Mathematics Education, Chungbuk National University, Cheongju 28644, Republic of Korea}
\email{byungdo@cbnu.ac.kr}
\thanks{This research has received funding from the European Research Council (ERC) under the European Union's Horizon 2020 research and innovation program (QUASIFT grant agreement 677368) as well as from the National Research Foundation of Korea (NRF) grant funded by the Korea government (MSIT) (No. 2020R1G1A1A01008746).}

\author{Arthur J. Parzygnat}
\address{Institut des Hautes \'Etudes Scientifiques (IHES), Le Bois-Marie 35 rte de Chartres 91440 Bures-sur-Yvette, France}
\email{parzygnat@ihes.fr}

\author{Corbett Redden}
\address{Department of Mathematics, LIU Post, Long Island University, 720 Northern Blvd, Brookville NY 11548, USA}
\email{corbett.redden@liu.edu}

\author{Augusto Stoffel}
\address{University of Greifswald, Walther-Rathenau-Str.~47, 17489 Greifswald, Germany}
\email{astoffel@mpim-bonn.mpg.de}

\makeatletter
\@namedef{subjclassname@2020}{%
  \textup{2020} Mathematics Subject Classification}
\makeatother

\date{\today}
\subjclass[2020]{Primary 19L50; Secondary 19D55, 58B34, 58J28}

\keywords{Differential $K$-theory; algebraic K-theory; Chern character; Chern--Weil theory; Serre--Swan correspondence; noncommutative de Rham homology}

\begin{document}
\sloppy
\maketitle
\begin{abstract}
We introduce a differential extension of algebraic $K$-theory of an algebra using Karoubi's Chern character. In doing so, we develop a necessary theory of secondary transgression forms as well as a differential refinement of the smooth Serre--Swan correspondence. Our construction subsumes the differential $K$-theory of a smooth manifold when the algebra is complex-valued smooth functions. Furthermore, our construction fits into a noncommutative differential cohomology hexagon diagram.
\end{abstract}

\tableofcontents

\section{Introduction}
Differential $K$-theory is a topic in mathematics that combines homotopy theoretic aspects of topological $K$-theory and differential geometric aspects of the de~Rham complex in a homotopy-theoretically consistent way. It is a natural home for studying gauge fields and their secondary characteristic classes. Many interesting areas, such as stable homotopy theory, the theory of characteristic classes, index theory, and superstring theory, thereby coalesce in this topic. It has been actively investigated from the viewpoint of abstract framework through the theory of $\infty$-categories, especially via sheaves of spectra and differential cohesion \cites{BNV,Sch}, as well as from the computational point of view by developing spectral sequences \cites{GS1,GS2}.

An interest in differential $K$-theory was kindled by attempts to topologically classify $D$-brane charges and Ramond--Ramond fields in type II string theories \cites{MM,Wi,F}. It was in Hopkins and Singer \cite{HS} that the first construction of a differential extension of any exotic cohomology theory was given. Following their work, several different constructions appeared, which are index-theoretic \cite{BS}, homotopy-theoretic \cite{BNV}, and geometric \cites{Kl, FL, SS, TWZ1, TWZ2}. While each model elucidates certain aspects of differential $K$-theory better than the other, these models are equivalent upon verifying uniqueness axioms \cite{BS2} or by direct comparisons \cite{Pa2}. However, we do not yet have a codification that is algebraic.

Furthermore, there has been a need for a formulation of the Minasian--Moore formula for $D$-branes in noncommutative spacetimes (see Szabo \cite{Sz} and references therein). To parallel the proposal to classify Ramond--Ramond fields in Freed \cite{F} in the noncommutative setup, one would need a \emph{noncommutative twisted differential $K$-theory}. However, unlike in the case of smooth manifolds, very little is known about noncommutative twisted $K$-theory other than the noncommutative twisted Chern character introduced by Mathai and Stevenson \cite{MS}. Noncommutative differential $K$-theory has not been well-explored either yet. Although there has been a homotopy-theoretic construction of differential algebraic $K$-theory as a sheaf of spectra over the site of smooth manifolds \cite{BG} whose underlying homotopy invariant sheaf evaluated at a point is an algebraic $K$-theory spectrum, the context and setup only provide a partially noncommutative differential $K$-theory, since one is still working with a site of \emph{commutative} spaces (manifolds).

The purpose of this paper is to fulfill the demand for a noncommutative differential $K$-theory. Namely, we construct differential extensions of $K_0$ of an algebra using universal noncommutative differential forms \`a la Karoubi \cite{Ka1} as an analogue of the de~Rham complex of a smooth manifold. Secondly, we set up a cycle map to relate our construction to the differential $K$-theory of a smooth manifold when the given algebra is the algebra of complex-valued smooth functions on the manifold.%
\footnote{Connes' Chern character $K_0(A)\ra HP_{\text{even}}(A)$ (as opposed to Karoubi's Chern character) is an alternative noncommutative counterpart of the usual Chern character $K^0(X)\ra H_{\dR}^{\text{even}}(X;\Q)$ when $A$ is the algebra of smooth or continuous functions on $X$~\cite{Co}. This counterpart is justified by the Serre--Swan correspondence and the Hochschild--Kostant--Rosenberg theorem, which say that both Chern characters are equivalent. Therefore, an alternative perspective, which we do not pursue in this work, is to develop a differential extension of $K$-theory with Connes' Chern character instead.}
Furthermore, the flat subgroup of our construction specializes to the multiplicative $K$-theory or $K$-theory with $\R/\Z$-coefficients, which are of interest in index theory and operator algebras \cites{AAS, Lott, Ba}.

The advantage of our approach is that our construction uses explicit cycle data, in parallel with existing geometric constructions of differential $K$-theory of a manifold~\cites{Kl,FL,SS,Pa2}. We develop the theory of Karoubi--Chern--Simons forms and give a detailed account of differential refinements of the smooth Serre--Swan correspondence. Furthermore, we establish a natural equivalence between our noncommutative differential $K$-theory and the differential $K$-theory of smooth manifolds when the underlying algebra is complex-valued smooth functions (Theorem \ref{THM.cycle.map.1}). To the best of our knowledge, this paper is the first to consider differential extensions of (co)homological invariants to the noncommutative framework.

This paper is organized as follows. Section~\ref{SEC.prelim} consists of background material on differential $K$-theory on manifolds, connections and curvatures on a module, Karoubi's Chern characters, and universal noncommutative differential forms. Section~\ref{SEC.KCS} is the technical core of this paper wherein we develop a noncommutative counterpart of the theory of Chern--Simons forms and obtain the primary and the secondary transgression formulas.
In Section~\ref{SEC.NCDKT.construction}, we construct a noncommutative differential $K$-theory and prove that the construction subsumes the differential $K$-theory of a manifold when the algebra is complex-valued smooth functions. Finally, we verify that our noncommutative differential $K$-group fits into the noncommutative analogue of the differential cohomology hexagon diagram.
Appendix~\ref{SEC.DSS} contains a differential refinement of smooth Serre--Swan correspondence, which was crucial for constructing a cycle map in Section~\ref{SEC.NCDKT.construction} (cf.\ Lemma~\ref{lem:CStoKCS}).
Appendix~\ref{sec:fibrations} provides a review of Grothendieck fibrations, which is used to prove the naturality of noncommutative Chern characters.

\textbf{Acknowledgements.} We thank Thomas Schick,  Ralf Meyer, Alex Atsushi Takeda, and Jae~Min Lee for helpful conversations as well as the Korea Institute for Advanced Study (KIAS) for the hospitality during our visits. BP thanks the Hausdorff Research Institute for Mathematics (HIM) for the support at the early stage of this work. A part of this research was done while AJP was at the University of Connecticut.

\section{Preliminaries}\label{SEC.prelim}
In this section, we collect background materials and setup notations we will be using throughout this paper. We review differential $K$-theory on smooth manifolds (Section \ref{SEC.FLK}), Karoubi's noncommutative Chern characters (Sections \ref{SEC.connection.curvature.kch}--\ref{SEC.kch}), and universal noncommutative differential forms (Section \ref{SEC.NC.differential.forms}).

\subsection{Review of differential $K$-theory} \label{SEC.FLK} In this subsection, we shall review differential $K$-theory by introducing a geometric model of even differential $K$-theory due to Klonoff \cite{Kl} as well as Freed and Lott \cite{FL}. We will use this construction in Section \ref{SEC.cycle.maps.in.karoubi.diff.k}. In this subsection, $X$ is always a compact smooth manifold. In the rest of the paper, $\Omega_{\dR}^{\bullet}(X)$ denotes the differential graded algebra of smooth complex-valued de~Rham forms on $X$ (whether compact or not) with differential $d$.

\begin{dfn}
\begin{enumerate}
\item
A \define{$\widehat{K}^0$-generator} on $X$ is a triple $(E,\na,\om)$ consisting of a smooth vector bundle $E\ra X$ with connection $\na$ and $\om\in \Om^{\odd}_{\dR}(X)/\text{Im}(d)$, where $\text{Im}(d)$ denotes the image of $d$.

\item
Two $\widehat{K}^0$-generators on $X$, $(E_0,\na_0,\om_0)$ and $(E_1,\na_1,\om_1)$ are \define{$\CS$-equivalent} if there is a smooth complex vector bundle $G\to X$ with connection $\na$ and a vector bundle isomorphism $\phi:E_0\dsum G\ra E_1\dsum G$ such that $\CS(\na_0\dsum\na,\na_1\dsum\na)=\om_0-\om_1\mod \text{Im}(d)$. Here $\CS(\na_0\dsum\na,\na_1\dsum\na)$ denotes the Chern--Simons form of any path of connections joining $\na_0\dsum\na$ and $\phi^*(\na_1\dsum\na)$ (cf.\ Simons and Sullivan \cite[Proposition 1.1, p.583]{SS}).

\item
The direct sum $(E_0,\na_0,\om_0) \dsum (E_1,\na_1,\om_1)$ is defined by $(E_0\dsum E_1, \na_0\dsum\na_1, \om_0+\om_1)$.
\end{enumerate}
\end{dfn}

It is a consequence of the secondary transgression formula of Chern forms (see Simons and Sullivan \cite[Proposition 1.1, p.583]{SS}) that the relation defined in (2) is an equivalence relation. We will use the notation $\sim_{\CS}$ to denote the $\CS$-equivalence relation and $[\cdot]_{\CS}$ to denote an equivalence class of a $\widehat{K}^0$-generator on $X$. The operation $\dsum$ is well-defined on the set of $\CS$-equivalence classes. We thus get a commutative monoid $(\f{M}(X),\dsum)$ consisting of $\CS$-equivalence classes $[(E,\na,\om)]_{\CS}$ endowed with the operation $\dsum$ on $\CS$-equivalence classes.

\begin{dfn}[Freed and Lott \cite{FL}, Klonoff \cite{Kl}] \label{DFN.FLK} The \define{even differential $K$-theory} of $X$, denoted by $\widehat{K}^0(X)$, is the Grothendieck group of the commutative monoid $(\f{M}(X),\dsum)$.
\end{dfn}

\begin{rmk} The even differential $K$-group $\widehat{K}^0(X)$ is known to be naturally isomorphic to other known models (see \cites{Kl,Pa2}).
\end{rmk}

\subsection{Connections and curvatures on a module}\label{SEC.connection.curvature.kch}
In this and the following subsection, we shall review some basic material for defining noncommutative Chern characters by Karoubi~\cite{Ka1}.
\begin{nta}\label{NTA.karoubi.notation.dga} Let $A$ be an algebra, unital but not necessarily commutative, over a field $\K$, and let $\Alg$ be the category of $\K$-algebras and $\K$-algebra homomorphisms. Unless specified otherwise, the notation $\Om_\bl(A)$ in this section means a differential graded algebra (henchforth DGA) over $\K$ satisfying $\Om_0(A)=A$. Any such DGA $\Omega_{\bullet}(A)$ satisfying this condition will be called a \define{DGA on top of $A$}.
Let $\DGA$ be the category of DGA's over $\K$ and maps of DGA's (chain maps preserving the algebra structure).
Its differential will always be denoted by $d$ unless specified otherwise.
The abelianization of $\Om_\bl(A)$ is the chain complex ${\Om}_n(A)_{\ab}:=\left({\Om}_n(A)/[{\Om}_\bl(A),{\Om}_\bl(A)]_n\right)$, where $[{\Om}_\bl(A),{\Om}_\bl(A)]_n$ is the $\K$-submodule
generated by elements of the form $[\om_i,\om_j]=\om_i\cdot \om_j-(-1)^{ij}\om_j\cdot \om_i$, with $\om_i\in {\Om}_i(A)$, $\om_i\in {\Om}_j(A)$, and $i+j=n$. Note that ${\Om}_\bl(A)_{\ab}$ is a chain complex with the differential inherited from the differential of $\Om_\bl(A)$ (but ${\Om}_\bl(A)_{\ab}$ is not necessarily a DGA). We will use the notation $\Fgp(A)$ for the category of finitely generated projective right $A$-modules and $A$-module maps. For any category $\cC$, we will write $\cC^{\text{iso}}$ to denote the subcategory of $\cC$ consisting of the same objects as $\cC$ and morphisms in $\cC^{\text{iso}}$ are isomorphisms in  $\cC$. For example, $\Fgp^{\text{iso}}(A)$ is the category of finitely generated projective right $A$-modules and $A$-module isomorphisms.
\end{nta}

\begin{rmk} \label{RMK.karoubi.notation.dga} We caution the reader that the notation $\Om_\bl(A)$ means an \emph{arbitrary} DGA whose zeroth degree is $A$, as in Karoubi \cite[p.~9]{Ka1}. For example when $X$ is a smooth manifold and $A=\Cinf(X;\C)$, the algebra of smooth $\C$-valued functions on $X$, the de~Rham complex of exterior differential forms $\Om^\bl_{\dR}(X)$ is just one example of a DGA on top of $A$. See Section \ref{SEC.NC.differential.forms} for more examples. Although $\Omega_{\bullet}(A)$ may arise as the application of a functor $\Omega_{\bl}:\Alg\to\DGA$ on an algebra $A$ (specifically when discussing naturality of certain constructions---cf.\ Definition~\ref{defn:DGAontopofAlg} for example), we will only use such additional structure on occasion. Nevertheless, we will write $\Omega_{\bullet}(A)$ even if the DGA on top of $A$ does not arise from some functorial construction on algebras.
\end{rmk}

\begin{dfn}[Karoubi~\cite{Ka1}]\label{DFN.NC.deRham.homology.generic}
Let $A$ be an algebra over $\mathbb{K}$ and let $\Omega_{\bullet}(A)$ be a DGA on top of $A$. The \define{noncommutative de Rham homology group} of $\Omega_{\bullet}(A)$  in degree $n$, denoted by ${H}_n^{\dR}(A)$, is the $n^{\text{th}}$ homology group of the complex ${\Om}_\bl(A)_{\ab}$.
\end{dfn}

\begin{rmk} Although not explicit in the notation, the definition of  ${H}_n^{\dR}(A)$ depends on a choice of a DGA ${\Om}_\bl(A)$ on top of $A$.
If there is a possibility for confusion, the notation $H_{n}^{\dR}(\Omega_{\bl}(A))$ will be used instead.
\end{rmk}

\begin{dfn} Given $M\in \Fgp(A)$ and  a DGA $\Omega_{\bullet}(A)$ on top of $A$, a \define{connection} $D$ on $M$ (with respect to $\Omega_{\bl}(A)$) is a $\K$-linear map $D: M\ra M\tsr_A \Om_1(A)$ satisfying the Leibniz rule $D(m a)=D(m) a+ m\otimes d(a)$ for all $m\in M$ and $a\in A$.
\end{dfn}

A connection as defined above always exists (see~\cite[p.12]{Ka1}).

\begin{exa}
If $A=C^{\infty}(X)$ for some smooth manifold $X$, and if $E\to X$ is a vector bundle, let $\Gamma(E)$ denote  the $A$-module of sections. Then a connection $\nabla:\Gamma(E)\to\Gamma(E)\otimes_{A}\Omega^{1}_{\dR}(X)$ reproduces the usual notion of a connection on a vector bundle (cf.\ \cite[pg~185 and pg~192]{Mo01}). To set notation, if $V$ is a vector field, then $\nabla_{V}s$ is the covariant derivative of a section $s$ in the direction $V$ and is obtained by composing $\nabla$ with evaluation of forms on $V$.
\end{exa}

\begin{dfn}\label{DFN.Fgp.conn.}
Let $A$ be a $\K$-algebra and let $\Omega_{\bullet}(A)$ be a DGA on top of $A$. The category $\FgpD(A)$
consists of the following data. Its objects are pairs $(M,D)$, where $M\in\Fgp(A)$ and  $D$ is a connection on $M$. The hom-set $\FgpD(A)\left((M_0,D_0),(M_1,D_1)\right)$ consists of $A$-linear maps $M_0\srl{f}\ra M_1$ such that the following diagram commutes:
\[\xymatrix{ M_0 \ar[d]^f \ar[rr]^(0.4){D_0} && M_0\tsr_{A} \Om_1(A)\ar[d]^{f \tsr_A \text{id}}\\
    M_1 \ar[rr]^(0.4){D_1} && M_1 \tsr_A \Om_1(A)
   } \]
\end{dfn}

\begin{exa}\label{EXA.examples.of.connection}
Let $E_{i}\xrightarrow{\pi_{i}}X$, $i\in\{0,1\}$, be two smooth vector bundles over a smooth manifold $X$, let $A:=C^{\infty}(X)$, let $M_{i}:=\Gamma(E_{i})$ be the modules of sections, let $D_{i}:\Gamma(E_i)\to\Gamma(E_{i})\otimes_{A}\Om^{1}_{\dR}(X)$ be connections, and let $\varphi:E_{0}\to E_{1}$ be a bundle map fixing the base, but not necessarily a fibrewise isomorphism, such that $D_{1}(\varphi s)=(\varphi\otimes\mathrm{id})D_{0}s$ (a special case of this is a connection-preserving bundle isomorphism). Then $\Gamma(E_{0})\to\Gamma(E_{1})$ sending $s$ to $\varphi\circ s$ is a morphism in $\FgpD(X):=\FgpD(A)$.
\end{exa}

To allow for a noncommutative generalization of morphism to bundle maps that are isomorphic on fibres and with a varying base, we will introduce the extension of scalars for modules (also called change of rings---see~\cite[Section 11.52]{Nes} for example) as well as functorial DGAs on top of algebras.

\begin{dfn}
Let $A$ and $B$ be $\K$-algebras. Let $N$ be a $B$-module. Let $\psi:B\to A$ be an algebra map. The \define{extension of scalars} $A$-module (of $N$ along $\psi$) is the right $A$-module defined as
$N\otimes_{B}A:=N\otimes_{\K}A/_{\sim}$
with the equivalence relation generated by  $nb\otimes_{\K} a\sim n\otimes_{\K}\psi(b)a$ and with the right action given by $(n\otimes_{B}a)a':=n\otimes_{B}(aa')$.
Because of the many different tensor products used, tensor products will often have subscripts $\K, A,$ and $B$ to help the reader distinguish them.
\end{dfn}

\begin{rmk}
Given an algebra map $\psi:B\to A$, the algebra $A$ can be viewed as a $(B,A)$-bimodule with the left $B$ action given by first applying $\psi$ and the using the multiplication on $A$, and the right $A$ action given by the multiplication on $A$. In this way, $N\otimes_{B}A$ is just the tensor product of a right $B$-module and a $(B,A)$-bimodule resulting in a right $A$-module. The extension of scalars is functorial (see Remark~\ref{rmk:grothendieckextensionscalars} and Appendix~\ref{sec:fibrations} for more details).
\end{rmk}

\begin{dfn}\label{dfn:cocartesianmorphism}
Let $A$ and $B$ be $\K$-algebras. Let $M$ be an $A$-module and $N$ a $B$-module. A \define{cocartesian morphism} from $(B,N)$ to $(A,M)$ is a pair $(\psi,\Psi)$ consisting of an algebra map $\psi:B\to A$ and a linear map $\Psi:N\to M$ such that $\Psi(nb)=\Psi(n)\psi(b)$ for all $n\in N$ and $b\in B$ and such that the canonical map%
\footnote{It is easy to see this map is well-defined because $\Psi(nb)a=\Psi(n)\psi(b)a$. It is immediate that $\cocartext{\Psi}$ is also right $A$-module morphism.}
\[
\begin{split}
N\otimes_{B}A&\xrightarrow{\cocartext{\Psi}}M\\
n\otimes_{B}a&\mapsto\Psi(n)a
\end{split}
\]
is an isomorphism of $A$-modules.
\end{dfn}

\begin{rmk}\label{rmk:grothendieckextensionscalars}
This remark is a justification for the terminology of ``cocartesian morphism'' in Definition~\ref{dfn:cocartesianmorphism} for readers familiar with the notion of Grothendieck fibrations, which are reviewed in Appendix~\ref{sec:fibrations}.
Using the same notation as in Definition~\ref{dfn:cocartesianmorphism}, the collection of objects being pairs $(A,M)$, with $A$ a $\K$-algebra and $M$ a finitely generated projective%
\footnote{The finitely generated projective assumption is not needed for the comments that follow in this remark, but it is needed for comparison to the smooth case if one wants an equivalence on certain subcategories.}
$A$-module, forms a category $\Fgp$. A morphism in $\Fgp$ from $(B,N)$ to $(A,M)$ is defined to be a pair $(\psi,\Psi)$ such that $\Psi(nb)=\Psi(n)\psi(b)$ for all inputs. This defines a Grothendieck opfibration $\Fgp\to\Alg$, where a cocartesian lift uses the extension of scalars. The cocartesian morphisms in this opfibration correspond to the cocartesian morphisms as we have defined under an appropriate fibred equivalence of opfibrations.
\end{rmk}

\begin{dfn}\label{defn:DGAontopofAlg}
A \define{DGA on top of $\Alg$} (or some subcategory) is a functor $\Omega_{\bl}:\Alg\to\DGA$ such that $\Omega_{0}:=\mathrm{ev}_{0}\circ\Omega_{\bl}=\mathrm{id}_{\Alg}$, where $\mathrm{ev}_{0}:\DGA\to\Alg$ is the restriction to the zeroth degree.
\end{dfn}

Having a DGA on top of $\Alg$ (instead of just a DGA on top of some specific algebra) is what will allow us to discuss naturality with the differential. Namely, if $\psi:B\to A$ is an algebra map and $\Omega_{\bl}$ a DGA on top of $\Alg$, then $\Omega_{\bl}(\psi):\Omega_{\bl}(B)\to\Omega_{\bl}(A)$ (often denoted abusively by $\psi$) is, in particular, a chain map. We would not necessarily get this from simply assigning a DGA on top of $A$ and $B$ if it was not done in a functorial manner.
Moving on, if one restricts to the subcategory whose objects are algebras of the form $C^{\infty}(X)$ for smooth manifolds $X$ and whose class of morphisms are pullbacks along smooth functions, then the DGA of de~Rham forms gives such a functor. Another example will be given in the context of universal noncommutative differential forms later.

\begin{lem}
\label{lem:pullbackconnection}
Let $\psi:B\to A$ be a map of algebras, let $N\in\Fgp(B)$, and let $\Omega_{\bl}:\Alg\to\DGA$ be a DGA on top of $\Alg$.
\begin{enumerate}
\item
Then $N\otimes_{B}\Omega_{k}(A):=N\otimes_{\K}\Omega_{k}(A)/_{\sim}$, where the equivalence relation is generated by $nb\otimes_{\K}\omega\sim n\otimes_{\K}\psi(b)\omega$, is an $A$-module (not necessarily finitely-generated) with the action given by $(n\otimes_{B}\omega)a:=n\otimes_{B}(\omega a)$. Equivalently, $N\otimes_{B}\Omega_{k}(A)$ is canonically isomorphic to $(N\otimes_{B}A)\otimes_{A}\Omega_{k}(A)$ (the tensor product of the right $A$-module $N\otimes_{B}A$ with the $(A,A)$-bimodule $\Omega_{k}(A)$).
\item
Furthermore, the map $\mathrm{id}\otimes_{\K}\psi:N\otimes_{\K}\Omega_{k}(B)\to N\otimes_{\K}\Omega_{k}(A)$ descends to a well-defined map $\mathrm{id}\otimes_{B}\psi:N\otimes_{B}\Omega_{k}(B)\to N\otimes_{B}\Omega_{k}(A)$. Note that the notation is slighly abusive and the map $\psi$ here is technically $\Omega_{k}(\psi)$.%
\footnote{This same abuse of notation occurs in differential geometry when one uses the same notation for the pullback of differential forms as for the pullback of smooth functions.}
\item
If, in addition, $D:N\to N\otimes_{B}\Omega_{1}(B)$ is a connection on $N$, then the map
\[
\begin{split}
N\otimes_{\K}A&\to N\otimes_{B}\Omega_{1}(A)\\
n\otimes a&\mapsto\big((\mathrm{id}\otimes_{B}\psi)(D(n))\big)a+n\otimes da
\end{split}
\]
descends to a connection
$D_{\psi}:N\otimes_{B}A\to N\otimes_{B}\Omega_{1}(A)\cong(N\otimes_{B}A)\otimes_{A}\Omega_{1}(A)$.
\end{enumerate}
\end{lem}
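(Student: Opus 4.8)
The plan is to treat the three parts in order, since each relies on the previous one, and to reduce everything to checking that the stated formulas respect the defining equivalence relations and the Leibniz rule. For part (1), I would first verify that the relation $nb\otimes_{\K}\omega\sim n\otimes_{\K}\psi(b)\omega$ is compatible with a well-defined right $A$-action: one checks that $(nb\otimes_{\K}\omega)a=nb\otimes_{\K}(\omega a)$ and $(n\otimes_{\K}\psi(b)\omega)a=n\otimes_{\K}(\psi(b)\omega a)$ are identified under $\sim$, which is immediate since the relation only touches the $B$-module structure on the left. For the canonical isomorphism $N\otimes_{B}\Omega_{k}(A)\cong(N\otimes_{B}A)\otimes_{A}\Omega_{k}(A)$, I would exhibit mutually inverse maps: $n\otimes_{B}\omega\mapsto(n\otimes_{B}1)\otimes_{A}\omega$ in one direction and $(n\otimes_{B}a)\otimes_{A}\omega\mapsto n\otimes_{B}(a\omega)$ in the other, and then check well-definedness of each against the respective equivalence relations, and that they are inverse to each other. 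This is the standard associativity-of-tensor-product argument $N\otimes_{B}(A\otimes_{A}\Omega_{k}(A))\cong(N\otimes_{B}A)\otimes_{A}\Omega_{k}(A)$, using that $A\otimes_{A}\Omega_{k}(A)\cong\Omega_{k}(A)$ as $(B,A)$-bimodules via $\psi$; I would just make the identifications explicit.

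For part (2), the map $\mathrm{id}\otimes_{\K}\psi$ (with $\psi$ meaning $\Omega_{k}(\psi)$) on $N\otimes_{\K}\Omega_{k}(B)$ sends $nb\otimes_{\K}\eta$ to $nb\otimes_{\K}\psi(\eta)$ and $n\otimes_{\K}\eta b$ to $n\otimes_{\K}\psi(\eta b)=n\otimes_{\K}\psi(\eta)\psi(b)$, since $\Omega_{\bl}(\psi)$ is a DGA map; using the relation on the target $N\otimes_{B}\Omega_{k}(A)$, both are identified with $n\otimes_{B}\psi(b)\psi(\eta)$ (up to the bimodule commutation already built into the identifications), so the map descends. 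Here I would be slightly careful about left versus right multiplication conventions, but since $\Omega_{\bl}(B)$ is a DGA (hence the $\Omega_{k}(B)$ are $(B,B)$-bimodules) and $\psi$ is an algebra-over-DGA map, the bookkeeping is routine.

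For part (3), which is the main point, I would first check that the stated formula $n\otimes a\mapsto\big((\mathrm{id}\otimes_{B}\psi)(D(n))\big)a+n\otimes da$ descends from $N\otimes_{\K}A$ to $N\otimes_{B}A$, i.e.\ that $nb\otimes a$ and $n\otimes\psi(b)a$ have the same image. Expanding the first: using the Leibniz rule $D(nb)=D(n)b+n\otimes db$ in $N\otimes_{B}\Omega_{1}(B)$, applying $\mathrm{id}\otimes_{B}\psi$ and right-multiplying by $a$ gives $(\mathrm{id}\otimes_{B}\psi)(D(n))\psi(b)a+(n\otimes_{B}\psi(db))a+nb\otimes da$. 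For the second: $(\mathrm{id}\otimes_{B}\psi)(D(n))\psi(b)a+n\otimes d(\psi(b)a)$, and since $\Omega_{\bl}(\psi)$ commutes with $d$ we have $d(\psi(b)a)=\psi(db)a+\psi(b)da$, giving $(\mathrm{id}\otimes_{B}\psi)(D(n))\psi(b)a+n\otimes\psi(db)a+n\otimes\psi(b)da$. The two expressions agree term by term once we use $nb\otimes da=n\otimes\psi(b)da$ (the defining relation) and $(n\otimes_{B}\psi(db))a=n\otimes_{B}\psi(db)a$. This equality — the cancellation of the two ``$db$'' terms against each other via the very relation defining the quotient — is the crux, and it is exactly where the functoriality hypothesis (that $\psi$ commutes with $d$) is used. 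Once $D_{\psi}$ is well-defined, verifying the Leibniz rule $D_{\psi}\big((n\otimes_{B}a)a'\big)=D_{\psi}(n\otimes_{B}a)a'+(n\otimes_{B}a)\otimes da'$ is a direct computation using $d(aa')=d(a)a'+a\,d(a')$ in $\Omega_{\bl}(A)$, and the identification with $(N\otimes_{B}A)\otimes_{A}\Omega_{1}(A)$ is then immediate from part (1). The main obstacle, as indicated, is keeping the several tensor products and the left/right module conventions straight in the descent check of part (3); everything else is formal.
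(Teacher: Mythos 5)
Your proposal follows the same route as the paper's own (very terse) proof: show that the stated maps respect the defining equivalence relations, and use that $\Omega_{\bl}(\psi)$ is a chain map of DGAs. Two small corrections to the write-up. In part (2) the relation generating $N\otimes_{B}\Omega_{k}(B)$ identifies $nb\otimes_{\K}\eta$ with $n\otimes_{\K}b\eta$ (the \emph{left} $B$-action on $\Omega_{k}(B)$), not with $n\otimes_{\K}\eta b$; with the correct ordering the image is $n\otimes_{\K}\psi(b)\psi(\eta)$ and the identification with $nb\otimes_{\K}\psi(\eta)$ in the target is immediate --- no appeal to ``bimodule commutation'' is needed, and indeed none is available over a noncommutative ring. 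In part (3) the detailed computation is right, but the closing commentary slightly misidentifies which terms are disposed of how: the two terms $\big(n\otimes\psi(db)\big)a$ and $n\otimes\psi(db)\,a$ are equal on the nose by definition of the right action, whereas $nb\otimes da$ and $n\otimes\psi(b)\,da$ are identified by the defining relation of the quotient; the chain-map hypothesis $\psi\circ d = d\circ\psi$ was already used a step earlier when you expanded $d(\psi(b)a)=\psi(db)\,a+\psi(b)\,da$. Neither point is a gap in the argument --- the structure matches the paper's, which records exactly the well-definedness chain $(\mathrm{id}\otimes_{B}\psi)(D(n))\psi(b)+n\otimes d\psi(b)=(\mathrm{id}\otimes_{B}\psi)(D(nb))$ and leaves the Leibniz check implicit, as you do.
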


\begin{proof}
{\color{white}{you found me!}}
\begin{enumerate}
\item
The canonical isomorphism is a property of the tensor product.
\item
This follows from $nb\otimes_{\K}\psi(\omega)\sim n\otimes_{\K} \psi(b)\psi(\omega)=n\otimes_{\K}\psi(b\omega)$.
\item
The fact that $D_{\psi}$ is well-defined follows from $\big((\mathrm{id}\otimes_{B}\psi)(D(n))\big)\psi(b)+n\otimes d\psi(b)=(\mathrm{id}\otimes_{B}\psi)\big(D(n)b+n\otimes db\big)=(\mathrm{id}\otimes_{B}\psi)\big(D(nb)\big).$ \qedhere
\end{enumerate}
\end{proof}

\begin{dfn}\label{dfn:inducedconnection}
The connection $D_{\psi}:N\otimes_{B}A\to N\otimes_{B}\Omega_{1}(A)\cong(N\otimes_{B}A)\otimes_{A}\Omega_{1}(A)$ on $N\otimes_{B}A$ from Lemma~\ref{lem:pullbackconnection},
given a connection $D:N\to N\otimes_{B}\Omega_{1}(B)$ on $N$, is called the \define{induced connection}. If $(B,N)\xrightarrow{(\psi,\Psi)}(A,M)$ is a cocartesian morphism, then the composite
\[
D_{\Psi}:=\left(M\xrightarrow{\cocartext{\Psi}^{-1}}N\otimes_{B}A\xrightarrow{D_{\psi}}(N\otimes_{B}A)\otimes_{A}\Omega_{1}(A)\xrightarrow{\cocartext{\Psi}\otimes\mathrm{id}}M\otimes_{A}\Omega_{1}(A)\right)
\]
is also called the \define{induced connection} (the map $\cocartext{\Psi}$ is defined in Definition~\ref{dfn:cocartesianmorphism}).
Furthermore, if $\varphi:M\to L$ is an isomorphism of $A$-modules and $\nabla$ is a connection on $L$, the \define{pullback connection} $\varphi^*\nabla$ on $M$ is the composite $M\srl{\varphi}\ra L \srl{\nabla}\ra L\tsr_{A}\Om_1(A) \xrightarrow{\varphi^{-1}\tsr\mathrm{id}} M\tsr_{A}\Om_1(A)$.
\end{dfn}

\begin{rmk}
Equivalently, the pullback connection $\varphi^*\nabla$ can be defined as the induced connection on $M$ associated to a cocartesian morphism of the form $(A,M)\xrightarrow{(\mathrm{id}_{A},\varphi)}(A,L)$. Indeed, for such a cocartesian morphism, $\varphi:M\to L$ is an isomorphism because $\cocartext{\varphi}$ equals the composite $M\otimes_{A}A\cong M\xrightarrow{\varphi}L$.
This shows that the two notions of induced connection and pullback connection agree when the algebras are fixed.
\end{rmk}

The following example illustrates that the pullback of smooth vector bundles and their associated pullback connections are described by the extension of scalars and the induced connection construction described in Lemma~\ref{lem:pullbackconnection}.
\begin{exa}
Let $\pi:F\to Y$ be a smooth vector bundle and let $\phi:X\to Y$ be a smooth map of manifolds. Let $\rho:E\to X$ denote the pullback bundle and let $\Phi:E\to F$ denote the associated map of total spaces.
Set $A:=C^{\infty}(X)$, $B:=C^{\infty}(Y)$, $M:=\Gamma(E),$ $N:=\Gamma(F)$, and $\psi:B\to A$ to be the pullback map associated with $\phi$.
The map $\Psi:N\to M$ sends a section $s$ of $F$ to the section of $E$ by composing the appropriate morphisms in the diagram
\[
\xy0;/r.25pc/:
(-10,7.5)*+{E}="1";
(10,7.5)*+{F}="2";
(-10,-7.5)*+{X}="3";
(10,-7.5)*+{Y}="4";
{\ar"1";"2"^{\Phi}};
{\ar"3";"4"_{\phi}};
{\ar"1";"3"_{\rho}};
{\ar"2";"4"_{\pi}};
{\ar@/_1.0pc/@{-->}"4";"2"_{s}};
{\ar@/_1.0pc/@{-->}"3";"1"_{\Psi(s)}};
\endxy
\]
using the fact that $\Phi$ is a fibrewise isomorphism. Namely, $\Psi(s)$ is the section that, when evaluated at $x\in X$, is given by $\Phi_{x}^{-1}(s(\phi(x)))$, where $\Phi_{x}:E_{x}\to F_{\phi(x)}$ is the restriction of $\Phi$ to the fibre over $x$. Therefore, the assignment $\cocartext{\Psi}:N\otimes_{B}A\to M$, given by (cf.\ Definition~\ref{dfn:cocartesianmorphism}) $s\otimes_{B}f\mapsto\Psi(s)f$, is well-defined (by fibrewise linearity of $\Phi$) and establishes an isomorphism of $A$-modules (cf.\ Appendix~\ref{sec:fibrations}).
Thus, the extension of scalars module is an algebraic analogue of the pullback bundle.
Now, given a connection $D$ on $\pi$, let $\nabla$ denote the pullback connection on $\rho$. Then the diagram
\[
\xy0;/r.25pc/:
(-20,7.5)*+{N\otimes_{B}A}="NBA";
(20,7.5)*+{(N\otimes_{B}A)\otimes_{A}\Omega^{1}_{\dR}(X)}="NBAY";
(-20,-7.5)*+{M}="M";
(20,-7.5)*+{M\otimes_{A}\Omega^{1}_{\dR}(X)}="MY";
{\ar"NBA";"NBAY"^(0.37){D_{\psi}}};
{\ar"M";"MY"_(0.4){\nabla}};
{\ar"NBA";"M"_{\cocartext{\Psi}}^{\cong}};
{\ar"NBAY";"MY"^{\cocartext{\Psi}\otimes_{A}\mathrm{id}}_{\cong}};
\endxy
\]
commutes.
Thus, the induced connection coincides with the pullback connection (up to the canonical isomorphism relating the extension of scalars to the module of sections of the pullback bundle, as described in Appendix~\ref{sec:fibrations}).
\end{exa}

\begin{dfn}\label{defn:extendingconnection} Let  $(M,D)\in\FgpD(A)$. The connection $D$ on $M$ is extended to a degree $+1$ $\K$-linear map $D:M\otimes_{A}\Omega_{\bullet}(A)\ra M\otimes_{A}\Omega_{\bullet}(A)$
uniquely determined by $D(s\tsr \om)=Ds\cdot \om+s\tsr d\om$ for all $s\in M$ and $\om\in\Om_\bl(A)$. The \define{curvature} of $D$ is the $\Omega_{\bl}(A)$-linear endomorphism $R_D:=D^2:M\tsr_A \Om_{\bl}(A)\ra M\tsr_A \Om_{\bl}(A)$.
\end{dfn}

The following lemma describes in what sense the curvature of the induced connection equals the pushforward of the curvature.

\begin{lem}\label{lem:incucedcurvature}
Let $A$ and $B$ be $\K$-algebras, let $\Omega$ be a DGA on top of $\Alg$, let $N$ be a $B$-module with connection $D:N\to N\otimes_{B}\Omega_{1}(B)$ whose curvature is $R_{D}$. Let $\psi:B\to A$ be an algebra map and let $D_{\psi}:N\otimes_{B}A\to N\otimes_{B}\Omega_{1}(A)$ be the associated connection (from Lemma~\ref{lem:pullbackconnection} and Definition~\ref{dfn:inducedconnection}) with curvature $R_{D_{\psi}}$. Then the curvatures are related via
\[
R_{D_{\psi}}=
\mathfrak{E}_{\psi}(R_{D}),
\]
where
$\mathfrak{E}_{\psi}:\mathrm{End}_{\Omega_{\bl}(B)}\big(N\otimes_{B}\Omega_{\bl}(B)\big)\to\mathrm{End}_{\Omega_{\bl}(A)}\big(N\otimes_{B}\Omega_{\bl}(A)\big)$
is the function sending an arbitrary $\Omega_{\bl}(B)$-linear endomorphism $L$ of $N\otimes_{B}\Omega_{\bl}(B)$ to the unique $\Omega_{\bl}(A)$-linear extension%
\footnote{The map $\mathfrak{E}_{\psi}(L)$ can equivalently be obtained via the extension of scalars functor (cf.\ Example~\ref{exa:extensionscalarsgrothendieck}) taking the algebra map $\psi:\Omega_{\bl}(B)\to\Omega_{\bl}(A)$ to the functor $\mathfrak{E}_{\psi}$ that sends a morphism $L:N\otimes_{B}\Omega_{\bl}(B)\to N\otimes_{B}\Omega_{\bl}(B)$ of $\Omega_{\bl}(B)$-modules to a morphism $\mathfrak{E}_{\psi}(L):N\otimes_{B}\Omega_{\bl}(A)\to N\otimes_{B}\Omega_{\bl}(A)$ of $\Omega_{\bl}(A)$-modules.
This also explains why the notation $\mathfrak{E}_{\psi}$ is used.
}
$\mathfrak{E}_{\psi}(L)$ of the composite $N\hookrightarrow N\otimes_{B}\Omega_{\bl}(B)\xrightarrow{L} N\otimes_{B}\Omega_{\bl}(B)\xrightarrow{\mathrm{id}\tsr_{B}\psi} N\tsr_{B}\Omega_{\bl}(A)$ to an $\Omega_{\bl}(A)$-linear endomorphism of $N\tsr_{B}\Omega_{\bl}(A)$, i.e.\ $n\otimes\omega$ gets sent to $\big((\mathrm{id}_{N}\otimes_{B}\psi)L(n\otimes1_{B})\big)\omega$.
\end{lem}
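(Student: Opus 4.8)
The plan is to unwind the definitions of both $R_{D_\psi}$ and $\mathfrak{E}_\psi(R_D)$ and check they agree on a generating set, namely on elements of the form $n\otimes 1_A$ for $n\in N$, since both sides are $\Omega_\bl(A)$-linear endomorphisms of $N\otimes_B\Omega_\bl(A)$ and such elements generate this module over $\Omega_\bl(A)$. By definition, $R_{D_\psi}=D_\psi^2$, where $D_\psi$ is extended to all of $N\otimes_B\Omega_\bl(A)$ via the Leibniz rule of Definition~\ref{defn:extendingconnection}; similarly $R_D=D^2$ on $N\otimes_B\Omega_\bl(B)$. So the computation reduces to comparing $D_\psi^2(n\otimes 1_A)$ with $\big((\mathrm{id}_N\otimes_B\psi)D^2(n\otimes 1_B)\big)$.

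First I would record the formula for $D_\psi$ from Lemma~\ref{lem:pullbackconnection}(3): on $N\otimes_B A$ it sends $n\otimes a\mapsto\big((\mathrm{id}\otimes_B\psi)(D(n))\big)a+n\otimes da$, and in particular $D_\psi(n\otimes 1_A)=(\mathrm{id}\otimes_B\psi)(D(n))$, where we use the identification $N\otimes_B\Omega_1(A)\cong(N\otimes_B A)\otimes_A\Omega_1(A)$ throughout. Next I would apply $D_\psi$ a second time. Writing $D(n)=\sum_i n_i\otimes\eta_i$ with $n_i\in N$ and $\eta_i\in\Omega_1(B)$, we get $D_\psi(n\otimes 1_A)=\sum_i n_i\otimes\psi(\eta_i)$, and then the extended Leibniz rule gives
\[
D_\psi^2(n\otimes 1_A)=\sum_i\big(D_\psi(n_i\otimes 1_A)\big)\psi(\eta_i)+\sum_i n_i\otimes d\psi(\eta_i).
\]
Since $\Omega_\bl(\psi)$ is a chain map (this is exactly the point of working with a DGA on top of $\Alg$, per the discussion after Definition~\ref{defn:DGAontopofAlg}), $d\psi(\eta_i)=\psi(d\eta_i)$, and $D_\psi(n_i\otimes 1_A)=(\mathrm{id}\otimes_B\psi)(D(n_i))$. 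On the other side, $D^2(n\otimes 1_B)=\sum_i\big(D(n_i)\big)\eta_i+\sum_i n_i\otimes d\eta_i$, and applying $\mathrm{id}\otimes_B\psi$—which is a chain map and respects the right multiplication structure by $\psi(\eta_i)$—yields precisely the same expression. This establishes the equality on generators.

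The remaining point, which I expect to be the only genuine subtlety rather than a routine unwinding, is to verify that $\mathfrak{E}_\psi(R_D)$ is \emph{well-defined}, i.e.\ that the composite $N\hookrightarrow N\otimes_B\Omega_\bl(B)\xrightarrow{R_D}N\otimes_B\Omega_\bl(B)\xrightarrow{\mathrm{id}\otimes_B\psi}N\otimes_B\Omega_\bl(A)$ really does extend uniquely to an $\Omega_\bl(A)$-linear endomorphism, and that this extension is legitimately applied to $R_{D}$ regarded as living in $\mathrm{End}_{\Omega_\bl(B)}(N\otimes_B\Omega_\bl(B))$. Uniqueness of the extension is immediate since $N$ generates $N\otimes_B\Omega_\bl(A)$ over $\Omega_\bl(A)$; existence follows from the functoriality remark in the footnote—one presents $\mathfrak{E}_\psi$ as the extension-of-scalars functor along the algebra map $\psi:\Omega_\bl(B)\to\Omega_\bl(A)$ applied to the module $N\otimes_B\Omega_\bl(B)$, under the canonical identification $(N\otimes_B\Omega_\bl(B))\otimes_{\Omega_\bl(B)}\Omega_\bl(A)\cong N\otimes_B\Omega_\bl(A)$, so that an $\Omega_\bl(B)$-linear endomorphism automatically induces an $\Omega_\bl(A)$-linear one. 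With well-definedness in hand, the generator-wise comparison above completes the proof; I would close by remarking that one only needs to check agreement on $n\otimes 1_A$ because both $R_{D_\psi}$ and $\mathfrak{E}_\psi(R_D)$ are by construction $\Omega_\bl(A)$-linear.
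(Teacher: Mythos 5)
Your argument is correct and follows the same route as the paper's own proof: reduce to checking the identity on elements of the form $n\otimes 1_A$ (justified by $\Omega_\bl(A)$-linearity of both sides), expand $D(n)$ as a sum of elementary tensors, apply the Leibniz rule and the chain-map property of $\Omega_\bl(\psi)$, and match terms. Your closing paragraph on well-definedness of $\mathfrak{E}_\psi(R_D)$ is a reasonable elaboration of the footnote in the lemma statement (and is worth spelling out: $\Omega_\bl(B)$-linearity of $R_D$ is exactly what makes the composite $n\mapsto(\mathrm{id}\otimes_B\psi)R_D(n\otimes 1_B)$ satisfy $\tilde{L}(nb)=\tilde{L}(n)\psi(b)$, so that it descends), but the paper does not treat this as a separate step since the statement already packages it via the extension-of-scalars functor.
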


\begin{proof}
Fix $n\in N$. By the uniqueness of the $\Omega_{\bl}(A)$-linear extension property of $\mathfrak{E}_{\psi}(R_{D})$ mentioned in the statement of the lemma, it suffices to show $D_{\psi}^{2}(n\otimes_{B}1_{A})=(\mathrm{id}\otimes_{B}\psi)D^2(n)$. Setting $D(n)=:\sum_{\alpha}n_{\alpha}\otimes_{B}\omega_{\alpha}\in N\otimes_{B}\Omega_{1}(B)$, this follows from
\[
\begin{split}
D_{\psi}^{2}(n\otimes_{B}1_{A})
&=D_{\psi}\big((\mathrm{id}\otimes_{B}\psi)D(n)\big)
=\sum_{\alpha}D_{\psi}\big(n_{\alpha}\otimes_{B}\psi(\omega_{\alpha})\big)\\
&=\sum_{\alpha}\Big(D_{\psi}(n_{\alpha}\otimes_{B}1_{A})\psi(\omega_{\alpha})+n_{\alpha}\otimes_{B}d\psi(\omega_{\alpha})\Big)\\
&=\sum_{\alpha}\Big((\mathrm{id}\otimes_{B}\psi)\big(D(n_{\alpha})\big)\psi(\omega_{\alpha})+n_{\alpha}\otimes_{B}\psi(d\omega_{\alpha})\Big)\\
&=\sum_{\alpha}(\mathrm{id}\otimes_{B}\psi)\Big(D(n_{\alpha})\omega_{\alpha}+n_{\alpha}\otimes_{B}d\omega_{\alpha}\Big)\\
&=(\mathrm{id}\otimes_{B}\psi)D^2(n)
\end{split}
\]
by two applications of the definition of $D_{\psi}$ from Definition~\ref{dfn:inducedconnection} and Lemma~\ref{lem:pullbackconnection}. The second term in the third line follows from the comments after Definition~\ref{defn:DGAontopofAlg}.
\end{proof}

\subsection{Traces of module endomorphisms}\label{SEC.traces}
We will soon take the trace of the curvature in order to define the Karoubi--Chern form. Therefore, we first review traces in a more general context~\cite[Section~1.16]{Ka1}.


\begin{dfn}
\label{defn:Fdual}
Let $R$ be a unital (not necessarily commutative) ring and $F$ a finitely generated projective right $R$-module. Let $F^* := \Hom_R(F, R)$ be the \define{dual} consisting of right $R$-linear maps. $F^*$ is given the structure of a left $R$-module upon setting $(r\mu)(x):=r\mu(x)$ for all $r\in R$, $\mu\in F^*$, and $x\in F$.
\end{dfn}

Given any ring $R$ and any two right $R$-modules $F$ and $G,$ with $F$ finitely generated projective, the map
\[
\begin{split}
G\otimes_{R}F^*&\rightarrow\mathrm{Hom}_{R}(F,G)\\
y\otimes\mu&\mapsto\big(x\mapsto y\mu(x)\big)
\end{split}
\]
determines an isomorphism of abelian groups (under addition) because the finitely generated projective assumption on the modules is what guarantees this map is an isomorphism.

\begin{dfn}\label{DFN.trace.with.Ab}
Let $R$ be a ring and $F$ a finitely generated projective right $R$-module.  The \define{trace} map is the composite
$\tr:\text{End}_R(F)\isom F\tsr_R F^*\srl{\text{eval}}\lra R/[R,R]$, where $\mathrm{eval}(x\otimes\mu)$ takes the equivalence class of $\mu(x)$.
\end{dfn}

Note that the trace is defined to take values in $R/[R,R]$ (as opposed to $R$). There would otherwise be an ambiguity due to the identity $xr\tsr\mu=x\tsr r\mu$. The first suggests $\mu(xr)=\mu(x)r$, while the latter suggests $(r\mu)(x)=r\mu(x)$. Because it is valued in $R/[R,R]$, this trace map is guaranteed to satisfy $\tr(TS)=\tr(ST)$ for
$S\in\Hom_{R}(F,G)$ and $T\in\Hom_{R}(G,F)$, where $G$ is another finitely generated projective right $R$-module.

\begin{exa}
\label{exa.trace.cyclic}
Take $F=R^{n}$. In this case, we can write elements of $\mathrm{End}_{R}(F)$ as $n\times n$ matrices with coefficients in $R$. The standard unit vectors $\{e_{i}\}$ define the standard matrices $\{E_{ij}\}$, where $E_{ij}$ corresponds to $e_{i}\otimes e_{j}^*$ and takes the $j$-th entry of any element in $R^{n}$ and places it in the $i$-th entry of a new vector whose other elements are all zero. Given $T\in\mathrm{End}_{R}(F)$, let $\{a_{ij}\in R\}$ be the unique elements satisfying $T(e_{j})=\sum_{i=1}^{n}e_{i}a_{ij}$.
We can express $T$ in matrix form as $\sum_{i,j}E_{ij}a_{ij}$ or, using the isomorphism above, as
$\sum_{i,j}e_{i}a_{ij}\otimes e_{j}^*\equiv \sum_{i,j}e_{i}\otimes a_{ij}e_{j}^*$.
Since $e_{j}^*(e_{i})=\delta_{ij}$, the trace of $T$ is given by the equivalence class associated to $\sum_{i,j}e_{j}^*(e_{i}a_{ij})=\sum_{i,j}e_{j}^*(e_{i})a_{ij}=\sum_{i,j}\delta_{ij}a_{ij}=\sum_{i}a_{ii}.$ One might notice that in this case, the expressions $\sum_{i,j}e_{j}^*(e_{i})a_{ij}$ and $\sum_{i,j}a_{ij}e_{j}^*(e_{i})$ are actually equal as elements of $R$. However, if we wanted this to be a basis-independent expression, then this would require that $\tr$ satisfies the property $\tr(T)=\tr(UTU^{-1})$ for all isomorphisms $U\in\mathrm{End}_{R}(F)$. This property is guaranteed by choosing a cyclicly invariant formula. Such a formula necessarily factors through $R/[R,R]$. We will find that this cyclic property is useful for computations involving the Karoubi--Chern forms later.
\end{exa}

\begin{exa}\label{ex:ringofevenforms}
Let $\Om_{\bl}(A)$ be a DGA on top of $A$. The subset $\Om_{\even}(A)$ consisting of all even degree elements in $\Om_{\bl}(A)$ is a ring.
By taking $R:=\Om_{\even}(A)\equiv\bigoplus_{k}\Omega_{2k}(A)$ and $F:=M\tsr_A\Om_{\even}(A)$,
the integral powers of the curvature map $D^2$ restrict to right $R$-linear endomorphisms of $F$.
The corresponding trace of $(D^2)^k$ provides an element $\tr(D^{2k})$ in $R/[R,R]$ with representatives living in $\Omega_{2k}(A)$.
Note that $[R,R]$ only contains commutators of even forms, while $[\Omega_{\bullet}(A),\Omega_{\bullet}(A)]_{2k}$ consists of forms of the form $[\omega_{i},\omega_{j}]$ with $i+j=2k$ since $i$ and $j$ could both be odd. Hence, there is an injection $[R,R]\hookrightarrow\bigoplus_{k}[\Omega_{\bullet}(A),\Omega_{\bullet}(A)]_{2k}$, which induces a surjection $R/[R,R]\to(\Om_{\bl}(A)_{\ab})_{\even}:=\bigoplus_{k}\left(\Omega_{2k}(A)/[\Omega_{\bullet}(A),\Omega_{\bullet}(A)]_{2k}\right)$. Therefore,
to obtain an element of $(\Om_{\bl}(A)_{\ab})_{\even}$, we apply this canonical map
$R/[R,R]\ra  (\Om_{\bl}(A)_{\ab})_{\even}$.
By a slight abuse of notation, we also denote this element by $\tr(D^{2k})\in(\Om_{\bl}(A)_{\ab})_{\even}$.
\end{exa}

\begin{lem}\label{lem:ringmapsandtrace}
Let $\psi:R\to S$ be a ring homomorphism and let $F$ be an $R$-module. Then $\psi$ extends to a ring homomorphism $\psi:R/[R,R]\to S/[S,S]$ and
\[
\xy0;/r.25pc/:
(-15,7.5)*+{\mathrm{End}_{R}(F)}="1";
(-15,-7.5)*+{\mathrm{End}_{S}(F\otimes_{R}S)}="2";
(15,7.5)*+{R/[R,R]}="3";
(15,-7.5)*+{S/[S,S]}="4";
{\ar"1";"2"_{\mathfrak{E}_{\psi}}};
{\ar"1";"3"^{\mathrm{tr}}};
{\ar"2";"4"_(0.6){\mathrm{tr}}};
{\ar"3";"4"^{\psi}};
\endxy
\]
is a commutative diagram of abelian groups. Here, $\mathfrak{E}_{\psi}$ is the map defined in a similar way to Lemma~\ref{lem:incucedcurvature}, namely as the map sending $L$, an $R$-endomorphism of $F$, to the unique $S$-linear extension%
\footnote{The map $\mathfrak{E}_{\psi}(L)$ can also be viewed as the extension of scalars functor $\mathfrak{E}_{\psi}$ associated to the map $R\xrightarrow{\psi}S$ acting on the $R$-module homomorphism $L$ to produce an $S$-module homomorphism (cf.\ Remark~\ref{rmk:grothendieckextensionscalars} and Example~\ref{exa:extensionscalarsgrothendieck}).}
$\mathfrak{E}_{\psi}(L)$ of the composite $F\xrightarrow{L}F\xrightarrow{\cong}F\otimes_{R}R\xrightarrow{\mathrm{id}\otimes_{R}\psi}F\otimes_{R} S$.
\end{lem}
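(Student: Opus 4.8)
The plan is to verify commutativity of the square by checking it on a generating set of $\mathrm{End}_R(F)$ and using that, for $F$ finitely generated projective, every $R$-endomorphism is a sum of rank-one maps of the form $x\mapsto y\mu(x)$ for $y\in F$, $\mu\in F^*$; equivalently, via the isomorphism $F\otimes_R F^*\xrightarrow{\cong}\mathrm{End}_R(F)$ recalled just before Definition~\ref{DFN.trace.with.Ab}, it suffices to treat $L=y\otimes\mu$. First I would record that $\psi\colon R\to S$ does descend to $R/[R,R]\to S/[S,S]$, since $\psi([R,R])\subseteq[S,S]$, so the right vertical map makes sense and is a homomorphism of abelian groups (indeed of rings). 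Then I would spell out the two composites around the square applied to $L=y\otimes\mu$.

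Going clockwise, $\mathrm{tr}(y\otimes\mu)$ is the class of $\mu(y)\in R/[R,R]$, and then $\psi$ sends this to the class of $\psi(\mu(y))\in S/[S,S]$. Going counterclockwise, I need an explicit description of $\mathfrak{E}_\psi(y\otimes\mu)$ as an element of $F\otimes_R S\otimes_S (F\otimes_R S)^*\cong\mathrm{End}_S(F\otimes_R S)$. The natural claim is that $\mathfrak{E}_\psi(y\otimes\mu)=(y\otimes_R 1_S)\otimes \wtl\mu$, where $\wtl\mu\colon F\otimes_R S\to S$ is the $S$-linear extension of $\mu$, i.e.\ $\wtl\mu(x\otimes_R s)=\psi(\mu(x))s$; this is exactly the unique $S$-linear extension of the composite $F\xrightarrow{y\otimes\mu}F\xrightarrow{\cong}F\otimes_R R\xrightarrow{\mathrm{id}\otimes\psi}F\otimes_R S$ described in the statement. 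I would verify this identification by evaluating both sides on a generator $x\otimes_R s$: the composite sends $x\mapsto y\mu(x)\mapsto y\otimes_R\psi(\mu(x))$, and its $S$-linear extension then sends $x\otimes_R s\mapsto (y\otimes_R 1_S)\psi(\mu(x))s=(y\otimes_R 1_S)\wtl\mu(x\otimes_R s)$, matching $((y\otimes_R 1_S)\otimes\wtl\mu)(x\otimes_R s)$. Taking the $S$-trace then gives the class of $\wtl\mu(y\otimes_R 1_S)=\psi(\mu(y))\cdot 1_S=\psi(\mu(y))$ in $S/[S,S]$, which agrees with the clockwise answer. Finally, since both composites are additive in $L$ and $\mathrm{End}_R(F)$ is generated under addition by elements $y\otimes\mu$, the square commutes for all $L$.

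The only genuinely delicate point — and thus the main obstacle — is getting the identification of $\mathfrak{E}_\psi(y\otimes\mu)$ with $(y\otimes_R 1_S)\otimes\wtl\mu$ right in a way that is clearly well-defined: one must check that $\wtl\mu$ is a well-defined right $S$-linear map $F\otimes_R S\to S$ (it is, because $\mu$ is right $R$-linear: $\wtl\mu(xr\otimes_R s)=\psi(\mu(xr))s=\psi(\mu(x)r)s=\psi(\mu(x))\psi(r)s=\wtl\mu(x\otimes_R\psi(r)s)$), and that under $F\otimes_R S\otimes_S(F\otimes_R S)^*\cong\mathrm{End}_S(F\otimes_R S)$ it corresponds to the $S$-linear extension named in the statement. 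Everything else is formal bookkeeping with the trace isomorphism and additivity. I would present the rank-one reduction first, then the $\mathfrak{E}_\psi$ computation, then the two trace computations, and conclude by additivity.
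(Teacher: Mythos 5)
Your proposal is correct and follows essentially the same route as the paper's proof: the paper also reduces to elementary tensors $f\otimes\mu$ in $F\otimes_R F^*$, introduces the maps $i_\psi(f)=f\otimes 1_S$ and $i_\psi^*(\mu)$ (your $\wtl\mu$), and checks that both the endomorphism-identification square and the evaluation square commute on such generators. Your identification of $\mathfrak{E}_\psi(y\otimes\mu)$ with $(y\otimes_R 1_S)\otimes\wtl\mu$ and the well-definedness check of $\wtl\mu$ are precisely the content of the paper's ``as one can check on applying the definitions to an element $f\otimes\mu$'' step, just spelled out more explicitly.
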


\begin{proof}
The fact that $\psi$ extends to a ring homomorphism on abelianizations is a standard fact. Setting
\begin{alignat*}{3}
F&\xrightarrow{i_{\psi}}F\otimes_{R}S
&\qquad\qquad F^*&\xrightarrow{i_{\psi}^*}(F\otimes_{R}S)^*\\
f&\mapsto f\otimes1_{S}
&\qquad\qquad\mu&\mapsto\Big(f\otimes s\mapsto\psi\big(\mu(f)\big)s\Big),
\end{alignat*}
one obtains $i_{\psi}(fr)=i_{\psi}(f)\psi(r)$ and $i_{\psi}^*(r \mu)=\psi(r)i_{\psi}^*(\mu)$ for all inputs ($i^*_{\psi}$ is not the usual dual map). Furthermore, the tensor product of these two abelian group homomorphisms induces a well-defined map $i_{\psi}\otimes i_{\psi}^*:F\otimes_{R}F^*\to(F\otimes_{R}S)\otimes_{S}(F\otimes_{R}S)^*$ such that
\[
\xy0;/r.25pc/:
(-25,7.5)*+{F\otimes_{R}F^*}="1";
(25,7.5)*+{\mathrm{End}_{R}(F)}="2";
(-25,-7.5)*+{(F\otimes_{R}S)\otimes_{S}(F\otimes_{R}S)^*}="3";
(25,-7.5)*+{\mathrm{End}_{S}(F\otimes_{R}S)}="4";
{\ar"1";"2"^{\cong}};
{\ar"1";"3"_{i_{\psi}\otimes i_{\psi}^*}};
{\ar"2";"4"^{\mathfrak{E}_{\psi}}};
{\ar"3";"4"^(0.56){\cong}};
\endxy
\]
is a commutative diagram of abelian groups, as one can check on applying the definitions to an element $f\otimes\mu\in F\otimes_{R}F^*$. Furthermore, by definition of the evaluation map, the diagram
\[
\xy0;/r.25pc/:
(-25,7.5)*+{F\otimes_{R}F^*}="1";
(-25,-7.5)*+{(F\otimes_{R}S)\otimes_{S}(F\otimes_{R}S)^*}="2";
(25,7.5)*+{R/[R,R]}="3";
(25,-7.5)*+{S/[S,S]}="4";
{\ar"1";"2"_{i_{\psi}\otimes i_{\psi}^*}};
{\ar"1";"3"^{\mathrm{eval}}};
{\ar"2";"4"_(0.6){\mathrm{eval}}};
{\ar"3";"4"^{\psi}};
\endxy
\]
commutes. Combining these two commutative diagrams and using the definition of the trace proves the lemma.
\end{proof}

\subsection{Karoubi's Chern character}\label{SEC.kch}

In this subsection, we define Karoubi's Chern character (compare Karoubi \cite[p.16]{Ka1}) and review some of its properties that we will use later.

\begin{dfn}[Karoubi's Chern character] Let $A$ be a unital algebra over a field $\K$ containing $\Q$ and let $\Omega_{\bullet}(A)$ be a DGA on top of $A$. Let $(M,D)\in \FgpD(A)$ with $D$ a connection on $M$ with respect to $\Omega_{\bullet}(A)$.
The $k^{\text{th}}$ \define{Karoubi's Chern character form} of $D$ is (cf.\ Example~\ref{ex:ringofevenforms}) $$\chk_k(D):=\frac{1}{k!}\tr(D^{2k})\in (\Om_{\ab}(A))_{2k}.$$
The \define{total Karoubi's Chern character form} is $\chk(D):=\sum_{k=0}^\infty \chk_k(D)$.
\end{dfn}

The sum in $\chk(D)$ is necessarily finite by the finitely generated assumption on $M$. Furthermore, there is some abuse of notation since the dependence of $\chk_{k}$ on the DGA $\Omega_{\bl}(A)$ is not made explicit.

\begin{exa}\label{exa:KaroubiChernformforGrassman}
Let $M\in\textrm{Fgp}(A)$, let $i:M\hookrightarrow A^{m}$ be an embedding, and set $M_{m}(A)$ to be the algebra of $m\times m$ matrices with entries in $A$. Let $p\in M_{m}(A)$ satisfy $p^2=p$ and $\mathrm{Im}(p)=\mathrm{Im}(i)$ (we will often conflate $p$ being viewed as a matrix with coefficients in $A$ as well as an $A$-endomorphism of $A^m$). Set $\mathcal{M}:=\mathrm{Im}(p)$.
Let $\tilde{D}:\mathcal{M}\to\mathcal{M}\tsr_{A}\Om_{1}(A)$ be the connection on $\mathcal{M}$ given by%
\footnote{Note that since $ds\in A^{m}\otimes_{A}\Omega_{1}(A)$, the $p\cdot ds$ here should technically be written as $(p\otimes 1_{\cA})\cdot s$ when viewed as a matrix equation or $(p\otimes_{A}\mathrm{id}_{\Omega_{1}(A)})(ds)$ when viewed as an operator acting on $ds$. In either case, $p$ has been extended in a natural way to act on $A^{m}\otimes_{A}\Omega_{1}(A)$ by acting trivially on the right factor. We will often exclude this additional identity to avoid cumbersome notation.}
$\tilde{D}(s):=p\cdot ds$ and
let $D:M\to M\tsr_{A}\Om_{1}(A)$ be the connection on $M$ given by%
\footnote{Just as $p$ has been extended to $A^{m}\otimes_{A}\Omega_{\bl}(A)$, the inclusion $i$ has naturally been extended to a map $M\otimes_{A}\Omega_{\bl}(A)\to A^{m}\otimes_{A}\Omega_{\bl}(A)$. This map is also denoted by $i$, as opposed to the more accurate $i\otimes\mathrm{id}_{\Omega_{\bl}(A)}$.}
$D:=i|_{\mathcal{M}}^{-1}\circ\tilde{D}\circ i$.
A simple calculation then shows $\tilde{D}^{2}(s)=p\cdot dp\cdot dp\cdot s$ for all $s\in\mathcal{M}$, exhibiting the right $A$-linearity of $\tilde{D}^{2}$ explicitly~\cite[Section~1.15 and~1.18]{Ka1}. In fact, $\tilde{D}^{2k}(\sigma)=p\cdot(dp)^{2k}\cdot \sigma$ for all $\sigma\in\mathcal{M}\tsr_{A}\Omega_{\bl}(A)$ so that $\tilde{D}^{2k}$ is  $\Omega_{\bl}(A)$-linear. Hence,
\[
\chk(D)=\chk\left(i|_{\mathcal{M}}^{-1}\circ\tilde{D}\circ i\right)
=\chk(\tilde{D})
=\sum_{k=0}^\infty \frac{1}{k!}\tr(p\cdot(dp)^{2k}),
\]
where we have used the cyclicity of the trace in the second identity, which is allowed since
\[
D^{2}=
i|_{\mathcal{M}}^{-1}\circ p\circ d\circ \underbrace{i\circ i|_{\mathcal{M}}^{-1}\circ p}_{=p}\circ d \circ i
=i|_{\mathcal{M}}^{-1}\circ\tilde{D}^{2}\circ i.
\]

As we will see later, for certain choices of DGA on top of $A$, every connection will be of this form and the Karoubi--Chern form will be expressible in this manner. This allows us to perform several calculations using ideas from linear algebra adapted to the setting where the ground ring is noncommutative. In particular, if $D$ is expressible in two ways via different projections $p$ or $q$ (possibly with embeddings into different freely generated modules), then $\tr(p\cdot dp^{2k})=\tr(q\cdot dq^{2k})$,
which shows that the Karoubi--Chern form expressed in terms of projectors is invariant.
\end{exa}

\begin{lem}\label{lem:directsumconnections}
If $D:M\to M\otimes_{A}\Omega_{1}(A)$ and $D':M'\to M'\otimes_{A}\Omega_{1}(A)$ are two connections (with respect to the same DGA on top of $A$), then the composite
\[
M\oplus M'\xrightarrow{D\oplus D'}\big(M\otimes_{A}\Omega_{1}(A)\big)\oplus\big(M'\otimes_{A}\Omega_{1}(A)\big)\xrightarrow{\cong}(M\oplus M')\otimes_{A}\Omega_{1}(A)
\]
is a connection on $M\oplus M'$, called the \define{direct sum} of the connections $D$ and $D'$, and is denoted $D\oplus D'$ (which is a slight abuse of notation since the isomorphism above is included in its definition). The latter isomorphism is the distributive natural isomorphism and is determined on elementary tensors via $(s\otimes\omega,s'\otimes\eta)\mapsto(s,0)\otimes\omega+(0,s')\otimes\eta$. Extending $D\oplus D'$ by Leibniz is done via the composite $(M\oplus M')\otimes_{A}\Omega_{\bullet}(A)\xrightarrow{\cong}\big(M\otimes_{A}\Omega_{\bullet}(A)\big)\oplus(M'\otimes_{A}\Omega_{\bullet}(A)\big)\xrightarrow{D\oplus D'}\big(M\otimes_{A}\Omega_{\bullet+1}(A)\big)\oplus(M'\otimes_{A}\Omega_{\bullet+1}(A)\big)\xrightarrow{\cong}(M\oplus M')\otimes_{A}\Omega_{\bullet+1}(A)$.
The curvature of this connection in terms of the curvatures $R_{D}$ and $R_{D'}$ is then given by $R_{D}\oplus R_{D'}$ (followed by a similar distributive natural isomorphism).
\end{lem}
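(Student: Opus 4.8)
The plan is to reduce everything to one structural observation about the distributive natural isomorphism $\tau\colon\big(M\otimes_{A}\Omega_{\bullet}(A)\big)\oplus\big(M'\otimes_{A}\Omega_{\bullet}(A)\big)\to(M\oplus M')\otimes_{A}\Omega_{\bullet}(A)$, $(s\otimes\omega,s'\otimes\eta)\mapsto(s,0)\otimes\omega+(0,s')\otimes\eta$: namely, that $\tau$ is right $\Omega_{\bullet}(A)$-linear and intertwines the ``inner differentials'' $s\otimes\omega\mapsto s\otimes d\omega$ on the two sides (componentwise on the domain, directly on the codomain). Both properties follow at once by evaluating on elementary tensors, and they are precisely the two ingredients that appear in the Leibniz rule and in the uniqueness clause of Definition~\ref{defn:extendingconnection}. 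Granting them, the lemma is transport of structure along $\tau$.

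For the first assertion, that $D\oplus D':=\tau\circ(D\oplus D')$ is a connection, I would check the Leibniz rule directly. Given $(m,m')\in M\oplus M'$ and $a\in A$, we have $(m,m')a=(ma,m'a)$; applying $D$ and $D'$ componentwise and using that each is a connection gives $\big(D(m)a+m\otimes da,\ D'(m')a+m'\otimes da\big)$, and applying $\tau$ together with its right $A$-linearity splits this as $\tau\big(D(m)a,D'(m')a\big)+\tau\big(m\otimes da,m'\otimes da\big)=\big((D\oplus D')(m,m')\big)a+(m,m')\otimes da$, the last step using $(s,0)\otimes da+(0,s')\otimes da=(s,s')\otimes da$.

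For the second assertion, that the stated composite is the Leibniz extension, note that the composite in the statement is exactly $\tau\circ(\widehat D\oplus\widehat{D'})\circ\tau^{-1}$, where $\widehat D$ and $\widehat{D'}$ are the extensions of $D$ and $D'$ from Definition~\ref{defn:extendingconnection}. I would verify that this composite satisfies the characterizing identity $\sigma\otimes\omega\mapsto\big((D\oplus D')(\sigma)\big)\omega+\sigma\otimes d\omega$ for $\sigma\in M\oplus M'$, $\omega\in\Omega_{\bullet}(A)$: since $\tau^{-1}\big((s,s')\otimes\omega\big)=(s\otimes\omega,\,s'\otimes\omega)$, applying $\widehat D\oplus\widehat{D'}$ and the defining Leibniz identity for each, then applying $\tau$ and invoking its right $\Omega_{\bullet}(A)$-linearity on the first summand and its compatibility with the inner differentials on the second, one recovers exactly $\big((D\oplus D')(\sigma)\big)\omega+\sigma\otimes d\omega$. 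By the uniqueness in Definition~\ref{defn:extendingconnection}, the composite is $\widehat{D\oplus D'}$.

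For the third assertion, with the extension identified as $\widehat{D\oplus D'}=\tau\circ(\widehat D\oplus\widehat{D'})\circ\tau^{-1}$, squaring gives $\big(\widehat{D\oplus D'}\big)^{2}=\tau\circ(\widehat D\oplus\widehat{D'})^{2}\circ\tau^{-1}=\tau\circ\big(\widehat D^{\,2}\oplus\widehat{D'}^{\,2}\big)\circ\tau^{-1}=\tau\circ(R_{D}\oplus R_{D'})\circ\tau^{-1}$, which is precisely the claim that the curvature of $D\oplus D'$ is $R_{D}\oplus R_{D'}$ followed by the distributive isomorphism. The only real obstacle is bookkeeping: making the two compatibility properties of $\tau$ precise and carrying them through the graded setting, in particular treating $\tau$ on all of $\Omega_{\bullet}(A)$ rather than only on $\Omega_{1}(A)$, and recognizing that the right action and the inner differential are exactly the pieces of Definition~\ref{defn:extendingconnection} that $\tau$ must respect. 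There is no substantive difficulty beyond this.
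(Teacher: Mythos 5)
Your proof is correct, and since the paper states this lemma without giving any proof (treating it as a routine verification), your argument — transporting structure along the distributive isomorphism $\tau$ after observing that $\tau$ is right $\Omega_{\bullet}(A)$-linear and commutes with the inner differentials $\mathrm{id}\otimes d$ — supplies exactly the bookkeeping the authors left implicit. The three verifications (Leibniz rule, identification of the Leibniz extension as $\tau\circ(\widehat D\oplus\widehat{D'})\circ\tau^{-1}$, and the curvature computation via conjugation commuting with squaring) are all sound.
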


\begin{rmk}
Although forming the direct sum of connections can be easily done as above, forming the tensor product is a subtle issue in the noncommutative setting. Indeed, if we attempt to define a connection on $M\otimes_{A} M'$,  we first realize that we should use $A$-bimodules~\cite{CQ}. Secondly, even if these are bimodules, $D\otimes \text{id}+\text{id}\otimes D'$ does not define a connection on $M\otimes M'$ because $D(m)\otimes m'$ lives in $M\otimes_{A}\Om_{1}(A)\otimes_{A} M'$ while $m\otimes D'(m')$ lives in $M\otimes_{A}M'\otimes_{A}\Om_{1}(A)$. We cannot simply swap these factors because of the non-commutativity.
Fortunately, we will not make use of a $\otimes$-product in this work, so these issues will not concern us.
\end{rmk}

\begin{prp}\label{PRP.chk.is.closed} Karoubi's Chern character form $\chk_k(D)$ is closed in ${\Om}_\bl(A)_{\ab}$.
\end{prp}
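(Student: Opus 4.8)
The plan is to verify closedness by a direct computation using the local expression for Karoubi's Chern character form from Example~\ref{exa:KaroubiChernformforGrassman}, reducing to the Grassmann connection case $\widetilde{D}(s)=p\cdot ds$ arising from a projection $p\in M_m(A)$ with $p^2=p$. By that example and the cyclicity of the trace, it suffices to show that $d\,\tr\!\big(p\cdot(dp)^{2k}\big)=0$ in $\Omega_\bl(A)_{\ab}$; here $d$ is the differential induced on the abelianization from the DGA $\Omega_\bl(A)$. First I would record the identity $p\,dp\,p=0$, obtained by differentiating $p^2=p$ (giving $dp = dp\cdot p + p\cdot dp$) and multiplying by $p$ on both sides; equivalently $dp\,p = (1-p)\,dp$ and $p\,dp = dp\,(1-p)$.

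Next I would compute $d\big(p\,(dp)^{2k}\big)$ using the Leibniz rule in $\Omega_\bl(A)$: since $d(dp)=0$, only the leading $dp$ from $d(p)$ survives, so $d\big(p\,(dp)^{2k}\big)=dp\,(dp)^{2k}=(dp)^{2k+1}$. Hence I must show $\tr\!\big((dp)^{2k+1}\big)=0$ in $\Omega_\bl(A)_{\ab}$. Now insert $1 = p + (1-p)$ in the middle somewhere and use the conjugation identities: writing $(dp)^{2k+1}=(dp)^{2k+1}\cdot 1 = (dp)^{2k+1}p + (dp)^{2k+1}(1-p)$ and pushing the projector through using $dp\,p=(1-p)\,dp$ repeatedly, one finds that $(dp)^{2k+1}p$ and $p\,(dp)^{2k+1}$ are related by a sign $(-1)^{2k+1}=-1$ because moving $p$ past $2k+1$ copies of the odd-degree element $dp$ flips it to $1-p$ an odd number of times — more precisely $p\,(dp)^{2k+1}=(dp)^{2k+1}(1-p)$, so $p\,(dp)^{2k+1}+(dp)^{2k+1}p=(dp)^{2k+1}$. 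On the other hand, cyclicity of the trace in $\Omega_\bl(A)_{\ab}$ gives $\tr\big(p\,(dp)^{2k+1}\big)=\tr\big((dp)^{2k+1}p\big)$ up to the Koszul sign: since $p$ has degree $0$, $\tr\big(p\,(dp)^{2k+1}\big)=\tr\big((dp)^{2k+1}p\big)$ with no sign. Combining, $\tr\big((dp)^{2k+1}\big)=2\,\tr\big((dp)^{2k+1}p\big)$, while the same manipulation applied the other way (pushing $p$ to the left) yields $\tr\big((dp)^{2k+1}\big)=2\,\tr\big(p\,(dp)^{2k+1}\big)$ but also $(dp)^{2k+1}=(1-p)(dp)^{2k+1}\cdot p + \cdots$ forcing $\tr\big((dp)^{2k+1}p\big)=-\tr\big((dp)^{2k+1}p\big)$, hence it vanishes. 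I would streamline this to the cleanest sign bookkeeping in the writeup.

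The main obstacle is twofold. First, one must be careful that the computation takes place in the abelianization $\Omega_\bl(A)_{\ab}$ (equivalently, inside $R/[R,R]$ with $R=\Omega_{\even}(A)$, as in Example~\ref{ex:ringofevenforms}), so "cyclicity of the trace" must be used in its graded form $\tr(\alpha\beta)=(-1)^{|\alpha||\beta|}\tr(\beta\alpha)$, and the signs in the projector-shuffling identities must match these Koszul signs exactly — this is where an error would most likely creep in. Second, one must justify the reduction to the Grassmann form: every $(M,D)\in\FgpD(A)$ need not literally have $D=\widetilde D$ for a projection, but by Example~\ref{exa:KaroubiChernformforGrassman} the Chern character form only depends on $D$ through $\tr(D^{2k})$ and equals $\sum_k \frac1{k!}\tr(p\,(dp)^{2k})$ once we pass to an embedding $M\hookrightarrow A^m$ and extend the connection; since closedness is a statement about this representative form, the reduction is legitimate. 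Alternatively, and perhaps more cleanly, one can argue intrinsically: $d\,\tr(D^{2k})=\tr\big([D,D^{2k}]\big)$ in $\Omega_\bl(A)_{\ab}$ because the induced differential on the abelianization is computed by the graded commutator $[D,-]$, and $[D,D^{2k}]=D\cdot D^{2k}-D^{2k}\cdot D=0$; the only subtlety is verifying that $d\,\tr(L)=\tr([D,L])$ for an $\Omega_\bl(A)$-linear endomorphism $L$, which follows by unwinding Definition~\ref{DFN.trace.with.Ab} and Definition~\ref{defn:extendingconnection}. I would likely present this intrinsic argument as the primary proof and relegate the projector computation to a remark.
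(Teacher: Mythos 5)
The paper itself only cites Karoubi~\cite[p.16--17]{Ka1} for this proposition, and the argument there is essentially the ``intrinsic'' one you sketch at the end: the curvature $R_D=D^2$ is $\Omega_\bl(A)$-linear, one checks that for any $\Omega_\bl(A)$-linear endomorphism $L$ one has $d\,\tr(L)=\tr([D,L])$ in $\Omega_\bl(A)_{\ab}$, and then $[D,D^{2k}]=0$ gives closedness. So your preferred route --- the one you say you would present as the primary proof --- is the right one and matches the cited source.

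Your first argument, however, has a genuine gap in the reduction step. Example~\ref{exa:KaroubiChernformforGrassman} computes $\chk(D)=\sum_k\tfrac{1}{k!}\tr\bigl(p\,(dp)^{2k}\bigr)$ only for the \emph{specific} Grassmann connection $D=i|_{\mathcal M}^{-1}\circ\widetilde D\circ i$ with $\widetilde D(s)=p\cdot ds$. A general connection $D$ on $M$ is $D=D_{\mathrm{Grass}}+\Gamma$ for some $\Omega_\bl(A)$-linear $\Gamma$, and then $\chk_k(D)=\tfrac{1}{k!}\tr(D^{2k})$ is \emph{not} equal to $\tfrac{1}{k!}\tr\bigl(p\,(dp)^{2k}\bigr)$ as a form (they are only cohomologous). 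Since the proposition is a statement about the form itself, and not its class, you cannot ``pass to the projector representative'' without first proving the very transgression result you are building toward. So the projector computation cannot stand as the primary proof for a general $D$ with respect to an arbitrary DGA on top of $A$.

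As a side note, the sign bookkeeping you flag as delicate does close cleanly, but not by the route you gesture at. From $dp=dp\cdot p+p\cdot dp$ one gets $p\,(dp)^{2k+1}=(dp)^{2k+1}(1-p)$, hence
\[
\tr\bigl(p\,(dp)^{2k+1}\bigr)=\tr\bigl(p\,(dp)^{2k+1}(1-p)\bigr)=\tr\bigl((1-p)\,p\,(dp)^{2k+1}\bigr)=0,
\]
using graded cyclicity (the Koszul sign is trivial since $\deg p=0$) and $(1-p)p=0$; combined with $\tr\bigl((dp)^{2k+1}\bigr)=2\,\tr\bigl(p\,(dp)^{2k+1}\bigr)$ this gives $\tr\bigl((dp)^{2k+1}\bigr)=0$. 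This is the clean version of the computation, but it is worth keeping as a remark precisely for the reason you identify: the intrinsic Bianchi-type argument is the one that actually covers all connections.
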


\begin{proof}
A proof is in \cite[p.16--17]{Ka1}.
\end{proof}

\begin{dfn}
Fix an algebra $A$.
The set of isomorphism classes of finitely generated projective $A$-modules together with the direct sum operation defines a commutative monoid.
Let $K_{0}(A)$ denote the Grothendieck group associated to this monoid. Thus, $K_{0}(A)$ has elements that are formal differences $[M]-[N]$ of
isomorphism
classes of $A$-modules.
\end{dfn}

\begin{prp}\label{PRP.chk.is.indep.choice.of.conn}
Let $A$ be an algebra and fix a DGA $\Omega_{\bl}(A)$ on top of $A$.
The assignment $\chk_k:K_0(A)\ra {H}^{\dR}_{2k}(\Omega_{\bl}(A))$, sending $[M]$ to $[\chk_k(D)]$, where $M\in \Fgp(A)$ and $D$ is some connection on $M$, is a group homomorphism.
\end{prp}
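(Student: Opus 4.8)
The plan is to verify the two things that make the assignment well-defined as a map out of the Grothendieck group, and then that it is additive: (1) for a fixed module $M$, the homology class $[\chk_k(D)] \in H^{\dR}_{2k}(\Omega_\bl(A))$ is independent of the choice of connection $D$; (2) isomorphic modules yield the same class; (3) the map respects direct sums, which — combined with the universal property of the Grothendieck group — upgrades the resulting monoid homomorphism $K_0(A)^{\mathrm{iso}}\to H^{\dR}_{2k}$ to a group homomorphism on $K_0(A)$. That $\chk_k(D)$ is a cycle in $\Om_\bl(A)_{\ab}$ is already Proposition~\ref{PRP.chk.is.closed}, so the classes in question make sense.

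For step (1), the standard Chern--Weil argument: given two connections $D_0, D_1$ on $M$, their difference $D_1 - D_0$ is $\Om_\bl(A)$-linear (it kills the Leibniz term), so $D_t := (1-t)D_0 + tD_1$ is a connection on $M$ with respect to $\Omega_\bl(A)\ctsr \Om^\bl_{\dR}([0,1])$ (equivalently, one works with the one-parameter family and its $t$-derivative). One then shows $\frac{d}{dt}\chk_k(D_t)$ is exact in $\Om_\bl(A)_{\ab}$ by the usual transgression computation: $\frac{d}{dt}\tr(D_t^{2k}) = k\,\tr\big(\dot D_t D_t^{2k-1} + \cdots\big)$ collapses, via cyclicity of the trace (valued in $\Om_\bl(A)/[\,\cdot,\cdot\,]$, as set up in Definition~\ref{DFN.trace.with.Ab} and Example~\ref{ex:ringofevenforms}) and the Bianchi-type identity $dD_t^2 = [D_t^2, \cdot]$-style manipulations, to $d\,\tr(\dot D_t D_t^{2k-2})$ up to a constant; integrating over $t\in[0,1]$ gives $\chk_k(D_1) - \chk_k(D_0) \in \mathrm{Im}(d)$. (This is exactly the $A = \Cinf(X)$ transgression formula in noncommutative dress; since Section~\ref{SEC.KCS} is advertised as developing precisely these Karoubi--Chern--Simons forms, I would cite the transgression formula proved there rather than redo it.) For step (2): if $\varphi: M \xrightarrow{\cong} M'$ is an $A$-module isomorphism and $D'$ is any connection on $M'$, then $\varphi^*D'$ is a connection on $M$ (Definition~\ref{dfn:inducedconnection}), its curvature is $\varphi^{-1}\circ R_{D'}\circ\varphi$ after extension to $\Om_\bl(A)$, and cyclicity of the trace gives $\tr((\varphi^*D')^{2k}) = \tr(R_{D'}^k)$ in $\Om_\bl(A)_{\ab}$; combined with (1) this shows the class depends only on $[M]$.

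For step (3): given $M, M'$ with connections $D, D'$, Lemma~\ref{lem:directsumconnections} shows $D\oplus D'$ is a connection on $M\oplus M'$ with curvature $R_D \oplus R_{D'}$ (block diagonal), so $\tr\big((D\oplus D')^{2k}\big) = \tr(R_D^k) + \tr(R_{D'}^k)$ — the trace of a block-diagonal endomorphism splitting as a sum, which one checks via the $F\tsr_R F^*$ description of the trace — hence $\chk_k(D\oplus D') = \chk_k(D) + \chk_k(D')$ already at the form level. Therefore $[M] \mapsto [\chk_k(D)]$ is a well-defined homomorphism of commutative monoids from the monoid of isomorphism classes into the abelian group $H^{\dR}_{2k}(\Omega_\bl(A))$, and by the universal property of the Grothendieck group it extends uniquely to a group homomorphism $\chk_k: K_0(A)\to H^{\dR}_{2k}(\Omega_\bl(A))$, sending $[M]-[N]$ to $[\chk_k(D_M)] - [\chk_k(D_N)]$.

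The main obstacle is step (1), the connection-independence, since it is the one place a genuine computation (the transgression formula) is needed and where noncommutativity must be handled carefully — one cannot move factors past each other freely, and the collapse of $\frac{d}{dt}\tr(D_t^{2k})$ to an exact form relies essentially on working in the abelianization $\Om_\bl(A)_{\ab}$ rather than in $\Om_\bl(A)$ itself. Everything else (isomorphism invariance, additivity, passing to the Grothendieck group) is formal once (1) and the trace identities are in hand; I would lean on the Karoubi--Chern--Simons transgression machinery of Section~\ref{SEC.KCS} for (1) and keep the rest brief.
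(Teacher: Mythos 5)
Your proof is correct, and it is more substantive than the paper's own proof, which consists solely of the citation ``See~\cite[Theorem~1.22]{Ka1} for a proof.'' Your three-step decomposition (connection-independence, isomorphism-invariance, additivity, then pass through the Grothendieck group) is the right structure, and each step maps onto machinery the paper develops later: step (1) is exactly Lemma~\ref{LEM.Transgression.formulae.chk}~(\ref{item:dKCS}), $d\KCS(\mathcal{D})=\chk(D_1)-\chk(D_0)$; step (2) is Lemma~\ref{LEM.KCS.invariant.under.pullback}~(\ref{item:pullbackconnection}); step (3) is Lemma~\ref{lem:directsumconnections} plus additivity of the trace under block sums.

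Your instinct to worry about circularity is the right one to have here, since Section~\ref{SEC.KCS} comes after this proposition, but it is unfounded: the proof of Lemma~\ref{LEM.Transgression.formulae.chk}~(\ref{item:dKCS}) rests only on Proposition~\ref{PRP.chk.is.closed}, the homotopy formula (Lemma~\ref{LEM.homotopy.formula}), and Lemma~\ref{lem:intervalcrossspacenc}, none of which invokes the proposition under discussion. The one cosmetic difference between your write-up and the paper's actual development is in how the transgression is packaged: you describe the familiar $\frac{d}{dt}\tr(D_t^{2k})$ computation, with the abelianization supplying the cyclicity needed to collapse the commutator terms, whereas the paper instead works with the complex $\La_\bl$ of formal polynomial forms, the associated connection $\wtl{\mathcal{D}}$ on $\wtl{M}$, and the homotopy operator $K$ rather than a literal $t$-derivative. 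These are the same argument in different notation (cf.\ Remark~\ref{rmk:spaceofconnectionsisaffine}, where your path $D_t=(1-t)D_0+tD_1$ is exactly the polynomial path of connections the paper would choose). Your caveat that the whole thing only works in $\Om_\bl(A)_{\ab}$, not in $\Om_\bl(A)$ itself, is correct and is the essential noncommutative wrinkle.
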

\begin{proof} See \cite[Theorem 1.22]{Ka1} for a proof.
\end{proof}

\begin{lem}\label{lem:naturalityK0A}
The assignment from Proposition~\ref{PRP.chk.is.indep.choice.of.conn} is functorial and natural in the following senses.
\begin{enumerate}
\item
Fix a DGA $\Omega_{\bl}:\Alg\to\DGA$ on top of $\Alg$ (cf.\ Definition~\ref{defn:DGAontopofAlg}).
If $\psi:B\to A$ is an algebra homomorphism, then the assignment
\[
\begin{split}
K_{0}(B)&\xrightarrow{\psi_{*}}K_{0}(A)\\
[N]&\mapsto[N\otimes_{B}A]
\end{split}
\]
is a homomorphism of abelian groups, $K_{0}$ defines a functor from algebras to abelian groups, and the diagram
\[
\xy0;/r.25pc/:
(-15,7.5)*+{K_{0}(B)}="1";
(15,7.5)*+{H^{\dR}_{2k}(\Omega_{\bl}(B))}="2";
(-15,-7.5)*+{K_{0}(A)}="3";
(15,-7.5)*+{H^{\dR}_{2k}(\Omega_{\bl}(A))}="4";
{\ar"1";"2"^(0.45){\chk_{k}}};
{\ar"1";"3"_{\psi_{*}}};
{\ar"3";"4"^(0.45){\chk_{k}}};
{\ar"2";"4"^{\psi_{*}}};
\endxy
\]
commutes.
In other words, for a fixed $k\in\N$ and DGA $\Omega_{\bl}(A)$, the Karoubi--Chern character defines a natural transformation $\chk_{k}:K_{0}\Rightarrow H_{2k}^{\dR}$ for the functors $K_{0},H_{2k}^{\dR}:\Alg\to\Ab$.
\item
Let $\sigma:\Omega_{\bl}\Rightarrow\Theta_{\bl}$ be a natural transformation of DGA's on top of $\Alg$ (functors from $\Alg$ to $\DGA$). Then the diagram
\[\xymatrix{ && H_{2k}^{\dR}(\Omega_{\bl}(A)) \ar[dd]^{\sigma_{A}} \\
K_0(A)\ar[drr]^{\chk_k}\ar[urr]^{\chk_k} &&  \\
 &&   H_{2k}^{\dR}(\Theta_{\bl}(A))
}\]
commutes for all algebras $A$.
\end{enumerate}
\end{lem}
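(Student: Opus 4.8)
The plan is to verify each of the two naturality statements by unwinding definitions and chasing the relevant diagrams, relying heavily on the lemmas already established in Section~\ref{SEC.traces} and~\ref{SEC.connection.curvature.kch}. For part (1), first I would check that $\psi_*:K_0(B)\to K_0(A)$ is well-defined on isomorphism classes and additive: extension of scalars preserves finitely generated projective modules (it sends $B^n$ to $A^n$, hence a direct summand of $B^n$ to a direct summand of $A^n$), respects direct sums up to canonical isomorphism ($(N\oplus N')\otimes_B A\cong(N\otimes_B A)\oplus(N'\otimes_B A)$), and sends isomorphic modules to isomorphic modules; functoriality of $\Fgp\to\Alg$ from Remark~\ref{rmk:grothendieckextensionscalars} gives $(\psi'\circ\psi)_*=\psi'_*\circ\psi_*$ and $(\mathrm{id})_*=\mathrm{id}$ up to coherent natural isomorphism, so $K_0$ is a functor. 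For the commuting square, pick $[N]\in K_0(B)$ with a connection $D$ on $N$; by Proposition~\ref{PRP.chk.is.indep.choice.of.conn} I may compute $\chk_k([N])$ using $D$ and $\chk_k(\psi_*[N])=\chk_k([N\otimes_B A])$ using the induced connection $D_\psi$ from Definition~\ref{dfn:inducedconnection}. Then I invoke Lemma~\ref{lem:incucedcurvature} to get $R_{D_\psi}^k=\mathfrak{E}_\psi(R_D^k)$ (the $k$-th power of the curvature behaves the same way, since $\mathfrak{E}_\psi$ is compatible with composition of $\Omega_\bl(B)$-linear endomorphisms), and Lemma~\ref{lem:ringmapsandtrace} applied to the ring map $\Omega_{\even}(\psi):\Omega_{\even}(B)\to\Omega_{\even}(A)$ to conclude $\tr(R_{D_\psi}^k)=\psi_*\tr(R_D^k)$ in the abelianization, whence $\chk_k(D_\psi)=\tfrac{1}{k!}\psi_*\tr(R_D^k)=\psi_*\chk_k(D)$ at the chain level; passing to homology classes gives the square. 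One should also note that $\psi$ is a chain map (comments after Definition~\ref{defn:DGAontopofAlg}), so it descends to $\psi_*:H_{2k}^{\dR}(\Omega_\bl(B))\to H_{2k}^{\dR}(\Omega_\bl(A))$.

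For part (2), the argument is more direct since the algebra $A$ is fixed. Given the natural transformation $\sigma:\Omega_\bl\Rightarrow\Theta_\bl$, its component $\sigma_A:\Omega_\bl(A)\to\Theta_\bl(A)$ is a DGA map restricting to $\mathrm{id}_A$ in degree zero. Take $[M]\in K_0(A)$; choose a connection $D$ on $M$ with respect to $\Omega_\bl(A)$. The key observation is that $\sigma_A$ pushes $D$ forward to a connection $\sigma_A(D)$ on $M$ with respect to $\Theta_\bl(A)$, defined on $s\in M$ as the composite $M\xrightarrow{D}M\otimes_A\Omega_1(A)\xrightarrow{\mathrm{id}\otimes\sigma_A}M\otimes_A\Theta_1(A)$ — this satisfies the Leibniz rule because $\sigma_A$ is a chain map fixing $A$ in degree zero, so $(\mathrm{id}\otimes\sigma_A)(D(sa))=(\mathrm{id}\otimes\sigma_A)(D(s)a+s\otimes da)=\sigma_A(D)(s)\,a+s\otimes d\sigma_A(a)=\sigma_A(D)(s)\,a+s\otimes da$. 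Using $\sigma_A(D)$ to compute the Karoubi--Chern form representing $\sigma_A(\chk_k([M]))$ is legitimate by Proposition~\ref{PRP.chk.is.indep.choice.of.conn}. Then $R_{\sigma_A(D)}=\mathfrak{E}_{\sigma_A}(R_D)$ by the same reasoning as in Lemma~\ref{lem:incucedcurvature} (with $\psi=\sigma_A$, $B=A$, and $N=M$), and Lemma~\ref{lem:ringmapsandtrace} with the ring map $\Omega_{\even}(A)\xrightarrow{\sigma_A}\Theta_{\even}(A)$ gives $\tr(R_{\sigma_A(D)}^k)=(\sigma_A)_*\tr(R_D^k)$, so $\chk_k(\sigma_A(D))=\sigma_A(\chk_k(D))$ at the chain level in the abelianized complex; since $\sigma_A$ is a chain map it descends to homology and the triangle commutes.

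The main obstacle, such as it is, will be bookkeeping rather than genuine difficulty: one must be careful that the various canonical isomorphisms ($N\otimes_B\Omega_k(A)\cong(N\otimes_B A)\otimes_A\Omega_k(A)$, the distributive iso for direct sums, the identification of $R/[R,R]$ with a summand surjecting onto $(\Omega_\bl(A)_\ab)_{\even}$ from Example~\ref{ex:ringofevenforms}) are compatible with $\mathfrak{E}_\psi$ and with the trace, so that Lemmas~\ref{lem:incucedcurvature} and~\ref{lem:ringmapsandtrace} glue together cleanly; and one must observe that $\mathfrak{E}_\psi$ respects $k$-fold composition of endomorphisms (it is the object-map of the extension-of-scalars functor on module categories applied to endomorphism rings, hence a ring homomorphism $\mathrm{End}_{\Omega_\bl(B)}(N\otimes_B\Omega_\bl(B))\to\mathrm{End}_{\Omega_\bl(A)}(N\otimes_B\Omega_\bl(A))$), which is what lets us pass from $R_{D_\psi}=\mathfrak{E}_\psi(R_D)$ to $R_{D_\psi}^k=\mathfrak{E}_\psi(R_D^k)$. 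I would state this last point explicitly as a short remark or incorporate it into the proof, since it is the linchpin connecting the curvature computation to the trace computation.
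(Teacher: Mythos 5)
Your proof is correct, and part (1) matches the paper's argument almost line for line: both reduce the commutativity of the square to Lemma~\ref{lem:incucedcurvature}, Lemma~\ref{lem:ringmapsandtrace}, and Proposition~\ref{PRP.chk.is.indep.choice.of.conn}, with the additional observation---which you are right to flag explicitly---that $\mathfrak{E}_{\psi}$ is a ring homomorphism on endomorphism rings, so $R_{D_{\psi}}^{k}=\mathfrak{E}_{\psi}(R_{D}^{k})$.

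For part (2) you take a genuinely different, more concrete route. The paper disposes of the triangle in one line by whiskering: it regards $\chk_{k}$ as a natural transformation $K_{0}\circ\ev_{0}\Rightarrow H_{2k}^{\dR}$ of functors $\DGA\to\Ab$, notes that $\ev_{0}(\sigma_{A})=\mathrm{id}_{A}$ so $K_{0}$ is unaffected, and reads the triangle off the component at $A$ of the horizontal composite $\chk_{k}\ast\sigma$. That argument presupposes naturality of $\chk_{k}$ with respect to arbitrary morphisms in $\DGA$, whereas part (1) only records it for morphisms of the form $\Omega_{\bl}(\psi)$. Your proof supplies exactly the computation that makes this rigorous: you push the connection $D$ forward along $\sigma_{A}$ to a connection $\sigma_{A}(D)=(\mathrm{id}\otimes\sigma_{A})\circ D$ on the same module $M$ but with respect to $\Theta_{\bl}(A)$, verify Leibniz using that $\sigma_{A}$ is a chain map restricting to $\mathrm{id}_{A}$ in degree zero, and then rerun the curvature and trace arguments (the reasoning of Lemma~\ref{lem:incucedcurvature} adapted to the DGA map $\sigma_{A}$, together with Lemma~\ref{lem:ringmapsandtrace} applied to the ring map $\Omega_{\even}(A)\to\Theta_{\even}(A)$). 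This is longer than the paper's one-line invocation, but it is more self-contained and makes explicit the dependence on Proposition~\ref{PRP.chk.is.indep.choice.of.conn} to justify changing the connection used to represent $\chk_{k}([M])$.
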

\begin{proof}
Well-defininedness of $K_{0}(B)\xrightarrow{\psi_{*}}K_{0}(A)$ follows from the
fact that the extension of scalars is a functor and sends isomorphism classes to isomorphism classes.
The group homomorphism condition follows from
$
(N\oplus N')\otimes_{B}A\cong (N\otimes_{B}A)\oplus (N'\otimes_{B}A)
$
for any two $B$-modules $N$ and $N'$.
The fact that the first diagram commutes follows from Lemma~\ref{lem:incucedcurvature}, Lemma~\ref{lem:ringmapsandtrace}, and Proposition~\ref{PRP.chk.is.indep.choice.of.conn}.
The last diagram commutes by the definition of the horizontal composition of natural transformations
\[
\xy0;/r.25pc/:
(-30,0)*+{\Alg}="1";
(0,0)*+{\DGA}="2";
(15,10)*+{\Alg}="3";
(30,0)*+{\Ab}="4";
{\ar@/^1.5pc/"1";"2"^{\Omega_{\bl}}};
{\ar@/_1.5pc/"1";"2"_{\Theta_{\bl}}};
{\ar"2";"3"^{\ev_{0}}};
{\ar"3";"4"^{K_{0}}};
{\ar@/_1.5pc/"2";"4"_{H^{\dR}_{2k}}};
{\ar@{=>}(-15,3);(-15,-3)^{\sigma}};
{\ar@{=>}(15,5);(15,-3)^{\chk_{k}}};
\endxy
\]
and the fact that $K_{0}$ itself does not depend on the DGA but only on the underlying algebra (the functor $\ev_{0}:\DGA\to\Alg$ takes the zeroth degree algebra---see Definition~\ref{defn:DGAontopofAlg}).
\end{proof}

\subsection{Review of universal noncommutative differential forms} \label{SEC.NC.differential.forms}
In this subsection we shall first review algebraic differential forms for commutative algebras and then we will review universal noncommutative differential forms by Karoubi \cite{Ka1} and Connes \cite{Co}. As shown by Karoubi, noncommutative de~Rham homology is closely related to cyclic homology (see Remark~\ref{RMK.nc.diff.forms} (\ref{item:cyclic})).
We shall also compare the relationship between Karoubi's Chern character and Connes' Chern character. Our intent is to provide only a concise survey of what we need. We refer the reader to Loday \cite[Sections 1.3, 2.6]{Lo}, Karoubi \cite[Sections 1.24, 2.15]{Ka1}, and Landi \cite[Chapter~7]{La03} for self-contained accounts.

Let $A$ be a unital commutative algebra over a field $\K$ and $M$ an $A$-module, which will occasionally be regarded as a symmetric $A$-bimodule. A \emph{derivation} $d:A\ra M$ is a $\K$-linear map satisfying the Leibniz rule, that is, $d(ab) = d(a) b + a d(b)$ for any $a, b \in A$. Let $\mathrm{Der}(A,M)$ denote the (additive) abelian group of derivations from $A$ to $M$. A derivation $d:A\to M$ is \emph{universal} if, for any derivation $\de:A\ra N$, there is a unique $\phi\in \text{Hom}_A(M,N)$ such that $\de=\phi \circ d$. To see that a universal derivation exists, consider $m:A\tsr_\K A\ra A$, the algebra multiplication of $A$, and let $I:=\ker m$.
Then the $\K$-linear map $A\ra I/I^2$ defined by $a\mapsto (1\tsr a-a\tsr 1)\!\mod I^2$ is a universal derivation. The $A$-module $I/I^2$ is isomorphic to the $A$-module $\Om^1_{A/\K}$ of \emph{K\"ahler differentials} (also known as algebraic differential $1$-forms), which is generated by the symbols $a\,db$ for $a,b\in A$~\cite[Sections~1.1.9, 1.3.8]{Lo}.
By the universality of $\Om^1_{A/\K}$, derivations on $A$ are classified as follows.

\begin{prp}\label{PRP.Der.commutative.case.classfication} The $A$-linear map $\phi:\text{Hom}_A(\Om^1_{A/\K},M)\ra \text{Der}(A,M)$, defined by $f\mapsto f\circ d$, where $d:A\to\Omega_{A/\K}^{1}$ is the universal derivation, specifies a natural isomorphism
\[
\xy0;/r.25pc/:
(-25,0)*+{\mathrm{SymBiMod}_{A}}="1";
(25,0)*+{A\text{-}\mathrm{Mod}}="2";
{\ar@/^1.25pc/^{\text{Hom}_A(\Om^1_{A/\K},-)}"1";"2"};
{\ar@/_1.25pc/_{\mathrm{Der}(A,-)}"1";"2"};
{\ar@{=>}(0,3);(0,-3)^{\phi}};
\endxy
\]
between functors $\text{SymBiMod}_A\ra A\text{-}\text{Mod}$ from the category of symmetric bimodules over $A$ to the category of left $A$-modules.
\end{prp}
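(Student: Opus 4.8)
The plan is to verify directly that the assignment $f \mapsto f \circ d$ is a natural isomorphism of functors, which amounts to three things: (i) the map $\phi$ is a well-defined $A$-linear map for each symmetric bimodule $M$; (ii) it is natural in $M$; and (iii) it is a bijection. Step (i) is immediate: if $f \in \Hom_A(\Om^1_{A/\K}, M)$, then $f \circ d$ is $\K$-linear, and since $f$ is $A$-linear while $d$ satisfies the Leibniz rule $d(ab) = d(a)b + a\,d(b)$ in $\Om^1_{A/\K}$, the composite $f \circ d$ satisfies the Leibniz rule into $M$ (using that $M$ is a symmetric bimodule so that left and right actions agree); $A$-linearity of $\phi$ itself follows because the $A$-module structure on both $\Hom_A(\Om^1_{A/\K},M)$ and $\Der(A,M)$ is given by pointwise multiplication by elements of $A$, and $(af)\circ d = a(f\circ d)$ tautologically.

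Step (ii), naturality, is a short diagram chase: given an $A$-module map $g : M \to N$ of symmetric bimodules, one checks that the square relating $\Hom_A(\Om^1_{A/\K}, g)$ and $\Der(A, g)$ commutes, which reduces to the identity $g \circ (f \circ d) = (g \circ f) \circ d$, i.e.\ associativity of composition. Step (iii), bijectivity, is exactly the content of the universal property of the universal derivation $d : A \to \Om^1_{A/\K}$ as recalled in the paragraph preceding the proposition: for every derivation $\de : A \to N$ there is a \emph{unique} $\phi_\de \in \Hom_A(\Om^1_{A/\K}, N)$ with $\de = \phi_\de \circ d$. Existence of $\phi_\de$ says $\phi$ is surjective; uniqueness says $\phi$ is injective. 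So once one invokes the universality of $\Om^1_{A/\K}$ (equivalently, the isomorphism $\Om^1_{A/\K} \cong I/I^2$ together with the verification that $a \mapsto (1\tsr a - a \tsr 1) \bmod I^2$ is a universal derivation, which is standard and cited to Loday), bijectivity is free.

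I expect the only genuinely substantive point to be a bookkeeping one rather than a deep one: making sure the hypothesis that $M$ is a \emph{symmetric} bimodule is used correctly, so that $\Der(A,M)$ as defined (with the single Leibniz rule $d(ab) = d(a)b + a\,d(b)$) is the right target, and that the $A$-action on $\Der(A,M)$ by which $\phi$ is claimed $A$-linear is unambiguous — this is where commutativity of $A$ and symmetry of the bimodule conspire. A secondary point is to confirm that all the constructions are functorial in $M$ on the nose (no coherence subtleties), which they are, since $\Hom_A(\Om^1_{A/\K}, -)$ and $\Der(A,-)$ are both evidently functors $\mathrm{SymBiMod}_A \to A\text{-}\mathrm{Mod}$ and $\phi$ is built from composition. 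In short, the proposition is essentially a restatement of the universal property of K\"ahler differentials packaged as a natural isomorphism, and the proof is: verify well-definedness and $A$-linearity of $\phi_M$, observe naturality via associativity of composition, and deduce bijectivity from the universal property already established.
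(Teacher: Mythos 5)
Your proposal is correct and is essentially the argument found in the reference the paper itself cites (Loday, \emph{Cyclic Homology}, Proposition 1.3.9); the paper gives no proof beyond that citation. The three steps you isolate — well-definedness and $A$-linearity of $\phi_M$ using symmetry of the bimodule, naturality via associativity of composition, and bijectivity as a direct restatement of the universal property of the K\"ahler differentials — are precisely the content of that result.
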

\begin{proof} See Loday~\cite[Proposition 1.3.9]{Lo}.
\end{proof}

\begin{dfn}\label{DFN.alg.diff.forms} Let $A$ be a commutative algebra over a field $\K$. The \define{algebraic differential forms} of degree $n$, denoted by $\Om^n_{A/\K}$, is $\La^n_A \Om^1_{A/\K}$ the $n^{\text{th}}$ exterior power of $\Om^1_{A/\K}$ over $A$. The $n^{\text{th}}$ \define{de~Rham homology group} of $A$ is the degree $n$ homology group $\texttt{H}_n^{\dR}(A)$ of the complex $(\Om^\bl_{A/\K},d)$ with differential $d:\Om^n_{A/\K}\ra \Om^{n+1}_{A/\K}$ sending $a_0da_1\we \cdots\we da_n$ to $da_0\we da_1\we \cdots\we da_n$.
\end{dfn}

\begin{rmk} In the case of smooth functions on manifolds, although algebraic differential forms of smooth functions are related to exterior differential forms of a manifold, they are not isomorphic.
To see this, let $X$ be a smooth manifold and set $A:=\Cinf(X)$. By universality, there is a map $\phi:\Om^1_{A/\R}\ra \Om^1_{\dR}(X)$
defined by $fdg\mapsto fd_{\dR}g$, where the former $d$ is the K\"ahler differential and the latter $d_{\dR}$ is the de~Rham exterior differential. The map $\phi$ is onto, because any differential $1$-form can be written as a finite sum $\sum f_i d_{\dR} g_i$ by a standard argument, for example using a partition of unity. However $\phi$ is not one-to-one in general. Even when $X=\R$ and $A=\Cinf(\R)$, we have $de^x \neq e^x dx$ in $\Om^1_{A/\R}$.
\end{rmk}

Now let $A$ be any unital algebra over a field $\K$. If $A$ is not commutative, the above method of defining algebraic differential forms is no longer valid. One of reasons is that Proposition \ref{PRP.Der.commutative.case.classfication} relies on the commutativity of $A$. When $A$ is noncommutative, we have the following generalization of Proposition \ref{PRP.Der.commutative.case.classfication}~\cite[Section~2.6.1]{Lo}.

\begin{prp} \label{PRP.Der.noncommutative.case.classfication} Let $M$ be a $A$-bimodule (not necessarily symmetric) and let $I=\ker(A\tsr_\K A\srl{m}\ra A)$. The natural transformation%
\footnote{The codomain of the functors $\mathrm{Der}(A,-)$ and $\text{Hom}_{A\tsr A^{\text{op}}}(I,-)$ can be taken to be the category of abelian groups. This is because the forgetful functor from $Z(A\tsr A^{\text{op}})$-modules to abelian groups reflects isomorphisms (here, $Z(A\tsr A^{\text{op}})$ is the center of $A\tsr A^{\text{op}}$). This also gives a natural isomorphism of $Z(A\tsr A^{\text{op}})$-modules.}
$\Phi:\text{Hom}_{A\tsr A^{\text{op}}}(I,-)\Rightarrow \text{Der}(A,-)$ defined by $\Phi_M:f\mapsto f\circ\dun$ for each bimodule $M$, where $\dun:A \ra I$, which is defined by $x\mapsto \dun(x):=1\tsr x-x\tsr 1$ for all $x\in A$, is a natural isomorphism between functors from $\text{BiMod}_A$ to abelian groups. Here, $\text{BiMod}_A$ denotes the category of $A$-bimodules.
\end{prp}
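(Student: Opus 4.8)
The plan is to mimic the proof of the commutative case (Proposition~\ref{PRP.Der.commutative.case.classfication}) while being careful to track the left and right module structures separately, since $A$ is no longer commutative. The key observation is that $I = \ker(m: A\tsr_\K A \to A)$ is naturally an $A\tsr A^{\op}$-module, i.e.\ an $A$-bimodule, via the obvious left and right multiplications of $A$ on the two tensor factors of $A\tsr_\K A$ (note $I$ is closed under this action since $m$ is a bimodule map). First I would verify that $\dun: A \to I$, $x\mapsto 1\tsr x - x\tsr 1$, is a well-defined $\K$-linear map landing in $I$ (clear, since $m(1\tsr x - x\tsr 1) = x - x = 0$) and that it is a derivation into the bimodule $I$: the Leibniz rule $\dun(xy) = x\,\dun(y) + \dun(x)\,y$ follows from the computation $1\tsr xy - xy\tsr 1 = x\tsr y - xy \tsr 1 + 1\tsr xy - x\tsr y = x(1\tsr y - y\tsr 1) + (1\tsr x - x\tsr 1)y$, where $x$ acts on the left of the first tensor factor and $y$ on the right of the second.

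Next I would check that $\Phi_M$ is well-defined, i.e.\ that $f\circ\dun$ is genuinely a derivation $A\to M$ whenever $f: I\to M$ is a bimodule map; this is immediate because $f$ is additive and bimodule-linear, so $f(\dun(xy)) = f(x\dun(y) + \dun(x)y) = xf(\dun(y)) + f(\dun(x))y$. Naturality of $\Phi$ in $M$ is a routine diagram chase: for a bimodule map $g: M\to N$, both $\Phi_N\circ\Hom(I,g)$ and $\Der(A,g)\circ\Phi_M$ send $f$ to $g\circ f\circ\dun$. The substantive part is showing each $\Phi_M$ is a bijection. For injectivity, suppose $f\circ\dun = 0$; I would show $f = 0$ by proving that $\dun(A)$ generates $I$ as a right $A$-module (equivalently as a left $A$-module), namely that any element $\sum_i a_i\tsr b_i \in I$ (so $\sum_i a_i b_i = 0$) can be written as $\sum_i (1\tsr a_i b_i - a_i\tsr b_i) \cdot(\text{corrections})$; more precisely $\sum_i a_i\tsr b_i = \sum_i a_i\tsr b_i - \big(\sum_i a_i b_i\big)\tsr 1 = -\sum_i \dun(a_i)\,b_i$, using $\sum a_i b_i = 0$. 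Hence every element of $I$ is a right-$A$-linear combination of elements of $\dun(A)$, so $f$ vanishing on $\dun(A)$ forces $f \equiv 0$.

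For surjectivity, given a derivation $\de: A\to M$ I would define $f: I\to M$ by $f\big(\sum_i a_i\tsr b_i\big) := \sum_i a_i\,\de(b_i)$ for $\sum_i a_i\tsr b_i \in I$ (so $\sum a_i b_i = 0$), and check: (i) $f$ is well-defined as a map on $A\tsr_\K A$ by bilinearity, and its restriction to $I$ is unambiguous; (ii) $f$ is a bimodule map --- left-linearity $f(c\cdot\xi) = c\,f(\xi)$ is clear, and right-linearity $f(\xi\cdot c) = f(\xi)\,c$ uses the Leibniz rule together with the relation $\sum a_i b_i = 0$: $f\big(\sum a_i\tsr b_i c\big) = \sum a_i\de(b_i c) = \sum a_i\de(b_i)c + \sum a_i b_i \de(c) = f(\xi)c + 0$; (iii) $\Phi_M(f) = f\circ\dun = \de$, since $f(\dun(x)) = f(1\tsr x - x\tsr 1) = \de(x) - x\de(1) = \de(x)$ using $\de(1) = \de(1\cdot 1) = 2\de(1)$, hence $\de(1) = 0$. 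The main obstacle, or at least the only place requiring genuine care, is keeping the two sides of every bimodule action straight in steps (ii) and in the injectivity argument --- unlike the commutative case, one cannot freely move scalars across the tensor, so one must consistently treat $A\tsr_\K A$ as having $A$ acting on the left of the first factor and on the right of the second, and verify each Leibniz-type manipulation respects this. The argument then concludes by invoking uniqueness of representing objects for natural isomorphisms, exactly as in the commutative proof, though here I would just note the natural isomorphism directly rather than appealing to any universal property statement, since $I$ itself plays the role that $\Om^1_{A/\K}$ played before.
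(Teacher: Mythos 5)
The paper does not give its own proof of this proposition; it simply cites Loday \cite[Section~2.6.1]{Lo} and moves on. Your argument is a correct, self-contained version of the standard proof: you verify $\dun$ is a derivation, check $\Phi_M$ is well-defined and natural, prove injectivity by showing $\dun(A)$ generates $I$ as a (one-sided) $A$-module, and prove surjectivity by exhibiting an explicit preimage $f(\sum a_i\otimes b_i)=\sum a_i\de(b_i)$ whose right $A$-linearity relies precisely on the defining constraint $\sum a_ib_i=0$. The only blemish is a cosmetic slip in the injectivity step: you write $\sum a_i\otimes b_i = \sum a_i\otimes b_i - (\sum a_ib_i)\otimes 1 = -\sum\dun(a_i)b_i$, but the middle expression is formally $\sum a_i\,\dun(b_i)$ rather than $-\sum\dun(a_i)\,b_i$ (the latter corresponds to subtracting $1\otimes(\sum a_ib_i)$ instead). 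Since $\sum a_ib_i=0$ on $I$, both reductions are valid and exhibit $I$ as generated by $\dun(A)$ over $A$, so the conclusion is unaffected; either identity suffices.
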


\begin{dfn}
Let $A$ be a unital algebra over a field $\K$. The totality of \define{universal noncommutative differential $1$-forms} of $A$ is the $A$-bimodule $\univf_1(A):=\mathrm{ker}(A\otimes_{\K}A\xrightarrow{m}A)$, where the left and right actions are induced from the multiplication on $A$. More explicitly, given a generator $\dun x=1\tsr x-x\tsr 1$ in $\univf_{1}(A)$, the left $A$-module structure on $\dun x$ is given by $y\cdot \dun x:=y\tsr x-yx\tsr 1$, while the right $A$-module structure is
given by $\dun x\cdot y=1\otimes xy-x\otimes y$, which equals $\dun(xy)-x\cdot \dun y$. The left and right module structures on $\univf_1(A)$ are uniquely determined by these special cases.
\end{dfn}

The terminology `universal noncommutative differential forms' refers to the \emph{noncommutative differential forms} of Loday~\cite[Section 2.6.1]{Lo}, the \emph{universal differential algebra of forms} of Landi~\cite[Section 7.1]{La03} and the \emph{universal complex} of Karoubi~\cite[Section 1.24]{Ka1}.

\begin{dfn}\label{DFN.nc.diff.n.forms} Let $A$ be a unital algebra over a field $\K$. The totality of \define{universal noncommutative differential $n$-forms} is  $\univf_n(A):=\univf_1(A) \tsr_A \cdots \tsr_A \univf_1(A)$ ($n$ times).
\end{dfn}

An element in $\univf_n(A)$ is of the form $a_0\dun a_1\cdots \dun a_n$ (the symbol $\tsr$ is omitted). A differential $\dun:\Om^u_n(A)\ra \univf_{n+1}(A)$ is defined by $a_0\dun a_1\cdots \dun a_n\mapsto \dun a_0 \dun a_1\cdots \dun a_n$, and we set $\univf_{0}(A):=A$.  The complex $(\univf_{\bl}(A),\dun,\cdot)$ is a DGA.
This DGA satisfies the following universal property.
\begin{lem}
\label{lem:universal.property.NCDF}
Let $(\Gamma_{\bl},\delta)$ be a DGA and let $\phi:A\to\Gamma_{0}$ be a morphism of unital algebras. Then there exists a unique extension of $\phi$ to a morphism  $\phi^{\mathrm{u}}:\univf_{\bl}(A)\to\Gamma_{\bl}$ of DGAs.
\end{lem}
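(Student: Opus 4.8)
The plan is to verify the universal property directly by defining $\phi^{\mathrm{u}}$ on elements of the form $a_0\,\dun a_1\cdots \dun a_n$ and then checking that this assignment is forced (hence unique) and compatible with all the structure. First I would observe that any morphism of DGAs extending $\phi$ must send $\dun a$ to $\delta(\phi(a))$, since $\dun a = \dun(1\cdot a)$ and a DGA morphism commutes with the differentials; more generally, since $\univf_\bl(A)$ is generated as an algebra by $A$ together with the elements $\dun a$, and since a morphism of DGAs is an algebra homomorphism commuting with $d$, the value on a typical element is forced to be
\[
\phi^{\mathrm{u}}(a_0\,\dun a_1\cdots \dun a_n) = \phi(a_0)\,\delta(\phi(a_1))\cdots \delta(\phi(a_n)).
\]
This immediately gives uniqueness. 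For existence, I would take the displayed formula as a definition, first at the level of the free construction: the tensor algebra picture presents $\univf_n(A)$ concretely, and one checks the formula descends to a well-defined $\K$-linear map $\univf_n(A)\to \Gamma_n$.

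The key steps, in order, are: (1) check well-definedness --- that the formula respects the defining relations of $\univf_\bl(A)$, namely the identification $\univf_n(A)=\univf_1(A)\tsr_A\cdots\tsr_A\univf_1(A)$ and the module relations $y\cdot \dun x = y\tsr x - yx\tsr 1$ and $\dun x\cdot y = \dun(xy) - x\,\dun y$; the latter is the crucial one and corresponds on the $\Gamma$ side to the Leibniz rule $\delta(\phi(xy)) = \delta(\phi(x))\phi(y) + \phi(x)\delta(\phi(y))$, which holds because $\delta$ is a derivation and $\phi$ is an algebra map. (2) check that $\phi^{\mathrm{u}}$ is multiplicative: given $a_0\,\dun a_1\cdots\dun a_m$ and $b_0\,\dun b_1\cdots \dun b_n$, their product in $\univf_\bl(A)$ is reduced to normal form using the Leibniz relation repeatedly (the standard "moving $b_0$ past the $\dun a_i$" computation), and the image of this normal form under $\phi^{\mathrm{u}}$ matches the product of images in $\Gamma_\bl$ because $\Gamma_\bl$ obeys the same Leibniz identity. (3) check that $\phi^{\mathrm{u}}$ commutes with the differentials: $\phi^{\mathrm{u}}(\dun(a_0\,\dun a_1\cdots \dun a_n)) = \phi^{\mathrm{u}}(\dun a_0\,\dun a_1\cdots \dun a_n) = \delta(\phi(a_0))\delta(\phi(a_1))\cdots\delta(\phi(a_n))$, while $\delta(\phi^{\mathrm{u}}(a_0\,\dun a_1\cdots \dun a_n)) = \delta\big(\phi(a_0)\,\delta(\phi(a_1))\cdots\delta(\phi(a_n))\big)$, and these agree by the graded Leibniz rule together with $\delta^2=0$. (4) Finally, note $\phi^{\mathrm{u}}$ restricts to $\phi$ in degree $0$ by construction.

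The main obstacle is step (1): showing the formula is well-defined on $\univf_n(A)$ rather than just on some free object mapping onto it. The cleanest route is to present $\univf_\bl(A)$ as a quotient of the tensor algebra on $A\oplus \bar A$ (where $\bar A$ carries the symbols $\dun a$) modulo the Leibniz relations, or equivalently to use the explicit description of $\univf_n(A)$ as $A\tsr_\K \bar A^{\tsr n}$ with $\bar A := A/\K$, under which $a_0\,\dun a_1\cdots\dun a_n \leftrightarrow a_0\tsr \bar a_1\tsr\cdots\tsr\bar a_n$; on this model the map $a_0\tsr\bar a_1\tsr\cdots\tsr\bar a_n\mapsto \phi(a_0)\delta(\phi(a_1))\cdots\delta(\phi(a_n))$ is manifestly well-defined and $\K$-linear (note $\delta(\phi(1))=\delta(1_{\Gamma_0})=0$, so passing to $\bar A$ is harmless), and one then transports the bimodule and product structure through this identification. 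Once this bookkeeping is set up, steps (2)--(4) are routine applications of the Leibniz rule and $\delta^2 = 0$, exactly mirroring the familiar proof that the de~Rham complex is initial among DGAs receiving a map from $A$.
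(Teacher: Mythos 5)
Your proof is correct. The paper does not actually supply its own argument for this lemma---it simply cites Landi (Proposition 34) and Loday (Section 2.6.6)---but your direct construction, namely forcing $\phi^{\mathrm{u}}(a_0\,\dun a_1\cdots\dun a_n)=\phi(a_0)\,\delta(\phi(a_1))\cdots\delta(\phi(a_n))$ for uniqueness and then checking well-definedness via the identification $\univf_n(A)\cong A\otimes_{\K}\bar A^{\otimes n}$ (using $\delta(1_{\Gamma_0})=0$), multiplicativity via the Leibniz relation on both sides, and compatibility with differentials via $\delta^2=0$ and the graded Leibniz rule, is exactly the standard argument one finds in those references.
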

\begin{proof}
See Landi~\cite[Proposition~34, p.108]{La03} or Loday \cite[Section~2.6.6]{Lo}.
\end{proof}

With this, we can prove the following fact, which provides another example of a DGA on top of $\Alg$.

\begin{prp}
The assignment $\univf:\Alg\to\DGA$ sending $A$ to $(\univf_{\bl}(A),\dun)$ defines a DGA on top of $\Alg$.
\end{prp}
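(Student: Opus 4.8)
The plan is to verify that the object-level assignment $A\mapsto(\univf_\bl(A),\dun)$ is the action on objects of a functor $\univf\colon\Alg\to\DGA$ and that this functor satisfies the identity $\mathrm{ev}_0\circ\univf=\mathrm{id}_{\Alg}$ required by Definition~\ref{defn:DGAontopofAlg}. The last condition is immediate, since by construction $\univf_0(A)=A$ and a DGA morphism in degree zero is just an algebra morphism, so $\mathrm{ev}_0$ applied to the value of $\univf$ returns the original algebra (and likewise on morphisms, once we know what $\univf$ does on morphisms). Hence the real content is to define $\univf$ on morphisms and check functoriality.

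To define $\univf$ on morphisms, I would invoke the universal property established in Lemma~\ref{lem:universal.property.NCDF}. Given an algebra map $\psi\colon B\to A$, compose it with the canonical inclusion $A=\univf_0(A)\hookrightarrow\univf_\bl(A)$ to obtain a unital algebra map $B\to\univf_0(A)$ into the degree-zero part of the DGA $(\univf_\bl(A),\dun)$. Lemma~\ref{lem:universal.property.NCDF} then yields a unique DGA morphism $\univf_\bl(\psi)\colon\univf_\bl(B)\to\univf_\bl(A)$ extending $\psi$; concretely it sends $b_0\dun b_1\cdots\dun b_n\mapsto\psi(b_0)\dun\psi(b_1)\cdots\dun\psi(b_n)$. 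Define $\univf(\psi):=\univf_\bl(\psi)$.

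Functoriality is then a matter of the uniqueness clause in Lemma~\ref{lem:universal.property.NCDF}. For identities: $\mathrm{id}_{\univf_\bl(A)}$ is a DGA morphism extending $\mathrm{id}_A$, so by uniqueness $\univf(\mathrm{id}_A)=\mathrm{id}_{\univf_\bl(A)}$. For composition: given $\psi\colon B\to A$ and $\chi\colon C\to B$, the composite $\univf_\bl(\psi)\circ\univf_\bl(\chi)\colon\univf_\bl(C)\to\univf_\bl(A)$ is a DGA morphism (composite of DGA morphisms), and in degree zero it restricts to $\psi\circ\chi$; thus it is \emph{the} unique DGA extension of $\psi\circ\chi$ guaranteed by Lemma~\ref{lem:universal.property.NCDF}, which is $\univf_\bl(\psi\circ\chi)$. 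Finally, $\mathrm{ev}_0\circ\univf=\mathrm{id}_{\Alg}$ holds because $\univf_0(A)=A$ and $\univf_\bl(\psi)$ restricts to $\psi$ in degree zero by construction.

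There is no serious obstacle here; the statement is essentially a repackaging of Lemma~\ref{lem:universal.property.NCDF} in functorial language. The only point requiring a line of care is confirming that $(\univf_\bl(A),\dun,\cdot)$ is genuinely a DGA (so that Lemma~\ref{lem:universal.property.NCDF} applies with $\Gamma_\bl=\univf_\bl(A)$), but this was already asserted in the paragraph preceding Lemma~\ref{lem:universal.property.NCDF}, so it may simply be cited.
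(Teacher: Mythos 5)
Your proof is correct and follows the same route as the paper: define $\univf$ on morphisms via the universal property of Lemma~\ref{lem:universal.property.NCDF} and derive functoriality from the uniqueness clause. You spell out the identity and composition checks more explicitly than the paper does, but the argument is the same.
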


\begin{proof}
The assignment on morphisms is canonically defined by the universal property in Lemma~\ref{lem:universal.property.NCDF}. In more detail, let $\phi:A=\univf_{0}(A)\to B=\univf_{0}(B)$ be an algebra map. Then Lemma~\ref{lem:universal.property.NCDF} guarantees a unique DGA extension $\phi^{\mathrm{u}}:\univf_{\bl}(A)\to\univf_{\bl}(B)$. Functoriality immediately follows from this uniqueness.
\end{proof}

\begin{rmk}
In fact, $\univf$ is an initial object in the category of DGA's on top of $\Alg$. In this category, a morphism is just a natural transformation. Given another DGA $\Omega$ on top of $\Alg$, there is a canonical natural transformation $\univf\Rightarrow\Omega$ given by extending the identity on $A$ by Lemma~\ref{lem:universal.property.NCDF} to a DGA map $\sigma_{A}:\univf(A)\to\Omega(A)$. In fact, that same lemma proves that this assignment is actually natural because for any map of algebras $\psi:B\to A$, the same universal property guarantees a unique extension $\psi^{\mathrm{u}}:\univf(B)\to\Omega(A)$, which means the naturality diagram
\[
\xy0;/r.25pc/:
(-15,7.5)*+{\univf(B)}="Bt";
(-15,-7.5)*+{\Omega(B)}="B";
(15,7.5)*+{\univf(A)}="At";
(15,-7.5)*+{{\Omega}(A)}="A";
{\ar"Bt";"B"_{\sigma_{B}}};
{\ar"At";"A"^{\sigma_{A}}};
{\ar"Bt";"At"^{\univf(\psi)}};
{\ar"B";"A"_{\Omega(\psi)}};
{\ar@{-->}"Bt";"A"|-{\psi^{\mathrm{u}}}};
\endxy
\]
automatically commutes since each composite in the diagram must equal $\psi^{\mathrm{u}}$.
\end{rmk}

\begin{dfn}\label{DFN.NC.deRham.homology.universal} The \define{universal noncommutative de Rham homology group} $\uhdr_n(A)$ is the $n^{\text{th}}$ homology group of the complex $(\univf_{\bl}(A)_{\ab},\dun)$ (cf.\ Notation~\ref{NTA.karoubi.notation.dga}).
\end{dfn}

\begin{rmk} \label{RMK.nc.diff.forms}
\begin{enumerate}
\item
\label{item:universaltoordinarydR}
Let $(\Om_\bl(A),d,\cdot)$ be an arbitrary DGA  with $\Om_0(A)=A$.
By Lemma~\ref{lem:universal.property.NCDF}, the identity map on $A$ is uniquely extended to a DGA map $\Phi:\univf_{\bl}(A) \ra \Om_\bl(A)$, $a_0\dun a_1\cdots \dun a_n\mapsto a_0 da_1\cdots da_n$. Hence, there is an induced homomorphism at the level of noncommutative de~Rham homology groups
$$\Phi_{\Om_\bl}: {\uhdr_n}(A) \ra {H}_n^{\dR}(\Omega_{\bl}(A)).$$

\item
When $A$ is a commutative unital $\K$-algebra, Remark~\ref{RMK.nc.diff.forms}~(\ref{item:universaltoordinarydR}) gives a homomorphism from the universal noncommutative de Rham homology to the de Rham homology in Definition \ref{DFN.alg.diff.forms} by choosing $\Om_\bl(A)$ to be $\Om_{A/\K}^\bl$. Note that this map is not an isomorphism in general, but the groups  $\uhdr_n(A)$ and $\texttt{H}_n^{\dR}(A)$ are closely related (cf.\ Definition~\ref{DFN.alg.diff.forms}). See \cite[Corollary 3.4.15]{Lo} for more details.

\item
When $X$ is a smooth manifold and $A=\Cinf(X)$, the map $\phi:\Om^1_{A/\K}\ra \Om_{\dR}^1(X)$ given by the universality of $\Om^1_{A/\K}$ uniquely extends to a DGA map $\phi:\Om^\bl_{A/\K}\ra \Om_{\dR}^\bl(X)$ inducing a homomorphism  $\phi: \texttt{H}_\bl^{\dR}(A)\ra H^{\bl}_{\dR}(X)$. Combining with (2) above we have the following commutative diagram.
\[ \xymatrix{
\uhdr_\bl(A)\ar[r]_{\Phi_{\Om^\bl_{A/\K}}} \ar@/^+1.5pc/[rr]^{\Phi_{\Om_{\dR}^\bl}} & \texttt{H}_\bl^{\dR}(A) \ar[r]_{\phi} &  H^{\bl}_{\dR}(X)}
\]

\item
As pointed out in \cite[Section 1.24]{Ka1}, Karoubi's Chern characters for the choice of DGA $\univf_\bl(A)$, denoted by $\uchk$, and Karoubi's Chern characters $\chk$ for a generic DGA $\Omega_{\bl}(A)$ are related by the map $\Phi_{\Omega_{\bullet}}$ considered in (\ref{item:universaltoordinarydR}) of this remark. By Lemma~\ref{lem:naturalityK0A}, the following diagram commutes:
\[\xymatrix{ && \uhdr_{2k}(A) \ar[dd]^{\Phi_{\Omega_{\bullet}}} \\
K_0(A)\ar[drr]^{\chk_k}\ar[urr]^{\uchk_k} &&  \\
 &&  {H}_{2k}^{\dR}(A)
}\]

\item
\label{item:cyclic}
Universal noncommutative de Rham homology groups are closely related to cyclic homology groups by Karoubi's theorem \cite[p.31 Theorem 2.15]{Ka1}: For a unital algebra $A$ over a field $\K$ containing $\Q$ and for $n\geq 1$, $\uhdr_n(A)\srl\isom\ra \ker\left(\overline{HC}_n(A)\srl{B}\ra \overline{HH}_{n+1}(A) \right)$ and when $n=0$, $\uhdr_0(A)$ is isomorphic to $\ker(HC_0(A)\ra HH_1(A))$. This theorem is the key for comparing two different Chern characters. The Connes' Chern character (see Loday \cite[Section 8.3]{Lo}) and Karoubi's Chern character are, up to a constant, the same in cyclic homology (compare Karoubi \cite[Remark 2.20]{Ka1} and Rosenberg \cite[Example 6.2.9]{Ro}); i.e. the following diagram commutes:
\[\xymatrix{
&& \uhdr_{2k}(A) \ar[dd]^{(-1)^k(2k)!\Phi} \\
K_0(A)\ar[drr]^{\Ch_k}\ar[urr]^{\uchk_k} &&  \\
 &&  \overline{HC}_{2k}(A)
}\]
It is interesting to consider a noncommutative differential $K$-theory constructed purely in cyclic theoretic setup, though this is not the subject of the present work.
\end{enumerate}
\end{rmk}

\section{Karoubi--Chern--Simons transgression forms} \label{SEC.KCS}
In \cite{Ka1}, Karoubi provided an algebraic codification of the standard argument for the Chern--Weil theorem. However, the algebraic analogue of Chern--Simons forms were treated only implicitly (compare Proposition~\ref{PRP.chk.is.indep.choice.of.conn} above and \cite[Remark~1.23]{Ka1}) because the argument was meant to hold for any choice of DGA whose degree zero term is $A$. The purpose in this section is to obtain transgression formulas for Karoubi's Chern character forms.
Throughout this subsection, our base field $\K$ contains $\Q$.

\subsection{Polynomial paths of connections} In this subsection, we shall define and study the properties of an algebraic analogue of a path of connections on a smooth vector bundle. In what follows, $C_{\bl}\dtsr D_{\bl}$ denotes the graded tensor product of chain complexes $C_{\bl}$ and $D_{\bl}$ over the same field.

\begin{dfn}[Karoubi \cite{Ka1}, Section 1.20] \label{DFN.homotopy.operator} Let $\La_\bl$ be the chain complex defined by $\La_0:=\K[t]$ (formal polynomials in the variable $t$), $\La_1:=\K[t]dt$, $\La_{n\neq 0,1}=0$, $d_0(P(t))=P'(t)dt$, and $d_{n\neq 0}=0$. The \define{homotopy operator} $\kappa:\La_\bl \ra \K$ is defined so that $\kappa(P(t))=0$ and $\kappa(P(t)dt)=\int_0^1 P(t)dt$. Let $A$ be an algebra over $\K$ and let $\Om_\bl(A)$ be a DGA on top of $A$. The homotopy operator on $\La_\bl \dtsr \Om_\bl(A)$ is the (degree $-1$) map $K:\La_\bl \dtsr \Om_\bl(A)\to\Om_\bl(A)$ uniquely determined by $K(\omega\otimes\theta):=\kappa(\omega)\theta$.
We define the \define{evaluation at the boundary} maps $\mathrm{ev}_{n}:\La_\bl \dtsr  \Om_\bl(A)\to\Om_\bl(A)$ for $n\in\{0,1\}$ that are determined by $\mathrm{ev}_{n}(\omega\otimes\theta):=\omega(n)\theta$ if $\omega\in\La_{0}$ and $0$ if $\omega\in\La_{1}$. If $\varphi\in\La_{\bl}\dtsr\Om_{\bl}(A),$ then we write $\varphi(n):=\mathrm{ev}_{n}(\varphi)$ for brevity. Such evalation maps $\mathrm{ev}_{n}$ are similarly defined for every $n\in\K$. Furthermore, one can define similar such evaluation maps $\mathrm{ev}_{n}:\La_{0}\tsr_{\K}M\to M$ for any $\K$-vector space $M$.
\end{dfn}

We will think of the complex $\Lambda_{\bullet}$ as an object in the category of DGAs representing the standard unit interval (the de~Rham complex of polynomial differential forms).
Note that $\La_\bl \dtsr\Om_\bl(A)$ together with the tensor product differential is itself a DGA on top of
\[
\wtl{A}:=\La_0\tsr_\K A.
\]
Henceforth, we will set $\Om_{\bl}(\wtl{A})\equiv\Om_\bl(\La_0\tsr_\K A):=\La_\bl\dtsr\Om_\bl(A)$. We will often switch between the notation $\wtl{A}$ and $\La_{0}\otimes_{\K}A$ depending on the context (the former is often used for statements and to condense notation, while the latter in proofs to facilate computations). We will use the same notation $d$ to stand for the differential on $\Omega_{\bl}(A), \Lambda_{\bl},$ and $\Omega_{\bl}(\wtl{A})$ since it will always be made clear which one is being applied based on the input.
The abelianization of $\Om_\bl(\wtl{A})$ is given by $\Om_\bl(\La_0\tsr_\K A)_{\ab}=\La_\bl\dtsr\Om_\bl(A)_{\ab}$ because $\La_\bl$ is a commutative DGA. Furthermore, the evaluation maps and homotopy operators descend to the abelianizations.

\begin{lem}\label{LEM.homotopy.formula}[Homotopy formula] For any $\vph\in \La_\bl \dtsr  \Om_\bl(A)$,
\beqs
(Kd+dK)\varphi=\vph(1)-\vph(0).
\eeqs
\end{lem}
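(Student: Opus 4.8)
The plan is to reduce the identity to a check on elementary tensors $\varphi = \omega \otimes \theta$ with $\omega \in \Lambda_\bl$ and $\theta \in \Omega_\bl(A)$, since both sides of the claimed equation are $\K$-linear in $\varphi$ and every element of $\Lambda_\bl \dtsr \Omega_\bl(A)$ is a finite sum of such tensors. Because $\Lambda_n = 0$ for $n \neq 0, 1$, there are only two cases to treat: $\omega = P(t) \in \Lambda_0$ and $\omega = P(t)\,dt \in \Lambda_1$. In each case I would unwind the definitions of $d$ (the tensor-product differential on $\Lambda_\bl \dtsr \Omega_\bl(A)$, which acts as $d(\omega \otimes \theta) = d\omega \otimes \theta + (-1)^{|\omega|}\omega \otimes d\theta$), of $K$ (determined by $K(\omega \otimes \theta) = \kappa(\omega)\theta$, where $\kappa$ kills $\Lambda_0$ and integrates on $\Lambda_1$), and of the boundary evaluations $\mathrm{ev}_0, \mathrm{ev}_1$ (which send $P(t) \otimes \theta$ to $P(0)\theta$, $P(1)\theta$ and kill $\Lambda_1 \otimes \Omega_\bl(A)$).

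Here are the two computations I expect. \emph{Case $\omega = P(t)$:} Then $K\varphi = \kappa(P(t))\theta = 0$, so $dK\varphi = 0$. On the other hand $d\varphi = P'(t)\,dt \otimes \theta + P(t) \otimes d\theta$, and applying $K$ kills the second summand (since $\kappa$ vanishes on $\Lambda_0$) and sends the first to $\kappa(P'(t)\,dt)\,\theta = \big(\int_0^1 P'(t)\,dt\big)\theta = (P(1) - P(0))\theta$ by the fundamental theorem of calculus. Meanwhile $\varphi(1) - \varphi(0) = P(1)\theta - P(0)\theta$, so the identity holds. \emph{Case $\omega = P(t)\,dt$:} Now $\varphi(1) - \varphi(0) = 0$ because $\mathrm{ev}_n$ vanishes on $\Lambda_1 \otimes \Omega_\bl(A)$. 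For the left side: $K\varphi = \big(\int_0^1 P(t)\,dt\big)\theta$, a degree-$0$-in-$\Lambda$ element, so $dK\varphi = \big(\int_0^1 P(t)\,dt\big)d\theta$; and $d\varphi = -P(t)\,dt \otimes d\theta$ (the $d\omega$ term vanishes since $\Lambda_2 = 0$), so $Kd\varphi = -\big(\int_0^1 P(t)\,dt\big)d\theta$. These cancel, giving $0$, as required. The two cases together establish the formula.

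The only mild subtlety — and the one place to be careful — is sign bookkeeping in the tensor-product differential and in how $K$ interacts with the $\Omega_\bl(A)$-degree of $\theta$ (whether $K$ is defined with a Koszul sign, i.e.\ whether $K(\omega \otimes \theta) = \kappa(\omega)\theta$ literally or with a sign depending on $|\theta|$). Since the stated definition of $K$ in Definition~\ref{DFN.homotopy.operator} has no sign, I would adopt that convention throughout; one should then double-check that in Case $\omega = P(t)\,dt$ the sign on $d\varphi = d(P(t)\,dt \otimes \theta) = (-1)^{1} P(t)\,dt \otimes d\theta$ is exactly what cancels $dK\varphi$, which it does. Everything else is routine. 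I do not anticipate a genuine obstacle; the result is the standard "integration along the fiber / chain homotopy for the interval" argument, transported verbatim to the algebraic setting, and the finite-dimensionality of $\Lambda_\bl$ in each degree makes the case analysis completely finite.
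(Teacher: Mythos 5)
Your proof is correct. The paper states this lemma without proof (treating it as a standard fact, essentially a Poincar\'e-lemma-type computation for the polynomial de~Rham complex of the interval), so your direct verification on elementary tensors is exactly the argument that was omitted. Both cases are computed accurately: in degree $0$ the fundamental theorem of calculus (valid formally for polynomials since $\Q\subseteq\K$) produces $P(1)-P(0)$, and in degree $1$ the sign $(-1)^{|\omega|}=-1$ from the tensor-product differential gives precisely the cancellation with $dK\varphi$, matching the vanishing of $\mathrm{ev}_1-\mathrm{ev}_0$ on $\La_1\dtsr\Om_\bl(A)$. You are also right to take the paper's definition of $K$ at face value with no Koszul sign; the computation confirms that convention is the one that makes the formula hold as stated.
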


The following proposition of Karoubi \cite[Lemma 1.21]{Ka1} is readily seen by Lemma \ref{LEM.homotopy.formula}.
\begin{prp}\label{PRP.homotopy.formula.cycle} Let $\phi\in \La_\bl \tsr  \Om_\bl(A)_{\ab}$ be a cycle. Then $\phi(1)-\phi(0)$ is a boundary.
\end{prp}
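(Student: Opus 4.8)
The plan is to deduce Proposition~\ref{PRP.homotopy.formula.cycle} directly from the homotopy formula in Lemma~\ref{LEM.homotopy.formula}, after transporting that formula to the abelianized complex. First I would note that, as remarked just before the statement of the lemma, both the differential $d$ and the homotopy operator $K$ on $\La_\bl\dtsr\Om_\bl(A)$ descend to the abelianization $\La_\bl\dtsr\Om_\bl(A)_{\ab}$, and likewise the evaluation maps $\mathrm{ev}_0,\mathrm{ev}_1$ descend. Hence the identity $(Kd+dK)\varphi=\varphi(1)-\varphi(0)$ from Lemma~\ref{LEM.homotopy.formula} holds verbatim for $\varphi$ an element of the abelianized complex $\La_\bl\tsr\Om_\bl(A)_{\ab}$; this is really just functoriality of the whole setup under the quotient map to the abelianization.

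Next, I would specialize to the case at hand: let $\phi\in\La_\bl\tsr\Om_\bl(A)_{\ab}$ be a cycle, i.e.\ $d\phi=0$. Plugging into the (abelianized) homotopy formula gives
\[
\phi(1)-\phi(0)=(Kd+dK)\phi=K(d\phi)+d(K\phi)=K(0)+d(K\phi)=d(K\phi).
\]
Since $K$ lowers degree by one, $K\phi$ is an element of $\Om_\bl(A)_{\ab}$ of one degree lower than $\phi$, and so $\phi(1)-\phi(0)=d(K\phi)$ exhibits $\phi(1)-\phi(0)$ explicitly as a boundary in the complex $\Om_\bl(A)_{\ab}$. That is exactly the assertion. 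The only mild bookkeeping point is to be careful that $\phi(1)$ and $\phi(0)$ genuinely land in $\Om_\bl(A)_{\ab}$ (not $\La_\bl\tsr\Om_\bl(A)_{\ab}$): this holds because $\mathrm{ev}_n$ kills the $\La_1$-part and evaluates the polynomial coefficient in $\La_0$ at $n$, landing in $\K\tsr\Om_\bl(A)_{\ab}\cong\Om_\bl(A)_{\ab}$.

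I do not anticipate any real obstacle here; the content is entirely in Lemma~\ref{LEM.homotopy.formula}, and the proposition is the standard ``cycles become homologous after restricting to the endpoints'' consequence of a chain homotopy. If anything, the only thing warranting a sentence of care is the claim that $d$, $K$, and $\mathrm{ev}_n$ pass to the abelianization, which follows because each of these maps is built from maps that respect the graded-commutator submodule (the differential on a DGA preserves commutators, $K$ acts only on the $\La_\bl$ tensor factor, and $\mathrm{ev}_n$ likewise), so the quotient complex inherits all the structure and the homotopy identity is preserved.
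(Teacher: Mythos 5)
Your proof is correct and is exactly the argument the paper intends: the homotopy formula of Lemma~\ref{LEM.homotopy.formula} descends to the abelianized complex (as the paper notes just before stating Lemma~\ref{LEM.homotopy.formula}, the differential, homotopy operator, and evaluation maps all pass to $\La_\bl\tsr\Om_\bl(A)_{\ab}$), and for a cycle $\phi$ it collapses to $\phi(1)-\phi(0)=d(K\phi)$. The paper simply calls this ``readily seen''; your write-up supplies exactly the bookkeeping the paper leaves implicit.
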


\begin{dfn}
\label{defn:polynomial.family.connections}
Let $M\in\Fgp(A)$ and let $\Omega_{\bullet}(A)$ be a DGA on top of $A$. A \define{polynomial path of connections} on $M$ is a $\K$-linear map $M\xrightarrow{\mathcal{D}}\Lambda_{0}\otimes_{\K}M\otimes_{A}\Omega_{1}(A)$ satisfying a modified form of the Leibniz rule given by $\mathcal{D}(ma)=1\otimes m\otimes da +\mathcal{D}(m)a$ for all $a\in A$ and $m\in M$. Equivalently, $\mathcal{D}$ satisfies the condition that the composite $\mathcal{D}_{t}:=M\xrightarrow{\mathcal{D}}\Lambda_{0}\otimes_{\K}M\otimes_{A}\Omega_{1}(A)\xrightarrow{\mathrm{ev}_{t}}M\otimes_{A}\Omega_{1}(A)$ is a connection on $M$ for every $t\in\K$.
\end{dfn}

\begin{rmk}
\label{rmk:spaceofconnectionsisaffine}
The fact that these two properties of $\mathcal{D}$ are equivalent follows from the fact that an element of $\Lambda_{0}\otimes_{\K}M\otimes_{A}\Omega_{1}(A)$ is determined by its evaluations (since $\Q\subseteq\K$). Also, note that given any two connections $D_{0}$ and $D_{1}$ on $M$, there exists a polynomial path of connections $\mathcal{D}$ such that $\mathcal{D}_{t}=D_{t}$ for $t\in\{0,1\}$. Indeed, such a polynomial path of connections can be defined by $\mathcal{D}_{t}:=t D_{0}+(1-t)D_{1}$ for all $t\in\K$. More precisely, if we let $u$ denote the polynomial representing $t\mapsto t$, then $\mathcal{D}(m):=u\otimes D_{0}(m)+(1-u)\otimes D_{1}(m)$ for all $m\in M$. In fact, the set of connections is an affine $\K$-space.
\end{rmk}

Just as one can associate a bundle with connection over a space cross the interval given a \emph{path} of connections on a \emph{single} bundle, one can construct a module with connection over the module extended by polynomials on the interval given a (polynomial) path of connections on that module. We first provide the module definition and justify it by showing it specializes properly to the smooth setting.

\begin{lem}\label{lem:intervalcrossspacenc}
Let $M$ be a finitely generated projective $A$-module and $\Omega_{\bl}(A)$  a DGA on top of $A$ and let  $\mathcal{D}:M\to\Lambda_{0}\otimes_\K M\otimes_A\Omega_{1}(A)$ be a polynomial path of connections on $M$. Set $\wtl{M}:=\La_{0}\tsr_\K M$.
\begin{enumerate}
\item
Then $\wtl{M}$, equipped with the right action of $\wtl{A}:=\La_{0}\otimes_{\K}A$ uniquely determined by
\[
(q\otimes m)(p\otimes a):=(qp)\otimes(ma)\qquad\forall\;q,p\in\Lambda_{0},\ m\in M,\ a\in A,
\]
is a finitely generated projective $\wtl{A}$-module.
\item\label{item:pathsofconnections2}
Setting $\Om_\bl(\wtl{A}):=\La_{\bl}\dtsr\Om_{\bl}(A)$ to be the DGA on top of $\wtl{A}$, the map $\wtl{M}\otimes_{\wtl{A}}\Omega_{1}(\wtl{A})\to(\La_{1}\otimes_{\K}M)\oplus(\La_{0}\otimes_{\K}M\otimes_{A}\Omega_{1}(A))$ uniquely determined by
\[
\begin{split}
(\La_0\otimes_{\K}M)\otimes_{\wtl{A}}\big((\La_1\tsr_\K A)\oplus(\La_0\tsr_\K\Omega_{1}(A))\big)&\to(\La_{1}\otimes_{\K}M)\oplus(\La_{0}\otimes_{\K}M\otimes_{A}\Omega_{1}(A))\\
(q\otimes m)\tsr\big((\eta\tsr a)+(p\tsr\omega)\big)&\mapsto
q\eta\tsr ma+qp\tsr m\tsr\omega.
\end{split}
\]
is a canonical isomorphism.
More generally, there is a canonical isomorphism
    \[
\wtl{M}\otimes_{\wtl{A}}\Omega_{k}(\wtl{A})\cong\big(\La_{1}\otimes_{\K}M\tsr\Omega_{k-1}(A)\big)\oplus\big(\La_{0}\otimes_{\K}M\otimes_{A}\Omega_{k}(A)\big)
\]
for all $k\in\N$.
\item
Using the first of these isomorphisms from (\ref{item:pathsofconnections2}), the unique linear map $\wtl{\mathcal{D}}$ determined by
\[
\begin{split}
\wtl{M}&\xrightarrow{\wtl{\mathcal{D}}}
(\La_{1}\otimes_{\K}M)\oplus(\La_{0}\otimes_{\K}M\otimes_{A}\Omega_{1}(A))\xrightarrow{\cong}\wtl{M}\otimes_{\wtl{A}}\Omega_{1}(\wtl{A})\\
q\otimes m&\xmapsto{\;\;\;}\quad\;\; dq\otimes m + q\mathcal{D}(m)
\end{split}
\]
defines a connection on $\wtl{M}$ with respect to the DGA $\Om_\bl(\wtl{A})$. In fact, $\wtl{\mathcal{D}}$ is determined by its action on elements of the form $1\otimes m$, which gets sent to $\mathcal{D}(m)$.
\item
The action of the curvature,  $R_{\wtl{\mathcal{D}}}=\wtl{\mathcal{D}}^2$, is determined by its action on $1\otimes m\in\wtl{M}$, which is given by
\[
\wtl{\mathcal{D}}^2(1\otimes m)=\sum_{\alpha}dq_{\alpha}\tsr m_{\alpha}\tsr\omega_{\alpha}+\sum_{\alpha}q_{\alpha}\mathcal{D}(m_\alpha)\omega_{\alpha}+\sum_{\alpha}q_{\alpha}\tsr m_{\alpha}\tsr d\omega_{\alpha}
\]
where
\[
\mathcal{D}(m)=:\sum_{\alpha}q_{\alpha}\otimes m_{\alpha}\otimes\omega_{\alpha}\in\La_0\tsr_\K M\tsr_A\Omega_{1}(A)
\]
is a tensor product expansion of $\mathcal{D}(m)$.
\item
The curvature $R_{\wtl{\mathcal{D}}}=\wtl{\mathcal{D}}^2$ of the connection $\wtl{\mathcal{D}}$ satisfies the relation
\[
\mathrm{ev}_{t}\circ\wtl{\mathcal{D}}^{2}\circ i_{M}=\mathcal{D}_{t}^{2}\qquad\forall\;t\in\K,
\]
where $\mathrm{ev}_{t}:\wtl{M}\tsr_{\wtl{A}}\Omega_{\bl}(\wtl{A})\to M\tsr_{A}\Omega_{\bl}(A)$ evaluates all expressions at $t\in\K$ (making use of the isomorphism from (\ref{item:pathsofconnections2}) above) and $i_{M}:M\otimes_{A}\Omega_{\bl}(A)\to\La_0\otimes_\K M\tsr_{A}\Omega_{\bl}(A)\hookrightarrow\wtl{M}\otimes_{\wtl{A}}\Omega_{\bl}(\wtl{A})$ is the inclusion sending $m\otimes\omega$ to $1\otimes m\otimes\omega$ (again making use of the isomorphism from (\ref{item:pathsofconnections2}) above).
In other words, the diagram
\[
\xy0;/r.25pc/:
(-25,7.5)*+{M\otimes_{A}\Omega_{\bl}(A)}="1";
(25,7.5)*+{M\otimes_{A}\Omega_{\bl}(A)}="2";
(-25,-7.5)*+{\wtl{M}\otimes_{\wtl{A}}\Omega_{\bl}(\wtl{A})}="3";
(25,-7.5)*+{\wtl{M}\otimes_{\wtl{A}}\Omega_{\bl}(\wtl{A})}="4";
{\ar"1";"2"^{\mathcal{D}_{t}^{2}}};
{\ar"3";"4"^{\wtl{\mathcal{D}}^{2}}};
{\ar"1";"3"^{i_{M}}};
{\ar"4";"2"_{\mathrm{ev}_{t}}};
\endxy
\]
commutes. Furthermore, $\mathrm{ev}_{t}$ satisfies $\mathrm{ev}_{t}\circ i_{M}=\mathrm{id}$ and it also restricts to a well-defined map $\mathrm{ev}_{t}:\wtl{M}\tsr_{\wtl{A}}\Omega_{\even}(\wtl{A})\to M\tsr_{A}\Omega_{\even}(A)$.
\item
Given any $\Omega_{\bl}(\wtl{A})$-linear map $L$ from $\wtl{M}\otimes_{\wtl{A}}\Omega_{\bl}(\wtl{A})$ to itself, set
$
L_{t}:=\mathrm{ev}_{t}\circ L\circ i_{M}
$
for each $t\in\K$. Then $L_{t}$ is a $\Omega_{\bl}(A)$-linear map from $M\otimes_{A}\Omega_{\bl}(A)$ to itself satisfying
\[
\mathrm{ev}_{t}\circ L^{k}\circ i_{M}=L_{t}^{k}\qquad\forall\;k\in\N.
\]
In particular, $\mathrm{ev}_{t}\circ R_{\wtl{\mathcal{D}}}^{k}\circ i_{M}=R^{k}_{\mathcal{D}_{t}}$ for all $t\in\K$ and $k\in\N$.
\item
The identity
\[
\mathrm{ev}_{t}\Big(\tr\big(R_{\wtl{\mathcal{D}}}^k\big)\Big)=\tr\big(R_{\mathcal{D}_{t}}^{k}\big), \quad\text{ i.e.\ }\quad
\mathrm{ev}_{t}\Big(\tr\big(\wtl{\mathcal{D}}^{2k}\big)\Big)=\tr\big(\mathcal{D}_{t}^{2k}\big),
\]
holds for all $k\in\N$ and $t\in\K$. Here, the two traces have different codomains (and domains). The one on the left side of each equality lands in $(\Omega_{\bl}(\wtl{A})_{\ab})_{\even}$,
while the one on the right side of each equality lands in $(\Om_{\bl}(A)_{\ab})_{\even}$. Consequently,
\[
\mathrm{ev}_{t}\Big(\chk\big(\wtl{\mathcal{D}}\big)\Big)=\chk\big(\mathcal{D}_{t}\big)\qquad\forall\;t\in\K.
\]
\item\label{item:pathofconnections8}
Let $\La_{0}\xrightarrow{r}\La_{0}$ be the map that reverses the formal input variable, namely $(rq)(t):=q(1-t)$ for all $q\in\La_{0}\equiv\K[t]$. Then $r$ extends uniquely to a DGA map satisfying%
\footnote{This is analogous to how the pullback of smooth functions and differential forms along a map (which in this case is the map $\K\ni t\mapsto1-t$) is a chain map on the de~Rham complex.}
$r(pdq)=r(p)d(r(q))$, and the composite $M\xrightarrow{\mathcal{D}}\La_{0}\tsr_{\K}M\tsr_{A}\Omega_{1}(A)\xrightarrow{r\otimes_{\K}\mathrm{id}}\La_{0}\tsr_{\K}M\tsr_{A}\Omega_{1}(A)$ defines a path of connections $\overline{\mathcal{D}}$, called the \define{reversed path of connections}, whose associated connection $\wtl{\overline{\mathcal{D}}}:\wtl{M}\otimes_{\wtl{A}}\Omega_{\bl}(\wtl{A})\to\wtl{M}\otimes_{\wtl{A}}\Omega_{\bl}(\wtl{A})$ satisfies
\[
\overline{\mathcal{D}}_{t}=\mathcal{D}_{1-t}\quad\forall\;t\in\K\qquad\text{ and }\qquad
\wtl{\overline{\mathcal{D}}}=(r\otimes\mathrm{id})\circ\wtl{\mathcal{D}}\circ(r\otimes\mathrm{id}),
\]
where $r\otimes\mathrm{id}:=r\otimes_{\K}\mathrm{id}_{M\otimes_{A}\Omega_{\bl}(A)}$ is the $\Omega_{\bl}(A)$-linear endomorphism%
\footnote{The map $r\otimes_{\K}\mathrm{id}_{M\otimes_{A}\Omega_{\bl}(A)}$ is \emph{not} $\Omega_{\bl}(\wtl{A})$-linear.}
of $\wtl{M}\otimes_{\wtl{A}}\Omega_{\bl}(\wtl{A})$ whose action on degree $(k+1)$-forms is obtained via the isomorphism
\[
\wtl{M}\otimes_{\wtl{A}}\Omega_{k+1}(\wtl{A})\cong \Big(\La_0\otimes_{\K}M\otimes_{A}\Omega_{k+1}(A)
\Big)\oplus\Big(\La_1\otimes_{\K}M\otimes_{A}\Omega_{k}(A)\Big)
\]
with $r$ acting on the first factor ($\La_0$ and $\La_1$) in each component of the direct sum (in other words, $r$ acts on the first factor of the $\Omega_{\bl}(A)$-module $\La_{\bl}\otimes_{\K}M\otimes_{A}\Omega_{\bl}(A)$).
Furthermore, the Karoubi--Chern character satisfies
\[
\chk\left(\wtl{\overline{\mathcal{D}}}\right)=
(r\otimes\mathrm{id})\chk\left(\wtl{\mathcal{D}}\right).
\]
\end{enumerate}
\end{lem}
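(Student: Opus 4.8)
The plan is to prove the eight listed items roughly in the order given, since each builds on the previous ones; the real content lies in items (1)--(3) (setting up the module, the key canonical isomorphism, and the connection), after which items (4)--(9) are mechanical verifications using the homotopy operator machinery already in place. For item (1), I would observe that $\widetilde M = \Lambda_0 \otimes_\K M$ is exactly the extension of scalars of $M$ along the unit inclusion $A \hookrightarrow \widetilde A = \Lambda_0 \otimes_\K A$, so finite generation and projectivity are immediate from the corresponding properties of $M$ (a projector $p \in M_m(A)$ for $M$ gives the projector $1 \otimes p \in M_m(\widetilde A)$ for $\widetilde M$). For item (2), the canonical isomorphism is really just the statement that $\widetilde M \otimes_{\widetilde A} \Omega_k(\widetilde A) \cong (N \otimes_B A) \otimes_A \Omega_k(A)$ from Lemma~\ref{lem:pullbackconnection}(1) applied here, combined with the direct sum decomposition $\Omega_k(\widetilde A) = \Lambda_\bl \dtsr \Omega_\bl(A)$ in degree $k$, which is $(\Lambda_1 \otimes_\K \Omega_{k-1}(A)) \oplus (\Lambda_0 \otimes_\K \Omega_k(A))$ since $\Lambda_{n} = 0$ for $n \neq 0,1$; I would just check the explicit formula is well-defined on the relevant tensor relations and has an obvious inverse.

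For item (3), I would verify the Leibniz rule for $\widetilde{\mathcal D}$: on $q \otimes m$ with $q \in \Lambda_0$, the formula $dq \otimes m + q\mathcal D(m)$ must satisfy $\widetilde{\mathcal D}((q\otimes m)(p \otimes a)) = \widetilde{\mathcal D}(qp \otimes ma)$, and the computation $d(qp)\otimes ma + qp\,\mathcal D(ma) = (dq\, p + q\, dp)\otimes ma + qp(1 \otimes m \otimes da + \mathcal D(m)a)$ should match $\widetilde{\mathcal D}(q \otimes m)(p \otimes a) + (q \otimes m)d(p \otimes a)$ once one unwinds the differential $d$ on $\Omega_\bl(\widetilde A) = \Lambda_\bl \dtsr \Omega_\bl(A)$ (which is the tensor product differential, so $d(p \otimes a) = dp \otimes a + (-1)^0 p \otimes da$). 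The claim that $\widetilde{\mathcal D}$ is determined by its value $\mathcal D(m)$ on $1 \otimes m$ then follows from the Leibniz rule and $\widetilde A$-linearity. Items (4) and (5) are direct computations of $\widetilde{\mathcal D}^2$ using the extension of $\widetilde{\mathcal D}$ by Leibniz (Definition~\ref{defn:extendingconnection}); item (5) in particular amounts to noting that $\mathrm{ev}_t$ is an algebra map $\widetilde A \to A$ (sending $q \otimes a \mapsto q(t)a$) and applying the compatibility between induced connections and evaluation, i.e.\ that $\mathcal D_t = \widetilde{\mathcal D}$ pulled back along $\mathrm{ev}_t$ in the sense of Lemma~\ref{lem:pullbackconnection}, so the curvature identity is an instance of Lemma~\ref{lem:incucedcurvature}.

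Items (6), (7), (8) then follow formally: (6) from iterating the naturality square of (5) and the fact that $\mathrm{ev}_t \circ i_M = \mathrm{id}$ (so $L^k$ and $L_t^k$ correspond); (7) from (6) together with Lemma~\ref{lem:ringmapsandtrace} applied to the ring map $\mathrm{ev}_t: \Omega_{\even}(\widetilde A) \to \Omega_{\even}(A)$, which shows $\mathrm{ev}_t$ commutes with $\tr$, and hence with $\chk = \sum_k \frac{1}{k!}\tr(\cdot^{2k})$; and (8) from the observation that $r: \Lambda_0 \to \Lambda_0$, $q(t) \mapsto q(1-t)$, is an algebra automorphism extending to a DGA automorphism of $\Omega_\bl(\widetilde A)$ fixing $\Omega_\bl(A)$, so that $\overline{\mathcal D}$ is the conjugate path, its associated connection is the conjugate connection $\wtl{\overline{\mathcal D}} = (r \otimes \mathrm{id}) \circ \widetilde{\mathcal D} \circ (r \otimes \mathrm{id})$ by the uniqueness clause in item (3), and conjugation by a DGA map commutes with taking curvature powers and (being $\Omega_\bl(A)$-linear, hence compatible with $\tr$ on the $\Omega_\bl(A)$-module structure) with $\chk$. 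The main obstacle I anticipate is purely bookkeeping: keeping the three tensor-product relations ($\otimes_\K$, $\otimes_A$, $\otimes_{\widetilde A}$) and the various suppressed identity maps straight while verifying that each formula is well-defined on equivalence classes and not merely on representatives — in particular the well-definedness of the isomorphism in item (2) and of $\widetilde{\mathcal D}$ in item (3), and the subtle point in (8) that $r \otimes \mathrm{id}$ is $\Omega_\bl(A)$-linear but not $\Omega_\bl(\widetilde A)$-linear, so that it makes sense to conjugate by it and still land on connections. None of this requires a genuinely new idea beyond the homotopy operator $\Lambda_\bl$ of Karoubi (Definition~\ref{DFN.homotopy.operator}) already recalled above.
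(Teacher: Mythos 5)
Your plan follows the same overall structure as the paper, but for items (5) and (7) you propose a genuinely different route: rather than the paper's direct element-by-element computations, you identify $\mathcal{D}_{t}$ with the induced connection $\wtl{\mathcal{D}}_{\mathrm{ev}_{t}}$ along the evaluation algebra map $\mathrm{ev}_{t}\colon\wtl{A}\to A$ and then invoke Lemma~\ref{lem:incucedcurvature} and Lemma~\ref{lem:ringmapsandtrace}. This identification is indeed correct (transporting along $\wtl{M}\otimes_{\wtl{A}}A\cong M$), and it is arguably a cleaner packaging. The one thing you should address before citing those lemmas: they are stated for a DGA on top of $\Alg$, i.e.\ a functor $\Omega_{\bl}\colon\Alg\to\DGA$, so that $\Omega_{\bl}(\psi)$ is available. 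Here $\Omega_{\bl}(\wtl{A}):=\La_{\bl}\dtsr\Omega_{\bl}(A)$ is assigned \emph{ad hoc} to the single algebra $\wtl{A}$, not as the value of the same functor at $\wtl{A}$, so the lemmas do not literally apply. Their proofs only use the existence of a chain/DGA map $\Omega_{\bl}(B)\to\Omega_{\bl}(A)$ covering $\psi$, and $\mathrm{ev}_{t}$ supplies one, but you must say so (this is why the paper simply computes). Relatedly, for item (6), ``iterating the naturality square'' silently requires the stronger intertwining $\mathrm{ev}_{t}\circ L=L_{t}\circ\mathrm{ev}_{t}$ as maps $\wtl{M}\otimes_{\wtl{A}}\Omega_{\bl}(\wtl{A})\to M\otimes_{A}\Omega_{\bl}(A)$; this holds because $L$ is $\Omega_{\bl}(\wtl{A})$-linear and $\La_{1}\cdot\La_{1}=0$, so the $\La_{1}$-components of $L(\wtl{m})$ die under $\mathrm{ev}_{t}$ after any further right-multiplication, but that observation has to be made explicitly (the paper does it for $k=2$ by writing out $L(i_{M}(m))$ in its $\La_{0}$ and $\La_{1}$ parts).

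For item (8), be careful with your phrase ``compatible with $\tr$.'' Since $r\otimes\mathrm{id}$ is \emph{not} $\Omega_{\bl}(\wtl{A})$-linear, cyclicity of the $\Omega_{\even}(\wtl{A})$-trace does not cancel the conjugation, and the conclusion is not $\tr(R_{\wtl{\overline{\mathcal{D}}}}^{k})=\tr(R_{\wtl{\mathcal{D}}}^{k})$ but the twisted identity $\tr(R_{\wtl{\overline{\mathcal{D}}}}^{k})=(r\otimes\mathrm{id})\tr(R_{\wtl{\mathcal{D}}}^{k})$. The correct mechanism, which the paper spells out, is that the trace depends only on the action of $R_{\wtl{\overline{\mathcal{D}}}}^{k}$ on elements $1\otimes m\otimes 1$, and on those one has $R_{\wtl{\overline{\mathcal{D}}}}^{k}(1\otimes m\otimes1)=(r\otimes\mathrm{id})R_{\wtl{\mathcal{D}}}^{k}(1\otimes m\otimes1)$; the $\Omega_{\bl}(A)$-linearity of $r\otimes\mathrm{id}$ is what allows you to pull it outside the trace as a twist rather than cancel it. Your noting that $r\otimes\mathrm{id}$ is $\Omega_{\bl}(A)$-linear but not $\Omega_{\bl}(\wtl{A})$-linear shows you saw the subtlety; just make sure your argument lands on the twisted formula and not on a spurious invariance.
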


\begin{proof}
Most of these claims amount to unraveling the definitions.
\begin{enumerate}
\item
To see that $\wtl{M}$ is a finitely generated projective $\wtl{A}$-module, note that since $M$ is a finitely generated projective $A$-module, there exists an $A$-module $Q$ and an $n\in\N$ such that $M\oplus Q\cong A^{n}$. Hence,
\[
\wtl{M}\oplus(\La_{0}\tsr_{\K}Q)
=(\La_{0}\tsr_{\K}M)\oplus(\La_{0}\tsr_{\K}Q)
\cong\La_{0}\tsr_{\K}(M\oplus Q)
\cong\La_{0}\tsr_{\K}A^{n}\cong(\La_{0}\tsr_{\K}A)^{n},
\]
which proves the claim.
\item
Because
\[
\Omega_{1}(\La_0\tsr_\K A)=(\La_1\tsr_\K A)\oplus(\La_0\tsr_\K\Om_1(A))
\]
(by definition) together with the properties of $\otimes$ and $\oplus$,
\[
\begin{split}
\wtl{M}&\otimes_{\wtl{A}}\Omega_{1}(\wtl{A})=(\La_{0}\otimes_{\K}M)\tsr_{\La_{0}\tsr A}\Big((\La_1\tsr_\K A)\oplus\big(\La_0\tsr_\K\Om_1(A)\big)\Big)\\
&\cong\Big((\La_{0}\otimes_{\K}M)\tsr_{\La_{0}\tsr A}(\La_1\tsr_\K A)\Big)\oplus\Big((\La_{0}\otimes_{\K}M)\tsr_{\La_{0}\tsr A}\big(\La_0\tsr_\K\Om_1(A)\big)\Big)\\
&\cong\Big(\big(\La_{0}\otimes_{\La_0}\La_{1}\big)\otimes_{\K}\big(M\otimes_{A}A\big)\Big)\oplus\Big(\big(\La_{0}\otimes_{\La_0}\La_{0}\big)\otimes_{\K}\big(M\otimes_{A}\Omega_{1}(A)\big)\Big)\\
&\cong(\La_{1}\otimes_{\K}M)\oplus\big(\La_{0}\otimes_{\K}M\otimes_{A}\Omega_{1}(A)\big)
\end{split}
\]
provides the claimed isomorphism. A completely analogous calculation proves the result for tensoring with $k$-forms.
\item
Note that $q\otimes m=(1\otimes m)(q\otimes1_A)$. Therefore, $\wtl{\mathcal{D}}$ is determined by its values on elements of the form $1\otimes m$ by the Leibniz rule.
\item
By definition (cf.\ Definition~\ref{defn:extendingconnection}), $\wtl{\mathcal{D}}$ is extended to $\wtl{M}\otimes_{\wtl{A}}\Omega_{\bl}(\wtl{A})$ by the formula
\begin{equation}
\label{eq:extendingDtilde}
\wtl{\mathcal{D}}(\wtl{m}\otimes\eta)=\wtl{\mathcal{D}}(\wtl{m})\eta+\wtl{m}\otimes d\eta
\end{equation}
for all $\wtl{m}\in\wtl{M}$ and $\eta\in\Omega_{k}(\wtl{A})$.
The fact that the action of $\wtl{\mathcal{D}}^2$ is determined by its action on $1\otimes m\in\wtl{M}$ follows from $\Omega_{\bl}(\wtl{A})$-linearity of the curvature (cf.\ Definition~\ref{defn:extendingconnection}). Then
\[
\begin{split}
\wtl{\mathcal{D}}^2(1\otimes m)&=\wtl{\mathcal{D}}\big(\mathcal{D}(m)\big)
=\sum_{\alpha}\wtl{\mathcal{D}}(q_{\alpha}\otimes m_{\alpha}\otimes\omega_{\alpha})\\
&=\sum_{\alpha}\wtl{\mathcal{D}}(q_\alpha\tsr m_\alpha)\omega_{\alpha}+\sum_{\alpha}q_{\alpha}\tsr m_{\alpha}\tsr d\omega_{\alpha}\quad\text{ by (\ref{eq:extendingDtilde})}\\
&=\sum_{\alpha}\Big(dq_{\alpha}\tsr m_{\alpha}+q_{\alpha}\mathcal{D}(m_{\alpha})\Big)\omega_{\alpha}+\sum_{\alpha}q_{\alpha}\tsr m_{\alpha}\tsr d\omega_{\alpha}\\
&=\sum_{\alpha}dq_{\alpha}\tsr m_{\alpha}\tsr\omega_{\alpha}+\sum_{\alpha}q_{\alpha}\mathcal{D}(m_\alpha)\omega_{\alpha}+\sum_{\alpha}q_{\alpha}\tsr m_{\alpha}\tsr d\omega_{\alpha},
\end{split}
\]
where the definition of $\wtl{\mathcal{D}}$ is used in the third line and the fact that $m_{\alpha}$ identifies with $m_{\alpha}\tsr1_{A}$ under the canonical isomorphism $M\cong M\otimes_{A}A$ is used in the last line (in the first of three terms).
\item
It is sufficient to prove the equality for forms of the lowest degree. Hence, as before, fix $m\in M$ and let $\mathcal{D}(m)=:\sum_{\alpha}q_{\alpha}\otimes m_{\alpha}\otimes\omega_{\alpha}$. Then
\[
\mathrm{ev}_{t}\Big(\wtl{\mathcal{D}}^2\big(i_{M}(m)\big)\Big)
=\mathrm{ev}_{t}\big(\wtl{\mathcal{D}}^2(1\otimes m)\big)
=\sum_{\alpha}q_{\alpha}(t)\Big(\mathcal{D}_{t}(m_{\alpha})\omega_{\alpha}+m_{\alpha}\otimes d\omega_{\alpha}\Big)
\]
by item (4).
On the other hand, since $\mathcal{D}_{t}$ is a connection on $M$ with respect to $\Omega_{\bl}(A)$, the Leibniz rule gives
\[
\begin{split}
\mathcal{D}^{2}_{t}(m)&=\mathcal{D}_{t}\left(\sum_{\alpha}q_{\alpha}(t)m_{\alpha}\otimes\omega_{\alpha}\right)
=\sum_{\alpha}q_{\alpha}(t)\mathcal{D}_{t}(m_{\alpha}\otimes\omega_{\alpha})\\
&=\sum_{\alpha}q_{\alpha}(t)\Big(\mathcal{D}_{t}(m_{\alpha})\omega_{\alpha}+m_{\alpha}\otimes d\omega_{\alpha}\Big)
\end{split}
\]
The identity $\mathrm{ev}_{t}\circ i_{M}=\mathrm{id}$ is immediate. The fact that $\mathrm{ev}_{t}$ restricts to a well-defined map in even degrees follows from the fact that $\mathrm{ev}_{t}(\theta)=0$ for all $\theta\in\La_{1}$.
\item
$\Omega_{\bl}(A)$-linearity of $L_{t}$ follows from
\[
L_{t}(m\otimes\omega)
=\mathrm{ev}_{t}\big(L(1\otimes m\otimes\omega)\big)
=\mathrm{ev}_{t}\big(L(1\otimes m\otimes1_{A})(1\otimes\omega)\big)
=L_{t}(m)\omega
\]
for $m\in M$ and $\omega\in\Omega_{\bl}(A)$.
For the next claim, it suffices to prove the claim for $k=2$. Furthermore, since $L(1\otimes m\otimes\omega)=L(1\otimes m)(1\otimes\omega)$ by right $\Omega_{\bl}(\La_0\tsr_{\K}A)$-linearity, it suffices to prove $(\mathrm{ev}_{t}\circ L^{2} \circ i_{M})(m)=L_{t}^{2}(m)$. Write
\begin{equation}
\label{eq:generalelementLambdaMA}
L\big(i_{M}(m)\big)\equiv L(1\otimes m\otimes 1_{A})=:\sum_{\alpha}q_{\alpha}\otimes m_{\alpha}\otimes\omega_{\alpha}+\sum_{\beta}\theta_{\beta}\otimes n_{\beta}\otimes\xi_{\beta}
\end{equation}
with $q_{\alpha}\in\La_{0},\theta_{\beta}\in\La_{1}$, $m_{\alpha},n_{\beta}\in M$, and $\omega_{\alpha},\xi_{\beta}\in\Omega_{\bl}(A)$. Then,
\[
\begin{split}
(\mathrm{ev}_{t}\circ L^{2}\circ i_{M})(m)
&=\mathrm{ev}_{t}\left(\sum_{\alpha}L(1\otimes m_{\alpha}\otimes1_{A})(q_{\alpha}\otimes\omega_{\alpha})+\sum_{\beta}L(1\otimes n_{\beta}\otimes1_{A})(\theta_{\beta}\otimes \xi_{\beta})\right)\\
&=\sum_{\alpha}q_{\alpha}(t)L_{t}(m_{\alpha})\omega_{\alpha}=L_{t}\left(\sum_{\alpha}q_{\alpha}(t)m_{\alpha}\otimes\omega_{\alpha}\right)=L_{t}^2(m).
\end{split}
\]
The claim $\mathrm{ev}_{t}\circ R_{\wtl{\mathcal{D}}}^{k}\circ i_{M}=R^{k}_{\mathcal{D}_{t}}$ follows from setting $L_{t}:=R_{\mathcal{D}_{t}}$ and $L:=R_{\wtl{\mathcal{D}}}$.
\item
The identity itself comes from commutativity of the diagram
\[
\xy0;/r.25pc/:
(-35,-7.5)*+{\mathrm{End}_{\Omega_{\even}(A)}\Big(M\otimes_{A}\Omega_{\even}(A)\Big)}="1";
(35,-7.5)*+{(\Om_{\bl}(A)_{\ab})_{\even}}="2";
(-35,7.5)*+{\mathrm{End}_{\Omega_{\even}(\wtl{A})}\Big(\wtl{M}\otimes_{\wtl{A}}\Omega_{\even}(\wtl{A})\Big)}="3";
(35,7.5)*+{(\Om_{\bl}(\wtl{A})_{\ab})_{\even}}="4";
{\ar"1";"2"^(0.65){\tr}};
{\ar"3";"4"^(0.65){\tr}};
{\ar"3";"1"^{\mathrm{ev}_{t}\circ\;\cdot\;\circ i_{M}}};
{\ar"4";"2"^{\mathrm{ev}_{t}}};
\endxy
,
\]
which follows from a similar calculation to the one in the proof of (6) by using the canonical isomorphism $F\otimes_{R}F^*\cong\mathrm{End}_{B}(F)$ (cf.\ Definition~\ref{defn:Fdual} and afterwards). In more detail, setting $R:=\Omega_{\even}(A)$, $F:=M\otimes_{A}R$, $\wtl{R}:=\Omega_{\even}(\wtl{A})$, and $\wtl{F}:=\wtl{M}\tsr_{\wtl{A}}\wtl{R}$, the claim follows from commutativity of
\[
\xy0;/r.25pc/:
(-20,-7.5)*+{F\otimes_{R}F^*}="1";
(20,-7.5)*+{R/[R,R]}="2";
(-20,7.5)*+{\wtl{F}\otimes_{\wtl{R}}\wtl{F}^*}="3";
(20,7.5)*+{\wtl{R}/[\wtl{R},\wtl{R}]}="4";
{\ar"1";"2"^{\mathrm{eval}}};
{\ar"3";"4"^{\mathrm{eval}}};
{\ar"3";"1"^{\mathrm{Ev}_{t}}};
{\ar"4";"2"^{\mathrm{ev}_{t}}};
\endxy
,
\]
where the left $\mathrm{Ev}_{t}$ is defined by sending $\wtl{f}\otimes\wtl{\varphi}$ to $\mathrm{ev}_{t}(\wtl{f})\otimes(\mathrm{ev}_{t}\circ\wtl{\varphi}\circ i_{M})$. The commutativity of this last diagram can be proved by choosing an element $\wtl{f}$ of the form similar to the right-hand-side of~(\ref{eq:generalelementLambdaMA}) and explicitly computing the image along the two directions.
\item
A short calculation shows $\overline{\mathcal{D}}$ is a path of connections. With the associated connection on $\wtl{M}$, a direct calculation gives
\[
\begin{split}
\wtl{\overline{\mathcal{D}}}&(q\otimes m)=dq\otimes m+q\overline{\mathcal{D}}(m)\qquad\text{by item (3) of this lemma for $\overline{\mathcal{D}}$}\\
&=dq\otimes m+q\big((r\otimes\mathrm{id})\mathcal{D}(m)\big)\qquad\text{by definition of $\overline{\mathcal{D}}$}\\
&=(r\otimes\mathrm{id})\Big(d\big(r(q)\big)\otimes m+r(q)\mathcal{D}(m)\Big)\qquad\text{since $r$ is a DGA map with $r^2=\mathrm{id}$}\\
&=\big((r\otimes\mathrm{id})\circ\wtl{\mathcal{D}}\circ(r\otimes\mathrm{id})\big)(q\otimes m)\qquad\text{by item (3) of this lemma for $\mathcal{D}$},
\end{split}
\]
proving the main relationship at the lowest degree (since $q\otimes m\in\wtl{M}$). This then extends to higher degree forms, namely $\wtl{M}\otimes_{\wtl{A}}\Omega_{\bl}(\wtl{A})$, using the isomorphism stated in the claim. Since $r^2=\mathrm{id}$, the curvature satisfies $R_{\wtl{\overline{\mathcal{D}}}}^{k}=(r\otimes\mathrm{id})\circ R_{\wtl{\mathcal{D}}}^{k}\circ(r\otimes\mathrm{id})$ for all $k\in\N$.%
\footnote{At this point, one cannot naively invoke the cyclicity of trace and conclude that the Karoubi--Chern form of $\wtl{\overline{\mathcal{D}}}$ is the same as that of $\wtl{\mathcal{D}}$ because $r\otimes\mathrm{id}$ is not $\Omega_{\bl}(\wtl{A})$-linear (indeed, it modifies the $\La_{\bl}$ component in a nontrivial fashion).}
Since
\[
R_{\wtl{\overline{\mathcal{D}}}}^{k}(\lambda\otimes m\otimes\omega)=\big(R_{\wtl{\overline{\mathcal{D}}}}^{k}(1\otimes m\otimes1_{A})\big)(\lambda\otimes\omega)
\]
for all $\lambda\in\La_{\bl},m\in M$, and $\omega\in\Omega_{\bl}(A)$, the trace only depends on how $R_{\wtl{\overline{\mathcal{D}}}}^{k}$ acts on the $M$ factor. But by the relation we just obtained, $R_{\wtl{\overline{\mathcal{D}}}}^{k}(1\otimes m\otimes1_{A})=(r\otimes\mathrm{id})R_{\wtl{\mathcal{D}}}^{k}(1\otimes m\otimes1_{A})$. Hence, $\tr\big(R_{\wtl{\overline{\mathcal{D}}}}^{k}\big)=(r\otimes\mathrm{id})\tr\big(R_{\wtl{\mathcal{D}}}^{k}\big)$, which gives the claim. \qedhere
\end{enumerate}
\end{proof}

\begin{exa}
\label{rmk:path.of.connections}
Let $\K$ be $\R$ or $\C$.
The construction of $\wtl{\mathcal{D}}$ in Lemma~\ref{lem:intervalcrossspacenc} is an analogue of the following construction from connections on vector bundles (cf.\ \cite[pg~197]{Mo01}). Let $E\xrightarrow{\pi}X$ be a $\K$-vector bundle over a smooth compact manifold $X$, and let $\Gamma(E)$ be the associated $C^{\infty}(X)$-module of sections. Let $\wtl{E}\xrightarrow{\wtl{\pi}}[0,1]\times X$ be the vector bundle obtained as the pullback of $E$ via the projection $[0,1]\times X\to X$.
\[
\xy0;/r.25pc/:
(-12.5,7.5)*+{\wtl{E}}="1";
(12.5,7.5)*+{E}="2";
(-12.5,-7.5)*+{[0,1]\times X}="3";
(12.5,-7.5)*+{X}="4";
(-8.5,5)*{\lrcorner};
{\ar"1";"2"};
{\ar"1";"3"_{\wtl{\pi}}};
{\ar"2";"4"^{\pi}};
{\ar"3";"4"};
\endxy
\]
The fiber $\wtl{E}_{(t,x)}$ over $(t,x)\in[0,1]\times X$ can be taken to be $E_{x}$, and $\Gamma(\wtl{E})$ is a $C^{\infty}([0,1]\times X)$-module. Note that $C^{\infty}([0,1]\times X)$ is a Fr\'echet space~\cite[Section~44 Corollary~1 pg~447]{Tr16}, which is isomorphic to $C^{\infty}([0,1])\ctsr_{\epsilon}C^{\infty}(X)$~\cite[Theorem~44.1 pg~449]{Tr16}. Here, $\epsilon$ denotes the $\epsilon$-topology~\cite[Definition~43.1 pg~434]{Tr16} and $C^{\infty}([0,1])\ctsr_{\epsilon}C^{\infty}(X)$ denotes the completion of $C^{\infty}([0,1])\otimes_{\K} C^{\infty}(X)$ with respect to the $\epsilon$-topology~\cite[Definition~43.5 pg~439]{Tr16}. A similar isomorphism holds for the modules of sections, namely $\Gamma(\wtl{E})\cong C^{\infty}([0,1])\ctsr_{\epsilon}\Gamma(E)$. This comes from equipping the space of smooth sections of any bundle with a natural family of seminorms that turns it into a Fr\'echet space (see Section~2.1 in~\cite{Ba15}).

In the smooth setting, $C^{\infty}([0,1])$ replaces $\La_{0}$ from our algebraic setup. By Wierstrauss' approximation theorem, the space $\La_{0}$ of polynomials over $[0,1]$ is dense in $C([0,1])$ with respect to the uniform topology, and therefore $\La_{0}$ is also dense in $C^{\infty}([0,1])$ with respect to the Fr\'echet topology. From this, it follows that $\La_{0}\otimes_{\K}\Gamma(E)$ is dense in $C^{\infty}([0,1])\otimes_{\K}\Gamma(E)$. Since the latter is dense in $C^{\infty}([0,1])\ctsr_{\epsilon}\Gamma(E)$ (by definition), $\La_{0}\otimes_{\K}\Gamma(E)$ is dense in $C^{\infty}([0,1])\ctsr_{\epsilon}\Gamma(E)$.
Therefore, there is no loss of generality by specifying a path of connections using $\La_{0}$ instead of $C^{\infty}([0,1])$.

Now, associated to a (polynomial) path of connections $\nabla:\Gamma(E)\to C^{\infty}([0,1])\otimes_{\K}\Gamma(E)\otimes_{C^{\infty}(X)}\Omega_{\dR}^{1}(X)$ is a family of connections obtained via evaluation at $t\in[0,1]\subseteq\R$ giving $\nabla^{t}:\Gamma(E)\to\Gamma(E)\otimes_{C^{\infty}(X)}\Omega_{\dR}^{1}(X)$. There is an associated connection $\wtl{\na}$ on $\wtl{E}$ uniquely determined by
\[
\wtl{\na}_{\frac{\partial}{\partial t}}s=0
\quad\text{ and }\quad
\wtl{\na}_{V}s=\Big([0,1]\times X\ni(t,x)\mapsto\na_{V_{(t,x)}}^{t}s\in E_{x}\Big)
\]
for sections $s\in\Gamma(\wtl{E})$ that do not depend on $t\in[0,1]$ (this means $s$ is the pullback of a section of $E$ along the projection) and for vector fields $V$ such that $V_{(t,x)}\in  T_{(t,x)}(\{t\}\times X)\subseteq T_{(t,x)}([0,1]\times X)$ (there is some abuse of notation in the right-most term, where $s$ in $\na_{V_{(t,x)}}^{t}s$ should be interpreted as the pullback of $s$ along the inclusion $\iota_{t}:X\xrightarrow{\cong}\{t\}\times X\subseteq[0,1]\times X$). This is the analogue of specifying $\wtl{\mathcal{D}}$ on elements of the form $1\otimes m$ from Lemma~\ref{lem:intervalcrossspacenc}.
\end{exa}

\subsection{Karoubi--Chern--Simons forms}
\begin{dfn}
\label{defn:KCSform}
The $k^{\text{th}}$ \define{Karoubi--Chern--Simons form} of a polynomial path of connections $\mathcal{D}$ is $\KCS_k(\mathcal{D}):=K\chk_k(\wtl{\mathcal{D}})$, where $K$ is the homotopy operator defined in Definition~\ref{DFN.homotopy.operator} and $\wtl{\mathcal{D}}$ is the connection on $\wtl{M}$ induced by $\mathcal{D}$ as in Lemma~\ref{lem:intervalcrossspacenc}.
The \define{total Karoubi--Chern--Simons form} of $\mathcal{D}$ is $\KCS(\mathcal{D}):=K\chk(\wtl{\mathcal{D}})=\sum_{k=0}^\infty K\chk_k(\wtl{\mathcal{D}})$.
\end{dfn}

\begin{rmk}
\label{RMK.KCS.is.a.generalization.of.CS}
Karoubi--Chern--Simons forms generalize the standard Chern--Simons forms in the following sense.
Let $X$ be a smooth manifold, $E$ a complex vector bundle over $X$,  $\K:=\C$, and $A:=C^{\infty}(X;\C)$. Let $\na$ be a polynomial path of connections on $\Gamma(E)$ with associated connection $\wtl{\na}$ on $\La_{0}\otimes_{\C}\Gamma(E)$, similar to Example~\ref{rmk:path.of.connections}. The Karoubi--Chern form is related to the standard Chern form via $\frac{1}{(2\pi i)^{k}}\ch^{\Ka}_{k}(\wtl{\na})=\ch_{k}(\wtl{\na}).$
Therefore, the Karoubi--Chern--Simons form is related to the standard Chern--Simons form $\CS(\na)$ via $\frac{1}{(2\pi i)^{k}}\KCS_k(\na)=\CS_{k}(\na)$.
Since $\La_{0}$ is dense inside $C^{\infty}([0,1];\C)$, there is no loss of generality in using polynomial paths of connections instead of smooth paths of connections (cf.\ Example~\ref{rmk:path.of.connections}).
\end{rmk}

\begin{lem} \label{LEM.Transgression.formulae.chk}
Let $\mathcal{D}$ be a polynomial path of connections on $M$ going from $D_{0}$ to $D_{1}$. Then the following facts hold.
\begin{enumerate}
\item
\label{item:dKCS}
$d\KCS(\mathcal{D})=\chk(D_1)-\chk(D_0)$.
\item
\label{item:KCSreversepath}
If $\overline{\mathcal{D}}$ is the reversed path of connections (cf. Lemma~\ref{lem:intervalcrossspacenc}), then
$\KCS(\overline{\mathcal{D}})=-\KCS(\mathcal{D})$.
\item
\label{item:KCSconstantpath}
$\KCS(\mathcal{C})=0$ for a constant polynomial path of connections $\mathcal{C}$, i.e.\ one for which $C_{t}=C_{0}$ for all $t\in\K$.
\item
\label{item:KCSbigon}
If $\mathcal{E}$ is another polynomial of connections satisfying $D_{0}=E_{0}$ and $D_{1}=E_{1}$, then  $\KCS(\mathcal{D})-\KCS(\mathcal{E})\in\mathrm{Im}(d)$.
\item
\label{item:KCSloop}
$\KCS(\mathcal{D})\in \text{Im}(d)$ if $D_0=D_1$.
\end{enumerate}
\end{lem}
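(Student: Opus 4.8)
The plan is to derive all five items from the closedness of the Karoubi--Chern character form (Proposition~\ref{PRP.chk.is.closed}), the homotopy formula (Lemma~\ref{LEM.homotopy.formula}), and the two compatibilities of $\chk(\wtl{\mathcal{D}})$---with boundary evaluation and with path reversal---recorded in Lemma~\ref{lem:intervalcrossspacenc} (items~(7) and~(\ref{item:pathofconnections8})), all applied on the extended module $\wtl{M}$ over the DGA $\Om_\bl(\wtl{A})$. Write $\varphi:=\chk(\wtl{\mathcal{D}})\in\La_\bl\dtsr\Om_\bl(A)_{\ab}$, so that $\KCS(\mathcal{D})=K\varphi$ with $K$ the homotopy operator of Definition~\ref{DFN.homotopy.operator}. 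For item~(\ref{item:dKCS}): Proposition~\ref{PRP.chk.is.closed} gives $d\varphi=0$, so the homotopy formula collapses to $d(K\varphi)=\varphi(1)-\varphi(0)$, and since $\varphi(t)=\mathrm{ev}_{t}\chk(\wtl{\mathcal{D}})=\chk(\mathcal{D}_{t})$ by Lemma~\ref{lem:intervalcrossspacenc}(7), the right side is $\chk(D_{1})-\chk(D_{0})$. For item~(\ref{item:KCSreversepath}): Lemma~\ref{lem:intervalcrossspacenc}(\ref{item:pathofconnections8}) gives $\chk(\wtl{\overline{\mathcal{D}}})=(r\tsr\mathrm{id})\chk(\wtl{\mathcal{D}})$ with $r$ acting on the $\La_\bl$-factor by the substitution $t\mapsto1-t$; a one-line check shows $\kappa\circ r=-\kappa$ (that substitution sends $P(t)\,dt$ to $-P(1-t)\,dt$ and leaves $\int_{0}^{1}$ unchanged), hence $K\circ(r\tsr\mathrm{id})=-K$ and $\KCS(\overline{\mathcal{D}})=-\KCS(\mathcal{D})$.

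For item~(\ref{item:KCSconstantpath}): since an element of $\La_{0}\tsr_\K M\tsr_A\Om_{1}(A)$ is determined by its evaluations (Remark~\ref{rmk:spaceofconnectionsisaffine}), a constant path must satisfy $\mathcal{C}(m)=1\tsr C_{0}(m)$---equivalently, $\wtl{\mathcal{C}}$ is the tensor product of the flat de~Rham connection on $\La_\bl$ with the connection $C_{0}$, whose curvature carries no $dt$. Concretely: in the formula of Lemma~\ref{lem:intervalcrossspacenc}(4) all the $\La_{0}$-coefficients of $\mathcal{C}(m)$ equal $1$, so their $d$ vanishes, and one reads off that $\wtl{\mathcal{C}}^{2}(1\tsr m)$, hence inductively $\wtl{\mathcal{C}}^{2k}(1\tsr m\tsr1_{A})$, lies in $\La_{0}\tsr_\K M\tsr_A\Om_{2k}(A)$, i.e.\ has no $dt$-component. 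Therefore $\tr(\wtl{\mathcal{C}}^{2k})$ has no $dt$-component and is annihilated by $K$, giving $\KCS(\mathcal{C})=0$.

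For item~(\ref{item:KCSbigon}): I would run the argument of item~(\ref{item:dKCS}) one level up. The assignment $s\mapsto(1-s)\wtl{\mathcal{D}}+s\wtl{\mathcal{E}}$ is a polynomial path $\mathcal{G}$ of connections on $\wtl{M}$ over $\Om_\bl(\wtl{A})$ running from $\wtl{\mathcal{D}}$ to $\wtl{\mathcal{E}}$ (connections form an affine space), so item~(\ref{item:dKCS}) applied to $\mathcal{G}$ over the base $\wtl{A}$ gives, inside $\La_\bl\dtsr\Om_\bl(A)_{\ab}$,
\[
d\,\KCS(\mathcal{G})=\chk(\wtl{\mathcal{E}})-\chk(\wtl{\mathcal{D}}).
\]
Applying the $t$-direction homotopy operator $K$, which commutes with all the $s$-direction data, and using $K\chk(\wtl{\mathcal{E}})=\KCS(\mathcal{E})$ and $K\chk(\wtl{\mathcal{D}})=\KCS(\mathcal{D})$, the left side becomes $K\,d\,\KCS(\mathcal{G})=\KCS(\mathcal{E})-\KCS(\mathcal{D})$. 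On the other hand the homotopy formula in the $t$-variable rewrites $K\,d\,\KCS(\mathcal{G})=\KCS(\mathcal{G})(1)-\KCS(\mathcal{G})(0)-d\,K\,\KCS(\mathcal{G})$, and evaluating $\mathcal{G}$ at $t\in\{0,1\}$ produces the $s$-path $s\mapsto(1-s)D_{t}+sE_{t}$, which is the \emph{constant} path at $D_{1}$ (resp.\ $D_{0}$) since $\mathcal{D}$ and $\mathcal{E}$ share both endpoints; by item~(\ref{item:KCSconstantpath}) those two boundary terms vanish. Hence $\KCS(\mathcal{E})-\KCS(\mathcal{D})=-d\,K\,\KCS(\mathcal{G})\in\mathrm{Im}(d)$. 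Item~(\ref{item:KCSloop}) then follows at once: apply item~(\ref{item:KCSbigon}) with $\mathcal{E}$ the constant path at $D_{0}=D_{1}$ and use item~(\ref{item:KCSconstantpath}).

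The main obstacle is making item~(\ref{item:KCSbigon}) precise. One must set up the ``two interval variables'' bookkeeping: using $\Om_\bl(\wtl{A})\cong\La_\bl\dtsr\Om_\bl(A)$, hence $\La_\bl\dtsr\Om_\bl(\wtl{A})\cong\La_\bl\dtsr\La_\bl\dtsr\Om_\bl(A)$, one has to know that the connection $\wtl{\mathcal{G}}$ built in the $s$-direction may equally be read as a construction in the $t$-direction, that the homotopy operators and boundary evaluations in the two interval directions commute, and that restricting $\chk(\wtl{\mathcal{G}})$ to $t\in\{0,1\}$ really computes $\chk$ of the constant $s$-path at $D_{t}$ (so that item~(\ref{item:KCSconstantpath}) applies there). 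These facts follow from the graded-commutativity of $\La_\bl\dtsr\La_\bl$ and the naturality of the Karoubi--Chern character under the DGA maps already exploited in Lemma~\ref{lem:intervalcrossspacenc}, but this is where the care is concentrated.
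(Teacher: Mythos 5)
Your proof is correct and follows essentially the same route as the paper for all five items. Items (1), (2), (3), and (5) are exactly as in the paper; item (4) is the same computation in disguise. Where you iterate the primary homotopy formula---viewing $s\mapsto(1-s)\wtl{\mathcal{D}}+s\wtl{\mathcal{E}}$ as a polynomial path of connections on $\wtl{M}$ over $\Om_\bl(\wtl{A})$, applying item~(\ref{item:dKCS}) once more, and then the $t$-direction homotopy formula---the paper packages this as a \emph{polynomial bigon of connections} $\Delta$ with an associated connection $\wtl{\Delta}$ on the ``double tilde'' module, and proves a secondary homotopy formula for $dKK_{1}-KK_{1}d$ (Lemma~\ref{LEM.homotopy.formula.secondary}) once and for all; the explicit primitive you produce, $-dK_{t}K_{s}\chk(\wtl{\mathcal{G}})$, is exactly the paper's $dKK_{1}\chk(\wtl{\Delta})$ after identifying $\wtl{\mathcal{G}}$ with $\wtl{\Delta}$. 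The ``two-interval bookkeeping'' you rightly flag as the delicate point---commutation of $K$, $K_{s}$, and the boundary evaluations, and that $\mathrm{ev}_{t\in\{0,1\}}$ of $\chk(\wtl{\Delta})$ really computes $\chk$ of the constant $s$-path---is precisely what Definition~\ref{defn:bigonofconnections}, Lemma~\ref{lem:polybigonsexist}, Construction~\ref{rmk:bigonconnectionscurvature}, and Lemma~\ref{LEM.homotopy.formula.secondary} are there to formalize; so to make your argument airtight you would in effect be re-deriving those pieces.
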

\begin{proof}
The exterior derivative of the Karoubi--Chern--Simons form is given by
\[
\begin{split}
d\KCS(\mathcal{D})&=dK\ch^{\Ka}(\wtl{\mathcal{D}})\quad\text{ by Definition~\ref{defn:KCSform}}\\
&=\mathrm{ev}_{1}\big(\ch^{\Ka}(\wtl{\mathcal{D}})\big)-\mathrm{ev}_{0}\big(\ch^{\Ka}(\wtl{\mathcal{D}})\big)-Kd\ch^{\Ka}(\wtl{\mathcal{D}})\quad\text{ by Lemma~\ref{LEM.homotopy.formula}}\\
&=\mathrm{ev}_{1}\big(\ch^{\Ka}(\wtl{\mathcal{D}})\big)-\mathrm{ev}_{0}\big(\ch^{\Ka}(\wtl{\mathcal{D}})\big)\quad\text{by Proposition~\ref{PRP.chk.is.closed}}\\
&=\chk(D_1)-\chk(D_0)\quad\text{ by Lemma~\ref{lem:intervalcrossspacenc}}.
\end{split}
\]
This proves claim (1). Claim (2) follows from item (\ref{item:pathofconnections8}) of Lemma~\ref{lem:intervalcrossspacenc} since $r(dt)=-dt$ (using the notation introduced in that lemma). The proofs of the other claims will be given after
Construction~\ref{rmk:bigonconnectionscurvature}.
\end{proof}

\begin{lem}\label{lem:KCSdirectsumpaths}
Let $M$ and $M'$ be two $A$-modules.
Given two polynomial paths of connections $\mathcal{D}$ on $M$ and $\mathcal{D}'$ on $M'$ (with respect to the same DGA $\Omega_{\bl}(A)$ on top of $A$),
\[
\KCS(\mathcal{D}\oplus\mathcal{D}')=\KCS(\mathcal{D})+\KCS(\mathcal{D}'),
\]
where $\mathcal{D}\oplus\mathcal{D}'$ is the polynomial path of connections on $M\oplus M'$ obtained by the composite
\[
M\oplus M'\xrightarrow{\mathcal{D}\oplus\mathcal{D}'}\big(\La_{0}\otimes_{\K}M\otimes_{A}\Omega_{1}(A)\big)\oplus\big(\La_{0}\otimes_{\K}M'\otimes_{A}\Omega_{1}(A)\big)\cong \La_{0}\otimes_{\K}(M\oplus M')\otimes_{A}\Omega_{1}(A).
\]
\end{lem}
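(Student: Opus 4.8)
The plan is to reduce the statement to additivity of the trace over direct sums, once we have identified the connection $\wtl{\mathcal{D}\oplus\mathcal{D}'}$ on $\widetilde{M\oplus M'}$ induced by the path $\mathcal{D}\oplus\mathcal{D}'$ with the direct sum $\wtl{\mathcal{D}}\oplus\wtl{\mathcal{D}'}$ of the induced connections (in the sense of Lemma~\ref{lem:directsumconnections}, applied over the DGA $\Omega_{\bl}(\wtl A)$). First I would record that $\mathcal{D}\oplus\mathcal{D}'$ is indeed a polynomial path of connections, since $(\mathcal{D}\oplus\mathcal{D}')_t=\mathcal{D}_t\oplus\mathcal{D}'_t$ for every $t\in\K$ and each $\mathcal{D}_t\oplus\mathcal{D}'_t$ is a connection by Lemma~\ref{lem:directsumconnections}. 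To prove $\wtl{\mathcal{D}\oplus\mathcal{D}'}=\wtl{\mathcal{D}}\oplus\wtl{\mathcal{D}'}$, I would use the canonical $\wtl A$-module isomorphism $\Lambda_0\tsr_\K(M\oplus M')\cong(\Lambda_0\tsr_\K M)\oplus(\Lambda_0\tsr_\K M')$ together with the fact, from item (3) of Lemma~\ref{lem:intervalcrossspacenc}, that each of these connections is uniquely determined by its action on elements of the form $1\tsr(-)$, on which it returns the underlying path of connections applied to that element. Since $(\mathcal{D}\oplus\mathcal{D}')(m,m')=(\mathcal{D}(m),\mathcal{D}'(m'))$ by definition, chasing through the distributive isomorphisms of item (2) of Lemma~\ref{lem:intervalcrossspacenc} and of Lemma~\ref{lem:directsumconnections} shows the two connections agree on $1\tsr(m,m')$, hence everywhere.

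With that identification in hand, Lemma~\ref{lem:directsumconnections} gives $R_{\wtl{\mathcal{D}}\oplus\wtl{\mathcal{D}'}}=R_{\wtl{\mathcal{D}}}\oplus R_{\wtl{\mathcal{D}'}}$ (up to the distributive natural isomorphism), so that $\big(R_{\wtl{\mathcal{D}}\oplus\wtl{\mathcal{D}'}}\big)^k=R_{\wtl{\mathcal{D}}}^k\oplus R_{\wtl{\mathcal{D}'}}^k$ for all $k$. I would then invoke additivity of the trace over direct summands: from Definition~\ref{DFN.trace.with.Ab}, writing $F:=\wtl M\tsr_{\wtl A}\Omega_{\even}(\wtl A)$ and $F':=\wtl{M'}\tsr_{\wtl A}\Omega_{\even}(\wtl A)$, the module $(F\oplus F')\tsr_{\Omega_{\even}(\wtl A)}(F\oplus F')^*$ decomposes into the four pieces $F_i\tsr F_j^*$, a block-diagonal endomorphism lies in the two diagonal pieces, and the evaluation map restricted to those pieces computes exactly the two constituent traces; hence $\tr\big(R_{\wtl{\mathcal{D}}}^k\oplus R_{\wtl{\mathcal{D}'}}^k\big)=\tr\big(R_{\wtl{\mathcal{D}}}^k\big)+\tr\big(R_{\wtl{\mathcal{D}'}}^k\big)$ in $(\Omega_{\bl}(\wtl A)_{\ab})_{\even}$. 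This yields $\chk_k(\wtl{\mathcal{D}\oplus\mathcal{D}'})=\chk_k(\wtl{\mathcal{D}})+\chk_k(\wtl{\mathcal{D}'})$ at the level of forms (not merely homology classes), and summing over $k$ gives $\chk(\wtl{\mathcal{D}\oplus\mathcal{D}'})=\chk(\wtl{\mathcal{D}})+\chk(\wtl{\mathcal{D}'})$.

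Finally, the homotopy operator $K$ of Definition~\ref{DFN.homotopy.operator} is $\K$-linear and descends to abelianizations, so applying it to the previous identity gives $\KCS(\mathcal{D}\oplus\mathcal{D}')=K\chk(\wtl{\mathcal{D}\oplus\mathcal{D}'})=K\chk(\wtl{\mathcal{D}})+K\chk(\wtl{\mathcal{D}'})=\KCS(\mathcal{D})+\KCS(\mathcal{D}')$ by Definition~\ref{defn:KCSform}, which is the claim.

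I expect the one genuinely delicate point to be the first identification $\wtl{\mathcal{D}\oplus\mathcal{D}'}=\wtl{\mathcal{D}}\oplus\wtl{\mathcal{D}'}$: this is ``morally obvious'' but requires checking that the distributive isomorphism $\Lambda_0\tsr_\K(M\oplus M')\cong(\Lambda_0\tsr_\K M)\oplus(\Lambda_0\tsr_\K M')$, the mixed isomorphisms of item (2) of Lemma~\ref{lem:intervalcrossspacenc} relating $\wtl M\tsr_{\wtl A}\Omega_{\bl}(\wtl A)$ to $\Lambda_{\bl}\tsr_\K M\tsr_A\Omega_{\bl}(A)$, and the distributive isomorphism built into Lemma~\ref{lem:directsumconnections} all intertwine correctly. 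An alternative that bypasses part of this bookkeeping is to argue pointwise in $t$, combining $(\mathcal{D}\oplus\mathcal{D}')_t=\mathcal{D}_t\oplus\mathcal{D}'_t$ with the $\mathrm{ev}_t$-compatibility statements of Lemma~\ref{lem:intervalcrossspacenc}, though one still needs the trace additivity, so the simplification is modest.
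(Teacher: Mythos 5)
Your proof is correct and follows essentially the same route as the paper's: identify $\widetilde{\mathcal{D}\oplus\mathcal{D}'}$ with $\widetilde{\mathcal{D}}\oplus\widetilde{\mathcal{D}'}$ by checking on elements of the form $1\otimes(m,m')$, then invoke Lemma~\ref{lem:directsumconnections} for the curvature decomposition, additivity of the trace, and $\K$-linearity of $K$. You have spelled out the bookkeeping (trace additivity via the decomposition $(F\oplus F')\otimes(F\oplus F')^*$, the chain of distributive isomorphisms) that the paper leaves implicit, but the content is the same.
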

Since the last isomorphism is canonical, it will often be dropped from the notation.
\begin{proof}
This follows from Lemma~\ref{lem:directsumconnections} and the fact that $\widetilde{\mathcal{D}\oplus\mathcal{D}'}(1\otimes m)=\mathcal{D}(m)\oplus\mathcal{D}'(m')=\widetilde{\mathcal{D}}(1\otimes m)\oplus\widetilde{\mathcal{D}'}(1\otimes m')=\widetilde{\mathcal{D}}\oplus\widetilde{\mathcal{D}'}\big(1\otimes(m\oplus m')\big)$ for all $m\in M$ and $m'\in M'$.
\end{proof}

Recall Definition \ref{DFN.homotopy.operator}, where we introduced the complex $\La_{\bl}$ of (formal) polynomials and their differentials in a single variable, which can be thought of as representing the DGA of polynomial differential forms on the unit interval. Also recall that the homotopy operator $\kappa:\La_{\bl}\to\K$ induces a homotopy operator $K:\Omega_{\bl}(\wtl{A})\to\Omega_{\bl}(A)$. Similarly to this setup, $\La_{\bl}\dtsr\La_{\bl}$ is (isomorphic to) the DGA of (formal) polynomials in two (commuting) variables,  which can be thought of as representing the DGA of polynomial differential forms over the unit square. Note that when viewed as polynomials in two variables $s$ and $t$, the relation $ds\wedge dt=-dt\wedge ds=$ holds by the definition of the graded tensor product. Set
\[
\dwtl{A}:=\La_{0}\tsr_{\K}\La_{0}\tsr_{\K}A.
\]
The DGA $\Omega_{\bl}(\dwtl{A}):=\La_\bl\dtsr\La_\bl\dtsr\Om_\bl(A)$ is equipped with the differential determined on homogeneous elements by
\[
d(\eta\tsr\om\tsr\te)=d\eta\tsr\om\tsr\te+(-1)^{|\eta|}\eta\tsr d\om\tsr\te+(-1)^{|\eta|+|\om|} \eta\tsr\om\tsr d\te,
\]
where $|\eta|$, $|\om|$ are the degrees of $\eta$, $\om$, respectively.
There are two homotopy operators $K_{1},K_{2}:\Omega_{\bl}(\dwtl{A})\to\Omega_{\bl}(\wtl{A})$ uniquely determined by the formulas
\beqs
K_{1}(\eta\tsr\om\tsr\te):=\kappa(\eta)\om\tsr\te\quad\text{ and }\quad
K_{2}(\eta\tsr\om\tsr\te):=\kappa(\om)\eta\tsr\te.
\eeqs

Notice that $KK_{1}=KK_{2}$. With this, we get the following homotopy formula.
\begin{lem}
\label{LEM.homotopy.formula.secondary}
For any homogeneous $\eta\tsr\om\tsr\te\in \La_\bl\dtsr\La_\bl\dtsr\Om_\bl(A)$,
\beqs
\begin{split}
(dKK_{1}-KK_{1}d)(\eta\tsr\om\tsr\te)&=
\Big(\kappa(\eta)\big(\omega(1)-\omega(0)\big)-\big(\eta(1)-\eta(0)\big)\kappa(\omega)\Big)\theta\\
&=
\left\{ \begin{array}{ll}
0\quad&\trm{if } \eta\otimes\om\in\La_0\otimes_{\K}\La_0\\
-(\eta(1)-\eta(0))\kappa(\om)\te\quad&\trm{if } \eta\otimes\om\in\La_0\otimes_{\K}\La_1\\
\kappa(\eta)(\om(1)-\om(0))\te\quad&\trm{if } \eta\otimes\om\in\La_1\otimes_{\K}\La_0\\
0 \quad&\trm{if } \eta\otimes\om\in\La_1\otimes_{\K}\La_1\\
\end{array} \right.
\end{split}
\eeqs
A similar result holds upon abelianization.
\end{lem}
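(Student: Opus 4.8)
The plan is to reduce the statement to the one-variable homotopy formula (Lemma~\ref{LEM.homotopy.formula}) applied in each tensor slot separately, since the operator $KK_{1}$ only ``integrates out'' the first $\La_{\bl}$ factor and leaves the remaining data untouched. Concretely, I would first record how $d$, $K_{1}$, and $K$ act on a homogeneous element $\eta\tsr\om\tsr\te$ with $|\eta|\in\{0,1\}$ and $|\om|\in\{0,1\}$, using that $\La_{n}=0$ for $n\notin\{0,1\}$ so there are only four cases to check. The operator $K_{1}$ is $\kappa$ on the first slot tensored with the identity, and $K$ is $\kappa$ on the (now only) $\La_{\bl}$-slot tensored with the identity on $\Om_{\bl}(A)$, so $KK_{1}(\eta\tsr\om\tsr\te)$ is nonzero only when both $\eta$ and $\om$ are pure multiples of $dt$, i.e.\ in $\La_{1}$, in which case it equals $\kappa(\eta)\kappa(\om)\te\in\Om_{\bl}(A)$; this is manifestly symmetric in $\eta,\om$, which is where $KK_{1}=KK_{2}$ comes from.

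The main computation is then to expand $dKK_{1}-KK_{1}d$ on $\eta\tsr\om\tsr\te$. I would split $d$ into its three Leibniz terms $d\eta\tsr\om\tsr\te$, $(-1)^{|\eta|}\eta\tsr d\om\tsr\te$, $(-1)^{|\eta|+|\om|}\eta\tsr\om\tsr d\te$. In the third term the $d\te$ gets pulled outside by $\Om_{\bl}(A)$-linearity of $K_{1}$ and $K$ in the last slot, and the corresponding contributions in $dKK_{1}$ and $KK_{1}d$ cancel, so only the first two Leibniz terms contribute a net amount. For the first term $d\eta\tsr\om\tsr\te$: if $\eta\in\La_{1}$ then $d\eta=0$; if $\eta\in\La_{0}$ then $d\eta=\eta'\,dt\in\La_{1}$ and $KK_{1}$ picks out $\kappa(\eta'\,dt)\kappa(\om)\te=(\eta(1)-\eta(0))\kappa(\om)\te$ by the fundamental theorem of calculus (the defining property of $\kappa$), provided $\om\in\La_{1}$; otherwise it is zero. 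The sign bookkeeping — the $(-1)^{|\eta|}$ in front of the $d\om$ term, and the degree shift when $d$ is applied \emph{after} $KK_{1}$ versus \emph{before} — is what produces the antisymmetric combination $\kappa(\eta)(\om(1)-\om(0))-(\eta(1)-\eta(0))\kappa(\om)$, and then one reads off the four cases: both $\La_{0}$ gives $0$ because $\kappa$ vanishes on $\La_{0}$; both $\La_{1}$ gives $0$ because $\om(1)-\om(0)=0=\eta(1)-\eta(0)$ for forms in $\La_{1}$ (they have no $\La_{0}$-component, so their evaluations vanish); and the two mixed cases give the stated single term.

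I would finish by noting that the abelianized statement is immediate: all of $d$, $K$, $K_{1}$, $\mathrm{ev}_{0}$, $\mathrm{ev}_{1}$, and $\kappa$ descend to the abelianizations (as already observed after Definition~\ref{DFN.homotopy.operator} and in the discussion preceding this lemma, using that $\La_{\bl}$ is a commutative DGA so $\Om_{\bl}(\dwtl{A})_{\ab}=\La_{\bl}\dtsr\La_{\bl}\dtsr\Om_{\bl}(A)_{\ab}$), and the identity is linear in $\te$, so it passes through the quotient verbatim. The only mildly delicate point — the ``hard part,'' though it is really just care rather than difficulty — is getting every sign right when commuting $d$ past $KK_{1}$, since $KK_{1}$ lowers degree by two and $d$ raises it by one; I would handle this by treating $K$ and $K_{1}$ as the ``contraction against $dt$ then integrate'' operators and tracking the Koszul sign exactly as in the proof of the one-variable Lemma~\ref{LEM.homotopy.formula}, applied once per variable.
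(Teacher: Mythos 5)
Your proof is correct, and your stated plan (apply the one-variable homotopy formula once per $\La_{\bl}$ slot) is conceptually the paper's, but your actual computation takes a different, more explicit route. The paper's proof is a two-step substitution that never touches the Leibniz rule: it applies Lemma~\ref{LEM.homotopy.formula} first in the inner $\La_{\bl}$ slot to write $KK_{1}d(\eta\tsr\om\tsr\te) = K\big((\eta(1)-\eta(0))\om\tsr\te\big) - KdK_{1}(\eta\tsr\om\tsr\te)$, and then again on the $\La_{\bl}\dtsr\Om_{\bl}(A)$ factor to rewrite $KdK_{1}(\eta\tsr\om\tsr\te)$ as $\kappa(\eta)\big(\om(1)-\om(0)\big)\te - dKK_{1}(\eta\tsr\om\tsr\te)$; rearranging yields the unified formula directly, and the four-case table is merely read off afterward. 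Your version instead expands $d$ into its three Leibniz terms, argues that the $d\te$-contributions to $dKK_{1}$ and $KK_{1}d$ cancel, and then evaluates the surviving two terms via FTC with a four-case analysis. Both are valid, but note that your cancellation claim for the $d\te$ term is slightly subtler than you let on: $dKK_{1}$ gives $\kappa(\eta)\kappa(\om)\,d\te$ while $KK_{1}d$ contributes $(-1)^{|\eta|+|\om|}\kappa(\eta)\kappa(\om)\,d\te$, and these cancel only because $\kappa(\eta)\kappa(\om)\neq 0$ forces $|\eta|=|\om|=1$, so the Koszul sign is $+1$. Your proof handles this implicitly, but the paper's approach avoids the issue entirely by never splitting $d$. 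The abelianization remark is handled identically in both.
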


\begin{proof}
This follows from
\[
\begin{split}
KK_{1}d(\eta\tsr\om\tsr\te)&=K\Big(\big(\eta(1)-\eta(0)\big)\omega\otimes\theta\Big)-KdK_{1}(\eta\otimes\omega\otimes\theta)\\
&=\big(\eta(1)-\eta(0)\big)\kappa(\omega)\theta+dKK_{1}(\eta\otimes\omega\otimes\theta)-\kappa(\eta)\big(\omega(1)-\omega(0)\big)\theta
\end{split}
\]
by applying Lemma~\ref{LEM.homotopy.formula} twice.
\end{proof}

It is useful to point out that the homotopy formula in Lemma~\ref{LEM.homotopy.formula.secondary} can be expressed as
\[
dKK_{1}-KK_{1}d=\mathrm{ev}_{0}K_{2}+\mathrm{ev}_{1}K_{1}-\mathrm{ev}_{1}K_{2}-\mathrm{ev}_{0}K_{1},
\]
\begin{wrapfigure}{r}{0.20\textwidth}
\vspace{-4mm}
  \centering
    \begin{tikzpicture}
    \draw (0,0) -- node[below]{\scriptsize$\mathrm{ev}_{0}K_{2}$} (2,0) -- node[left]{\rotatebox{-90}{\scriptsize$\mathrm{ev}_{1}K_{1}$}} (2,-2) -- node[above]{\scriptsize$-\mathrm{ev}_{1}K_{2}$} (0,-2) -- node[right]{\rotatebox{-90}{\scriptsize$-\mathrm{ev}_{0}K_{1}$}} cycle;
    \draw[->] (-0.2,-0.2) -- (-0.2,-1.3) node[below]{\small$s$};
    \draw[->] (0.2,0.2) -- (1.3,0.2) node[right]{\small$t$};
    \coordinate (TLbeg) at (0.7,-0.6);
    \coordinate (TR) at (1.4,-0.6);
    \coordinate (BR) at (1.4,-1.4);
    \coordinate (BL) at (0.6,-1.4);
    \coordinate (TLend) at (0.6,-0.7);
    \draw[rounded corners=3pt,->] (TLbeg) -- (TR) -- (BR) -- (BL) -- (TLend);
    \end{tikzpicture}
\end{wrapfigure}
which provides a geometric picture in terms of integration and evaluation on the sides of a square. Namely, $\mathrm{ev}_{0}K_{2}$ integrates along the positive $t$ direction at $s=0$, $\mathrm{ev}_{1}K_{1}$ integrates along the $s$ direction at $t=1$, $-\mathrm{ev}_{1}K_{2}$ integrates along the $-t$ direction (hence, the negative sign) at $s=1$, and $-\mathrm{ev}_{0}K_{1}$ integrates along the $-s$ direction at $t=0$, thus giving a loop around the square.

\begin{dfn}
\label{defn:bigonofconnections}
A \define{polynomial square of connections} on the $A$-module $M$ with respect to the DGA $\Omega_{\bl}(A)$ is a map $\Delta:M\to\La_{0}\otimes_{\K}\La_{0}\otimes_{\K}M\otimes_{A}\Omega_{1}(A)$ satisfying a modified form of the Leibniz rule given by $\Delta(ma)=1\otimes1\otimes m\otimes da +\Delta(m)a$ for all $a\in A$ and $m\in M$. Equivalently, $\Delta$ satisfies the condition that the composite $\Delta_{(s,t)}:=M\xrightarrow{\Delta}\Lambda_{0}\otimes_{\K}\Lambda_{0}\otimes_{\K}M\otimes_{A}\Omega_{1}(A)\xrightarrow{\mathrm{ev}_{(s,t)}}M\otimes_{A}\Omega_{1}(A)$ is a connection on $M$ for every $s,t\in\K$.
Here, there are two evaluation maps, which will be written as%
\footnote{The $s$ variable will always be used for the left $\La_{0}$ factor, while $t$ will be used for the right one, so a more precise way of writing these would be as $\mathrm{ev}_{s}\tsr\mathrm{id}$ and $\mathrm{id}\tsr\mathrm{ev}_{t}$, though we avoid this cluttered notation.}
$\mathrm{ev}_{s}$ and $\mathrm{ev}_{t}$ satisfying $\mathrm{ev}_{s}\circ\mathrm{ev}_{t}=\mathrm{ev}_{t}\circ\mathrm{ev}_{s}$ so that this last map is written as $\mathrm{ev}_{(s,t)}$.
Let $\mathcal{D},\mathcal{E}:M\to\La_{0}\tsr_{\K}M\tsr_{A}\Omega_{1}(A)$ be two polynomial paths of connections such that $\mathcal{D}_{0}=\mathcal{E}_{0}$ and $\mathcal{D}_{1}=\mathcal{E}_{1}$. A \define{polynomial bigon of connections from $\mathcal{D}$ to $\mathcal{E}$} is a polynomial square of connections $\Delta:M\to\La_{0}\otimes_{\K}\La_{0}\otimes_{\K}M\otimes_{A}\Omega_{1}(A)$ such that $\mathrm{ev}_{s=0}(\Delta)=\mathcal{D}$ and $\mathrm{ev}_{s=1}(\Delta)=\mathcal{E}$, where the evaluation maps are for the left $\La_{0}$ factor and where the diagrams
\[
\xy0;/r.25pc/:
(20,7.5)*+{M}="M";
(-20,-7.5)*+{M\tsr_{A}\Omega_{\bl}(A)}="MA";
(20,-7.5)*+{\Lambda_{0}\tsr_{\K}M\tsr_{A}\Omega_{\bl}(A)}="LMA";
{\ar"M";"LMA"^{\mathrm{ev}_{t=0}(\Delta)}};
{\ar"M";"MA"_{\mathcal{E}_{0}=\mathcal{D}_{0}}};
{\ar@{^{(}->}"MA";"LMA"_(0.45){i_{M}}};
\endxy
\quad\text{ and }\quad
\xy0;/r.25pc/:
(-20,7.5)*+{M}="M";
(20,-7.5)*+{M\tsr_{A}\Omega_{\bl}(A)}="MA";
(-20,-7.5)*+{\Lambda_{0}\tsr_{\K}M\tsr_{A}\Omega_{\bl}(A)}="LMA";
{\ar"M";"LMA"_{\mathrm{ev}_{t=1}(\Delta)}};
{\ar"M";"MA"^{\mathcal{E}_{1}=\mathcal{D}_{1}}};
{\ar@{_{(}->}"MA";"LMA"^(0.45){i_{M}}};
\endxy
\]
both commute.
\end{dfn}

\begin{lem}
\label{lem:polybigonsexist}
Let $\mathcal{D},\mathcal{E}$ be two polynomial paths of connections such that $\mathcal{D}_{0}=\mathcal{E}_{0}$ and $\mathcal{D}_{1}=\mathcal{E}_{1}$. Then there exists a polynomial bigon of connections from $\mathcal{D}$ to $\mathcal{E}$.
\end{lem}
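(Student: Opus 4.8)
The plan is to write down an explicit bigon by interpolating affinely in the new ``square'' variable. Recall from Remark~\ref{rmk:spaceofconnectionsisaffine} that connections, and hence paths of connections, form an affine $\K$-space, so the linear combination of $\mathcal{D}$ and $\mathcal{E}$ written below is again a legitimate family. Let $u\in\La_{0}$ denote the polynomial representing $s\mapsto s$, thought of as sitting in the \emph{left} tensor factor $\La_{0}$ (the new $s$-direction), and regard $\mathcal{D}(m),\mathcal{E}(m)\in\La_{0}\tsr_{\K}M\tsr_{A}\Omega_{1}(A)$ as using the \emph{right} $\La_{0}$ factor (the $t$-direction, as in the definition of a polynomial path of connections). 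Define
\[
\Delta\colon M\to\La_{0}\tsr_{\K}\La_{0}\tsr_{\K}M\tsr_{A}\Omega_{1}(A),\qquad
\Delta(m):=(1-u)\tsr\mathcal{D}(m)+u\tsr\mathcal{E}(m).
\]
This is manifestly $\K$-linear and well defined.

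First I would check that $\Delta$ is a polynomial square of connections, i.e.\ that it satisfies the modified Leibniz rule of Definition~\ref{defn:bigonofconnections}. Since $\mathcal{D}$ and $\mathcal{E}$ are polynomial paths of connections, $\mathcal{D}(ma)=1\tsr m\tsr da+\mathcal{D}(m)a$ and likewise for $\mathcal{E}$, so
\[
\Delta(ma)=(1-u)\tsr\big(1\tsr m\tsr da+\mathcal{D}(m)a\big)+u\tsr\big(1\tsr m\tsr da+\mathcal{E}(m)a\big)=1\tsr1\tsr m\tsr da+\Delta(m)a,
\]
using $(1-u)+u=1$ for the $da$-term. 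Next, evaluating the left $\La_{0}$-factor at $s=0$ kills $u$ and sends $1-u$ to $1$, so $\mathrm{ev}_{s=0}(\Delta)=\mathcal{D}$; evaluating at $s=1$ gives $\mathrm{ev}_{s=1}(\Delta)=\mathcal{E}$. Hence $\Delta$ restricts on the two vertical sides of the square to the prescribed paths $\mathcal{D}$ and $\mathcal{E}$, so it is a bigon \emph{from} $\mathcal{D}$ \emph{to} $\mathcal{E}$, provided the horizontal-side conditions hold.

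It then remains to verify the two commuting diagrams of Definition~\ref{defn:bigonofconnections}, which concern the sides $t=0$ and $t=1$. Evaluating the right $\La_{0}$-factor at $t=0$ and using $\mathrm{ev}_{t=0}(\mathcal{D}(m))=\mathcal{D}_{0}(m)$ and $\mathrm{ev}_{t=0}(\mathcal{E}(m))=\mathcal{E}_{0}(m)$ gives
\[
\mathrm{ev}_{t=0}(\Delta)(m)=(1-u)\tsr\mathcal{D}_{0}(m)+u\tsr\mathcal{E}_{0}(m)=1\tsr\mathcal{D}_{0}(m)=i_{M}\big(\mathcal{D}_{0}(m)\big),
\]
where the middle equality uses precisely the hypothesis $\mathcal{D}_{0}=\mathcal{E}_{0}$ together with $(1-u)+u=1$; the same computation at $t=1$ uses $\mathcal{D}_{1}=\mathcal{E}_{1}$ to give $\mathrm{ev}_{t=1}(\Delta)=i_{M}\circ\mathcal{D}_{1}$. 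Thus $\Delta$ is a polynomial bigon of connections from $\mathcal{D}$ to $\mathcal{E}$. There is no genuine obstacle here: the only thing to be careful about is keeping track of which $\La_{0}$ factor carries which evaluation variable, and the crux of the argument is simply that the endpoints of $\mathcal{D}$ and $\mathcal{E}$ coincide, which is exactly what makes the value of $\Delta$ on the horizontal sides independent of $s$.
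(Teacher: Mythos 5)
Your proposal is correct and uses exactly the same construction as the paper, namely $\Delta(m)=(1-u)\tsr\mathcal{D}(m)+u\tsr\mathcal{E}(m)$ with $u$ the identity polynomial in the left $\La_0$ factor. The only difference is presentational: the paper motivates the formula via the analogous elementary fact for polynomials on the unit square and leaves the verifications implicit, whereas you spell out the modified Leibniz rule and the four boundary conditions explicitly — all correctly.
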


\begin{proof}
This lemma follows from the following basic fact about polynomials. Let $p$ and $q$ be two polynomials on the unit interval satisfying $p(0)=q(0)=:a$ and $p(1)=q(1)=:b$. Then there exists a polynomial $r$ on the unit square such that
\[
r(0,t)=p(t),\quad r(1,t)=q(t),\quad r(s,0)=a, \quad r(s,1)=b \qquad\forall\; s,t\in[0,1].
\]
Indeed, the polynomial
\[
[0,1]\times[0,1]\ni (s,t)\mapsto(1-s)p(t)+sq(t)
\]
satisfies this property. Let $u$ be the element of $\La_{0}$ representing the identity polynomial, namely $u(s)=s$ for all $s\in[0,1]$. Then
\[
\begin{split}
M&\xrightarrow{\Delta}\La_{0}\tsr_{\K}\La_{0}\tsr_{\K}M\tsr_{A}\Omega_{1}(A)\\
m&\mapsto(1-u)\tsr\mathcal{D}(m)+u\tsr\mathcal{E}(m)
\end{split}
\]
defines such a polynomial bigon of connections.
\end{proof}

\begin{cns}
\label{rmk:bigonconnectionscurvature}
Using the notation of Definition~\ref{defn:bigonofconnections}, by a construction analogous to the one in Lemma~\ref{lem:intervalcrossspacenc}, a polynomial square of connections $\Delta$ defines a connection
\[
\wtl{\Delta}:
\dwtl{M}\to\dwtl{M}\otimes_{\dwtl{A}}\Omega_{1}(\dwtl{A})
\]
on the
$\dwtl{A}$-module $\dwtl{M}:=\La_{0}\tsr_{\K}\La_{0}\tsr_{\K}M$. This connection is defined by
\[
\dwtl{M}\ni p\otimes q\otimes m\mapsto dp\otimes q\otimes m+p\otimes dq\otimes m+(p\otimes q)\Delta(m)
\]
and makes use of the canonical isomorphism
\[
\begin{split}
\dwtl{M}\tsr_{\dwtl{A}}\Omega_{k}\big(\dwtl{A}\big)&\cong\Big(\La_{1}\otimes_{\K}\La_{1}\tsr_{\K}M\tsr_{A}\Omega_{k-2}(A)\Big)\oplus\Big(\La_{1}\otimes_{\K}\La_{0}\tsr_{\K}M\tsr_{A}\Omega_{k-1}(A)\Big)\\
&\oplus\Big(\La_{0}\otimes_{\K}\La_{1}\tsr_{\K}M\tsr_{A}\Omega_{k-1}(A)\Big)\oplus\Big(\La_{0}\otimes_{\K}\La_{0}\tsr_{\K}M\tsr_{A}\Omega_{k}(A)\Big)
\end{split},
\]
which is valid for all $k\in\N$ (negative degree spaces are set to be zero).
Using the two evaluations, one can define the connections $\Delta_{\downarrow t}:=\mathrm{ev}_{t}\circ\wtl{\Delta}\circ i_{M}$ and $\Delta_{s\rightarrow}:=\mathrm{ev}_{s}\circ\wtl{\Delta}\circ j_{M}$ on $\wtl{M}$, where $i_{M}:\La_{0}\tsr_{\K}M\to\dwtl{M}$ and $j_{M}:\La_{0}\tsr_{\K}M\to\dwtl{M}$ are given by $i_{M}(q\otimes m):=q\tsr1\tsr m$ and $j_{M}(p\tsr m):=1\tsr p\tsr m$. One should interpret $\Delta_{\downarrow t}$ as the connection for a fixed $t$ but varying in the down ($s$) direction. Similarly, $\Delta_{s\rightarrow}$ is the connection at a fixed $s$ and varying in the right ($t$) direction. The following picture may help visualize all these different connections
\[
    \begin{tikzpicture}[scale=1.25]
    \draw (0,0) -- node[below]{\scriptsize$\Delta_{0\rightarrow}$} (2,0) -- node[left]{\scriptsize$\Delta_{\downarrow1}$} (2,-2) -- node[above]{\scriptsize$\Delta_{1\rightarrow}$} (0,-2) -- node[right]{\scriptsize$\Delta_{\downarrow0}$} cycle;
    \draw[->] (-0.2,-0.2) -- (-0.2,-1.3) node[below]{\small$s$};
    \draw[->] (0.2,0.2) -- (1.3,0.2) node[right]{\small$t$};
    \node at (1,-1) {$\wtl{\Delta}$};
    \end{tikzpicture}
\]
The curvature of $\wtl{\Delta}$ satisfies
\[
\mathrm{ev}_{t}\circ\wtl{\Delta}^{2}\circ i_{M}=\Delta_{\downarrow t}^{2},\quad
\mathrm{ev}_{s}\circ\wtl{\Delta}^{2}\circ j_{M}=\Delta_{s\rightarrow}^{2},\quad
\mathrm{ev}_{(s,t)}\circ\wtl{\Delta}^{2}\circ i\!j_{M}=\Delta_{(s,t)}^{2}\qquad\forall\;s,t\in\K,
\]
where $i\!j_{M}:M\tsr_{A}\Omega_{\bl}(A)\to\dwtl{M}\tsr_{\dwtl{A}}\Omega_{\bl}\big(\dwtl{A}\big)$ is the inclusion sending $m\otimes\omega$ to $1\otimes1\otimes m\otimes\omega$. As a corollary of the first two of these equations,
\[
\mathrm{ev}_{t}\big(\chk(\wtl{\Delta})\big)=\chk(\Delta_{\downarrow t})
\quad\text{ and }\quad
\mathrm{ev}_{s}\big(\chk(\wtl{\Delta})\big)=\chk(\Delta_{s\rightarrow})
\qquad\forall\;s,t\in\K
\]
as elements of $\Om_\bl(\wtl{A})_{\text{ab}}$.
\end{cns}

\begin{proof}[Proof of the rest of Lemma~\ref{LEM.Transgression.formulae.chk}]
{\color{white}{you found me!}}
\begin{enumerate}
\setcounter{enumi}{2}
\item
Let $\mathcal{C}$ be a constant polynomial path of connections. Then $\chk(\wtl{\mathcal{C}})\in\Omega_{\bl}(\wtl{A})_{\ab}\cong\La_{\bl}\dtsr\Omega_{\bl}(A)_{\ab}$ factors through $\Omega_{\bl}(A)_{\ab}$. In more detail, $\mathcal{C}$ constant means that $\mathcal{C}$ is equal to the composite $M\xrightarrow{C}M\otimes_{A}\Omega_{1}(A)\xrightarrow{i_{M}}\La_{0}\otimes_{\K}M\otimes_{A}\Omega_{1}(A)$, where $C=\mathrm{ev}_{t}(\mathcal{C})$ for any $t\in\K$ (since they are all equal). Hence, the curvature of $\wtl{\mathcal{C}}$ is obtained from $\wtl{\mathcal{C}}^{2}(1\otimes m)=\wtl{\mathcal{C}}(\mathcal{C}(m))=\wtl{\mathcal{C}}(1\otimes C(m))=\mathcal{C}(C(m))=1\otimes C^{2}(m)$. Hence, $\KCS(\mathcal{C})=K\ch^{\Ka}(\wtl{\mathcal{C}})=0$.
One can also prove this more simply using item (\ref{item:KCSreversepath}) of Lemma~\ref{LEM.Transgression.formulae.chk} and the fact that the reverse of a constant path is itself.
\item
Consider a polynomial bigon of connections $\Delta:M\to\La_{0}\otimes_{\K}\La_{0}\otimes_{\K}M\otimes_{A}\Omega_{1}(A)$ from $\mathcal{D}$ to $\mathcal{E}$, with $\mathcal{D}_{0}=\mathcal{E}_{0}$ and $\mathcal{D}_{1}=\mathcal{E}_{1}$ (one exists by Lemma~\ref{lem:polybigonsexist}). The fact that $\Delta$ is a bigon from $\mathcal{D}$ to $\mathcal{E}$ means that $\mathrm{ev}_{s=0}\Delta=\mathcal{D}$ and $\mathrm{ev}_{s=1}\Delta=\mathcal{E}$.
By Lemma~\ref{lem:intervalcrossspacenc}, $\mathcal{D}$ and $\mathcal{E}$ define connections $\wtl{\mathcal{D}},\wtl{\mathcal{E}}:\wtl{M}\to\wtl{M}\otimes_{\wtl{A}}\Omega_{1}(\wtl{A})$, and
by Construction~\ref{rmk:bigonconnectionscurvature}, $\Delta$ defines a connection $\wtl{\Delta}:\dwtl{M}\to\dwtl{M}\otimes_{\dwtl{A}}\Omega_{1}(\dwtl{A})$.
We also note that
\begin{equation}
\label{eq:evtKcommutes}
\mathrm{ev}_{t}\circ K_{1}=K\circ\mathrm{ev}_{t}
\qquad\text{ and }\qquad
\mathrm{ev}_{s}\circ K_{2}=K\circ\mathrm{ev}_{s}
\qquad\forall\;s,t\in\K.
\end{equation}
For the moment, set $\Theta:=\chk(\wtl{\Delta})$. The exterior derivative of $KK_{1}\Theta$ is given by
\[
\begin{split}
d&KK_{1}\Te=\mathrm{ev}_{0}K_{2}\Theta+\mathrm{ev}_{1}K_{1}\Theta-\mathrm{ev}_{1}K_{2}\Theta-\mathrm{ev}_{0}K_{1}\Theta+KK_{1}d\Theta\quad\text{ by Lemma~\ref{LEM.homotopy.formula.secondary}}\\
&=K\mathrm{ev}_{s=0}\Theta+K\mathrm{ev}_{t=1}\Theta-K\mathrm{ev}_{s=1}\Theta-K\mathrm{ev}_{t=0}\Theta\quad\text{ by Proposition~\ref{PRP.chk.is.closed} and~(\ref{eq:evtKcommutes})}\\
&=K\chk(\Delta_{0\rightarrow})+K\chk(\Delta_{\downarrow 1})-K\chk(\Delta_{1\rightarrow})-K\chk(\Delta_{\downarrow 0})\quad\text{by Construction~\ref{rmk:bigonconnectionscurvature}}\\
&=K\chk(\Delta_{0\rightarrow})-K\chk(\Delta_{1\rightarrow})\quad\text{ by Lemma~\ref{LEM.Transgression.formulae.chk} part (\ref{item:KCSconstantpath})}.
\end{split}
\]
Hence,
\[
dKK_{1}\chk(\wtl{\Delta})
=K\chk(\Delta_{0\rightarrow})-K\chk(\Delta_{1\rightarrow})
=\KCS(\mathcal{D})-\KCS(\mathcal{E}),
\]
which not only proves the claim but provides an explicit differential form realizing exactness of the right-hand-side.
\item
The case where $\mathcal{D}$ satisfies $D_{0}=D_{1}$ can be obtained from the previous case (\ref{item:KCSbigon}) by taking $\mathcal{E}:=\mathcal{C}$ to be a constant polynomial path of connections and applying part (\ref{item:KCSconstantpath}) of this lemma. \qedhere
\end{enumerate}
\end{proof}

\begin{rmk}
A common argument in the smooth setting is to prove item (\ref{item:KCSbigon}) of Lemma~\ref{LEM.Transgression.formulae.chk} using the other items, including (\ref{item:KCSloop}). This is because for smooth paths, one can take two paths with the same source and target and turn them into a loop (with some appropriate restrictions to be discussed momentarily). However, polynomial paths of connections \emph{do not compose} in this sense. This is because the concatenation of two polynomials is not a polynomial in general. A similar situation happens for smooth paths actually. However, there are at least two succesful methods for dealing with the smooth case. One is to use smooth paths with sitting instants. Another is to use piecewise smooth paths. Neither of these approaches seem to be available for polynomial paths. The first case is not an option because polynomials are analytic and so a polynomial with sitting instants is necessarily a constant. The second case is not a simple option in the algebraic setting either because it would be awkward to replace $\La_{0}$ to include piecewise polynomials. In the smooth setting, one typically uses a larger class of paths (such as continuous paths) and then places restrictions on those paths to define piecewise smooth paths as an appropriate subset. We do not have an obvious candidate that replaces continuous paths.
\end{rmk}

\begin{nta}\label{NTA.KCS.of.two.connections}
Let $M\in \Fgp(A)$ and let $D_0,D_1$ be connections on $M$ with respect to some DGA on top of $A$. By Remark~\ref{rmk:spaceofconnectionsisaffine}, there exists a polynomial path of connections $\mathcal{D}$ interpolating from $D_{0}$ to $D_{1}$.
Let $\KCS(D_0,D_1)$ denote the equivalence class of the $\KCS$-form of the path of connections $\mathcal{D}$ up to $\textrm{Im}(d)$. This does not depend on $\mathcal{D}$ by Lemma \ref{LEM.Transgression.formulae.chk} (\ref{item:KCSbigon}). By $\KCS_k(D_0,D_1)$ we mean the $(2k-1)^{\text{st}}$ degree
term in $\KCS(D_0,D_1)$.
\end{nta}

\begin{lem}\label{lem:KCStriangle}
Let $D_{1},D_{2},D_{3}$ be three connections on an $A$-module $M$ with respect to some DGA on top of $A$. Then
\[
\KCS(D_{1},D_{3})=\KCS(D_{1},D_{2})+\KCS(D_{2},D_{3}).
\]
\end{lem}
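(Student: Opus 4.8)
The plan is to realize the three pairwise Karoubi--Chern--Simons forms as three of the four edges of a single polynomial \emph{square} of connections shaped like a triangle with vertices $D_{1},D_{2},D_{3}$, and then apply the secondary homotopy formula (Lemma~\ref{LEM.homotopy.formula.secondary}) exactly as in the proof of Lemma~\ref{LEM.Transgression.formulae.chk}(\ref{item:KCSbigon}). A two-dimensional construction is forced here: polynomial paths of connections do not concatenate (a concatenation of polynomials is not a polynomial), so one cannot build a path $D_{1}\to D_{3}$ out of paths $D_{1}\to D_{2}$ and $D_{2}\to D_{3}$; a polynomial square filling the triangle plays the role of the $2$-cell whose boundary transgression produces the additivity.

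I would begin by fixing, for each pair $(i,j)$, the straight-line polynomial path of connections $\mathcal{D}^{ij}$ with $\mathcal{D}^{ij}_{0}=D_{i}$ and $\mathcal{D}^{ij}_{1}=D_{j}$ (these exist by Remark~\ref{rmk:spaceofconnectionsisaffine}), so that by Notation~\ref{NTA.KCS.of.two.connections} the class $\KCS(D_{i},D_{j})$ is represented by the form $\KCS(\mathcal{D}^{ij})=K\chk(\wtl{\mathcal{D}^{ij}})$. Writing $u,v\in\La_{0}$ for the two coordinate polynomials ($u$ for the left $\La_{0}$ factor, $v$ for the right), I would then define $\Delta\colon M\to\La_{0}\tsr_{\K}\La_{0}\tsr_{\K}M\tsr_{A}\Omega_{1}(A)$ on $m\in M$ by
\[
\Delta(m):=(1-u)(1-v)\tsr D_{1}(m)+u(1-v)\tsr D_{2}(m)+v\tsr D_{3}(m),
\]
with each $D_{i}(m)\in M\tsr_{A}\Omega_{1}(A)$ placed in the last two tensor slots and the polynomial coefficients in the first two. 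Because the three coefficient polynomials sum to $1$, the map $\Delta$ satisfies the modified Leibniz rule of Definition~\ref{defn:bigonofconnections}, hence is a polynomial square of connections on $M$, and every evaluation $\Delta_{(s,t)}$ is an affine combination of $D_{1},D_{2},D_{3}$, hence a connection.

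Next I would identify the four edges of $\Delta$ in the sense of Construction~\ref{rmk:bigonconnectionscurvature}: setting $v=0$ gives $\Delta_{\downarrow0}=\wtl{\mathcal{D}^{12}}$, setting $u=0$ gives $\Delta_{0\rightarrow}=\wtl{\mathcal{D}^{13}}$, setting $u=1$ gives $\Delta_{1\rightarrow}=\wtl{\mathcal{D}^{23}}$, and setting $v=1$ gives $\Delta_{\downarrow1}$, the connection on $\wtl{M}$ induced by the constant path at $D_{3}$. (In each case the evaluated map agrees with the claimed connection on the generators $1\tsr m$ of $\wtl{M}$, and the difference of two connections is $\wtl{A}$-linear, so the two agree.) Then I would run the computation from the proof of Lemma~\ref{LEM.Transgression.formulae.chk}(\ref{item:KCSbigon}) with this $\Delta$: with $\Theta:=\chk(\wtl{\Delta})$, which is a cycle by Proposition~\ref{PRP.chk.is.closed}, the secondary homotopy formula (Lemma~\ref{LEM.homotopy.formula.secondary}), the identities $\mathrm{ev}_{t}K_{1}=K\mathrm{ev}_{t}$ and $\mathrm{ev}_{s}K_{2}=K\mathrm{ev}_{s}$ from~(\ref{eq:evtKcommutes}), and the relations $\mathrm{ev}_{t}(\chk(\wtl{\Delta}))=\chk(\Delta_{\downarrow t})$, $\mathrm{ev}_{s}(\chk(\wtl{\Delta}))=\chk(\Delta_{s\rightarrow})$ of Construction~\ref{rmk:bigonconnectionscurvature} yield
\[
dKK_{1}\chk(\wtl{\Delta})=K\chk(\Delta_{0\rightarrow})+K\chk(\Delta_{\downarrow1})-K\chk(\Delta_{1\rightarrow})-K\chk(\Delta_{\downarrow0}).
\]
The one difference from the bigon case is that here only the single horizontal edge $\Delta_{\downarrow1}$ is constant, so $K\chk(\Delta_{\downarrow1})=0$ by Lemma~\ref{LEM.Transgression.formulae.chk}(\ref{item:KCSconstantpath}) while the term $K\chk(\Delta_{\downarrow0})=\KCS(\mathcal{D}^{12})$ persists. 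Substituting the edge identifications (and Definition~\ref{defn:KCSform}) gives
\[
\KCS(\mathcal{D}^{13})-\KCS(\mathcal{D}^{12})-\KCS(\mathcal{D}^{23})=dKK_{1}\chk(\wtl{\Delta})\in\mathrm{Im}(d),
\]
and passing to classes modulo $\mathrm{Im}(d)$ yields $\KCS(D_{1},D_{3})=\KCS(D_{1},D_{2})+\KCS(D_{2},D_{3})$.

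The main obstacle is the very first step: realizing that, unlike the usual smooth argument which derives this from the loop case Lemma~\ref{LEM.Transgression.formulae.chk}(\ref{item:KCSloop}) by concatenating paths, one must directly write down a polynomial square whose edges are precisely $\mathcal{D}^{13}$, $\mathcal{D}^{12}$, $\mathcal{D}^{23}$ and a constant path --- a degenerate square encoding the triangle on $D_{1},D_{2},D_{3}$ --- and then verifying that the explicit affine formula above does have exactly these four edges. Everything afterward is the bookkeeping already performed in the proof of Lemma~\ref{LEM.Transgression.formulae.chk}(\ref{item:KCSbigon}).
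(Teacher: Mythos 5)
Your proposal is correct, but it takes a genuinely different route from the paper's own proof. You build a degenerate polynomial square $\Delta$ with three of its four edges equal to the straight-line paths $\mathcal{D}^{12}$, $\mathcal{D}^{23}$, $\mathcal{D}^{13}$ and the fourth edge a constant path at $D_{3}$, and then run the secondary homotopy formula argument of Lemma~\ref{LEM.Transgression.formulae.chk}(\ref{item:KCSbigon}) verbatim, killing the constant edge via item~(\ref{item:KCSconstantpath}). The edge identifications check out: $\Delta|_{v=0}=(1-u)D_1+uD_2$, $\Delta|_{u=0}=(1-v)D_1+vD_3$, $\Delta|_{u=1}=(1-v)D_2+vD_3$, $\Delta|_{v=1}\equiv D_3$, and the three coefficient polynomials sum to $1$, so $\Delta$ does satisfy the modified Leibniz rule. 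The paper instead stays one-dimensional: it constructs a single polynomial path $\mathcal{D}$ interpolating $D_1,D_2,D_3$ at $t=0,\tfrac12,1$ via Lagrange polynomials, and then splits $K\chk(\wtl{\mathcal{D}})=K_{[0,\frac12]}\chk(\wtl{\mathcal{D}})+K_{[\frac12,1]}\chk(\wtl{\mathcal{D}})$ using additivity of the homotopy operator over subintervals, identifying each half with $\KCS(D_1,D_2)$ and $\KCS(D_2,D_3)$ via reparametrization covariance. The tradeoff: the paper's argument is more literally ``additivity of integration over subintervals,'' at the cost of introducing the new family of operators $\kappa_{[a,b]}$ and the reparametrization lemma~(\ref{eq:kappaab}) just for this proof, whereas your argument introduces no new machinery and instead recycles Construction~\ref{rmk:bigonconnectionscurvature} and Lemma~\ref{LEM.homotopy.formula.secondary}. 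Both also rely implicitly on Lemma~\ref{LEM.Transgression.formulae.chk}(\ref{item:KCSbigon}) to make $\KCS(D_i,D_j)$ well-defined independently of the choice of path; your edges happen to already be the straight-line paths, so you sidestep any further invocation of that item, which is a mild simplification.
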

\begin{proof}
First recall that given any distinct $k$ points $t_{1},\dots,t_{k}\in\Q$, there exist polynomials $p_{1},\dots,p_{k}\in\La_{0}$ such that
$p_{i}(t_{j})=\delta_{ij}$ for all $i,j\in\{1,\dots,k\}$. In this case, $k=3$ and $t_{1}=0, t_{2}=\frac{1}{2}, t_{3}=1$.
\begin{center}
\begin{tikzpicture}
      \draw[->,thin] (0,0) -- (2.25,0) node[right] {$t$};
      \draw[->,thin] (0,0) -- (0,2.25);
      \node at (-0.2,2) {$1$};
      \node at (-0.2,0) {$0$};
      \node at (1,-0.35) {$\frac{1}{2}$};
      \node at (2,-0.35) {$1$};
      \draw[samples=50,domain=0:2,thick,smooth,variable=\t] plot (\t,{2-3*(\t)+pow(\t,2)});
      \node at (1,2.65) {\small$p_{1}(t)=1-3t+2t^2$};
\end{tikzpicture}
\qquad\quad
\begin{tikzpicture}
      \draw[->,thin] (0,0) -- (2.25,0) node[right] {$t$};
      \draw[->,thin] (0,0) -- (0,2.25);
      \node at (-0.2,2) {$1$};
      \node at (-0.2,0) {$0$};
      \node at (1,-0.35) {$\frac{1}{2}$};
      \node at (2,-0.35) {$1$};
      \draw[samples=50,domain=0:2,thick,smooth,variable=\t] plot (\t,{4*(\t)-2*pow(\t,2)});
      \node at (1,2.65) {\small$p_{2}(t)=4t-4t^2$};
\end{tikzpicture}
\qquad\quad
\begin{tikzpicture}
      \draw[->,thin] (0,0) -- (2.25,0) node[right] {$t$};
      \draw[->,thin] (0,0) -- (0,2.25);
      \node at (-0.2,2) {$1$};
      \node at (-0.2,0) {$0$};
      \node at (1,-0.35) {$\frac{1}{2}$};
      \node at (2,-0.35) {$1$};
      \draw[samples=50,domain=0:2,thick,smooth,variable=\t] plot (\t,{-(\t)+pow(\t,2)});
      \node at (1,2.65) {\small$p_{3}(t)=-t+2t^2$};
\end{tikzpicture}
\end{center}
Let $\mathcal{D}$ be the path of connections determined by
\[
\mathcal{D}(m):=p_{1}\otimes D_{1}(m)+p_{2}\otimes D_{2}(m)+p_{3}\otimes D_{3}(m).
\]
Then by construction, $\mathrm{ev}_{n}\mathcal{D}=D_{2n+1}$ for all $n\in\{0,\frac{1}{2},1\}$. In particular $\mathcal{D}$ is a polynomial path of connections interpolating from $D_{1}$ to $D_{3}$ passing through $D_{2}$. Second, we generalize the homotopy operator $\kappa:\La_{\bl}\to\K$ to a family $\kappa_{[a,b]}:\La_{\bl}\to\K$ depending on $a,b\in\Q$ with $a<b$, by setting $\kappa_{[a,b]}(p):=0$ for all $p\in\La_{0}$ and $\kappa_{[a,b]}(\omega):=\int_{a}^{b}\omega$ for all $\omega\in\La_{1}.$ The usual $\kappa$ is obtained with $a=0$ and $b=1$. Some immediate properties of this family are the familiar properties of the integral from calculus such as $\kappa_{[a,c]}=\kappa_{[a,b]}+\kappa_{[b,c]}$ with $a<b<c$. Another property is affine covariance (a special case of reparametrization covariance), which says
\begin{equation}
\label{eq:kappaab}
\kappa_{[a,b]}\big(p(t)dt\big)=\beta\kappa_{[\beta^{-1}(a-\alpha),\beta^{-1}(b-\alpha)]}\big(p(\alpha+\beta t)dt\big)
\end{equation}
for all $\alpha,\beta\in\Q$ with $\beta>0$.
Similarly, just as $\kappa$ extends to $K$ on $\La_{\bl}\dtsr\Omega_{\bl}(A)$-valued forms, $\kappa_{[a,b]}$ extends to $K_{[a,b]}$ and obeys similar properties. Hence (all equalities
take place in $\Omega_{\bl}(A)/\mathrm{Im}(d)$),
\begin{equation}
\label{eqn:KCStriangle}
\begin{split}
\KCS(D_{1},D_{3})&=\KCS(\mathcal{D})=K\chk(\widetilde{\mathcal{D}})=K_{[0,\frac{1}{2}]}\chk(\widetilde{\mathcal{D}})+K_{[\frac{1}{2},1]}\chk(\widetilde{\mathcal{D}})\\
&=K\chk(\widetilde{\mathcal{D}_{12}})+K\chk(\widetilde{\mathcal{D}_{23}})=\KCS(D_{1},D_{2})+\KCS(D_{2},D_{3}),
\end{split}
\end{equation}
where $\mathcal{D}_{12}$ and $\mathcal{D}_{23}$ are the paths of connections defined by reparametrizing $\mathcal{D}$ with double speed, with the latter shifted to begin at $0$, namely
\[
\widetilde{\mathcal{D}_{12}}(m):=\sum_{j=1}^{3}p'_{j}\otimes D_{j}(m)
\quad
\text{ and }
\quad
\widetilde{\mathcal{D}_{23}}(m):=\sum_{j=1}^{3}p''_{j}\otimes D_{j}(m),
\]
where
\[
p'_{j}(t):=p_{j}\left(\frac{1}{2}t\right)
\quad\text{ and }\quad
p''_{j}(t):=p_{j}\left(\frac{1}{2}t+\frac{1}{2}\right).
\]
The second last equality in~(\ref{eqn:KCStriangle}) still requires further justification, and it suffices to consider one of the two terms, such as $K_{[0,\frac{1}{2}]}\chk(\widetilde{\mathcal{D}})=K\chk(\widetilde{\mathcal{D}_{12}})$, the latter of which equals $K_{[0,1]}\chk(\widetilde{\mathcal{D}_{12}})$ by definition. The only terms contributing to the integration from $K$ are those that involve derivatives of the $p'_{j}$. Since these produce an additional factor of $\beta=\frac{1}{2}$ as an overall coefficient, this factor automatically produces the one from~(\ref{eq:kappaab}), exhibiting this equality.
\end{proof}

\begin{dfn}\label{dfn:inducedpathofconnections}
Let $B\xrightarrow{\psi}A$ be an algebra map, let $N$ be a $B$-module, let $\Omega$ be a DGA on top of $\Alg$, and let $\mathcal{D}:N\to\La_{0}\tsr_{\K}N\tsr_{B}\Omega_{1}(B)$ be a polynomial path of connections. Then the \define{induced (polynomial) path of connections} $\mathcal{D}_{\psi}:N\tsr_{B}A\to\La_{0}\tsr_{\K}N\tsr_{B}\Omega_{1}(A)\cong\La_{0}\tsr_{\K}(N\tsr_{B}A)\tsr_{A}\Omega_{1}(A)$ is given by the assignment
\[
N\tsr_{B}A\ni n\otimes a\mapsto\Big((\mathrm{id}\otimes_{B}\psi)(\mathcal{D}(n))\Big)a+1\otimes n\otimes da.
\]
In the special case of a cocartesian morphism of the form $(A,M)\xrightarrow{(\mathrm{id}_{A},\varphi)}(A,L)$, the map $\varphi:M\to L$ is an isomorphism (cf.\ Definition~\ref{dfn:inducedconnection}).  In this case, if $\mathcal{D}$ is a polynomial path of connections on $L$, the \define{pullback (polynomial) path (of connections)} $\vph^*\mathcal{D}$ on $M$ is the path obtained via the composite $M\xrightarrow{\vph}L\xrightarrow{\mathcal{D}}\La_{0}\otimes_{\K}L\otimes_{A}\Omega_{1}(A)\xrightarrow{\text{id}\otimes\vph^{-1}\otimes\text{id}}\La_{0}\otimes_{\K}M\otimes_{A}\Omega_{1}(A)$.
\end{dfn}

It is a simple exercise (compare Lemma~\ref{lem:pullbackconnection}) to verify that $\mathcal{D}_{\psi}$ in Definition~\ref{dfn:inducedpathofconnections} indeed defines a polynomial path of connections on the extension of scalars $N\tsr_{B}A$ with respect to the DGA $\Omega_{\bl}(A)$. The following lemma describes some naturality properties of the above constructions.

\begin{lem} \label{LEM.KCS.invariant.under.pullback}
Let $M$ and $L$ be $A$-modules and let $\vph:M\ra L$ be an $A$-module isomorphism (equivalently a cocartesian morphism of the form $(\mathrm{id}_{A},\varphi):(A,M)\to(A,L)$). Also, let $B$ be an algebra, $N$ a $B$-module, $B\xrightarrow{\psi}A$ an algebra map, and $\Omega$ a DGA on top of $\Alg$.
\begin{enumerate}
\item\label{item:pullbackconnection}
If $D$ is a connection on $L$, then $\chk(\vph^*D)=\chk(D)$ (cf.\ Definition~\ref{dfn:inducedconnection}).
\item\label{item:pullbackpathconnection}
If $\mathcal{D}$ is a polynomial path of connections on $L$, then $\KCS(\vph^*\mathcal{D})=\KCS(\mathcal{D})$.
\item\label{item:inducedconnection}
If $D$ is a connection on $N$, then $\chk(D_{\psi})=\psi(\chk(D))$ (cf.\ Lemma~\ref{lem:pullbackconnection} and Definition~\ref{dfn:inducedconnection}).
\item\label{item:inducedpathconnection}
If $\mathcal{D}$ is a polynomial path of connections on $N$, then $\KCS(\mathcal{D}_{\psi})=\psi(\KCS(\mathcal{D}))$ (cf.\ Definition~\ref{dfn:inducedpathofconnections}).
\end{enumerate}
\end{lem}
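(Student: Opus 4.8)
The plan is to establish (\ref{item:pullbackconnection}) and (\ref{item:inducedconnection}) directly from the way curvature transforms under the pullback and the induced-connection constructions, and then to obtain the path-level statements (\ref{item:pullbackpathconnection}) and (\ref{item:inducedpathconnection}) by applying these to the interval-extended connection $\wtl{\mathcal{D}}$ over $\Om_{\bl}(\wtl{A})$ and then hitting everything with the homotopy operator $K$ of Definition~\ref{defn:KCSform}.

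For (\ref{item:pullbackconnection}), extending $\vph^*D$ by the Leibniz rule (Definition~\ref{defn:extendingconnection}) shows $\vph^*D=(\vph^{-1}\tsr\mathrm{id})\circ D\circ(\vph\tsr\mathrm{id})$ on $L\tsr_{A}\Om_{\bl}(A)$, whence $R_{\vph^*D}^{k}=(\vph^{-1}\tsr\mathrm{id})\circ R_{D}^{k}\circ(\vph\tsr\mathrm{id})$ for all $k$. Viewing these as endomorphisms over $R:=\Om_{\even}(A)$ of $M\tsr_{A}R$ and $L\tsr_{A}R$ (cf.\ Example~\ref{ex:ringofevenforms}), the cyclic invariance of the trace---which is precisely what taking values in $R/[R,R]$ provides (Definition~\ref{DFN.trace.with.Ab})---together with $(\vph\tsr\mathrm{id})\circ(\vph^{-1}\tsr\mathrm{id})=\mathrm{id}$ gives $\tr(R_{\vph^*D}^{k})=\tr(R_{D}^{k})$ in $(\Om_{\bl}(A)_{\ab})_{2k}$, so that $\chk(\vph^*D)=\chk(D)$ exactly. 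For (\ref{item:inducedconnection}), Lemma~\ref{lem:incucedcurvature} gives $R_{D_{\psi}}=\mathfrak{E}_{\psi}(R_{D})$; since $\mathfrak{E}_{\psi}$ preserves composition and the identity (it is the extension-of-scalars functor applied to endomorphisms), we get $R_{D_{\psi}}^{k}=\mathfrak{E}_{\psi}(R_{D}^{k})$. Applying the commutative trace square of Lemma~\ref{lem:ringmapsandtrace} to the ring map obtained by restricting $\Om_{\bl}(\psi)$ to even degrees then yields $\tr(R_{D_{\psi}}^{k})=\psi(\tr(R_{D}^{k}))$, hence $\chk(D_{\psi})=\psi(\chk(D))$ in $(\Om_{\bl}(A)_{\ab})_{\even}$ (with $\psi$ standing, as usual, for the induced map $\Om_{\bl}(\psi)_{\ab}$ on abelianized forms).

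For the path-level statements, set $\Phi:=\mathrm{id}_{\La_{0}}\tsr\vph:\wtl{M}\to\wtl{L}$, an isomorphism of $\wtl{A}$-modules, and $\wtl{\psi}:=\mathrm{id}_{\La_{\bl}}\dtsr\Om_{\bl}(\psi):\Om_{\bl}(\wtl{B})\to\Om_{\bl}(\wtl{A})$. Checking on elements of the form $1\tsr m$ (which determine both sides by the Leibniz rule, by Lemma~\ref{lem:intervalcrossspacenc}), and using the canonical isomorphisms $\wtl{M}\tsr_{\wtl{A}}\Om_{k}(\wtl{A})\cong(\La_{1}\tsr_{\K}M\tsr_{A}\Om_{k-1}(A))\oplus(\La_{0}\tsr_{\K}M\tsr_{A}\Om_{k}(A))$ of Lemma~\ref{lem:intervalcrossspacenc}(\ref{item:pathsofconnections2}) and $\wtl{N}\tsr_{\wtl{B}}\wtl{A}\cong\wtl{N\tsr_{B}A}$, one verifies the identities
\[
\wtl{\vph^*\mathcal{D}}=\Phi^*\wtl{\mathcal{D}}
\qquad\text{and}\qquad
\wtl{\mathcal{D}_{\psi}}=\big(\wtl{\mathcal{D}}\big)_{\wtl{\psi}}.
\]
Granting these, part (\ref{item:pullbackconnection}) applied to the isomorphism $\Phi$ over the DGA $\Om_{\bl}(\wtl{A})$ gives $\chk(\wtl{\vph^*\mathcal{D}})=\chk(\wtl{\mathcal{D}})$ in $\Om_{\bl}(\wtl{A})_{\ab}$, and applying $K$ yields $\KCS(\vph^*\mathcal{D})=\KCS(\mathcal{D})$ by Definition~\ref{defn:KCSform}. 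Similarly, part (\ref{item:inducedconnection}) applied to $\wtl{\psi}$ gives $\chk(\wtl{\mathcal{D}_{\psi}})=\wtl{\psi}(\chk(\wtl{\mathcal{D}}))$; since $\wtl{\psi}$ acts as the identity on the $\La_{\bl}$-factor and $K$ only integrates that factor, one has $K\circ\wtl{\psi}=\psi\circ K$, and applying $K$ gives $\KCS(\mathcal{D}_{\psi})=\psi(\KCS(\mathcal{D}))$, both as exact equalities of forms (in particular modulo $\mathrm{Im}(d)$). Note that a transgression-formula argument via Lemma~\ref{LEM.Transgression.formulae.chk}(\ref{item:dKCS}) would only give these equalities up to a closed form, so the direct comparison above is genuinely needed.

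The calculations in (\ref{item:pullbackconnection}) and (\ref{item:inducedconnection}) are short; the work, and main obstacle, lies in (\ref{item:pullbackpathconnection}) and (\ref{item:inducedpathconnection}), and is purely bookkeeping: one must carefully track how $\Phi^{-1}\tsr\mathrm{id}$ and $\mathrm{id}\tsr_{\wtl{B}}\wtl{\psi}$ pass through the canonical decompositions of Lemma~\ref{lem:intervalcrossspacenc}(\ref{item:pathsofconnections2}) and restrict, on the $\La_{0}$-summand, to the maps $\mathrm{id}\tsr\vph^{-1}\tsr\mathrm{id}$ and $\mathrm{id}\tsr_{B}\psi$ appearing in Definitions~\ref{dfn:inducedpathofconnections} and~\ref{dfn:inducedconnection}. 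A minor secondary point: $\Om_{\bl}(\wtl{A})=\La_{\bl}\dtsr\Om_{\bl}(A)$ is not literally a DGA on top of $\Alg$ in the sense of Definition~\ref{defn:DGAontopofAlg} (its degree-zero part is $\La_{0}\tsr_{\K}A$, not $A$), so to invoke (\ref{item:inducedconnection}) inside the proof of (\ref{item:inducedpathconnection}) one notes that the proofs of Lemma~\ref{lem:incucedcurvature} and Lemma~\ref{lem:ringmapsandtrace} use only the DGA map $\wtl{\psi}$ extending the algebra map $\wtl{B}\to\wtl{A}$, which is available. Everything else amounts to unwinding the definitions.
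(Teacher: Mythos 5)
Your proof is correct and follows essentially the same route as the paper's: cyclicity of the trace for~(\ref{item:pullbackconnection}); Lemma~\ref{lem:incucedcurvature} together with the trace square of Lemma~\ref{lem:ringmapsandtrace} for~(\ref{item:inducedconnection}); and the identities $\wtl{\vph^*\mathcal{D}}=(\mathrm{id}\tsr\vph)^*\wtl{\mathcal{D}}$ and $(\mathrm{id}_{\La_{0}}\tsr\psi)\chk(\wtl{\mathcal{D}})=\chk(\wtl{\mathcal{D}_{\psi}})$ for the path-level claims, after which $K$ does the rest. Your caveat that $\Omega_{\bl}(\wtl{A})=\La_{\bl}\dtsr\Omega_{\bl}(A)$ is not literally a DGA on top of $\Alg$, so that the appeal to Lemmas~\ref{lem:incucedcurvature} and~\ref{lem:ringmapsandtrace} in~(\ref{item:inducedpathconnection}) should be justified by noting their proofs only use the single chain map $\wtl{\psi}$, is a genuine subtlety the paper passes over silently (it omits the verification of the last identity altogether), and is worth keeping.
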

\begin{proof} We first prove the first pair of claims. Since $\vph^*D=(\vph^{-1}\tsr\text{id})\circ D\circ (\vph\tsr\text{id})$, squaring gives $(\vph^{-1}\tsr\text{id})\circ D^2\circ (\vph\tsr\text{id})$. Since the trace is cyclic (cf.\ Example~\ref{exa.trace.cyclic}), $\chk(\vph^*D)=\chk(D)$, which proves (1). Set $\La_{0}\tsr_{\K}M\xrightarrow{\phi:=\text{id}\tsr\vph}\La_{0}\tsr_{\K}L$. Then a simple calculation shows that $\wtl{\vph^*\mathcal{D}}=\phi^{*}\wtl{\mathcal{D}}$ (using the notation of Lemma~\ref{lem:intervalcrossspacenc}). Thus, (\ref{item:pullbackpathconnection}) follows from (\ref{item:pullbackconnection}).

We now prove the second pair of claims. For item (\ref{item:inducedconnection}),
\[
\begin{split}
\chk(D_{\psi})&=\sum_{k}\frac{1}{k!}\tr(R_{D_{\psi}}^{2k})
\overset{\text{Lem~\ref{lem:incucedcurvature}}}{=\joinrel=\joinrel=\joinrel=\joinrel=}\sum_{k}\frac{1}{k!}\tr\big(\mathfrak{E}_{\psi}(R_{D}^{2k})\big)\\
&\overset{\text{Lem~\ref{lem:ringmapsandtrace}}}{=\joinrel=\joinrel=\joinrel=\joinrel=}\sum_{k}\frac{1}{k!}\psi\big(\tr(R_{D}^{2k})\big)
=\psi\big(\chk(D)\big).
\end{split}
\]
In this calculation, Lemma~\ref{lem:ringmapsandtrace} applies due to Example~\ref{ex:ringofevenforms}. In more detail, in terms of the notation of Lemma~\ref{lem:ringmapsandtrace}, set $R:=\Omega_{\even}(B)$ and $S:=\Omega_{\even}(A)$, set $\psi:R\to S$ to be the map $\psi:B\to A$ extended to forms (which can be done since $\Omega$ is a DGA on top of $\Alg$), and set $F:=N\otimes_{B}\Omega_{\even}(B)$, noting that $F\otimes_{R}S\cong N\otimes_{B}\Omega_{\even}(A)$.
Claim (\ref{item:inducedpathconnection}) follows from the identity $(\mathrm{id}_{\La_{0}}\otimes\psi)\chk\big(\wtl{\mathcal{D}}\big)=\chk\big(\wtl{\mathcal{D}_{\psi}}\big)$, whose proof we omit since it is similar to calculations that have already been done.
\end{proof}

\section{Noncommutative differential $K$-theory}\label{SEC.NCDKT.construction}
In this section, we construct a noncommutative differential $K$-group and verify that our construction recovers the differential $K$-theory of a smooth manifold when the algebra is complex-valued smooth functions. Furthermore, we shall see that our $K$-group fits into a noncommutative analogue of the differential cohomology hexagon diagram.

\subsection{Noncommutative differential $K$-theory with Karoubi's Chern character}

Let $A$ be a unital $\K$-algebra and let $(M,D)$ be an $A$-module with connection $D$ with respect to a DGA $\Om_{\bl}(A)$ on top of $A$. Set $(\Om_{\bl}(A)_{\ab})_{\odd}:=\Dsum_{k=0}^\infty \Om_{2k+1}(A)/[\Omega_{\bl}(A),\Omega_{\bl}(A)]_{2k+1}$. Let $\om\in(\Om_{\bl}(A)_{\ab})_{\odd}/\mathrm{Im}(d)$. Then the triple $(M,D,\om)$ will be called a \define{$\widehat K_0$-generator} of $\Om_\bl(A)$.

\begin{dfn}
Two $\widehat K_0$-generators of $\Om_\bl(A)$, $(M_0,D_0,\om_0)$ and $(M_1,D_1,\om_1)$, are \define{$\KCS$-equivalent}, denoted by $\sim_{\KCS}$, if there is an $A$-module $N$ with connection $D$ and an $A$-module isomorphism $M_0\dsum N\srl{\vph}\ra M_1\dsum N$ such that $\KCS(D_0\dsum D,\vph^*(D_1\dsum D))=(\om_0-\om_1)\!\!\mod\mathrm{Im}(d)$ (cf.\ Lemma~\ref{lem:directsumconnections}, Notation~\ref{NTA.KCS.of.two.connections}, and Lemma~\ref{LEM.KCS.invariant.under.pullback}).
\end{dfn}

\begin{lem} The relation $\sim_{\KCS}$ is an equivalence relation. The direct sum is well-defined on $\KCS$-equivalence classes, where the direct sum is defined on representatives as $(M,D,\omega)\oplus(M',D',\omega'):=(M\oplus M',D\oplus D',\omega+\omega')$.
\end{lem}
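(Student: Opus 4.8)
The plan is to verify the three defining properties of an equivalence relation for $\sim_{\KCS}$, and then check that $\oplus$ descends to equivalence classes. For reflexivity, given a $\widehat K_0$-generator $(M,D,\omega)$, take $N=0$ and $\varphi=\mathrm{id}_M$; then $\varphi^*(D\oplus 0)=D$ up to the canonical identification, so $\KCS(D,\varphi^*D)=\KCS(D,D)$, which lies in $\mathrm{Im}(d)$ by Lemma~\ref{LEM.Transgression.formulae.chk}(\ref{item:KCSloop}) (applied to the constant path, or equivalently by Notation~\ref{NTA.KCS.of.two.connections}), and $(\omega-\omega)=0\bmod\mathrm{Im}(d)$. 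Hence $(M,D,\omega)\sim_{\KCS}(M,D,\omega)$.

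For symmetry, suppose $(M_0,D_0,\omega_0)\sim_{\KCS}(M_1,D_1,\omega_1)$ witnessed by $(N,D)$ and $\varphi:M_0\oplus N\to M_1\oplus N$ with $\KCS(D_0\oplus D,\varphi^*(D_1\oplus D))=(\omega_0-\omega_1)\bmod\mathrm{Im}(d)$. Use the same $(N,D)$ and the isomorphism $\varphi^{-1}:M_1\oplus N\to M_0\oplus N$. The key computation is that $\KCS\big(D_1\oplus D,(\varphi^{-1})^*(D_0\oplus D)\big)=-\KCS\big(D_0\oplus D,\varphi^*(D_1\oplus D)\big)$ modulo $\mathrm{Im}(d)$; this follows from Lemma~\ref{lem:KCStriangle} together with Lemma~\ref{LEM.KCS.invariant.under.pullback}(\ref{item:pullbackconnection})--(\ref{item:pullbackpathconnection}) (reversing a path negates its $\KCS$ by Lemma~\ref{LEM.Transgression.formulae.chk}(\ref{item:KCSreversepath}), and pullback along an isomorphism preserves $\chk$ hence preserves $\KCS$). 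So the new $\KCS$ equals $-(\omega_0-\omega_1)=\omega_1-\omega_0\bmod\mathrm{Im}(d)$, as required.

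For transitivity, suppose $(M_0,D_0,\omega_0)\sim_{\KCS}(M_1,D_1,\omega_1)$ via $(N,D)$ and $\varphi:M_0\oplus N\to M_1\oplus N$, and $(M_1,D_1,\omega_1)\sim_{\KCS}(M_2,D_2,\omega_2)$ via $(N',D')$ and $\varphi':M_1\oplus N'\to M_2\oplus N'$. The witness for $(M_0,D_0,\omega_0)\sim_{\KCS}(M_2,D_2,\omega_2)$ is the module $N\oplus M_1\oplus N'$ with connection $D\oplus D_1\oplus D'$ and the composite isomorphism $M_0\oplus N\oplus M_1\oplus N'\xrightarrow{\varphi\oplus\mathrm{id}}M_1\oplus N\oplus M_1\oplus N'\xrightarrow{\text{swap}}M_1\oplus N'\oplus M_1\oplus N\xrightarrow{\varphi'\oplus\mathrm{id}}M_2\oplus N'\oplus M_1\oplus N$, followed by a reordering back to $M_2\oplus(N\oplus M_1\oplus N')$. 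Using Lemma~\ref{lem:KCSdirectsumpaths} (additivity of $\KCS$ under $\oplus$), Lemma~\ref{lem:KCStriangle} (the triangle identity for $\KCS$ of three connections), and Lemma~\ref{LEM.KCS.invariant.under.pullback}, the resulting $\KCS$ telescopes to $(\omega_0-\omega_1)+(\omega_1-\omega_2)=\omega_0-\omega_2\bmod\mathrm{Im}(d)$. Finally, well-definedness of $\oplus$ on classes follows by taking direct sums of the respective witnessing data and again invoking Lemma~\ref{lem:KCSdirectsumpaths} and Lemma~\ref{LEM.KCS.invariant.under.pullback} to see that the $\KCS$-forms add correctly; one checks that if $(M_i,D_i,\omega_i)\sim_{\KCS}(M_i',D_i',\omega_i')$ for $i\in\{0,1\}$ then $(M_0\oplus M_1,D_0\oplus D_1,\omega_0+\omega_1)\sim_{\KCS}(M_0'\oplus M_1',D_0'\oplus D_1',\omega_0'+\omega_1')$, with witnessing module the direct sum of the two witnessing modules and witnessing isomorphism obtained by interleaving.

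The main obstacle is bookkeeping: every step requires inserting canonical reordering isomorphisms of direct sums of modules and tracking how $\varphi^*$ interacts with $\oplus$ and with composition of isomorphisms. The substantive inputs — that $\KCS$ is additive over $\oplus$, satisfies the triangle identity, is invariant under pullback along isomorphisms, and that $\KCS(D,D)\in\mathrm{Im}(d)$ — are all already established (Lemmas~\ref{lem:KCSdirectsumpaths}, \ref{lem:KCStriangle}, \ref{LEM.KCS.invariant.under.pullback}, and \ref{LEM.Transgression.formulae.chk}), so the proof is a matter of assembling these carefully rather than proving anything genuinely new; this is why the paper presents it as a lemma with a short proof.
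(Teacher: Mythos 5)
Your proof is correct and follows essentially the same route as the paper's: reflexivity via $N=0$ and Lemma~\ref{LEM.Transgression.formulae.chk}(\ref{item:KCSloop}); symmetry via the inverse isomorphism together with Lemma~\ref{LEM.KCS.invariant.under.pullback} and path reversal; transitivity by splicing the two witnesses with swap isomorphisms and telescoping via Lemmas~\ref{lem:KCSdirectsumpaths}, \ref{lem:KCStriangle}, and \ref{LEM.KCS.invariant.under.pullback}. The one cosmetic difference is in transitivity: you use the stabilizer $N\oplus M_1\oplus N'$, whereas the paper takes the smaller $N\oplus N'$ (the $M_1$ already appears after applying $\varphi$ to $M_0\oplus N$, so it need not be added separately); both choices work, with the extra $M_1$ contributing only a vanishing $\KCS(D_1,D_1)$ term, and your citation of Lemma~\ref{lem:KCStriangle} in the symmetry step is likewise harmless but unnecessary.
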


\begin{proof}
Reflexivity $(M_0,D_0,\om_0)\sim_{\KCS}(M_0,D_0,\om_0)$ holds since one can take $N:=0$,  $D:=0$, and $\varphi:=\mathrm{id}$. Indeed, by Lemma~\ref{LEM.Transgression.formulae.chk} (\ref{item:KCSloop}), $0=\KCS(D_{0}\oplus0,D_{0}\oplus0)=(\omega_{0}-\omega_{0})\mod\mathrm{Im}(d)$.
Symmetry holds by taking the inverse module isomorphism. In more detail, suppose $(M_0,D_0,\om_0)\sim_{\KCS}(M_1,D_1,\om_1)$. Then there exist an $A$-module $N$ with connection $D$ and an $A$-module isomorphism $M_{0}\oplus N\xrightarrow{\varphi}M_{1}\oplus N$ such that $\KCS(D_0\dsum D,\vph^*(D_1\dsum D))=(\om_0-\om_1)\!\!\mod\mathrm{Im}(d)$. As a result,
\[
\begin{split}
\KCS(D_1\dsum D,(\vph^{-1})^*(D_0\dsum D))&=\KCS(\varphi^*(D_1\dsum D),D_0\dsum D)\quad\text{ by Lemma~\ref{LEM.KCS.invariant.under.pullback} (\ref{item:pullbackpathconnection})}\\
&=-\KCS(D_0\dsum D,\varphi^*(D_1\dsum D))\quad\text{by Lemma~\ref{LEM.Transgression.formulae.chk} item~\ref{item:KCSreversepath}}\\
&=(\omega_{1}-\omega_{0})\mod\mathrm{Im}(d).
\end{split}
\]
As for transitivity, suppose $(M_{0},D_{0},\omega_{0})\sim_{\KCS}(M_{1},D_{1},\omega_{1})$ and $(M_{1},D_{1},\omega_{1})\sim_{\KCS}(M_{2},D_{2},\omega_{2})$.
By assumption, there exist $A$-modules with connections $(N_{01},D_{01})$, $(N_{12},D_{12})$ and module isomorphisms $M_{0}\oplus N_{01}\xrightarrow{\varphi_{01}}M_{1}\oplus N_{01}$, $M_{1}\oplus N_{12}\xrightarrow{\varphi_{12}}M_{2}\oplus N_{12}$ such that
\[
\KCS\big(D_{0}\oplus D_{01},\varphi_{01}^*(D_{1}\oplus D_{01})\big)=(\omega_{0}-\omega_{1})\mod\mathrm{Im}(d)
\quad\text{ and}\]
\[
\KCS\big(D_{1}\oplus D_{12},\varphi_{12}^*(D_{2}\oplus D_{12})\big)=(\omega_{1}-\omega_{2})\mod\mathrm{Im}(d).
\]
Set $N_{02}:=N_{01}\oplus N_{12}$, $D_{02}:=D_{01}\oplus D_{12}$, and let $\varphi_{02}:M_{0}\oplus N_{02}\xrightarrow{\varphi_{02}}M_{2}\oplus N_{02}$ be the composite
\[
\xy0;/r.25pc/:
(-60,7)*+{M_{0}\oplus N_{01}\oplus N_{12}}="1";
(-40,-7)*+{M_{1}\oplus N_{01}\oplus N_{12}}="2";
(0,-7)*+{N_{01}\oplus M_{1}\oplus N_{12}}="3";
(40,-7)*+{N_{01}\oplus M_{2}\oplus N_{12}}="4";
(60,7)*+{M_{2}\oplus N_{01}\oplus N_{12}}="5";
{\ar"1";"2"^{\varphi_{01}\oplus\mathrm{id}_{12}}};
{\ar"2";"3"^{\sigma\oplus\mathrm{id}_{12}}};
{\ar"3";"4"^{\mathrm{id}_{01}\oplus\varphi_{12}}};
{\ar"4";"5"^{\sigma\oplus\mathrm{id}_{12}}};
\endxy
,
\]
where $\sigma$ is the swap isomorphism.
Then
\[
\begin{split}
\KCS&\big(D_{0}\oplus D_{02},\varphi_{02}^{*}(D_{2}\oplus D_{02})\big)\overset{\text{Lem~\ref{lem:KCStriangle}}}{=\joinrel=\joinrel=\joinrel=\joinrel=}\KCS\big(D_{0}\oplus D_{02},(\varphi_{01}\oplus\mathrm{id}_{12})^{*}(D_{1}\oplus D_{01}\oplus D_{12})\big)\\
&+\KCS\big((\varphi_{01}\oplus\mathrm{id}_{12})^{*}(D_{1}\oplus D_{01}\oplus D_{12}),\varphi_{02}^{*}(D_{2}\oplus D_{02})\big)\\
&\overset{\text{Lem~\ref{LEM.KCS.invariant.under.pullback}}}{=\joinrel=\joinrel=\joinrel=\joinrel=}\KCS\big(D_{0}\oplus D_{02},\varphi_{01}^{*}(D_{1}\oplus D_{01})\oplus D_{12}\big)\\
&+\KCS\big(D_{01}\oplus D_{1}\oplus D_{12},(\mathrm{id}_{01}\oplus\varphi_{12})^*(D_{01}\oplus D_{2}\oplus D_{12})\big)\\
&\overset{\text{Lem~\ref{lem:KCSdirectsumpaths}}}{=\joinrel=\joinrel=\joinrel=\joinrel=}\KCS\big(D_{0}\oplus D_{01},\varphi_{01}^{*}(D_{1}\oplus D_{01})\big)+\KCS(D_{12},D_{12})\\
&+\KCS(D_{01},D_{01})+\KCS(D_{1}\oplus D_{12},\varphi_{12}^*(D_{2}\oplus D_{12})\big)\\
&=(\omega_{0}-\omega_{1})\mod\mathrm{Im}(d)+0+0+(\omega_{1}-\omega_{2})\mod\mathrm{Im}(d)\\
&=(\omega_{0}-\omega_{2})\mod\mathrm{Im}(d),
\end{split}
\]
which completes the proof of transitivity.

The final claim of this lemma is that the monoid structure on $\widehat{K}_{0}$-generators descends to KCS equivalence classes. We omit the proof because it follows a similar (in fact simpler) argument to the proof of transitivity.
\end{proof}

\begin{nta}\label{NTA.monoid.diff.k.karoubi}
The KCS equivalence class of $(M,D,\om)$, a $\widehat K_0$-generator of $\Om_\bl(A)$, will be denoted by $[(M,D,\om)]$.
We will denote by $\cM(\Om_\bl(A))$ the commutative monoid of $\KCS$-equivalence classes of $\widehat K_0$-generators of $\Om_\bl(A)$ with the monoid operation given by the direct sum. Note that the identity element of this monoid is the equivalence class of the trivial $A$-module $0$ with $0$ connection and the $0$ odd form.
\end{nta}

\begin{dfn}\label{DFN.karoubi.diff.k} Let $A$ be a unital algebra over $\K$, with $\K$ a field containing $\Q$, and let $\Omega_{\bl}(A)$ be a DGA on top of $A$. The \define{noncommutative differential $K_0$ group} of $A$, denoted by $\widehat{K}_0(A)$, is defined by the Grothendieck group of the commutative monoid $\cM(\Om_\bl(A))$.
\end{dfn}

\begin{lem}
Let $\Omega_{\bl}$ be a DGA on top of $\Alg$.
Given an algebra homomorphism $\psi:B\to A$, the assignment
\[
\begin{split}
\cM(\Om_\bl(B))&\xrightarrow{\psi_{*}}\cM(\Om_\bl(A))\\
[(N,D,\omega)]&\mapsto[(N\otimes_{B}A,D_{\psi},\psi(\omega))]
\end{split}
\]
is well-defined and uniquely determines an abelian group homomorphism $\widehat{K}_{0}(B)\xrightarrow{\psi_{*}}\widehat{K}_{0}(A)$ ($D_{\psi}$ is the induced connection from Lemma~\ref{lem:pullbackconnection} and Definition~\ref{dfn:inducedconnection}).
\end{lem}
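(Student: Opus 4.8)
The statement has three parts: (i) the assignment on generators is well-defined, i.e.\ respects $\KCS$-equivalence; (ii) it is a monoid homomorphism $\cM(\Om_\bl(B))\to\cM(\Om_\bl(A))$; and (iii) it therefore extends uniquely to a group homomorphism on Grothendieck groups. Part (iii) is automatic from the universal property of the Grothendieck group once (ii) is established, so the real content is (i) and (ii). I would organize the proof so that the group-theoretic step is a one-line appeal to functoriality of the Grothendieck completion, and spend the effort on the monoid-level claims.

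\textbf{Well-definedness.} First I would observe that the formula makes sense: given $(N,D,\omega)$ with $\omega\in(\Om_\bl(B)_{\ab})_{\odd}/\mathrm{Im}(d)$, the induced connection $D_\psi$ on $N\otimes_B A$ is supplied by Lemma~\ref{lem:pullbackconnection} and Definition~\ref{dfn:inducedconnection}, and $\psi(\omega)\in(\Om_\bl(A)_{\ab})_{\odd}/\mathrm{Im}(d)$ is well-defined because $\Omega_\bl(\psi)$ is a chain map of DGAs (comments after Definition~\ref{defn:DGAontopofAlg}) which descends to abelianizations (Lemma~\ref{lem:ringmapsandtrace}), hence sends $\mathrm{Im}(d)$ into $\mathrm{Im}(d)$. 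Next, suppose $(M_0,D_0,\omega_0)\sim_{\KCS}(M_1,D_1,\omega_1)$, witnessed by an $A$-module — here, $B$-module — $P$ with connection $D$ and an isomorphism $\varphi:M_0\oplus P\to M_1\oplus P$ with $\KCS(D_0\oplus D,\varphi^*(D_1\oplus D))=(\omega_0-\omega_1)\bmod\mathrm{Im}(d)$. Applying $\psi_*$: extension of scalars is additive, $(M_i\oplus P)\otimes_B A\cong(M_i\otimes_B A)\oplus(P\otimes_B A)$ compatibly with the induced connections (Lemma~\ref{lem:pullbackconnection}, and Lemma~\ref{lem:naturalityK0A} for the additivity), so $\varphi\otimes_B\mathrm{id}_A$ gives an isomorphism $(M_0\otimes_B A)\oplus(P\otimes_B A)\to(M_1\otimes_B A)\oplus(P\otimes_B A)$. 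The crucial point is that $\KCS$ intertwines with $\psi_*$: by Lemma~\ref{LEM.KCS.invariant.under.pullback}(\ref{item:inducedpathconnection}) and the compatibility of induced connections with direct sums, $\KCS\big((D_0\oplus D)_\psi,\ (\varphi\otimes\mathrm{id})^*((D_1\oplus D)_\psi)\big)=\psi\big(\KCS(D_0\oplus D,\varphi^*(D_1\oplus D))\big)=\psi(\omega_0-\omega_1)=\psi(\omega_0)-\psi(\omega_1)\bmod\mathrm{Im}(d)$; here I also need that $(\varphi^*D')_\psi=(\varphi\otimes\mathrm{id})^*(D'_\psi)$, which is a routine check from the definitions of induced and pullback connections (compare the remark after Definition~\ref{dfn:inducedconnection} and Definition~\ref{dfn:inducedpathofconnections}). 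This shows $\psi_*[(M_0,D_0,\omega_0)]=\psi_*[(M_1,D_1,\omega_1)]$.

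\textbf{Monoid homomorphism and conclusion.} That $\psi_*$ preserves the identity element is clear since $0\otimes_B A=0$ and $\psi(0)=0$. For additivity, $(N\oplus N')\otimes_B A\cong(N\otimes_B A)\oplus(N'\otimes_B A)$ with $(D\oplus D')_\psi$ corresponding to $D_\psi\oplus D'_\psi$ (again Lemma~\ref{lem:pullbackconnection} together with the description of direct sums of connections in Lemma~\ref{lem:directsumconnections}), and $\psi(\omega+\omega')=\psi(\omega)+\psi(\omega')$ since $\psi$ is additive on abelianizations; hence $\psi_*\big([(N,D,\omega)]+[(N',D',\omega')]\big)=\psi_*[(N,D,\omega)]+\psi_*[(N',D',\omega')]$. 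Finally, since $\widehat K_0(-)$ is by definition the Grothendieck group of $\cM(\Om_\bl(-))$, the monoid homomorphism $\psi_*$ induces a unique homomorphism of abelian groups $\widehat K_0(B)\to\widehat K_0(A)$ by the universal property of group completion.

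\textbf{Main obstacle.} No single step is deep; the bookkeeping burden is the compatibility of all the constructions (induced connection, pullback connection, direct sum, $\KCS$-form) with extension of scalars. The one point that deserves genuine care — and which I expect to be the main source of friction — is establishing the identity $\KCS\big((D_0\oplus D)_\psi,(\varphi\otimes\mathrm{id})^*((D_1\oplus D)_\psi)\big)=\psi\big(\KCS(D_0\oplus D,\varphi^*(D_1\oplus D))\big)$, which requires combining Lemma~\ref{LEM.KCS.invariant.under.pullback}(\ref{item:pullbackpathconnection}) and (\ref{item:inducedpathconnection}) and checking that the operations "induce along $\psi$", "pull back along $\varphi$", and "take $\KCS$" commute in the right order; once that is in hand the rest is formal.
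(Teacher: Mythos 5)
Your proposal is correct and follows essentially the same route as the paper: take the $\KCS$-equivalence witness, push everything forward via the extension-of-scalars functor, and invoke Lemma~\ref{LEM.KCS.invariant.under.pullback} to move $\psi$ past $\KCS$, then finish with the universal property of the Grothendieck group. You also correctly isolate the one genuinely delicate compatibility, namely that inducing along $\psi$ commutes (up to the canonical distributive isomorphisms) with pulling back along $\varphi$, which the paper likewise invokes as a "natural isomorphism" at exactly the corresponding step.
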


\begin{proof}
Suppose that $(N,D,\omega)\sim_{\KCS}(N',D',\omega')$, i.e.\ there exists a $B$-module $N''$ with  connection $D''$ and an isomorphism $N\oplus N''\xrightarrow{\varphi}N'\oplus N''$ such that $\KCS(D\oplus D'',\varphi^*(D'\oplus D''))=(\omega-\omega')\mod\mathrm{Im}(d)$.
Let $\mathcal{D}$ be a polynomial path of connections connecting $D\oplus D''$ to $\varphi^*(D'\oplus D'')$.
The extension of scalars functor (cf.\ Remark~\ref{rmk:grothendieckextensionscalars}) $\mathfrak{E}_{\psi}:\Fgp(B)\to\Fgp(A)$ associated to $\psi$ takes the isomorphism $\varphi$ to the isomorphism $\Phi$ given by the composite
\[
(N\otimes_{B}A)\oplus(N''\otimes_{B}A)\cong (N\oplus N'')\otimes_{B}A\xrightarrow{\mathfrak{E}_{\psi}(\varphi)}(N'\oplus N'')\otimes_{B}A\cong (N'\otimes_{B}A)\oplus(N''\otimes_{B}A),
\]
where we have included canonical isomorphisms due to the distributive law of extension of scalars over direct sums. The functor $\mathfrak{E}_{\psi}$ also takes the path of connections $\mathcal{D}$ to $\mathcal{D}_{\psi}$ (cf.\ Definition~\ref{dfn:inducedpathofconnections}), which is a path of connections from $(D\oplus D'')_{\psi}\cong D_{\psi}\oplus D''_{\psi}$ to $(\varphi^*(D'\oplus D''))_{\psi}$, the latter of which is natural isomorphic to $\Phi^{*}(D'_{\psi}\oplus D_{\psi}'')$.
Hence
\[
\begin{split}
\KCS(D_{\psi}\oplus D_{\psi}'',\Phi^{*}(D'_{\psi}\oplus D_{\psi}''))&=\KCS(\mathcal{D}_{\psi})\mod\mathrm{Im}(d)\\
&\overset{\text{Lem~\ref{LEM.KCS.invariant.under.pullback}}}{=\joinrel=\joinrel=\joinrel=\joinrel=}\psi(\KCS(\mathcal{D}))\mod\mathrm{Im}(d)\\
&=(\psi(\omega)-\psi(\omega'))\mod\mathrm{Im}(d),
\end{split}
\]
which proves well-definedness of $\psi_{*}$. The fact that $\psi_{*}$ is a monoid homomorphism follows from the fact that $\mathfrak{E}_{\psi}$ preserves the direct sum (up to canonical isomorphism). Finally, the fact that $\psi_{*}$ induces a unique group homomorphism $\widehat{K}_0(B)\xrightarrow{\psi_{*}}\widehat{K}_0(A)$ follows from the universal property of the Grothendieck group.
\end{proof}

The noncommutative differential $K_0$-group $\widehat{K}_0(A)$ has the forgetful map $I$ and the characteristic form map $R$ that fit into a square diagram as known as the differential cohomology square diagram.

\begin{prp}\label{prop:IandR}
Let $\Omega_{\bl}$ be a DGA on top of $\Alg$. The assignments
\begin{align*}
I: \widehat{K}_0(A)&\ra K_0(A) & R: \widehat{K}_0(A)&\ra (\Om_{\bl}(A)_{\ab})_{\text{even}}^{\text{cl}}\\
[(M,D,\om)] &\mapsto [M] &
[(M,D,\om)] &\mapsto \chk(D)+d\om
\end{align*}
uniquely define group homomorphisms, where $(\Om_{\bl}(A)_{\ab})_{\text{even}}^{\text{cl}}$ is the subgroup of $(\Om_{\bl}(A)_{\ab})_{\even}$ (cf.\ Example \ref{ex:ringofevenforms}) consisting of $d$-closed elements. Furthermore, the homomorphisms above are natural in $A$ in the following sense. Given an algebra homomorphism $\psi:B\to A$, the diagrams
\[
\xy0;/r.25pc/:
(-12.5,7.5)*+{\widehat{K}_{0}(B)}="1";
(12.5,7.5)*+{K_{0}(B)}="2";
(-12.5,-7.5)*+{\widehat{K}_{0}(A)}="3";
(12.5,-7.5)*+{K_{0}(A)}="4";
{\ar"1";"2"^{I}};
{\ar"1";"3"_{\psi_{*}}};
{\ar"3";"4"^{I}};
{\ar"2";"4"^{\psi_{*}}};
\endxy
\qquad\text{ and }\qquad
\xy0;/r.25pc/:
(-15,7.5)*+{\widehat{K}_{0}(B)}="1";
(15,7.5)*+{(\Omega_{\bl}(B)_{\ab})_{\even}^{\text{cl}}}="2";
(-15,-7.5)*+{\widehat{K}_{0}(A)}="3";
(15,-7.5)*+{(\Omega_{\bl}(A)_{\ab})_{\even}^\text{cl}}="4";
{\ar"1";"2"^(0.40){R}};
{\ar"1";"3"_{\psi_{*}}};
{\ar"3";"4"^(0.40){R}};
{\ar"2";"4"^{\psi_{*}}};
\endxy
\]
commute.
\end{prp}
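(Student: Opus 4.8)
The plan is to establish the assertions in turn: that $I$ and $R$ each descend from $\widehat K_0$-generators to a homomorphism on $\widehat K_0(A)$, and then that both squares commute. All of this is an exercise in unwinding definitions and invoking the transgression and naturality lemmas already proved.

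\emph{The map $I$.} First I would check it is well defined on $\KCS$-equivalence classes. If $(M_0,D_0,\om_0)\sim_{\KCS}(M_1,D_1,\om_1)$ via a module $N$ (with some connection) and an isomorphism $M_0\dsum N\xrightarrow{\ \cong\ }M_1\dsum N$, then $[M_0]+[N]=[M_1]+[N]$ in $K_0(A)$, hence $[M_0]=[M_1]$ by cancellation in the Grothendieck group. Since $[M\dsum M']=[M]+[M']$, the assignment $[(M,D,\om)]\mapsto[M]$ is a morphism of commutative monoids $\cM(\Om_\bl(A))\to K_0(A)$, and so extends uniquely to a group homomorphism on $\widehat K_0(A)$ by the universal property of the Grothendieck construction. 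Naturality is then immediate: $I(\psi_{*}[(N,D,\om)])=[N\tsr_B A]=\psi_{*}[N]=\psi_{*}(I[(N,D,\om)])$, the middle equality being the definition of $\psi_{*}$ on $K_0$ (Lemma~\ref{lem:naturalityK0A}).

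\emph{The map $R$.} Here the first thing to verify is that $\chk(D)+d\om$ genuinely lies in $(\Om_\bl(A)_{\ab})_{\even}^{\mathrm{cl}}$: the form $\chk(D)$ is closed by Proposition~\ref{PRP.chk.is.closed}, $d\om$ is even and $d\circ d=0$, and replacing $\om$ by $\om+d\eta$ leaves $d\om$ unchanged, so the value depends only on the class of $\om$ modulo $\mathrm{Im}(d)$. The crux is well-definedness on $\KCS$-equivalence classes. Given an equivalence as above, apply $d$ to $\KCS(D_0\dsum D,\vph^*(D_1\dsum D))=(\om_0-\om_1)\mod\mathrm{Im}(d)$. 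By the transgression formula Lemma~\ref{LEM.Transgression.formulae.chk}\,(\ref{item:dKCS}) the left side becomes $\chk(\vph^*(D_1\dsum D))-\chk(D_0\dsum D)$; by Lemma~\ref{LEM.KCS.invariant.under.pullback}\,(\ref{item:pullbackconnection}) the pullback does not change the Chern character; and by Lemma~\ref{lem:directsumconnections} the curvature of a direct sum is block diagonal, so its trace splits and $\chk(D_i\dsum D)=\chk(D_i)+\chk(D)$. The $\chk(D)$ terms cancel, leaving $\chk(D_0)+d\om_0=\chk(D_1)+d\om_1$ in $(\Om_\bl(A)_{\ab})_{\even}$, as needed. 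Additivity $R\big((M,D,\om)\dsum(M',D',\om')\big)=R(M,D,\om)+R(M',D',\om')$ follows again from additivity of $\chk$ and of $d$ over direct sums, so $R$ is a monoid homomorphism and extends uniquely to $\widehat K_0(A)$.

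\emph{Naturality of $R$.} Commutativity of the second square amounts to the identity $\chk(D_\psi)+d\big(\psi(\om)\big)=\psi\big(\chk(D)+d\om\big)$ in $(\Om_\bl(A)_{\ab})_{\even}^{\mathrm{cl}}$. The first summand equals $\psi\big(\chk(D)\big)$ by Lemma~\ref{LEM.KCS.invariant.under.pullback}\,(\ref{item:inducedconnection}), while $d\big(\psi(\om)\big)=\psi(d\om)$ because $\Om_\bl(\psi)$ is a chain map, so its induced map on abelianizations commutes with $d$ (cf.\ Lemma~\ref{lem:ringmapsandtrace}). The only genuinely nontrivial point in the whole argument is the well-definedness of $R$ on $\KCS$-classes, which is exactly where the transgression formula and the pullback-invariance of the Karoubi--Chern form are needed; everything else is bookkeeping with the Grothendieck group and the naturality lemmas from Section~\ref{SEC.KCS} and Subsection~\ref{SEC.kch}.
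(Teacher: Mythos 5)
Your proof is correct and follows essentially the same route as the paper: well-definedness of $R$ is reduced to the transgression formula (Lemma~\ref{LEM.Transgression.formulae.chk}~(\ref{item:dKCS})), pullback-invariance of $\chk$ (Lemma~\ref{LEM.KCS.invariant.under.pullback}~(\ref{item:pullbackconnection})), and additivity over direct sums (Lemma~\ref{lem:directsumconnections}), with everything else being the universal property of the Grothendieck group plus the naturality lemmas. One small bookkeeping remark: your citation of Lemma~\ref{lem:ringmapsandtrace} for $d\psi(\omega)=\psi(d\omega)$ is a bit of a stretch---the clean justification, which the paper uses, is simply that $\Omega_{\bl}(\psi)$ is a DGA map by Definition~\ref{defn:DGAontopofAlg} and therefore a chain map on abelianizations.
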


\begin{proof}
The group homomorphism property of $I$ and $R$ are determined by the preservation of the monoidal structure (by the universal property of the Grothendieck group construction). Hence, it suffices to show $I$ and $R$ are homomorphisms on the underlying monoids.
To see that $R$ is well-defined, suppose that $(M_0,D_0,\om_0)\sim_{\KCS}(M_1,D_1,\om_1)$. Then there exist an $A$-module $N$ with connection $D$ and an $A$-module isomorphism $M_{0}\oplus N\xrightarrow{\varphi}M_{1}\oplus N$ such that $\KCS(D_0\dsum D,\vph^*(D_1\dsum D))=(\om_0-\om_1)\!\!\mod\mathrm{Im}(d)$.
Then, by applying $d$ to this last equation, one obtains
\[
\begin{split}
d\omega_{0}-d\omega_{1}&=d\KCS(D_0\dsum D,\vph^*(D_1\dsum D))\\
&\overset{\text{Lem~\ref{LEM.Transgression.formulae.chk}}}{=\joinrel=\joinrel=\joinrel=\joinrel=}\chk(\varphi^*(D_{1}\oplus D))-\chk(D_{1}\oplus D)\\
&\overset{\text{Lem~\ref{LEM.KCS.invariant.under.pullback}}}{=\joinrel=\joinrel=\joinrel=\joinrel=}\chk(D_{1}\oplus D)-\chk(D_{0}\oplus D)\\
&\overset{\text{Lem~\ref{lem:directsumconnections}}}{=\joinrel=\joinrel=\joinrel=\joinrel=}\chk(D_{1})+\chk(D)-\chk(D_{0})-\chk(D)=\chk(D_{1})-\chk(D_{0}),
\end{split}
\]
which proves $R$ is well-defined.
The fact that $R$ is a group homomorphism follows from Lemma~\ref{lem:directsumconnections}.

Finally, naturality of $I$ is immediate, while
naturality of $R$ follows from Lemma~\ref{LEM.KCS.invariant.under.pullback} and the fact that $\Omega_{\bl}$ is a DGA on top of $\Alg$.
\end{proof}

\begin{prp}\label{prop:IRPrCHK}
Let $\Omega_{\bl}(A)$ be a DGA on top of some algebra $A$.
The following diagram commutes: \[\xymatrix{
    \widehat{K}_0(A)\ar[d]^{R} \ar[r]^{I} & K_0(A)\ar[d]^{\chk} \\
(\Om_{\bl}(A)_{\ab})_{\text{even}}^{\text{cl}} \ar[r]^(0.55){\text{Pr}} & H^{\dR}_{\even}(A)
    }\] where $\text{Pr}$ is the map taking de Rham homology class.
\end{prp}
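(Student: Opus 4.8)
The plan is to verify commutativity on the level of generators, since every element of $\widehat{K}_{0}(A)$ is a formal difference of classes $[(M,D,\omega)]$ and all four maps in the square are group homomorphisms (established in Proposition~\ref{prop:IandR} and Proposition~\ref{PRP.chk.is.indep.choice.of.conn}). So I would fix a $\widehat{K}_{0}$-generator $(M,D,\omega)$ and chase it around both ways.

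Going right then down: $I[(M,D,\omega)]=[M]\in K_{0}(A)$, and then $\chk([M])$ is, by Proposition~\ref{PRP.chk.is.indep.choice.of.conn}, the de~Rham homology class $[\chk(D)]\in H_{\even}^{\dR}(A)$, where we are free to compute $\chk$ using precisely the connection $D$ that came with the generator. Going down then right: $R[(M,D,\omega)]=\chk(D)+d\omega\in(\Omega_{\bl}(A)_{\ab})_{\even}^{\mathrm{cl}}$, and then $\mathrm{Pr}$ sends this to its de~Rham homology class $[\chk(D)+d\omega]$. The two outputs agree because $d\omega$ is a boundary in the abelianized complex $\Omega_{\bl}(A)_{\ab}$ (this is exactly why $\mathrm{Pr}$ is well-defined and why $\chk(D)+d\omega$ is $d$-closed, using that $\chk_{k}(D)$ is closed by Proposition~\ref{PRP.chk.is.closed}), so $[\chk(D)+d\omega]=[\chk(D)]$ in $H_{\even}^{\dR}(A)$.

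The only genuinely non-formal point is to make sure the choice of connection is consistent across the two paths: when we evaluate $\chk\circ I$ on $[(M,D,\omega)]$ we must be allowed to pick the connection $D$ (and not some other connection on $M$) in the definition of $\chk([M])$, but this is precisely the content of Proposition~\ref{PRP.chk.is.indep.choice.of.conn}, which says the resulting homology class is independent of the chosen connection. So no obstacle arises there.

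I expect essentially no serious obstacle; the statement is a bookkeeping check that the two already-constructed maps $R$ and $\chk\circ I$ differ by the exact term $d\omega$, which dies under $\mathrm{Pr}$. The proof is therefore a one-line diagram chase on generators together with an appeal to Propositions~\ref{PRP.chk.is.closed} and~\ref{PRP.chk.is.indep.choice.of.conn}, followed by the remark that homomorphisms agreeing on monoid generators agree on the Grothendieck group.
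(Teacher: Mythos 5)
Your proposal is correct and matches the paper's (terse) proof, which simply states that the result ``follows immediately from the definitions''; your write-up just spells out the generator-level diagram chase that justifies that remark. The key observations you identify — that $\chk\circ I$ may be computed with the given connection $D$ by Proposition~\ref{PRP.chk.is.indep.choice.of.conn}, and that $\mathrm{Pr}$ kills the exact term $d\omega$ — are exactly the content.
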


\begin{proof}
This follows immediately from the definitions.
\end{proof}

\begin{rmk}\label{rmk:KaroubiMKA} Historically, the archetype of noncommutative differential $K$-theory has been the multiplicative $K$-group $MK(A)$ by Karoubi \cite[Section 7.3]{Ka2}. The group $MK(A)$ is the \emph{flat subgroup} of our $\widehat{K}_0(A)$ when $F^r=0$ for $r>0$, where $F^r$ is the filtration considered in \cite{Ka2}; i.e. $$MK(A)=\ker\left(\widehat{K}_0(A)\srl{R}\ra \Om_{\bl}(A)_{\ab}\right).$$
Therefore, our noncommutative differential $K$-theory generalizes Karoubi's multiplicative $K$-theory and promotes it to the context of differential extensions of $K$-theory in the noncommutative framework.
\end{rmk}

\subsection{Cycle maps} \label{SEC.cycle.maps.in.karoubi.diff.k}
We show that the noncommutative differential $K$-group $\widehat{K}_0(A)$
recovers what is known as the even differential $K$-theory of a manifold (Theorem \ref{THM.cycle.map.1} below). Furthermore, there is a universal cycle map from the group $\widehat{K}^{\text{u}}_0(A)$, constructed by using the DGA of universal noncommutative differential forms, into the group $\widehat{K}_0(A)$, defined with respect to an arbitrary DGA $\Om_\bl(A)$ on top of $A$. For example, one such DGA could be the de~Rham complex if $A$ is smooth functions over a manifold (Definition \ref{DEF.cycle.map.2}).

In this subsection, we will use notations and constructions of the geometric model of differential $K$-theory discussed in Section \ref{SEC.FLK}. In particular, recall the notation $\f{M}(X)$ as in Definition \ref{DFN.FLK} as well as $\cM(\Om_\bl(A))$ in Notation \ref{NTA.monoid.diff.k.karoubi}.

\begin{lem}
\label{lem:CStoKCS}
Let $X$ be a compact smooth manifold, set $A:=\Cinf(X;\C)$, and let $\Om_\bl(A)=\Om^\bl_{\dR}(X;\C)$ be the de~Rham complex of $X$. The assignment
\beq\label{eqn:CStoKCS} \f{M}(X) &\ra \cM(\Om_\bl(A))\\
[\left(E,\na,(\om_{2k-1})_{k\in\Z^+}\right)]_{\CS}&\mapsto [\left(\Ga(E),\na,((2\pi i)^k\om_{2k-1})_{k\in\Z^+}\right)]_{\KCS} \eeq is an isomorphism of commutative monoids that is natural in $X$.
\end{lem}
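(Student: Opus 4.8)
The plan is to verify that the assignment in~\eqref{eqn:CStoKCS} is (i) well-defined on $\CS$-equivalence classes, (ii) a monoid homomorphism, (iii) a bijection, and (iv) natural in $X$. The conceptual backbone is the translation dictionary between the geometric and algebraic worlds that has already been assembled in the excerpt: the smooth Serre--Swan correspondence $E\mapsto\Gamma(E)$ with its differential refinement (Appendix~\ref{SEC.DSS}), the identification of pullback bundles with extension of scalars and pullback connections with induced connections (the Examples following Definition~\ref{dfn:inducedconnection}), and the normalization $\frac{1}{(2\pi i)^k}\KCS_k(\na)=\CS_k(\na)$ from Remark~\ref{RMK.KCS.is.a.generalization.of.CS}. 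The factor $(2\pi i)^k$ attached to $\omega_{2k-1}$ is precisely what is forced by this normalization, so that the $\KCS$-relation on the algebraic side matches the $\CS$-relation on the geometric side term by term in each degree.

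\textbf{Order of steps.} First I would record that $\Gamma(E)\in\Fgp(A)$ by smooth Serre--Swan, that a connection on $E$ in the differential-geometric sense is the same datum as a connection on $\Gamma(E)$ in the sense of Definition~\ref{DFN.Fgp.conn.} (this is the Example after that definition), and that $(2\pi i)^k\omega_{2k-1}\in(\Om^\bl_{\dR}(X;\C)_{\ab})_{\odd}/\Ima(d)$ makes sense since $A$ is commutative and hence the abelianization is trivial, i.e.\ $(\Om^\bl_{\dR}(X)_{\ab})_{\odd}=\Om^{\odd}_{\dR}(X)$. Thus a $\widehat K^0$-generator on $X$ maps to a genuine $\widehat K_0$-generator of $\Om_\bl(A)$, and the map on representatives is defined. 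Second, for well-definedness, suppose $(E_0,\na_0,\om_0)\sim_{\CS}(E_1,\na_1,\om_1)$, witnessed by $G$, $\na$, and a connection-preserving isomorphism $\varphi\colon E_0\oplus G\to E_1\oplus G$ with $\CS(\na_0\oplus\na,\varphi^*(\na_1\oplus\na))=\om_0-\om_1\bmod\Ima(d)$. Applying $\Gamma$ gives $N:=\Gamma(G)$, a connection $D$, and an $A$-module isomorphism $\Gamma(\varphi)\colon\Gamma(E_0)\oplus N\to\Gamma(E_1)\oplus N$; then I would invoke Remark~\ref{RMK.KCS.is.a.generalization.of.CS} degree-by-degree to get $\KCS_k(\na_0\oplus\na,\Gamma(\varphi)^*(\na_1\oplus\na))=(2\pi i)^k\CS_k(\na_0\oplus\na,\varphi^*(\na_1\oplus\na))=(2\pi i)^k(\om_0-\om_1)_{2k-1}\bmod\Ima(d)$, which is exactly the $\KCS$-equivalence condition for the images. (Here I would be careful that $\Gamma$ intertwines pullback connection with pullback connection and direct sum with direct sum, which is the content of the Examples in Section~\ref{SEC.connection.curvature.kch} together with Lemma~\ref{lem:directsumconnections}.) Third, the monoid homomorphism property is immediate from $\Gamma(E_0\oplus E_1)\cong\Gamma(E_0)\oplus\Gamma(E_1)$, $\na_0\oplus\na_1$ going to $D_0\oplus D_1$, and the additivity of the form part, using that $(2\pi i)^k$ is additive over the sum of the $\omega$'s in each fixed degree.

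\textbf{Bijectivity and naturality.} For surjectivity I would use the differential refinement of smooth Serre--Swan in Appendix~\ref{SEC.DSS}: every finitely generated projective $A$-module is $\Gamma(E)$ for some smooth $E$, every connection on it arises from a connection on $E$, and every odd form class is hit since the form part of the map is just multiplication by the nonzero scalars $(2\pi i)^k$ in each degree, hence surjective on forms. Any $\widehat K_0$-generator $(\Gamma(E),D,\eta)$ is therefore the image of $(E,\na,((2\pi i)^{-k}\eta_{2k-1})_k)$. For injectivity, suppose the images of $(E_0,\na_0,\om_0)$ and $(E_1,\na_1,\om_1)$ are $\KCS$-equivalent, witnessed by an $A$-module $N$ with connection $D$ and isomorphism $\vph\colon\Gamma(E_0)\oplus N\to\Gamma(E_1)\oplus N$; by Serre--Swan I may write $N=\Gamma(G)$, $D$ as a connection on $G$, and $\vph=\Gamma(\psi)$ for a bundle isomorphism $\psi$, and then run the normalization from Remark~\ref{RMK.KCS.is.a.generalization.of.CS} backwards, dividing by $(2\pi i)^k$ in degree $2k-1$, to recover $\CS(\na_0\oplus\na,\psi^*(\na_1\oplus\na))=\om_0-\om_1\bmod\Ima(d)$, i.e.\ $\CS$-equivalence. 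Finally, naturality in $X$: for a smooth map $f\colon Y\to X$ with associated algebra map $\psi=f^*\colon A=\Cinf(X;\C)\to B=\Cinf(Y;\C)$, the geometric side uses pullback of bundles, connections, and forms, the algebraic side uses extension of scalars $(-)\otimes_A B$, induced connections $D_\psi$, and $\psi$ on forms, and these match under $\Gamma$ by the Examples cited above; since pullback of de~Rham forms commutes with wedge and $d$, the scalar $(2\pi i)^k$ passes through unchanged, giving the commuting square. I expect the main obstacle to be bookkeeping rather than conceptual: assembling the various compatibilities of $\Gamma$ with $\oplus$, with pullback/induced connections, and with the extension-of-scalars functor into a clean statement, and making sure the degreewise scalar $(2\pi i)^k$ is tracked consistently through the $\CS$/$\KCS$ normalization in every step; the genuinely load-bearing external input is the differential refinement of smooth Serre--Swan from Appendix~\ref{SEC.DSS}, which supplies both surjectivity and the ability to realize the witnessing data $(N,D,\vph)$ geometrically.
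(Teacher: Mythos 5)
Your proof follows essentially the same route as the paper's: verify well-definedness via the normalization $\KCS_k=(2\pi i)^k\CS_k$ of Remark~\ref{RMK.KCS.is.a.generalization.of.CS}, establish bijectivity by transporting witnessing data through the differential Serre--Swan equivalence of Appendix~\ref{SEC.DSS} (Proposition~\ref{PRP.Differential.SS}), and use the additivity of $\Gamma$ under $\oplus$ plus the functoriality of extension-of-scalars for the monoid and naturality claims. Two small points worth tightening: in the well-definedness step you call $\varphi$ a ``connection-preserving isomorphism,'' which contradicts the very next line where $\varphi^*(\na_1\oplus\na)\neq\na_0\oplus\na$ in general and $\CS$ of the straight-line path between them is nonzero — $\varphi$ is just a vector bundle isomorphism, exactly as in the paper's Definition~2.1; and in the surjectivity step you should note explicitly (as the paper does) that for an arbitrary $(M,D,\omega)$ one only gets an isomorphism $\alpha:\Gamma(E)\xrightarrow{\cong}M$ rather than equality, and that $[(M,D,\omega)]=[(\Gamma(E),\na,\omega)]$ because $\KCS(D,(\alpha^{-1})^*\na)=0\bmod\mathrm{Im}(d)$ by Lemma~\ref{LEM.Transgression.formulae.chk}~(\ref{item:KCSloop}).
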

\begin{proof} To streamline notations in our proof, we will omit subindices in the sequence of odd degree differential forms modulo exact forms. For example, $\om$ denotes a sequence $(\om_{2k-1})_{k\in\Z^+}$, $C\cdot \om$ means $((2\pi i)^k\om_{2k-1})_{k\in\Z^+}$, and $C^{\text{inv}}\cdot \om$ represents $((2\pi i)^{-k}\om_{2k-1})_{k\in\Z^+}$. Here, the use of the letter $C$ is to express that a constant is multiplied (for each $k$). Now, suppose $(E_0,\na_0,\om_0)$ and $(E_1,\na_1,\om_1)$ are equivalent $\widehat{K}^0$-generators. We claim $(\Ga(E_0),\na_0,C\cdot\om_0)$ and $(\Ga(E_1),\na_1,C\cdot\om_1)$ are equivalent $\widehat{K}_0$-generators of $\Om_\bl(A)$. Since there exists a smooth vector bundle $G\to X$ with connection $\na_G$ and an isomorphism $E_0\dsum G \srl\phi\ra E_1\dsum G$, we obtain $(\Ga(G),\na_G)\in \FgpD(X)$ (cf. Example \ref{EXA.examples.of.connection}) and an isomorphism of $A$-modules $\vph:\Ga(E_0)\dsum \Ga(G)\ra \Ga(E_1)\dsum \Ga(G)$. Since $\vph^*(\na_1\dsum\na_G)=\phi^*(\na_1\dsum\na_G)$, Remark \ref{RMK.KCS.is.a.generalization.of.CS} implies $(2\pi i)^{k}\CS_k(\na_0\dsum \na_G,\na_1\dsum \na_G)=\KCS_k(\na_0\dsum \na_G,\na_1\dsum \na_G)$. Hence, the assignment~(\ref{eqn:CStoKCS}) is well-defined.

We next show that~(\ref{eqn:CStoKCS}) is surjective. Take any $[(M,D,\om)]\in \cM(\Om_\bl(A))$. Since $\Gamma^\con$ from Proposition \ref{PRP.Differential.SS} is an equivalence of categories,
there is a $(E,\na)\in \Bun^{\text{iso}}_\na(X)$ such that $\al:\Ga(E)\srl\isom\ra M$ and $\na=(\al\tsr\text{id})\circ D\circ\al^{-1}$. Then~(\ref{eqn:CStoKCS}) maps $[(E,\na,C^{\text{inv}}\cdot\om)]\in \f{M}(X)$ to $[(\Ga(E),\na,\om)]\in \cM(\Om_\bl(A))$, which is equal to $[(M,D,\om)]$ because ${\al^{-1}}^*\na=D$, and $\KCS(D,{\al^{-1}}^*\na)=0\mod\text{Im}(d)$. Thus the map is onto.

We next show that~(\ref{eqn:CStoKCS}) is injective. Suppose $(M_0,D_0,C\cdot\om_0)$ and $(M_1,D_1,C\cdot\om_1)$ are equivalent $\widehat{K}_0$-generators of $\Om_\bl(A)$. Then there exists a $(M_2,D_2)\in\FgpD(X)$ and an $A$-module isomorphism $\vph:M_0\dsum M_2\ra M_1\dsum M_2$ satisfying $\KCS(D_0\dsum D_2,D_1\dsum D_2)=\om_0-\om_1 \mod\text{Im}(d)$. Again, since $\Ga^{\text{con}}$ is an equivalence by Proposition \ref{PRP.Differential.SS}, for each $(M_i,D_i)\in\FgpD(X)$ there exist $(E_i,\na_i)\in \BunD(X)$ and $\al_i:\Ga(E_i)\srl\isom\ra M_i$ satisfying $\na_i=(\al_i\tsr\text{id})\circ D_i\circ\al_i^{-1}$ for $i=0,1,2$. The isomorphism $\vph$ induces a vector bundle isomorphism $\phi:E_0\dsum E_2\ra E_1\dsum E_2$ satisfying $(\al_1\dsum\al_2)\circ\phi=\vph\circ (\al_0\dsum\al_2)$. We note that $\phi^*(\na_1\dsum\na_2)=\phi^*(\al_1\dsum\al_2)^*(D_1\dsum D_2)=(\al_0\dsum\al_2)^*\vph^*(D_1\dsum D_2)$. Hence by Lemma \ref{LEM.KCS.invariant.under.pullback}, $\KCS(D_0\dsum D_2,\vph^*(D_1\dsum D_2))=\KCS(\na_0\dsum \na_2,\phi^*(\na_1\dsum \na_2))$, which is equal to $C\cdot\CS(\na_0\dsum \na_2,\phi^*(\na_1\dsum \na_2))$ by Remark \ref{RMK.KCS.is.a.generalization.of.CS}. Hence, (\ref{eqn:CStoKCS}) is one-to-one.

Finally, verifying that the given map is a monoid homomorphism that is natural in $X$ follows similar calculations to those we have already seen.
\end{proof}

Since a natural isomorphism of commutative monoids induces a natural isomorphism of corresponding Grothendieck groups, we obtain the following theorem.

\begin{thm} \label{THM.cycle.map.1} Let $A=\Cinf(X;\C)$. The equivalence $\BunD^{\text{iso}}(X)\xrightarrow{\Gamma^\con} \FgpD^{\text{iso}}(X)$ from~\eqref{EQN.GammaCon} and Proposition~\ref{PRP.Differential.SS} induces a cycle map
$\texttt{cycle}:\widehat{K}^0(X)\ra \widehat{K}_0(A)$, which is an isomorphism of abelian groups natural in $X$.
\end{thm}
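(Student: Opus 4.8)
The plan is to reduce Theorem~\ref{THM.cycle.map.1} to the monoid isomorphism already established in Lemma~\ref{lem:CStoKCS}. Recall from Section~\ref{SEC.FLK} that $\widehat{K}^0(X)$ is by definition the Grothendieck group of $(\f{M}(X),\oplus)$, and from Notation~\ref{NTA.monoid.diff.k.karoubi} and Definition~\ref{DFN.karoubi.diff.k} that $\widehat{K}_0(A)$ is the Grothendieck group of $(\cM(\Om_\bl(A)),\oplus)$ with $A=\Cinf(X;\C)$ and $\Om_\bl(A)=\Om^\bl_{\dR}(X;\C)$. So the bulk of the work is already done: Lemma~\ref{lem:CStoKCS} gives a natural isomorphism of commutative monoids $\f{M}(X)\xrightarrow{\cong}\cM(\Om_\bl(A))$ sending $[(E,\na,\om)]_{\CS}$ to $[(\Ga(E),\na,(2\pi i)^{\bullet}\om)]_{\KCS}$, built on the Serre--Swan equivalence $\Gamma^\con$ of Proposition~\ref{PRP.Differential.SS}.

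First I would invoke the universal property of the Grothendieck group functor $K:\mathbf{CMon}\to\Ab$: it is left adjoint to the forgetful functor, hence preserves isomorphisms and is functorial. Applying $K$ to the monoid isomorphism of Lemma~\ref{lem:CStoKCS} therefore produces a group isomorphism
\[
\texttt{cycle}\colon\widehat{K}^0(X)=K\big(\f{M}(X)\big)\xrightarrow{\ \cong\ }K\big(\cM(\Om_\bl(A))\big)=\widehat{K}_0(A).
\]
Concretely, $\texttt{cycle}$ sends a formal difference $[(E_0,\na_0,\om_0)]_{\CS}-[(E_1,\na_1,\om_1)]_{\CS}$ to $[(\Ga(E_0),\na_0,(2\pi i)^{\bullet}\om_0)]_{\KCS}-[(\Ga(E_1),\na_1,(2\pi i)^{\bullet}\om_1)]_{\KCS}$, and it is well-defined precisely because the underlying monoid map is.

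Second, for naturality in $X$: given a smooth map $f\colon X\to Y$, write $\psi:=f^*\colon\Cinf(Y;\C)\to\Cinf(X;\C)$ for the induced algebra map. I would check that the square relating $\texttt{cycle}$ on $Y$ and on $X$ along $f^*$ (on the left) and $\psi_*$ (on the right) commutes. At the level of monoids, naturality of the assignment in Lemma~\ref{lem:CStoKCS} already gives this — one must only match the pullback of a $\widehat{K}^0$-generator $(E,\na,\om)$ along $f$ with the extension of scalars $(\Ga(E)\otimes_{\Cinf(Y)}\Cinf(X),\na_\psi,\psi(\om))$, which is exactly the content of the example following Definition~\ref{dfn:inducedconnection} (the pullback bundle with pullback connection corresponds to extension of scalars with induced connection under Serre--Swan) together with the fact that de~Rham pullback of forms coincides with $\Omega_\bl(\psi)$ and commutes with the constants $(2\pi i)^\bullet$. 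Applying $K$ to this natural isomorphism of monoid-valued functors yields a natural isomorphism of group-valued functors, which is the assertion.

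The main obstacle is not in this theorem at all — it is entirely in Lemma~\ref{lem:CStoKCS} and the differential Serre--Swan correspondence of Proposition~\ref{PRP.Differential.SS} that it relies on (well-definedness, surjectivity via the equivalence $\Gamma^\con$, and injectivity via pullback invariance of $\KCS$ in Lemma~\ref{LEM.KCS.invariant.under.pullback} and the comparison constant of Remark~\ref{RMK.KCS.is.a.generalization.of.CS}). Once those are in hand, the only genuinely new point here is the bookkeeping that $K$ is functorial and preserves isomorphisms and natural isomorphisms, which is standard; so the proof of Theorem~\ref{THM.cycle.map.1} itself is essentially a one-line corollary of Lemma~\ref{lem:CStoKCS}.
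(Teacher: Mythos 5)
Your proof is correct and follows essentially the same route as the paper: the paper derives the theorem from Lemma~\ref{lem:CStoKCS} in a single sentence by noting that a natural isomorphism of commutative monoids induces a natural isomorphism of the associated Grothendieck groups, which is precisely your argument. You have merely spelled out the standard functoriality of the Grothendieck group construction and the naturality check that the paper leaves implicit.
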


The noncommutative differential $K$-group $\widehat{K}_0(A)$ is therefore far more general than the differential $K$-theory of a compact smooth manifold. The construction is valid for $\FgpD(A)$ of finitely generated projective $A$-modules with connections associated to a unital $\K$-algebra $A$ and any choice of DGA $\Om_\bl(A)$ on top of $A$.

As we have seen in Remark~\ref{RMK.nc.diff.forms}, the universal property of the complex $\univf_{\bl}(A)$ of universal noncommutative differential forms induces a map on noncommutative de~Rham homology groups and a compatibility between Karoubi's Chern characters. We would like to extend those observations to noncommutative differential $K$-groups by constructing a cycle map by using the universality of noncommutative differential forms.

Let $A$ be a unital algebra over $\C$ and $M\in \Fgp(A)$. Let $\univf_{\bl}(A)$ be the complex of universal noncommutative differential forms and $\Om_{\bl}(A)$ any DGA on top of $A$. Using these DGAs we can consider two different connections on $M$ and hence two different categories of finitely generated projective $A$-modules with connection $\FgpuD(A)$ and $\FgpD(A)$, respectively.

Recall the DGA map $\Phi:\univf_{\bl}(A) \ra \Om_\bl(A)$, $a_0\dun a_1\cdots \dun a_n\mapsto a_0 da_1\cdots da_n$ from Remark~\ref{RMK.nc.diff.forms} (\ref{item:universaltoordinarydR}). The map $\Phi$ induces a functor
\beqs
\Phi:\FgpuD(A)&\ra \FgpD(A)\\
(M, D)&\mapsto (M,\Phi D)\\
((M_0, D_0)\ra (M_1, D_1))&\mapsto ((M_0,\Phi D_0)\ra (M_1,\Phi D_1)).
\eeqs

\begin{lem}\label{LEM.universality.functoriality} Let $A$ be a unital algebra over $\C$ and $\Om_\bl(A)$ a DGA on top of $A$.
The assignment
\beqs \Phi:\cM(\univf_\bl(A)) &\ra \cM(\Om_\bl(A))\\
[(M, D,\om)]_{\KCS}&\mapsto [(M,\Phi D,\Phi\om)]_{\KCS}
\eeqs
is a homomorphism of commutative monoids uniquely induced by the map $\Phi:\univf_{\bl}(A) \ra \Om_\bl(A)$.
\end{lem}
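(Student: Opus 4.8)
The plan is to verify that $\Phi$ is well-defined on $\KCS$-equivalence classes, and then that it respects the monoid operation; uniqueness is automatic once one checks that the assignment on representatives is the only one compatible with the DGA map $\Phi:\univf_\bl(A)\to\Om_\bl(A)$ applied degreewise. First I would observe that $\Phi:\univf_\bl(A)\to\Om_\bl(A)$ is a map of DGAs (Remark~\ref{RMK.nc.diff.forms}(\ref{item:universaltoordinarydR})), hence in particular a chain map commuting with the differentials, and it descends to the abelianizations $\univf_\bl(A)_{\ab}\to\Om_\bl(A)_{\ab}$ since it sends commutators to commutators; this guarantees $\Phi\om$ is well-defined in $(\Om_\bl(A)_{\ab})_{\odd}/\mathrm{Im}(d)$ whenever $\om\in(\univf_\bl(A)_{\ab})_{\odd}/\mathrm{Im}(d)$, and that $\Phi D$ is genuinely a connection on $M$ with respect to $\Om_\bl(A)$ (the Leibniz rule is preserved because $\Phi$ restricts to $\mathrm{id}_A$ in degree $0$ and intertwines $\dun$ with $d$). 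So the functor $\Phi:\FgpuD(A)\to\FgpD(A)$ is well-defined, and $(M,\Phi D,\Phi\om)$ is a legitimate $\widehat K_0$-generator of $\Om_\bl(A)$.

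The key step is compatibility with $\KCS$. Here the central fact I would use is that $\Phi$, being a map of DGAs, commutes with the relevant constructions on $\wtl A=\La_0\tsr_\K A$: it extends to a DGA map $\La_\bl\dtsr\univf_\bl(A)\to\La_\bl\dtsr\Om_\bl(A)$ (identity on $\La_\bl$), hence commutes with the homotopy operator $K$ of Definition~\ref{DFN.homotopy.operator} and with evaluation maps $\mathrm{ev}_t$. Given a polynomial path of connections $\mathcal D$ on $M$ with respect to $\univf_\bl(A)$, one gets $\Phi\mathcal D$ (apply $\Phi$ to the $\Om_1$-factor), whose associated connection $\wtl{\Phi\mathcal D}$ on $\wtl M$ satisfies $\wtl{\Phi\mathcal D}=\Phi\circ\wtl{\mathcal D}$ in an appropriate sense, so that $\chk(\wtl{\Phi\mathcal D})=\Phi(\chk(\wtl{\mathcal D}))$ (this is the naturality already recorded in Remark~\ref{RMK.nc.diff.forms}(\ref{item:universaltoordinarydR}) and Lemma~\ref{lem:naturalityK0A}(2), applied here to the algebra $\wtl A$ together with the natural transformation of DGAs induced by $\Phi$). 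Applying $K$ and using that $\Phi K=K\Phi$ gives
\[
\KCS(\Phi\mathcal D)=K\chk(\wtl{\Phi\mathcal D})=K\Phi\big(\chk(\wtl{\mathcal D})\big)=\Phi\big(K\chk(\wtl{\mathcal D})\big)=\Phi\big(\KCS(\mathcal D)\big).
\]
Then, if $(M_0,D_0,\om_0)\sim_{\KCS}(M_1,D_1,\om_1)$ witnessed by $(N,D)$ and $\vph:M_0\oplus N\xrightarrow{\cong}M_1\oplus N$ with $\KCS(D_0\oplus D,\vph^*(D_1\oplus D))=(\om_0-\om_1)\bmod\mathrm{Im}(d)$, the \emph{same} module $N$ with connection $\Phi D$ and the \emph{same} isomorphism $\vph$ (note $\Phi$ does not change the underlying modules or module maps, only the connection and forms) witnesses $(M_0,\Phi D_0,\Phi\om_0)\sim_{\KCS}(M_1,\Phi D_1,\Phi\om_1)$: apply $\Phi$ to both sides of the $\KCS$ identity, using $\Phi(\vph^*(D_1\oplus D))=\vph^*(\Phi D_1\oplus\Phi D)$ (which holds since $\vph^*$ is just pre/post-composition with $\vph,\vph^{-1}$ and $\Phi$ commutes with the curvature and trace constructions — cf.\ Lemma~\ref{LEM.KCS.invariant.under.pullback}(\ref{item:pullbackconnection})) and $\Phi(D_0\oplus D)=\Phi D_0\oplus\Phi D$. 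Well-definedness on $\KCS$-classes follows.

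Finally, the monoid-homomorphism property is immediate from the definitions: $\Phi$ applied to $(M\oplus M',D\oplus D',\om+\om')$ gives $(M\oplus M',\Phi(D\oplus D'),\Phi\om+\Phi\om')=(M\oplus M',\Phi D\oplus\Phi D',\Phi\om+\Phi\om')$ since $\Phi(D\oplus D')=\Phi D\oplus\Phi D'$ (the direct sum of connections is defined via the distributive natural isomorphism, with which $\Phi$ commutes — Lemma~\ref{lem:directsumconnections}) and $\Phi$ is additive on forms; the identity element (the zero module with zero connection and zero form) is visibly sent to the identity element. Uniqueness: any monoid homomorphism $\cM(\univf_\bl(A))\to\cM(\Om_\bl(A))$ that is "induced by" $\Phi$ in the sense of sending a representative $(M,D,\om)$ to $(M,\Phi D,\Phi\om)$ is by definition this one, since $\Phi$ determines $\Phi D$ and $\Phi\om$ uniquely. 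I expect the only mildly delicate point to be bookkeeping the identification $\Phi(\vph^*(\cdot))=\vph^*(\Phi(\cdot))$ and $\Phi\circ\wtl{(\cdot)}=\wtl{\Phi(\cdot)}$ through the several canonical isomorphisms of Lemma~\ref{lem:intervalcrossspacenc}, but these are all instances of naturality of $\Phi$ as a DGA map and involve no new ideas.
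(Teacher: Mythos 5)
Your proof follows essentially the same route as the paper's: both hinge on the observation that $\Phi$ extends to a DGA map $\La_\bl\dtsr\univf_\bl(A)\to\La_\bl\dtsr\Om_\bl(A)$ that commutes with the homotopy operator $K$, giving $\Phi\,\KCS(\mathcal{D})=\KCS(\Phi\mathcal{D})$, after which well-definedness on $\sim_{\KCS}$ classes follows by reusing the same witnessing module $N$ (with connection $\Phi D$) and the same isomorphism $\vph$. Your write-up is slightly more explicit about the descent to abelianizations and the commutation $\Phi(\vph^*(\cdot))=\vph^*(\Phi(\cdot))$, but the decomposition of the argument and the key facts invoked are identical to the paper's.
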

\begin{proof} Recall notations from Section \ref{SEC.KCS}. The DGA map $\Phi:\univf_{\bl}(A) \ra \Om_\bl(A)$ induces the DGA map $\Phi_{\La_\bl}:(\La_\bl\dtsr\univf_{\bl}(A),\de^{\mathrm{u}}) \ra (\La_\bl\dtsr\Om_\bl(A),\de)$ sending $\om\tsr\te$ to $\om\tsr\Phi\te$, and the homotopy operator $K$ commutes with $\Phi$. That is $K\circ\Phi_{\La_\bl}=\Phi \circ K$.
Since $\Phi\chk(D)=\chk(\Phi D)$, we conclude $\Phi\KCS(\mathcal{D})=\KCS(\Phi\mathcal{D})$, where $\mathcal{D}$ is a polynomial path of connections with respect to the DGA $\univf_{\bl}(A)$ and where $\Phi\mathcal{D}$ is the associated polynomial path of connections with respect to $\Om_\bl(A)$ induced by $\Phi$.

Now suppose $(M_0, D_0, \om_0)$ and $(M_1, D_1, \om_1)$ are equivalent $\widehat{K}_0$-generators of $\univf_\bl(A)$ as realized by some $(N,D)\in \Fgp_{\D^{\mathrm{u}}}(A)$ and an $A$-module isomorphism $M_0\dsum N \srl\vph\ra M_1\dsum N$. Then $\Phi\KCS( D_0\dsum  D,  D_1\dsum  D)=\KCS(\Phi D_0\dsum \Phi D, \Phi D_1\dsum \Phi D)$, and the triples $(M_0,\Phi D_0,\Phi \om_0)$ and $(M_1,\Phi D_1,\Phi \om_1)$ are equivalent $\widehat{K}_0$-generators of $\Om_\bl(A)$ with the choice $(N,\Phi  D)$ and the same $A$-module isomorphism $\vph$. Thus, the assignment in the statement of this lemma is well-defined. It is readily seen that the map is a homomorphism of commutative monoids.
\end{proof}

\begin{rmk} By a similar proof, any morphism $\Xi_{\bl}(A)\to\Omega_{\bl}(A)$ of DGA's on top of $A$ induces a homomorphism $\cM(\Xi_\bl(A))\to\cM(\Om_\bl(A))$ of commutative monoids.
\end{rmk}

Since a monoid homomorphism induces a homomorphism on their Grothendieck groups, we have the following cycle map. We will use the notation $\widehat{K}^{\mathrm{u}}_0(A)$ to denote the group completion of $\cM(\univf_\bl(A))$.

\begin{dfn} \label{DEF.cycle.map.2}  Let $A$ be a unital algebra over $\C$ and $\Om_\bl(A)$ any DGA on top of $A$. The \define{universal cycle map} is the map $\texttt{cycle}_\Phi:\widehat{K}^{\mathrm{u}}_0(A)\ra \widehat{K}_0(A)$ induced by the canonical DGA map $\Phi:\univf_{\bl}(A) \ra \Om_\bl(A)$.
\end{dfn}

\subsection{Hexagon diagram} \label{SEC.hexagon.diagram}

\begin{nta} Let $A$ be an algebra over $\K$. We denote by $\cP$ the category whose objects are pairs $(M,\phi)$ consisting of $M\in \Fgp(A)$ and $\phi\in \text{Aut}(M)$. A morphism from $(M,\phi)$ to $(M',\phi')$ is an isomorphism $\vph:M\ra M'$ satisfying that $\phi'\circ \vph=\vph\circ \phi$.
\end{nta}

\begin{dfn}\label{DFN.algebraic.K1} Let $A$ be an algebra over $\K$. The \define{algebraic $K_1$-group} of $A$ is the free abelian group generated by the set of isomorphism classes of $\cP$ modulo the following relations:
\begin{itemize}
    \item[(1)] $(M_1\dsum M_2,\phi_1\dsum\phi_2)=(M_1,\phi_1)+(M_2,\phi_2)$.
    \item[(2)] $(M,\phi_1\circ \phi_2)=(M,\phi_1)+(M,\phi_2)$.
\end{itemize}
\end{dfn}

\begin{dfn}
Let $A$ be an algebra over $\K$ and fix a DGA $\Omega_{\bl}(A)$ on top of $A$. Consider a triple $(M,D,\phi)$ consisting of $(M,D)\in \FgpD(A)$ and $\phi\in\text{Aut}(M)$. The straight line path joining $D$ and the pullback connection $\phi^*D$ is a polynomial path of connections (see Definition \ref{dfn:inducedconnection} and Remark \ref{rmk:spaceofconnectionsisaffine}).
The \define{total odd Chern character form} of a triple $(M,D,\phi)$ is
$\chk_{1}(M,D,\phi):= \KCS(t\mapsto (1-t)D+t\phi^*D)$.
\end{dfn}

\begin{dfn} Let $A$ be an algebra over $\K$ and fix a DGA $\Omega_{\bl}(A)$ on top of $A$. The \define{total odd Chern character} is the assignment \beqs
\chk_1:K_1(A)&\ra H^{\dR}_{\odd}(A)\\
[(M,\phi)]&\mapsto [\KCS(t\mapsto (1-t)D+t\phi^*D)].
\eeqs where $D$ is an arbitrary connection on $M$.
\end{dfn}

\begin{prp} Let $\Omega_{\bl}$ be a DGA on top of $\Alg$. The assignment $\chk_1:K_1(A)\ra H^{\dR}_{\odd}(A)$ is a group homomorphism that does not depend on the choice of connection and is natural in $A$.
\end{prp}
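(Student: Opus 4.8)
The plan is to prove three things: (i) $\chk_1$ is well-defined, i.e.\ independent of the choice of connection $D$; (ii) it respects the defining relations of $K_1(A)$ and hence descends to a group homomorphism; and (iii) it is natural in $A$. First I would address well-definedness. Given two connections $D_0,D_1$ on $M$, one wants $\KCS(t\mapsto(1-t)D_0+t\,\vph^*D_0)$ and $\KCS(t\mapsto(1-t)D_1+t\,\vph^*D_1)$ to differ by $\mathrm{Im}(d)$. The key observation is that pullback along $\vph$ is an affine automorphism of the (affine) space of connections, so $\vph^*D_1=\vph^*D_0+\vph^*(D_1-D_0)$ and, crucially, $(\vph^*\cdot)$ commutes with taking convex combinations. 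Thus the two straight-line paths have the same endpoints of the corresponding ``$\KCS(D,\vph^*D)$'' construction when one uses that $\KCS(D_0,D_1)$ is path-independent modulo $\mathrm{Im}(d)$ (Notation~\ref{NTA.KCS.of.two.connections}). Concretely, using Lemma~\ref{lem:KCStriangle} and Lemma~\ref{LEM.KCS.invariant.under.pullback}~(\ref{item:pullbackpathconnection}): $\KCS(D_0,\vph^*D_0)=\KCS(D_0,D_1)+\KCS(D_1,\vph^*D_1)+\KCS(\vph^*D_1,\vph^*D_0)$ and $\KCS(\vph^*D_1,\vph^*D_0)=\KCS(D_1,D_0)=-\KCS(D_0,D_1)$ modulo $\mathrm{Im}(d)$, so the $D_0$ and $D_1$ contributions agree in $H^{\dR}_{\odd}(A)$. (Here one also needs that $\chk_1(M,\phi)$ is automatically $d$-closed modulo $\mathrm{Im}(d)$: by Lemma~\ref{LEM.Transgression.formulae.chk}~(\ref{item:dKCS}), $d\KCS(D,\vph^*D)=\chk(\vph^*D)-\chk(D)=0$ since the Karoubi--Chern forms of a connection and its pullback agree by Lemma~\ref{LEM.KCS.invariant.under.pullback}~(\ref{item:pullbackconnection}), so the class lands in $H^{\dR}_{\odd}(A)$ as claimed.)

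Next I would check the relations. For relation (1), $(M_1\oplus M_2,\phi_1\oplus\phi_2)=(M_1,\phi_1)+(M_2,\phi_2)$: choose $D=D_1\oplus D_2$; then $(\phi_1\oplus\phi_2)^*(D_1\oplus D_2)=\phi_1^*D_1\oplus\phi_2^*D_2$, the straight-line path splits as a direct sum of straight-line paths, and Lemma~\ref{lem:KCSdirectsumpaths} gives additivity of $\KCS$ under direct sums. For relation (2), $(M,\phi_1\circ\phi_2)=(M,\phi_1)+(M,\phi_2)$: with a fixed connection $D$, apply Lemma~\ref{lem:KCStriangle} to the three connections $D$, $\phi_2^*D$, $(\phi_1\phi_2)^*D=\phi_2^*(\phi_1^*D)$, obtaining $\KCS(D,(\phi_1\phi_2)^*D)=\KCS(D,\phi_2^*D)+\KCS(\phi_2^*D,\phi_2^*(\phi_1^*D))$, and the second term equals $\KCS(D,\phi_1^*D)$ by naturality of $\KCS$ under the isomorphism $\phi_2$ (Lemma~\ref{LEM.KCS.invariant.under.pullback}~(\ref{item:pullbackpathconnection}), applied with $\vph=\phi_2$). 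Hence $\chk_1$ factors through $K_1(A)$ and is a group homomorphism since $H^{\dR}_{\odd}(A)$ is abelian and the generators add.

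Finally, naturality in $A$: given an algebra map $\psi:B\to A$ and $\Omega_{\bl}$ a DGA on top of $\Alg$, one must show the square relating $K_1(\psi)$, $\chk_1$, and $\psi_*$ on homology commutes. Starting from $(N,\phi)$ over $B$ with connection $D$, the extension-of-scalars functor sends this to $(N\otimes_B A,\phi_\psi)$ with induced connection $D_\psi$, and it sends the straight-line path $t\mapsto(1-t)D+t\,\phi^*D$ to the straight-line path $t\mapsto(1-t)D_\psi+t\,(\phi^*D)_\psi=(1-t)D_\psi+t\,(\phi_\psi)^*D_\psi$ (using that extension of scalars commutes with convex combinations, pullback, and the cocartesian identification, essentially Definition~\ref{dfn:inducedpathofconnections} and the remark following it). Then Lemma~\ref{LEM.KCS.invariant.under.pullback}~(\ref{item:inducedpathconnection}) gives $\KCS(\mathcal{D}_\psi)=\psi(\KCS(\mathcal{D}))$, and passing to homology classes (compatibly with the induced map $\psi_*$ on $H^{\dR}_{\odd}$, as in Lemma~\ref{lem:naturalityK0A}) yields the desired commutativity.

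The main obstacle I anticipate is the well-definedness argument: getting the bookkeeping right so that changing the connection $D\mapsto D'$ does not change the $K_1$-class, since one is simultaneously changing both the start and the end of the path (the endpoint is $\vph^*D'$, not $\vph^*D$), and this requires carefully combining the triangle identity (Lemma~\ref{lem:KCStriangle}), path-reversal (Lemma~\ref{LEM.Transgression.formulae.chk}~(\ref{item:KCSreversepath})), and the pullback-invariance of $\KCS$ modulo $\mathrm{Im}(d)$ rather than any single lemma. Everything else is a direct application of the transgression lemmas assembled in Section~\ref{SEC.KCS}.
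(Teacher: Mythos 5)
Your proof is essentially the paper's proof, assembled from the same ingredients (Lemma~\ref{lem:KCStriangle}, Lemma~\ref{LEM.KCS.invariant.under.pullback}, Lemma~\ref{LEM.Transgression.formulae.chk}~(\ref{item:KCSreversepath}), Lemma~\ref{lem:KCSdirectsumpaths}), and the calculations you describe for connection-independence, the two $K_1$ relations, and naturality are all correct. In fact your treatment of relation~(2) is more explicit than the paper's, which only cites Lemma~\ref{LEM.KCS.invariant.under.pullback}~(\ref{item:pullbackconnection}); your triangle-plus-pullback decomposition is exactly what is needed.

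There is, however, one genuine (if small) gap. You conflate ``well-defined'' with ``independent of the choice of connection $D$,'' but $K_1(A)$ is the free abelian group on \emph{isomorphism classes} of pairs $(M,\phi)$, so you must also show that $\chk_1(M,D,\phi)=\chk_1(M',D',\phi')$ whenever there is an $A$-module isomorphism $\varphi:M\to M'$ intertwining $\phi$ and $\phi'$ (i.e.\ $\phi'\circ\varphi=\varphi\circ\phi$). Your argument with $D_0,D_1$ both living on $M$ and the same $\phi$ does not address this. The fix is easy, and you have all the pieces: pick $D'$ on $M'$, set $D:=\varphi^*D'$, observe that $\phi^*\varphi^*=(\varphi\phi)^*=(\phi'\varphi)^*=\varphi^*\phi'^*$, so the straight-line path $t\mapsto(1-t)D+t\phi^*D$ is the $\varphi$-pullback of the straight-line path $t\mapsto(1-t)D'+t\phi'^*D'$, and then Lemma~\ref{LEM.KCS.invariant.under.pullback}~(\ref{item:pullbackpathconnection}) gives equality of the $\KCS$ forms; combined with your connection-independence this yields isomorphism-class invariance. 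The paper handles this slightly differently by doing both at once: it takes arbitrary connections $D$ on $M$, $D'$ on $M'$, and an arbitrary intertwiner $\varphi$, and runs your triangle argument on the three connections $D$, $\varphi^*D'$, $\varphi^*\phi'^*D'$, which collapses to $\chk_1(M,D,\phi)=\chk_1(M',D',\phi')$ in one pass. Either route works; just don't forget the intertwiner.
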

\begin{proof}
Let $(M,\phi)$ and $(M',\phi')$ be equivalent pairs under an isomorphism $\vph:M\ra M'$ so that $\phi'\circ\varphi=\varphi\circ\phi$.
Choose connections $D$ on $M$ and $D'$ on $M'$. Then
\[
\begin{split}
\KCS(t\mapsto(1-t)D+t\phi^*D)&=\KCS(t\mapsto(1-t)D+t\vph^*D')+\KCS(t\mapsto(1-t)\vph^*D'+t\vph^*\phi'^*D')\\
&\quad+\KCS(t\mapsto(1-t)\vph^*\phi'^*D'+t\phi^*D) \mod \text{Im}(d)
\end{split}
\]
by Lemma~\ref{lem:KCStriangle} applied to paths schematically depicted as
\[
\xy0;/r.25pc/:
(-10,7.5)*+{D}="1";
(10,7.5)*+{\phi^*D}="2";
(-10,-7.5)*+{\varphi^*D'}="3";
(10,-7.5)*+{\varphi^*\phi'^*D'}="4";
{\ar@/^/"1";"2"};
{\ar@/_/"1";"3"};
{\ar@/_/"3";"4"};
{\ar@/_/"4";"2"};
\endxy
\]
The third term in these expressions can be simplified to (all terms modulo the image of $d$)
\[
\begin{split}
\KCS(t\mapsto(1-t)\vph^*\phi'^*D'+t\phi^*D)&=\KCS(t\mapsto(1-t)\phi^*\vph^*D'+t\phi^*D) \text{ since $\phi'\circ\varphi=\varphi\circ\phi$}\\
&=\KCS(t\mapsto(1-t)\vph^*D'+tD)\quad\text{by Lemma~\ref{LEM.KCS.invariant.under.pullback} (\ref{item:pullbackconnection})}\\
&=-\KCS(t\mapsto(1-t)D+t\vph^*D')\quad\text{by Lemma~\ref{LEM.Transgression.formulae.chk} (\ref{item:KCSreversepath})}.
\end{split}
\]
Combining this with the previous equation gives
\[
\KCS(t\mapsto(1-t)D+t\phi^*D)=\KCS(t\mapsto(1-t)\vph^*D'+t\vph^*\phi'^*D')=\KCS(t\mapsto(1-t)D'+t\phi'^*D')
\]
by another application of Lemma~\ref{LEM.KCS.invariant.under.pullback} (\ref{item:pullbackconnection}). This proves that $\chk_1$ is well-defined on the set $\texttt{isom}(\cP)$ of isomorphism classes of $\mathcal{P}$. By the universal property of the free group functor, there exists a canonical extension to the free group of $\texttt{isom}(\cP)$. To prove that it is well-defined on $K_{1}(A)$, it therefore suffices to show that the normal subgroup generated by the relations in Definition~\ref{DFN.algebraic.K1} get sent to zero under $\chk_{1}$. Note that $\chk_1([(M_1\dsum M_2,\phi_1\dsum\phi_2)])=\chk_1([(M_1,\phi_1)])+\chk_1([(M_2,\phi_2)])$ by the additivity of trace on block sum (cf.\ Lemma~\ref{lem:KCSdirectsumpaths}), and $\chk_1(M,\phi_1\circ \phi_2)=\chk_1(M,\phi_1)+\chk_1(M,\phi_2)$ follows from Lemma~\ref{LEM.KCS.invariant.under.pullback} (\ref{item:pullbackconnection}). This shows that $\chk_1$ descends to a unique group homomorphism on the quotient $K_{1}(A)$.

To see the naturality, let $B$ be an algebra over $\K$ and $A\srl\psi\ra B$ an algebra map. We see that $\Om_\bl(\psi)(\chk_1(M,\phi))=[\KCS(t\mapsto (1-t)D_\psi + t (\phi^*D)_\psi)]$ by Lemma~\ref{LEM.KCS.invariant.under.pullback} (\ref{item:inducedpathconnection}).
\end{proof}

\begin{nta} Let $\text{Im}(\chk_1)$ be the subgroup of the abelian group $\Omega_{\odd}(A)$
generated by all total odd Chern character forms. We shall write $\Om_{\chk_1}(A)$ for the abelian group generated by $\text{Im}(\chk_1)+\Omega_{\odd}^{\text{exact}}(A)$.
\end{nta}

\begin{dfn}\label{defn:amap} Let $A$ be an algebra over $\K$ and fix a DGA $\Omega_{\bl}(A)$ on top of $A$. Define \beqs
a:\Om_{\odd}(A)/\Om_{\chk_1}(A) &\ra \widehat{K}_0(A)\\
[\om]&\mapsto [(O,0,\om)]-[(O,0,0)],
\eeqs where $O$ denotes the zero module $\{0\}$ and $0$ the trivial connection on $O$.
\end{dfn}

\begin{lem} Let $\Omega_{\bl}$ be a DGA on top of $\Alg$. Then the map $a:\Om_{\odd}(A)/\Om_{\chk_1}(A) \ra \widehat{K}_0(A)$ is a group homomorphism natural in $A$.
\end{lem}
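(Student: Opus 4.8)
The plan is to establish, in order, well-definedness of $a$, its homomorphism property, and its naturality; the first two are handled together by first building a homomorphism $\widehat{a}\colon\Om_{\odd}(A)\to\widehat{K}_0(A)$, $\omega\mapsto[(O,0,\omega)]-[(O,0,0)]$, out of the unquotiented group of odd forms and then checking it annihilates $\Om_{\chk_1}(A)$. The first observation is that $[(O,0,0)]$ is the unit of the monoid $\cM(\Om_\bl(A))$ (Notation~\ref{NTA.monoid.diff.k.karoubi}), hence is $0$ in $\widehat{K}_0(A)$, so $\widehat{a}(\omega)=[(O,0,\omega)]$. Since $(O,0,\omega)\oplus(O,0,\omega')=(O\oplus O,0\oplus0,\omega+\omega')=(O,0,\omega+\omega')$ and $\oplus$ descends to $\KCS$-equivalence classes, $\widehat{a}$ is additive. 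Consequently, to obtain $a$ on the quotient it is enough to verify that $\widehat{a}$ kills the generators of $\Om_{\chk_1}(A)$.

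Exact odd forms are killed automatically, because $\omega$ enters the triple $(O,0,\omega)$ only through its class modulo $\mathrm{Im}(d)$, i.e. $(O,0,d\eta)=(O,0,0)$. For a form $\omega=\chk_1(N,D,\phi)=\KCS\bigl(t\mapsto(1-t)D+t\phi^*D\bigr)$ with $(N,D)\in\FgpD(A)$ and $\phi\in\Aut(N)$, I would exhibit a $\KCS$-equivalence $(O,0,\omega)\sim_{\KCS}(O,0,0)$ using the auxiliary module $N$ with connection $D$ and the automorphism $\phi$ of $O\oplus N\cong N$: then $0\oplus D\cong D$ and $\phi^*(0\oplus D)\cong\phi^*D$, and by Notation~\ref{NTA.KCS.of.two.connections} (the straight-line path being a polynomial path from $D$ to $\phi^*D$) one gets $\KCS(0\oplus D,\phi^*(0\oplus D))=\KCS(D,\phi^*D)=[\,\KCS(t\mapsto(1-t)D+t\phi^*D)\,]=\omega\bmod\mathrm{Im}(d)$, which is exactly the defining relation for $\KCS$-equivalence. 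Hence $\widehat{a}$ annihilates $\Om_{\chk_1}(A)$ and descends to a group homomorphism $a\colon\Om_{\odd}(A)/\Om_{\chk_1}(A)\to\widehat{K}_0(A)$.

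For naturality, fix an algebra map $\psi\colon B\to A$. I would first check that $\psi$ (i.e. $\Om_\bl(\psi)$, in particular a chain map since $\Om_\bl$ is a DGA on top of $\Alg$) descends to the quotients: it sends exact forms to exact forms, and it sends $\mathrm{Im}(\chk_1^{B})$ into $\mathrm{Im}(\chk_1^{A})$ because pushing the datum $(N,D,\phi)$ forward along $\psi$, using the extension of scalars functor $\mathfrak{E}_\psi$ (Remark~\ref{rmk:grothendieckextensionscalars}) and the affine structure on connections (Remark~\ref{rmk:spaceofconnectionsisaffine}), yields $(N\otimes_B A,\,D_\psi,\,\mathfrak{E}_\psi(\phi))$ with $(\phi^*D)_\psi=\mathfrak{E}_\psi(\phi)^*(D_\psi)$ and $\bigl((1-t)D+t\phi^*D\bigr)_\psi=(1-t)D_\psi+t(\phi^*D)_\psi$, so that Lemma~\ref{LEM.KCS.invariant.under.pullback}~(\ref{item:inducedpathconnection}) gives $\psi\bigl(\chk_1(N,D,\phi)\bigr)=\chk_1\bigl(N\otimes_B A,D_\psi,\mathfrak{E}_\psi(\phi)\bigr)\in\mathrm{Im}(\chk_1^{A})$. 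Thus $\psi(\Om_{\chk_1}(B))\subseteq\Om_{\chk_1}(A)$. Commutativity of the naturality square is then a one-line computation: $\psi_*\bigl(a[\omega]\bigr)=\psi_*[(O,0,\omega)]=[(O\otimes_B A,0_\psi,\psi(\omega))]=[(O,0,\psi(\omega))]=a\bigl(\psi[\omega]\bigr)$, using $O\otimes_B A\cong O$ and that the induced connection on the zero module is $0$.

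The only step requiring genuine care — and hence the main obstacle — is the pushforward identity $\psi(\chk_1(N,D,\phi))=\chk_1(N\otimes_B A,D_\psi,\mathfrak{E}_\psi(\phi))$: one must verify that the extension of scalars commutes with forming pullback connections and with affine combinations of connections, and then invoke Lemma~\ref{LEM.KCS.invariant.under.pullback}. Both compatibilities follow from functoriality of $\mathfrak{E}_\psi$ and the definitions of $D_\psi$ and of the pullback/reversed paths in Section~\ref{SEC.KCS}; everything else is unwinding definitions together with the already-established facts that $\oplus$ respects $\KCS$-equivalence and that $\KCS(D_0,D_1)$ is path-independent modulo $\mathrm{Im}(d)$ (Lemma~\ref{LEM.Transgression.formulae.chk}).
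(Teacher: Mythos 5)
Your proof is correct and follows essentially the same approach as the paper's: the crux in both is the $\KCS$-equivalence $(M,D,\chk_1(M,D,\phi))\sim_{\KCS}(M,D,0)$ (which you state as $(O,0,\chk_1)\sim_{\KCS}(O,0,0)$, an equivalent formulation), and naturality via Lemma~\ref{LEM.KCS.invariant.under.pullback}~(\ref{item:inducedpathconnection}). You are a bit more explicit than the paper about factoring $a$ through a homomorphism $\widehat a$ on $\Om_{\odd}(A)$ and about verifying that $\psi$ carries $\Om_{\chk_1}(B)$ into $\Om_{\chk_1}(A)$ (so that the source is functorial), but these are points the paper leaves implicit rather than genuine differences in strategy.
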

\begin{proof}
Suppose $\om-\eta=\chk_1(M,D,\phi)$ for some pair $(M,\phi)\in \cP$ and an arbitrary choice of connection $D$ on $M$.
\beqs
a([\om])&=[(O,0,\om)]-[(O,0,0)]\\
&=[(O,0,\eta+\chk_1(M,D,\phi))]+[(M,D,0)]-[(M,D,0)]-[(O,0,0)]\\
&=[(O,0,\eta)]+[(M,D,\chk_1(M,D,\phi))]-[(M,D,0)]-[(O,0,0)]\\
&=[(O,0,\eta)]-[(O,0,0)]=a([\eta]).\eeqs So the map is well-defined. It is clearly a group homomorphism, and by Lemma~\ref{LEM.KCS.invariant.under.pullback} (\ref{item:inducedpathconnection}), the map is natural in $A$.
\end{proof}

\begin{prp} The sequence
$$0\ra \Omega_{\odd}(A)/\Omega_{\chk_1}(A)\srl{a}\ra \widehat K_0(A)\srl{I}\ra K_0(A)\ra 0$$
is exact. Here, $a$ and $I$ are from Definition~\ref{defn:amap} and Proposition~\ref{prop:IandR}, respectively.
\end{prp}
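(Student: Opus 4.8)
The plan is to verify exactness at each of the three nontrivial spots: injectivity of $a$, exactness at $\widehat K_0(A)$, and surjectivity of $I$.

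First, surjectivity of $I$ is immediate: given $[M]\in K_0(A)$ with $M\in\Fgp(A)$, choose any connection $D$ on $M$ (one exists by the remark after the definition of connection) and observe $I([(M,D,0)])=[M]$; for a general class $[M]-[N]$ in $K_0(A)$, lift to $[(M,D,0)]-[(N,D',0)]$.

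Second, for exactness at $\widehat K_0(A)$, the inclusion $\mathrm{Im}(a)\subseteq\ker(I)$ is clear since $I\circ a([\omega])=[O]-[O]=0$. For the reverse, suppose $I(\xi)=0$ for $\xi\in\widehat K_0(A)$. Write $\xi=[(M_0,D_0,\omega_0)]-[(M_1,D_1,\omega_1)]$; then $[M_0]=[M_1]$ in $K_0(A)$, so (after adding a common $\widehat K_0$-generator of the form $(P,D_P,0)$ to both, which leaves $\xi$ unchanged and is absorbed by the Grothendieck group) we may assume $M_0\cong M_1$ via some isomorphism $\psi$. Using Notation~\ref{NTA.KCS.of.two.connections}, set $\eta:=\KCS(D_0,\psi^*D_1)$ and compare $(M_0,D_0,\omega_0)$ with $(M_1,D_1,\omega_1+(\omega_0-\eta-\omega_1))$; by definition of $\sim_{\KCS}$ (taking $N=0$, $\varphi=\psi$) these two $\widehat K_0$-generators are $\KCS$-equivalent, so in $\widehat K_0(A)$ we get $\xi=[(M_0,D_0,\omega_0)]-[(M_1,D_1,\omega_1)]=[(O,0,\omega_0-\eta-\omega_1)]-[(O,0,0)]$ after cancelling $[(M_1,D_1,\cdot)]$ against itself and using additivity of the $\omega$-slot; this is $a([\omega_0-\eta-\omega_1])$, so $\xi\in\mathrm{Im}(a)$.

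Third, injectivity of $a$: suppose $a([\omega])=0$ in $\widehat K_0(A)$, i.e.\ $[(O,0,\omega)]=[(O,0,0)]$ after group completion, which means there is a $\widehat K_0$-generator $(M,D,\mu)$ with $(O,0,\omega)\oplus(M,D,\mu)\sim_{\KCS}(O,0,0)\oplus(M,D,\mu)$, i.e.\ $(M,D,\omega+\mu)\sim_{\KCS}(M,D,\mu)$. Unwinding $\sim_{\KCS}$, there is $(N,D')$ and an isomorphism $\varphi:M\oplus N\to M\oplus N$ with $\KCS(D\oplus D',\varphi^*(D\oplus D'))=\omega\mod\mathrm{Im}(d)$. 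Now the left-hand side is, by definition and Notation~\ref{NTA.KCS.of.two.connections} together with the definition of the odd Chern character form, exactly $\chk_1(M\oplus N,D\oplus D',\varphi)$ up to exact forms. Hence $\omega\in\mathrm{Im}(\chk_1)+\Omega^{\mathrm{exact}}_{\odd}(A)=\Omega_{\chk_1}(A)$, so $[\omega]=0$ in $\Omega_{\odd}(A)/\Omega_{\chk_1}(A)$.

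The main obstacle is the exactness at $\widehat K_0(A)$: the bookkeeping needed to reduce a general element $\xi$ in the Grothendieck group (a formal difference of generators, with possibly stabilized modules) to the single-generator form $[(O,0,\omega)]-[(O,0,0)]$ requires care — one must add auxiliary generators $(P,D_P,0)$ to realize the abstract equality $[M_0]=[M_1]$ as an honest module isomorphism, track how the $\omega$-components add under $\oplus$, and invoke Lemma~\ref{LEM.Transgression.formulae.chk}(\ref{item:KCSbigon}) to see that the choice of path defining $\KCS(D_0,\psi^*D_1)$ doesn't matter. The other two parts are essentially formal once the definitions of $\sim_{\KCS}$, $\chk_1$, and $\Omega_{\chk_1}(A)$ are unwound.
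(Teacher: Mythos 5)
Your proof is correct and follows essentially the same strategy as the paper's: use the defining condition of the Grothendieck group to reduce to $\KCS$-equivalence of monoid elements, stabilize so the underlying modules become honestly isomorphic, and observe that the resulting $\KCS$ form is by definition an odd Karoubi--Chern character form. Your treatment of the injectivity of $a$ is in fact slightly more careful than the paper's, since you explicitly unwind what $[(O,0,\omega)]=[(O,0,0)]$ means in the group completion (existence of an absorbing $(M,D,\mu)$) before invoking the definition of $\sim_{\KCS}$, and you correctly track the congruence $\mod\mathrm{Im}(d)$ that the paper elides.
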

\begin{proof}
We first prove the injectivity of the map $a$. Let $\omega\in\Omega_{\odd}(A)$ and suppose $a([\om])=[(O,0,\om)]-[(O,0,0)]=0$. Then there exists $(M,D)\in \FgpD(A)$ and an $A$-module automorphism $\vph:M\ra M$ such that $\om=\KCS(t\mapsto (1-t)D+t\phi^*D)=\chk_1(M,D,\vph).$ Hence, $\omega=0\mod\Om_{\chk_1}(A)$, proving injectivity of $a$. Now we verify $\ker I\subseteq \text{Im}(a)$. Suppose $[(M_0,D_0,\om_0)]-[(M_1,D_1,\om_1)]\in \widehat{K}_0(A)$ satisfies $[M_0]=[M_1]$ in $K_0(A)$. Then there exists an $A$-module $M$ and an $A$-module isomorphism $\phi:M_0\dsum M\ra M_1\dsum M$. Take any connection $D$ on $M$ and note that $[(M_1\dsum M,D_1\dsum D,\om_1)]=[(M_0\dsum M,D_0\dsum D,\te)]$, where $\te=\om_1+\KCS(t\mapsto (1-t)(D_0\dsum D)+t\phi^*(D_1\dsum D)$. Hence, \beqs \phantom{}
[(M_0,D_0,\om_0)]-[(M_1,D_1,\om_1)] &=[(M_0\dsum M,D_0\dsum D,\om_0)]-[(M_1\dsum M,D_1\dsum D,\om_1)]\\
&=[(M_0\dsum M,D_0\dsum D,\om_0)]-[(M_0\dsum M,D_0\dsum D,\te)]\\
&=[(O,0,\om_0)]-[(O,0,\te)]=a([\om_0-\te]).
\eeqs
It is clear that $I\circ a=0$. Finally, $I$ is onto by the fact that every finitely generated projective $A$-module admits a connection.
\end{proof}

\begin{lem}\label{lem:alphabetar} The assignments
\begin{align*}
H_{\odd}^{\dR}(A)&\xrightarrow{\al}MK(A)&
MK(A)&\xrightarrow{\be} K_{0}(A)\\
[\om]&\mapsto [(O,0,\om)]-[(O,0,0)]\qquad,\quad&
[(M_0,D_0,\om)]-[(M_1,D_1,\eta)]&\mapsto [M_0]-[M_1],
\end{align*}
\beqs
\text{ and}\qquad
H_{\odd}^{\dR}(A) &\xrightarrow{r}\Om_{\odd}(A)/\Om_{\chk_1}(A)\\
[\om]&\mapsto \om+\Om_{\chk_1}(A)
\eeqs
are group homomorphisms. Here, $O$ denotes the zero module $\{0\}$, $0$ denotes the trivial connection on $O$, and $MK(A):=\ker(R)$ (cf.\ Remark~\ref{rmk:KaroubiMKA}).
\end{lem}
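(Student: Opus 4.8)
The plan is to verify that each of the three assignments $\al$, $\be$, and $r$ is well-defined and additive, treating them one at a time.

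First I would handle $\al$. Its codomain is $MK(A)=\ker(R)$, so I must check that $[(O,0,\om)]-[(O,0,0)]$ actually lies in $\ker(R)$: indeed, by Proposition~\ref{prop:IandR}, $R([(O,0,\om)])=\chk(0)+d\om=d\om$, which is zero in de~Rham homology but, more to the point, we must check it vanishes in $(\Om_\bl(A)_{\ab})_{\even}^{\text{cl}}$. Here we use that $\om$ ranges over $H_{\odd}^{\dR}(A)$, so $\om$ is a cycle in $\Om_\bl(A)_{\ab}$, hence $d\om=0$ on the nose; thus $R(\al[\om])=0$ and $\al[\om]\in MK(A)$. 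Well-definedness on homology classes: if $\om=\om'+d\te$ for some even $\te$, then $(O,0,\om)\sim_{\KCS}(O,0,\om')$ because $\KCS$ of the (necessarily trivial) path of connections on the zero module is $0$, and $0-0=\om-\om'=d\te\equiv 0\bmod\mathrm{Im}(d)$; so the two generators are $\KCS$-equivalent. Additivity is immediate from the definition of the direct sum on generators, $(O,0,\om)\oplus(O,0,\om')=(O,0,\om+\om')$.

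Next, $\be$. Well-definedness requires checking two things: that the formula $[(M_0,D_0,\om)]-[(M_1,D_1,\eta)]\mapsto[M_0]-[M_1]$ respects $\KCS$-equivalence and the Grothendieck group relations, and that it respects the subgroup relation $MK(A)=\ker(R)\subseteq\widehat K_0(A)$ — but the latter is automatic since $\be$ is just the restriction of $I$ from Proposition~\ref{prop:IandR} to $MK(A)$. So $\be$ is simply $I|_{MK(A)}$, which is a group homomorphism because $I$ is. Finally, for $r$, well-definedness means: if $\om$ and $\om'$ represent the same class in $H^{\dR}_{\odd}(A)$, i.e.\ $\om-\om'=d\te$ for some even $\te$, then $\om\equiv\om'\bmod\Om_{\chk_1}(A)$; this holds because $\Om_{\chk_1}(A)$ contains $\Omega_{\odd}^{\text{exact}}(A)$ by its definition, and $d\te$ is exact. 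Additivity of $r$ is clear since it is induced by the identity map followed by a quotient projection.

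The routine nature of all three verifications means there is no serious obstacle; if anything, the only subtle point is making sure the codomains are correctly interpreted — in particular that for $\al$ one genuinely lands in $MK(A)=\ker(R)$ rather than merely in $\widehat K_0(A)$, which forces the use of the fact that a de~Rham homology class is represented by a $d$-closed form so that $R$ of the resulting generator vanishes identically and not just up to exact forms. I would organize the write-up as three short paragraphs, one per map, each stating well-definedness (on equivalence classes and, for $\be$, on the Grothendieck relations) followed by the one-line additivity check, and then note that $\be$ is literally $I|_{MK(A)}$ so no independent argument is needed there.
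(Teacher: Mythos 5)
Your proof is correct. The paper does not include a proof of this lemma — it is evidently deemed a routine verification — and the checks you carry out are exactly what is needed. A few small remarks. For $\alpha$, your appeal to $\KCS$-equivalence to handle $\omega-\omega'\in\mathrm{Im}(d)$ is actually unnecessary: the third entry of a $\widehat K_0$-generator is by definition an element of $(\Omega_{\bl}(A)_{\ab})_{\odd}/\mathrm{Im}(d)$, so $(O,0,\omega)$ and $(O,0,\omega')$ are literally the same generator when $\omega-\omega'$ is exact; the only well-definedness issue is that the canonical map $H^{\dR}_{\odd}(A)\to(\Omega_{\bl}(A)_{\ab})_{\odd}/\mathrm{Im}(d)$ is well-defined, which is immediate. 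Your observation that one must check the image lands in $\ker(R)$ (using $d\omega=0$ for a closed representative, so that $R$ vanishes on the nose in $(\Omega_{\bl}(A)_{\ab})^{\text{cl}}_{\even}$ and not merely up to exact forms) is the one genuinely nontrivial point and you handle it correctly. In the additivity check for $\alpha$, strictly speaking $(O,0,\omega)\oplus(O,0,\omega')=(O\oplus O,0\oplus 0,\omega+\omega')$ rather than $(O,0,\omega+\omega')$, but the canonical isomorphism $O\oplus O\cong O$ (together with $\KCS=0$ for the unique connection on the zero module) shows these represent the same class, so the conclusion stands. Identifying $\beta$ as the restriction of $I$ to the subgroup $MK(A)=\ker R$ is the cleanest route and disposes of that case without further work.
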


\begin{prp}
\label{prop:hexagonpieces}
The following facts hold regarding the maps $I$ and $R$ from Proposition~\ref{prop:IandR}, $\text{Pr}$ from Proposition~\ref{prop:IRPrCHK}, $a$ from Definition~\ref{defn:amap}, and $\alpha, \beta$, and $r$ from Lemma~\ref{lem:alphabetar}.
\begin{enumerate}
\item $R \circ a=d$.
\item $a \circ r=\trm{incl}\circ \al$, where $\trm{incl}:MK(A)\hookrightarrow \widehat{K}_{0}(A)$ denotes the inclusion.
\item\label{item:betaIincl} $\be =I\circ \trm{incl}$.
\item\label{item:hexagonexact} The following sequences are exact:
\beqs
H_{\odd}^{\dR}(A) \xrightarrow{\al} & MK(A) \xrightarrow{\be} K_{0}(A) \xrightarrow{\chk} H_{\even}^{\dR}(A)\\
H_{\odd}^{\dR}(A) \xrightarrow{r}&\Om_{\odd}(A)/\Om_{\chk_1}(A) \xrightarrow{d} \text{Im}(R) \xrightarrow{\trm{Pr}} H_{\even}^{\dR}(A)
\eeqs
\end{enumerate}
\end{prp}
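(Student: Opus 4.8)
The plan is to settle items (1)--(3) by unwinding the definitions, and then to check exactness of the two sequences in (4) term by term, drawing on the relations (1)--(3) together with the transgression formulas of Lemma~\ref{LEM.Transgression.formulae.chk}, the invariance statements of Lemma~\ref{LEM.KCS.invariant.under.pullback}, closedness of the Karoubi--Chern forms (Proposition~\ref{PRP.chk.is.closed}), and the compatibility $\mathrm{Pr}\circ R=\chk\circ I$ from Proposition~\ref{prop:IRPrCHK}. For (1), evaluating $R$ on $a([\om])=[(O,0,\om)]-[(O,0,0)]$ and using that the curvature of the zero connection on the zero module vanishes gives $R(a([\om]))=(\chk(0)+d\om)-(\chk(0)+0)=d\om$. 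For (2), both $a(r([\om]))$ and $\mathrm{incl}(\al([\om]))$ are by definition the element $[(O,0,\om)]-[(O,0,0)]$ of $\widehat K_0(A)$, so the two composites coincide; that $\al([\om])$ genuinely lies in $MK(A)=\ker R$ is guaranteed by Lemma~\ref{lem:alphabetar}, or directly by (1) since a cycle $\om$ has $d\om=0$. For (3), $I$ forgets the connection and the odd form, so $I\big(\mathrm{incl}([(M_0,D_0,\om)]-[(M_1,D_1,\eta)])\big)=[M_0]-[M_1]=\be([(M_0,D_0,\om)]-[(M_1,D_1,\eta)])$.

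For the first sequence, exactness at $MK(A)$ comes down to $\mathrm{Im}(\al)=\ker(\be)$. The composite vanishes since $\be(\al([\om]))=[O]-[O]=0$. Conversely, given $x\in MK(A)$ with $\be(x)=0$, I would re-run the stabilization argument from the preceding proposition (the inclusion $\ker I\subseteq\mathrm{Im}(a)$, applicable since $I(\mathrm{incl}(x))=\be(x)=0$ by (3)) to produce an odd form $\mu$ with $\mathrm{incl}(x)=a([\mu])$ in $\widehat K_0(A)$; applying $R$ and using (1) together with $x\in\ker R$ forces $d\mu=0$, so $\mu$ is a cycle, $\al([\mu])$ is defined, and $\mathrm{incl}(\al([\mu]))=a(r([\mu]))=a([\mu])=\mathrm{incl}(x)$ by (2), whence $x=\al([\mu])$ by injectivity of $\mathrm{incl}$. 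Exactness at $K_0(A)$ asks that $\mathrm{Im}(\be)=\ker(\chk)$. The composite vanishes: for $x\in MK(A)$ we get $\chk(\be(x))=\chk(I(\mathrm{incl}(x)))=\mathrm{Pr}(R(\mathrm{incl}(x)))=\mathrm{Pr}(R(x))=0$ by (3), Proposition~\ref{prop:IRPrCHK}, and $x\in\ker R$. Conversely, given $y=[M_0]-[M_1]\in\ker\chk$, choose connections $D_i$ on $M_i$; since $\chk(D_0)-\chk(D_1)$ is a cycle (Proposition~\ref{PRP.chk.is.closed}) whose homology class equals $\chk(y)=0$, write $\chk(D_0)-\chk(D_1)=d\mu$ with $\mu\in(\Om_\bl(A)_{\ab})_{\odd}$; then $x:=[(M_0,D_0,0)]-[(M_1,D_1,\mu)]$ satisfies $R(x)=\chk(D_0)-\chk(D_1)-d\mu=0$, so $x\in MK(A)$ and $\be(x)=y$.

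For the second sequence, all computations take place inside the complex $\Om_\bl(A)_{\ab}$. The map $d$ on $\Om_{\odd}(A)/\Om_{\chk_1}(A)$ is well defined because $d$ annihilates exact forms and, by Lemma~\ref{LEM.Transgression.formulae.chk}(1) and Lemma~\ref{LEM.KCS.invariant.under.pullback}(1), also annihilates every total odd Chern form $\KCS(t\mapsto(1-t)D+t\phi^*D)$, whose exterior derivative is $\chk(\phi^*D)-\chk(D)=0$; moreover its image lands in $\mathrm{Im}(R)$ since $d\om=R(a([\om]))$ by (1). Exactness at $\Om_{\odd}(A)/\Om_{\chk_1}(A)$ then amounts to $d\circ r=0$ (a cycle is closed) together with the observation that if $d\om=0$ then $\om$ is a cycle and $[\om]$ equals $r$ of its homology class; exactness at $\mathrm{Im}(R)$ amounts to $\mathrm{Pr}\circ d=0$ (a boundary has trivial homology class) together with the observation that if $\xi\in\mathrm{Im}(R)$ has $\mathrm{Pr}(\xi)=0$ then $\xi=d\eta$ for some $\eta\in\Om_{\odd}(A)$, so $\xi\in\mathrm{Im}(d)$.

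The main obstacle I anticipate is the exactness at $MK(A)$ in the first sequence: one must transport an element of the abstract group $MK(A)$ into $\widehat K_0(A)$ along the injection $\mathrm{incl}$, re-use the direct-sum-and-Chern--Simons bookkeeping of the preceding proposition to realize it as $a$ of an explicit odd form, and only afterward invoke $R(x)=0$ to upgrade that form to a genuine cycle; keeping careful track of which identities hold on the nose versus modulo $\mathrm{Im}(d)$ is the delicate point. Everything else is straightforward diagram-chasing once (1)--(3) and the well-definedness of $d$ on $\Om_{\odd}(A)/\Om_{\chk_1}(A)$ are in hand.
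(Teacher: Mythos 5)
Your argument is correct. Items (1)--(3), the exactness of the second sequence, and $\ker(\chk)\subseteq\mathrm{Im}(\be)$ all match the paper's reasoning (the paper treats everything except the two nontrivial inclusions $\ker(\be)\subseteq\mathrm{Im}(\al)$ and $\ker(\chk)\subseteq\mathrm{Im}(\be)$ as straightforward, and your handling of the latter is essentially identical). The one place you genuinely diverge is in proving $\ker(\be)\subseteq\mathrm{Im}(\al)$: you cite the preceding proposition's inclusion $\ker I\subseteq\mathrm{Im}(a)$ as a black box to produce a class $[\mu]$ with $a([\mu])=\trm{incl}(x)$, then apply $R$ together with item (1) to conclude $d\mu=0$, and finally pass to the de~Rham homology class of a chosen representative. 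The paper instead re-runs the stabilization argument inline, constructing the explicit representative $\om_0-\zeta-\om_1$ with $\zeta=\KCS\bigl(t\mapsto(1-t)(D_0\oplus D)+t\vph^*(D_1\oplus D)\bigr)$, and then verifying $d(\om_0-\zeta-\om_1)=0$ directly via the transgression formula, Lemma~\ref{LEM.KCS.invariant.under.pullback}, and the additivity $\chk(D_1\oplus D)-\chk(D_0\oplus D)=\chk(D_1)-\chk(D_0)$. Your route is a modest streamlining: it reuses the earlier exact sequence as a lemma rather than inlining its proof, at the cost of not producing an explicit odd-form witness. The delicate point you flagged is handled soundly: since every element of $\Om_{\chk_1}(A)$ is $d$-closed (exact forms are killed by $d$, and $d\,\chk_1(M,D,\phi)=\chk(\phi^*D)-\chk(D)=0$ by Lemma~\ref{LEM.KCS.invariant.under.pullback}), the differential descends to the quotient $\Om_{\odd}(A)/\Om_{\chk_1}(A)$, and $d[\mu]=0$ forces every representative of $[\mu]$ to be closed, so the de~Rham class of whichever representative you choose is legitimately defined.
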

\begin{proof}
We shall give proofs for $\ker(\be)\subseteq \text{Im}(\al)$ and $\ker(\chk)\subseteq \text{Im}(\be)$ in (\ref{item:hexagonexact}). All other claims are straightforward. To prove the former, suppose $[(M_0,D_0,\om_0)]-[(M_1,D_1,\om_1)]\in \ker(\be)$. This means that there exists $M\in \Fgp(A)$ and an $A$-module isomorphism $\vph:M_0\dsum M\ra M_1\dsum M$. Take an arbitrary connection $D$ on $M$ and let $\zeta:=\KCS\left(t\mapsto (1-t)(D_0\dsum D)+t\vph^*(D_1\dsum D)\right)$. Then \beqs \phantom{}
[(M_0,D_0,\om_0)]-[(M_1,D_1,\om_1)]&=[(M_0,D_0,\om_0)]-([(M_1,D_1,\om_0-\zeta)]+[(O,0,-\om_0+\zeta+\om_1)])\\
&=[(M_0,D_0,\om_0)]-([(M_0,D_0,\om_0)]+[(O,0,-\om_0+\zeta+\om_1)])\\
&=a([\om_0-\zeta-\om_1]).
\eeqs
We have to verify that $\om_0-\zeta-\om_1$ represents an odd degree de Rham homology class. This follows from  $$d(\om_0-\zeta-\om_1)=-\chk(D_0)-\big(\chk(D_1\dsum D)-\chk(D_0\dsum D)\big)+\chk(D_1)=0.$$

To prove $\ker(\chk)\subseteq \text{Im}(\be)$, suppose $\chk\left([M_0]-[M_1]\right)=0$. If we choose connections $D_0$ and $D_1$ on $M_0$ and $M_1$, respectively, then $\chk(D_0)-\chk(D_1)=d\xi$ for some odd degree form $\xi$.
Hence, $[(M_0,D_0,0)]-[(M_1,D_1,\xi)]\in\ker(R)$. Therefore,
\[
\begin{split}
[M_0]-[M_1]&=I\left([(M_0,D_0,0)]-[(M_1,D_1,\xi)]\right)\quad\text{ by Proposition~\ref{prop:IandR}}\\
&=\be\left([(M_0,D_0,0)]-[(M_1,D_1,\xi)]\right)\quad\text{by Proposition~\ref{prop:hexagonpieces} (\ref{item:betaIincl})}.
\end{split}
\]
Hence, $[M_0]-[M_1]\in\text{Im}(\be)$, as needed.
\end{proof}

\begin{cor}\label{COR.hexagon.diagram} In the following diagram for $\widehat K_0(A)$, all square and triangles are commutative and all sequences are exact.
\[
\xy0;/r.25pc/:
(-35,30)*+{0}="0TL";
(-35,0)*+{H_{\odd}^{\dR}(A)}="L";
(-17.5,15)*+{MK(A)}="TL";
(-35,-30)*+{0}="0BL";
(-17.5,-15)*+{\Omega_{\odd}(A)/\Omega_{\chk_1}(A)}="BL";
(0,0)*+{\widehat{K}_{0}(A)}="M";
(17.5,15)*+{K_{0}(A)}="TR";
(35,30)*+{0}="0TR";
(17.5,-15)*+{\Omega_{\chk_0}(A)}="BR";
(35,0)*+{H_{\even}^{\dR}(A)}="R";
(35,-30)*+{0}="0BR";
(-17.5,0)*{\scriptstyle\circlearrowleft};
(0,9)*{\scriptstyle\circlearrowleft};
(0,-8)*{\scriptstyle\circlearrowright};
(17.5,0)*{\scriptstyle\circlearrowleft};
{\ar"0TL";"TL"};
{\ar"L";"TL"^{\alpha}};
{\ar"TL";"TR"^{\beta}};
{\ar@{}"TL";"M"^(0.2){}="a"^(0.8){}="b"};
{\ar@{^{(}->}"a";"b"};
{\ar"M";"TR"^{I}};
{\ar"TR";"0TR"};
{\ar"TR";"R"^{\chk}};
{\ar"L";"BL"^{r}};
{\ar"0BL";"BL"};
{\ar"BL";"M"^{a}};
{\ar"BL";"BR"^(0.60){d}};
{\ar@{->>}"BR";"R"};
{\ar"BR";"0BR"};
{\ar"M";"BR"^{R}};
{\ar"BR";"R"^{\text{Pr}}};
\endxy
\]
Here, $\Omega_{\chk_0}(A):=\mathrm{Im}(R)$ is the subgroup of $d$-closed even forms whose de~Rham homology class is in the image of Karoubi's Chern character.
\end{cor}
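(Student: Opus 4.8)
The plan is to obtain the corollary by assembling the results already established in this subsection together with the ones they cite; no genuinely new argument is needed, so I would proceed cell by cell through the diagram.

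First I would record the four commutativities (the four circle markers). The right-hand square $\mathrm{Pr}\circ R=\chk\circ I$ is exactly Proposition~\ref{prop:IRPrCHK}. The upper triangle with vertices $MK(A)$, $\widehat{K}_0(A)$, $K_0(A)$ commutes because $\be=I\circ\mathrm{incl}$, which is Proposition~\ref{prop:hexagonpieces}~(\ref{item:betaIincl}). The quadrilateral with vertices $H_{\odd}^{\dR}(A)$, $MK(A)$, $\widehat{K}_0(A)$, $\Om_{\odd}(A)/\Om_{\chk_1}(A)$ commutes because $\mathrm{incl}\circ\al=a\circ r$, which is Proposition~\ref{prop:hexagonpieces}~(2). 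The lower triangle with vertices $\Om_{\odd}(A)/\Om_{\chk_1}(A)$, $\widehat{K}_0(A)$, $\Om_{\chk_0}(A)$ commutes because $R\circ a=d$, which is Proposition~\ref{prop:hexagonpieces}~(1). Along the way I would note the two null composites needed below: $I\circ a=0$, since the module underlying $a([\om])$ is the zero module, and $\mathrm{Pr}\circ d=0$, since an exact even form has trivial de~Rham homology class.

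Next I would read off the exactness statements. The central column $0\to\Om_{\odd}(A)/\Om_{\chk_1}(A)\xrightarrow{a}\widehat{K}_0(A)\xrightarrow{I}K_0(A)\to 0$ is the short exact sequence proved just before Lemma~\ref{lem:alphabetar}. The sequence $0\to MK(A)\hookrightarrow\widehat{K}_0(A)\xrightarrow{R}\Om_{\chk_0}(A)\to 0$ is exact by the very definitions involved: $MK(A):=\ker R$ (Remark~\ref{rmk:KaroubiMKA}), with $\mathrm{incl}$ the inclusion of this kernel, gives exactness at $MK(A)$ and at $\widehat{K}_0(A)$, while $\Om_{\chk_0}(A):=\mathrm{Im}(R)$ gives exactness at $\Om_{\chk_0}(A)$; here one also checks, using Proposition~\ref{PRP.chk.is.indep.choice.of.conn} together with $R\circ a=d$, that $\mathrm{Im}(R)$ is precisely the group of $d$-closed even forms whose de~Rham homology class lies in $\mathrm{Im}(\chk)$, as asserted in the statement. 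Finally, exactness of $H_{\odd}^{\dR}(A)\xrightarrow{\al}MK(A)\xrightarrow{\be}K_0(A)\xrightarrow{\chk}H_{\even}^{\dR}(A)$ at $MK(A)$ and $K_0(A)$, and of $H_{\odd}^{\dR}(A)\xrightarrow{r}\Om_{\odd}(A)/\Om_{\chk_1}(A)\xrightarrow{d}\Om_{\chk_0}(A)\xrightarrow{\mathrm{Pr}}H_{\even}^{\dR}(A)$ at $\Om_{\odd}(A)/\Om_{\chk_1}(A)$ and $\Om_{\chk_0}(A)$, is Proposition~\ref{prop:hexagonpieces}~(\ref{item:hexagonexact}).

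The main obstacle here is, honestly, negligible: every ingredient has already been proved, the genuine work residing in the transgression formulas of Lemma~\ref{LEM.Transgression.formulae.chk}, the $\KCS$-naturality of Lemma~\ref{LEM.KCS.invariant.under.pullback}, and the propositions built on them. The only points deserving care are bookkeeping ones: matching each arrow of the displayed diagram to the identity that governs it, and remembering that the surjectivity decoration on $\Om_{\chk_0}(A)\to H_{\even}^{\dR}(A)$ records surjectivity onto $\mathrm{Im}(\chk)$ rather than onto all of $H_{\even}^{\dR}(A)$, so that the two arc sequences are claimed exact only at their interior nodes — exactly as supplied by Proposition~\ref{prop:hexagonpieces}~(\ref{item:hexagonexact}).
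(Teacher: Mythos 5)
Your proposal is correct and essentially reproduces what the paper implicitly does: the corollary is stated without a separate proof precisely because all of its content is already contained in Proposition~\ref{prop:IRPrCHK}, Proposition~\ref{prop:hexagonpieces}, the short exact sequence proved after Definition~\ref{defn:amap}, and the definitions $MK(A):=\ker R$ and $\Omega_{\chk_0}(A):=\mathrm{Im}(R)$, and your cell-by-cell assembly matches that reading exactly. The only small imprecision is the citation for the final sentence characterizing $\mathrm{Im}(R)$: what one actually uses is the commuting right square $\mathrm{Pr}\circ R=\chk\circ I$ for the inclusion $\mathrm{Im}(R)\subseteq\{$closed even forms with class in $\mathrm{Im}(\chk)\}$, and for the reverse inclusion the observation that if $\mathrm{Pr}(\omega)=\chk([M])$ and $D$ is any connection on $M$, then $\omega-\chk(D)=d\eta$ for some odd $\eta$, so $\omega=R([(M,D,\eta)])$; citing Proposition~\ref{PRP.chk.is.indep.choice.of.conn} alone glosses over this short argument, but the idea is the same.
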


\appendix

\section{The Serre--Swan theorem and its differential refinements}\label{SEC.DSS}

In this appendix, we shall review the Serre--Swan theorem and its variants. Secondly, we extend those results to include the data of connections in their respective categories. As an application, we provide an algebraic formulation of the Narasimhan--Ramanan theorem.

\subsection{Review of Serre--Swan theorem}

\begin{dfn}
Let $X$ be a topological space and $\cA$ a sheaf of rings on $X$. A \define{sheaf of $\cA$-modules} $\cM$ on $X$ is a sheaf of abelian groups on $X$
such that for each open $U \subseteq X,$
\begin{enumerate}
\item
$\cM(U)$ is an $\cA(U)$-module and
\item
$(a\cdot m)\vert_V = a\vert_V \cdot m\vert_V$ for all $a
\in \cA(U)$, $m \in \cM(U)$, and $V \subseteq U$.
\end{enumerate}
A \define{morphism of $\cA$-modules} $f\colon \cM \to
\cN$ is a morphism of sheaves such that $f_U\colon \cM(U)
\to \cN(U)$ is $\cA(U)$-linear for each open $U \subseteq X$.
\end{dfn}

\begin{dfn}
An $\cA$-module $\cM$ is \define{free} if $\cM \cong \bigoplus_{i \in I} \cA$ for some set $I$, in which case $\lvert I\rvert$ is the \define{rank} of $\cM$. An $\cA$-module $\cM$ is \define{locally free} if there is an open cover $\{ U_j \subset X\}_{j \in J}$ such that $\cM\vert_{U_j}$ is free for every $j \in J$.  A locally free $\cA$-module $\cM$ has \define{bounded rank} if the open cover can be chosen so that $\operatorname{rk}\cM\vert_{U_j} \leq n$, for some fixed integer $n$ and all $j \in J$.
\end{dfn}

In what follows, $X$ will be a smooth manifold, and we take $\cA =
C^\infty_X$ to be its sheaf of smooth functions.

\begin{nta} Let $X\in \Man$.  We denote by $\Lfb(X)$ the
  full subcategory of the category of sheaves of $C^\infty_X$-modules consisting of locally free sheaves of $C^\infty_X$-modules of
  bounded rank. Recall that $\Bun^{\text{iso}}(X)$ denotes the category of finite rank complex vector bundles over $X$ with bundle maps that are fiberwise isomorphisms and $\Fgp^{\text{iso}}(\Cinf(X))$ denotes the category of finitely generated left $\Cinf(X)$-modules with $\Cinf(X)$-linear isomorphisms (see Notation~\ref{NTA.karoubi.notation.dga}). In the remainder of this appendix only (and not the body of the paper), $\Omega_{X}^{k}$ denotes the sheaf of smooth complex-valued de~Rham $k$-forms over $X$.
\end{nta}

For each manifold $X$, the diagram
\begin{equation}
\label{eq:bun-lfb-fpg}
\vcenter{\hbox{
\xymatrix{
\Bun^{\text{iso}}(X) \ar[d]_{\Ga_{\text{loc}}} \ar[r]^-\Gamma & \Fgp^{\text{iso}}(\Cinf(X))\\
\Lfb(X) \ar[ur]_{\Ga_{X}} &
}
}}
\end{equation}
commutes. Here, the vertical map assigns to a vector bundle its sheaf of local sections. It is locally free and of the same rank as the vector bundle. The diagonal map $\Ga_X$ assigns to a sheaf of $C^\infty_X$-modules its $C^\infty(X)$-module of global sections. The horizontal map $\Gamma$ is also a global sections functor.

The following result is a smooth version of the Serre--Swan theorem.

\begin{prp}
\label{thm:serre-swan}
Each of the arrows in \eqref{eq:bun-lfb-fpg} is an equivalence of categories.
\end{prp}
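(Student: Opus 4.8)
The plan is to prove the proposition by exhibiting each arrow as an equivalence, using the classical Serre--Swan theorem as the input and then checking that the commuting triangle \eqref{eq:bun-lfb-fpg} forces the remaining arrows to be equivalences as well. First I would recall that the smooth Serre--Swan theorem in its usual formulation already gives that the horizontal global-sections functor $\Gamma\colon\Bun^{\text{iso}}(X)\to\Fgp^{\text{iso}}(\Cinf(X))$ is an equivalence: it is essentially surjective because every finitely generated projective $\Cinf(X)$-module arises as $\Gamma(E)$ for some vector bundle $E$ (a projective module is a direct summand of a free module $\Cinf(X)^n$, hence the image of an idempotent matrix, which assembles into a subbundle of the trivial bundle $X\times\C^n$), and it is fully faithful on isomorphisms because a bundle map that is fiberwise an isomorphism induces an isomorphism of section modules and conversely (using that sections separate points of each fiber together with a partition-of-unity argument to recover the bundle map from the module map). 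I would cite \cite{Mo01} or the standard references for this.

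Next I would treat the vertical functor $\Ga_{\text{loc}}\colon\Bun^{\text{iso}}(X)\to\Lfb(X)$, which assigns to a vector bundle its sheaf of local sections. Essential surjectivity: given a locally free sheaf $\cM$ of bounded rank, one builds a vector bundle $E$ by gluing trivial bundles over a trivializing cover via the transition functions extracted from the sheaf isomorphisms $\cM|_{U_j}\cong (C^\infty_{U_j})^{n_j}$ (the cocycle condition for these transitions is automatic from uniqueness of the isomorphisms on overlaps), and then $\Ga_{\text{loc}}(E)\cong\cM$. Full faithfulness on isomorphisms: a morphism of locally free sheaves is determined by its restriction to a trivializing cover, where it becomes a matrix of smooth functions, which is exactly the local data of a bundle map; it is a sheaf isomorphism iff the matrices are pointwise invertible iff the corresponding bundle map is a fiberwise isomorphism. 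So $\Ga_{\text{loc}}$ is an equivalence.

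Finally, the diagonal functor $\Ga_X\colon\Lfb(X)\to\Fgp^{\text{iso}}(\Cinf(X))$ is handled by two-out-of-three applied to the commuting triangle: since $\Gamma=\Ga_X\circ\Ga_{\text{loc}}$ up to natural isomorphism, and both $\Gamma$ and $\Ga_{\text{loc}}$ are equivalences, $\Ga_X$ is an equivalence as well (more precisely, it is naturally isomorphic to $\Gamma\circ\Ga_{\text{loc}}^{-1}$ for any chosen quasi-inverse $\Ga_{\text{loc}}^{-1}$). This completes the proof modulo the standard facts above.

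The main obstacle I anticipate is not conceptual but bookkeeping: one must be careful that the "bounded rank" hypothesis is genuinely used in the essential surjectivity of $\Ga_{\text{loc}}$ (so that the glued bundle has finite rank, or locally constant finite rank on connected components) and that the partition-of-unity arguments needed for $\Gamma$ being an equivalence really do require $X$ to be a (paracompact, second countable) smooth manifold, so I would state those hypotheses explicitly. I expect the cleanest writeup is to prove $\Ga_{\text{loc}}$ and $\Gamma$ are equivalences directly and then invoke two-out-of-three for $\Ga_X$, rather than attempting a direct argument for the diagonal functor, since the latter would essentially duplicate the gluing construction.
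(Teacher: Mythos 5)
Your proposal is correct in substance, but it takes a noticeably different route from the paper: the paper's entire proof is a single citation to Nestruev~\cite[Theorem 11.32]{Nes}, whereas you supply a self-contained argument. Concretely, you prove directly that $\Gamma$ is an equivalence (idempotent matrices for essential surjectivity, locality of module maps plus a partition-of-unity argument for full faithfulness), prove directly that $\Gamma_{\mathrm{loc}}$ is an equivalence (gluing from a trivializing cover), and deduce $\Gamma_{X}$ by two-out-of-three. What the paper's route buys is brevity and delegation to a standard text that has already handled the paracompactness/bounded-rank bookkeeping carefully; what your route buys is self-containment and a visible reason why the bounded-rank hypothesis matters (it is what lets the idempotent/gluing constructions produce a bundle of finite rank with a global embedding into a trivial bundle). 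Both routes are legitimate, and the two-out-of-three reduction for the diagonal $\Gamma_{X}$ is a genuine simplification compared to attacking it directly.

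One small inaccuracy worth fixing: you justify the \v{C}ech cocycle condition for the transition functions $g_{ij}$ by ``uniqueness of the isomorphisms on overlaps,'' but the local trivializations $\phi_{j}\colon \cM|_{U_{j}}\cong(C^{\infty}_{U_{j}})^{n_{j}}$ are far from unique. The cocycle identity $g_{ij}g_{jk}=g_{ik}$ holds simply because $g_{ij}:=\phi_{i}\circ\phi_{j}^{-1}$ on $U_{i}\cap U_{j}$, so it is a tautology of composition, not a uniqueness statement. A second, more cosmetic point: the diagram \eqref{eq:bun-lfb-fpg} writes $\Lfb(X)$, but since the other two corners are groupoids, the functors land in (or should be restricted to) $\Lfb^{\mathrm{iso}}(X)$; the paper itself later writes $\Lfb^{\mathrm{iso}}(X)$ when reusing this functor, so you would do well to make that restriction explicit when proving full faithfulness of $\Gamma_{\mathrm{loc}}$ on isomorphisms.
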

\begin{proof}
See Nestruev \cite[Theorem 11.32]{Nes}.
\end{proof}

This proposition belongs to a family of several related results.
Serre's original theorem was formulated for an affine scheme $X$ \cite{Se}. It asserted an
equivalence between the categories of locally free sheaves of
$\mathcal O_X$-modules and finitely generated projective modules over
its coordinate ring $\Gamma(X, \mathcal O_X)$.  Swan extended Serre's result
replacing $X$ by a paracompact topological space of bounded topological
dimension and $\mathcal O_X$ by its sheaf of continuous functions \cite{Sw}.
Several other variations exist.  See Morye~\cite{Mor} for a general
theorem from which many classical results follow.

\subsection{Differential refinements of the smooth Serre--Swan theorem}

\begin{dfn}\label{def:connection-on-a-sheaf}
Let $X$ be a smooth manifold. A \define{connection} on a sheaf of $C^\infty_X$-modules $\mathcal M$ is a $\C$-linear map%
\footnote{The notation $\mathcal{D}$ is used in this appendix only to refer to a connection on a sheaf. This notation is not to be conflated with that of a polynomial path of connections, which only appears in the body of this work.}
$\cD\colon \mathcal{M}\ra \mathcal{M}\tsr_{C^\infty_X} \Om^1_X$ satisfying $\cD(vf)=\cD(v)f+v\tsr df$ for every $U\subset X$, $v\in \mathcal{M}(U)$, and $f\in C^\infty(U)$.  If $\mathcal M$ and $\mathcal N$ are $C^\infty_X$-modules with connection, denoted by $\mathcal D_{\cM}$ and $\mathcal D_{\cN}$, respectively, a morphism $F\colon \mathcal M \to \mathcal N$ is \define{compatible with the connections} if the diagram
\[
\xy0;/r.25pc/:
(-20,7.5)*+{\cM}="M";
(20,7.5)*+{\cN}="N";
(-20,-7.5)*+{\cM\otimes_{C^{\infty}_{X}}\Omega^{1}_{X}}="MW";
(20,-7.5)*+{\cN\otimes_{C^{\infty}_{X}}\Omega^{1}_{X}}="NW";
{\ar"M";"N"^{F}};
{\ar"M";"MW"_{\mathcal{D}_{\cM}}};
{\ar"N";"NW"^{\mathcal{D}_{\cN}}};
{\ar"MW";"NW"_{F\otimes_{C^{\infty}_{X}}\text{id}_{\Omega^{1}_{X}}}};
\endxy
\]
commutes.  In a formula, this means that
\begin{equation*}
\mathcal D_\cN F(v) = (F\tsr_{C^{\infty}_{X}} \text{id}_{\Om_X^1})(\mathcal D_{\cM} v)
\end{equation*}
holds for all local sections $v$ of $\mathcal M$.
\end{dfn}

\begin{nta}
Let $\LfbD(X)$ denote the category of locally free sheaves of $C^\infty_X$-modules of bounded rank endowed with a connection. Morphisms are $C^\infty_X$-linear homomorphisms compatible with the given connections. Recall, $\FgpD^{\text{iso}}(X)$ denotes the category of finitely-generated projective modules with connection over $C^\infty(X;\C)\equiv C^\infty(X)$ with respect to the DGA $\Om_{\dR}^\bl(X;\C)$ (cf.\ Notation~\ref{NTA.karoubi.notation.dga},  Definition~\ref{DFN.Fgp.conn.}, and Example~\ref{EXA.examples.of.connection}).
\end{nta}

The tensor product of vector bundles and tensor product of modules over $C^\infty(X)$ make $\Bun^{\text{iso}}(X)$ and $\Fgp^{\text{iso}}(C^\infty(X))$ into symmetric monoidal categories. Recall that if $\mathcal M$ and $\mathcal N$ are sheaves of $\mathcal A$-modules, we define $\mathcal M \otimes_{\mathcal A} \mathcal N$ as the sheafification of the presheaf $U \mapsto \mathcal M(U) \otimes_{\mathcal A(U)} \mathcal N(U)$.
In general, the sheafification step in the definition of tensor product of sheaves is essential, and the global sections functor is not monoidal. In the context of $C^\infty_X$-modules, however, the situation is simplified.

\begin{lem}
\label{lem:Gamma-vs-otimes}
Given sheaves of $C^\infty_X$-modules $\mathcal M$ and $\mathcal N$, there is a natural isomorphism of $C^\infty(X)$-modules
\begin{equation*}
\mathcal M(X) \otimes_{C^\infty(X)} \mathcal N(X)
\xrightarrow{\cong}
(\mathcal M \otimes_{C^\infty_X} \mathcal N)(X).
\end{equation*}
\end{lem}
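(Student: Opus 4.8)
The plan is to exploit the special features of the sheaf of smooth functions $C^\infty_X$ on a manifold---namely the existence of smooth partitions of unity---to show that the presheaf tensor product is already a sheaf, so that no sheafification is needed and the global sections functor becomes monoidal on the nose. More precisely, write $\mathcal{P}$ for the presheaf $U\mapsto\mathcal{M}(U)\otimes_{C^\infty(U)}\mathcal{N}(U)$, so that $\mathcal{M}\otimes_{C^\infty_X}\mathcal{N}$ is by definition the sheafification $\mathcal{P}^{\#}$. There is a canonical presheaf morphism $\mathcal{P}\to\mathcal{P}^{\#}$, and on global sections the map in the statement is the composite $\mathcal{M}(X)\otimes_{C^\infty(X)}\mathcal{N}(X)=\mathcal{P}(X)\to\mathcal{P}^{\#}(X)=(\mathcal{M}\otimes_{C^\infty_X}\mathcal{N})(X)$, which is manifestly natural in $\mathcal{M}$ and $\mathcal{N}$; the real content is bijectivity.

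First I would show that $\mathcal{P}$ is separated (a monopresheaf): if $\xi\in\mathcal{P}(U)$ restricts to zero on every member of an open cover $\{U_i\}$ of $U$, then $\xi=0$. Write $\xi=\sum_{k=1}^{n}v_k\otimes w_k$ with $v_k\in\mathcal{M}(U)$, $w_k\in\mathcal{N}(U)$; using a partition of unity $\{\rho_i\}$ subordinate to $\{U_i\}$ one has $\xi=\sum_i\rho_i\xi$, and each $\rho_i\xi$ is supported in $U_i$ where $\xi$ vanishes, so $\rho_i\xi=0$ (here one uses that multiplication by $\rho_i$ can be pushed inside the tensor since $C^\infty_X$ acts, and that a section of $\mathcal{M}$ or $\mathcal{N}$ multiplied by a function vanishing on a neighborhood of $\mathrm{supp}\,\rho_i^{\mathrm c}$... more carefully, one writes $\rho_i=\rho_i\sigma_i$ for a bump function $\sigma_i$ supported in $U_i$ and uses that the restriction-to-$U_i$ being zero plus the extension-by-zero property gives $\rho_i\xi=0$). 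Summing over $i$ gives $\xi=0$. Next I would show the gluing axiom: given sections $\xi_i\in\mathcal{P}(U_i)$ agreeing on overlaps, the element $\xi:=\sum_i\rho_i^{\mathrm{ext}}\xi_i$, where $\rho_i^{\mathrm{ext}}\xi_i$ denotes the extension by zero of $\rho_i\xi_i\in\mathcal{P}(U_i)$ to all of $U$ (well-defined because $\rho_i\xi_i$ has support inside $U_i$, again using bump functions), restricts correctly to each $U_j$: on $U_j$ one computes $\xi|_{U_j}=\sum_i(\rho_i^{\mathrm{ext}}\xi_i)|_{U_j}=\sum_i\rho_i|_{U_j}\,\xi_i|_{U_i\cap U_j}=\sum_i\rho_i|_{U_j}\,\xi_j|_{U_j}=\xi_j|_{U_j}$, using the overlap compatibility in the middle step and $\sum_i\rho_i=1$ at the end. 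Hence $\mathcal{P}$ is a sheaf, so $\mathcal{P}\to\mathcal{P}^{\#}$ is an isomorphism of sheaves, and in particular an isomorphism on $X$-sections; this is exactly the claimed natural isomorphism.

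The main obstacle---and the only place requiring genuine care---is the bookkeeping around \emph{extension by zero} of sections of $\mathcal{M}$, $\mathcal{N}$, and hence of $\mathcal{P}$: for a general sheaf of modules a section over an open set supported in a smaller open set need not extend, but for $C^\infty_X$-modules one argues that if $s\in\mathcal{M}(U_i)$ and $\rho_i$ is a bump function with $\mathrm{supp}\,\rho_i\subseteq U_i$, then $\rho_i s$ glues with the zero section on $U\setminus\mathrm{supp}\,\rho_i$ to a well-defined section of $\mathcal{M}(U)$, because the two agree on the overlap $U_i\setminus\mathrm{supp}\,\rho_i$ where $\rho_i\equiv 0$. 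One must also verify that these extension-by-zero operations are compatible with the tensor product structure and with restrictions, which is routine. An alternative, slightly slicker route avoiding sheaf-theoretic gluing entirely: one could instead invoke the Serre--Swan equivalence (Proposition~\ref{thm:serre-swan}) together with the fact that $\Gamma_X$ and $\Gamma$ identify $\mathcal{M}$, $\mathcal{N}$ with sheaves of sections of vector bundles $E$, $F$, observe that $\mathcal{M}\otimes_{C^\infty_X}\mathcal{N}$ is then the sheaf of sections of $E\otimes F$ (a standard fact about bundles, again a partition-of-unity argument), and that $\Gamma(E\otimes F)\cong\Gamma(E)\otimes_{C^\infty(X)}\Gamma(F)$ by Serre--Swan applied to $E\otimes F$; but since this still ultimately rests on the same partition-of-unity input and would require $\mathcal{M},\mathcal{N}$ to have bounded rank, I would present the direct argument above as the cleaner and more self-contained option.
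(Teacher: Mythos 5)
Your approach is genuinely different from the paper's. You try to show directly that the presheaf $\mathcal P\colon U\mapsto \mathcal M(U)\otimes_{C^\infty(U)}\mathcal N(U)$ is already a sheaf, so that sheafification is a no-op; the paper instead first proves the isomorphism $\Hom_{C^\infty_X}(\mathcal M,\mathcal N)\cong\Hom_{C^\infty(X)}(\mathcal M(X),\mathcal N(X))$ using bump functions and the sheaf property of $\mathcal N$, and then deduces the tensor statement by a chain of hom--tensor adjunctions and the Yoneda lemma. Both rest on partitions of unity, but the paper manipulates only $\Hom$-objects (which are sheaves, so sheaf-theoretic gluing is available for free) and never needs to glue elements of an abstract tensor product.

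That difference is exactly where your argument has a gap that you do not flag. In the gluing step you set $\xi:=\sum_i\rho_i^{\mathrm{ext}}\xi_i$, and in the separation step you write $\xi=\sum_i\rho_i\xi$; for a noncompact manifold these sums are generically infinite. An element of $\mathcal M(U)\otimes_{C^\infty(U)}\mathcal N(U)$ is by definition a \emph{finite} sum of simple tensors, and local finiteness of the partition of unity gives no meaning to an infinite sum in an abstract tensor product (contrast with $\mathcal M(U)$ itself, where a locally finite sum of sections does converge via the sheaf axiom). You cannot in general push $\sum_i\rho_i$ across the tensor sign one index at a time, and writing each $\xi_i$ as a finite sum $\sum_{k=1}^{m_i}v_{i,k}\otimes w_{i,k}$ gives no uniform bound on $m_i$, so the candidate glued section is not an element of the presheaf at all. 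The extension-by-zero bookkeeping you do flag is handled fine, but this infinite-sum issue is the more serious one. Your argument is correct whenever a finite partition of unity subordinate to the given cover exists (so in particular for compact $X$, or after reducing to such a situation); to make it work for general $X$ you would either need a finiteness reduction (say, via the Lebesgue-dimension coloring trick followed by a careful treatment of disjoint unions) or you should switch to the paper's route through $\Hom$, which sidesteps the issue entirely because the sheaf property of $\underline\Hom_{C^\infty_X}(\mathcal N,\mathcal Z)$ is what absorbs the locally finite gluing.
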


Before proving the lemma, we recall some notation and fundamental facts.  Let $\mathcal M$, $\mathcal N$ and $\mathcal Z$ be sheaves of $C^\infty_X$-modules.
\begin{enumerate}
\item
The set of homomorphisms $\mathcal M \to \mathcal N$ is denoted $\Hom_{C^\infty_X}(\mathcal M, \mathcal N)$.  It has a natural structure of a $C^\infty(X)$-module: for any $f\in C^\infty(X)$, $F\in \Hom_{C^\infty_X}(\mathcal M, \mathcal N)$, and an open set $U$ of $X$, set $(f\cdot F)(U):=f|_{U}F(U)$.
\item
The sheaf of homomorphisms from $\mathcal M$ to $\mathcal N$ assigns to an open set $U \subseteq X$ the $C^\infty(U)$-module $\Hom_{C^\infty_U}(\mathcal M\vert_U, \mathcal N\vert_U)$.  This sheaf is denoted $\underline\Hom_{C^\infty_X}(\mathcal M, \mathcal N)$, and it has a natural structure of a $C^\infty_X$-module. Note that $\Hom_{C^\infty_X}(\mathcal M, \mathcal N)$ is the set of global sections of the sheaf $\underline\Hom_{C^\infty_X}(\mathcal M, \mathcal N)$.
\item
The previous construction is the internal hom in the category of sheaves of $C^\infty_X$-modules, in the sense that there is a natural isomorphism (hom-tensor adjunction)
  \begin{equation*}
  \Hom_{C^\infty_X}(\mathcal M \otimes_{C^\infty_X} \mathcal N, \mathcal Z)
  \cong
  \Hom_{C^\infty_X}(\mathcal M,
  \underline\Hom_{C^\infty_X}(\mathcal N, \mathcal Z)).
  \end{equation*}
\end{enumerate}

\begin{proof}[Proof of Lemma \ref{lem:Gamma-vs-otimes}]
Consider the map
\begin{equation}
\label{eq:Hom-vs-Gamma}
\Hom_{C^\infty_X}(\mathcal M, \mathcal N)\
\to \Hom_{C^\infty(X)}(\mathcal M(X), \mathcal N(X)),
\end{equation}
which sends a sheaf homomorphism to the associated map on
global sections. We claim that, because all sheaves involved are fine,
\eqref{eq:Hom-vs-Gamma} is an isomorphism of $C^\infty(X)$-modules. To
show that \eqref{eq:Hom-vs-Gamma} is injective, suppose that
$F\colon\mathcal M \to \mathcal N$ is such that $F_X\colon \mathcal
M(X) \to \mathcal N(X)$ vanishes. We need to show that $F_U\colon
\mathcal M(U) \to \mathcal N(U)$ vanishes for any open $U \subseteq X$.
Fix $\sigma \in \mathcal M(U)$, let $U' \subseteq X$ be open with
closure contained in $U$, and let $\rho$ be a function with support in
$U$ and $\rho\vert_{U'} = 1$
Then $\rho\sigma$ defines a global section of $\mathcal M$, and
\begin{equation*}
F_U(\sigma)\vert_{U'}
= F_{U'}(\rho\sigma\vert_{U'})
= F_X(\rho\sigma)\vert_{U'} = 0.
\end{equation*}
Since, by the sheaf property of $\mathcal N$, any section over $U$ can be reconstructed from its restrictions to each open $U'$ with closure in $U$, we conclude that $F\vert_U = 0$ and therefore $F = 0$.

To show that \eqref{eq:Hom-vs-Gamma} is surjective, fix $F_X\colon \mathcal M(X) \to \mathcal N(X)$. We want to find a sheaf homomorphism $F\colon \mathcal M \to \mathcal N$ that acts as $F_X$ on global sections. Fix $\sigma \in \mathcal M(U)$ and consider, for any open $U'$ with closure in $U$, a function $\rho$ as above. Then we claim that the section $F_X(\rho\sigma)\vert_{U'} \in \mathcal N(U')$
is independent of $\rho$.  In fact, let $\rho'$ be another admissible choice, and set $\eta = \rho - \rho'$.  Then we want to show that $F_X(\eta\sigma)\vert_{U'} = 0$.  Choosing a function $\xi$ that is identically $1$ in $U'$ and has support in the set $\{x\in X \mid \eta(x) = 0 \}$, we can write
\begin{equation*}
F_X(\eta\sigma)\vert_{U'}
= \xi\vert_{U'} \cdot F_X(\eta\sigma)\vert_{U'}
= F_X(\xi\eta\sigma)\vert_{U'}
= 0,
\end{equation*}
since the section $\xi\eta\si$ vanishes globally by definition of $\xi$. Now, using the sheaf property of $\mathcal N$ to glue these coherent values $F_X(\rho\sigma)\vert_{U'} \in \mathcal N(U')$ as $U' \subseteq U$ varies over all open subsets with closure in $U$, we obtain an element $F_U(\sigma) \in \mathcal N(U)$. This recipe, applied to the open $X$, gives us the original $F_X$, and the naturality condition $F_U(\sigma)\vert_V = F_V(\sigma\vert_V)$ is satisfied for all $V \subseteq U$. This proves \eqref{eq:Hom-vs-Gamma} is surjective.

Finally, let $\mathcal Z$ be an arbitrary sheaf of $C^\infty_X$-modules.
Then the lemma follows from the Yoneda lemma and the following natural
isomorphisms:
\begin{equation*}
\def\iHom{\underline\Hom_{C^\infty_X}}
\def\sHom{\Hom_{C^\infty_X}}
\def\mHom{\Hom_{C^\infty(X)}}
\def\sotimes{\otimes_{C^\infty_X}}
\def\motimes{\otimes_{C^\infty(X)}}
\begin{aligned}
  \mHom((\mathcal M \sotimes \mathcal N)(X), \mathcal Z(X))
     & \cong \sHom(\mathcal M \sotimes \mathcal N, \mathcal Z)
  \\ & \cong \sHom(\mathcal M, \iHom(\mathcal N, \mathcal Z))
  \\ & \cong \mHom(\mathcal M(X), \sHom(\mathcal N, \mathcal Z))
  \\ & \cong \mHom(\mathcal M(X), \mHom(\mathcal N(X), \mathcal Z(X)))
  \\ & \cong \mHom(\mathcal M(X) \motimes \mathcal N(X), \mathcal Z(X)).
\end{aligned}
\end{equation*}
Above, $\underline\Hom_{C^\infty_X}$ denotes the sheaf of homomorphisms between $C^\infty_X$-modules (which is again a $C^\infty_X$-module), while $\Hom_{C^\infty_X}$ denotes its global sections, that is, the $C^\infty(X)$-module of sheaf homomorphisms.  Each of the above identifications is justified by the isomorphism \eqref{eq:Hom-vs-Gamma}, the adjunction between $\Hom_{C^\infty(X)}$ and $\otimes_{C^\infty(X)}$, or its analogue for $\underline\Hom_{C^\infty_X}$ and $\otimes_{C^\infty_X}$.
\end{proof}

\begin{cor}
Suppose that the sheaves of $C^\infty_X$-modules $\mathcal M$ and $\mathcal N$ are locally free of bounded rank.  Then so is their tensor product $\mathcal M \otimes_{C^\infty_X} \mathcal N$.  Thus, $\Lfb(X)$ is a symmetric monoidal category.  Moreover, $\Gamma_X$ is a symmetric monoidal functor.
\end{cor}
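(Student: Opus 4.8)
The plan is to establish the corollary in three stages: first the local freeness of the tensor product, then the conclusion that $\Lfb(X)$ is symmetric monoidal, and finally the monoidality of $\Gamma_X$, which is really a formal consequence of Lemma~\ref{lem:Gamma-vs-otimes}.

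For the first claim, I would argue locally. Since $\mathcal{M}$ and $\mathcal{N}$ are locally free of bounded rank, each point $x\in X$ has an open neighborhood $U$ on which $\mathcal{M}\vert_U\cong(C^\infty_U)^{\oplus m}$ and $\mathcal{N}\vert_U\cong(C^\infty_U)^{\oplus n}$ with $m,n$ bounded uniformly over $X$ (shrink $U$ so that both trivializations hold simultaneously). Tensor product commutes with finite direct sums and $C^\infty_U\otimes_{C^\infty_U}C^\infty_U\cong C^\infty_U$, so $(\mathcal{M}\otimes_{C^\infty_X}\mathcal{N})\vert_U\cong(C^\infty_U)^{\oplus mn}$; moreover the presheaf tensor product is already a sheaf on such $U$ (no sheafification needed locally since the modules are free), so restricting the sheafified tensor product to $U$ recovers the naive one. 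Thus $\mathcal{M}\otimes_{C^\infty_X}\mathcal{N}$ is locally free, and its rank is bounded by the product of the two rank bounds.

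For the symmetric monoidal structure on $\Lfb(X)$: the category of all sheaves of $C^\infty_X$-modules is symmetric monoidal under $\otimes_{C^\infty_X}$ with unit $C^\infty_X$, and I have just shown $\Lfb(X)$ is closed under this tensor product; the unit $C^\infty_X$ is itself free of rank $1$, hence in $\Lfb(X)$; and the associativity, unit, and symmetry constraints are inherited from the ambient category (they restrict to isomorphisms in $\Lfb(X)$ since $\Lfb(X)$ is a full subcategory closed under the relevant operations). So $\Lfb(X)$ is a symmetric monoidal subcategory.

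For the monoidality of $\Gamma_X$: the natural isomorphism $\Gamma_X\mathcal{M}\otimes_{C^\infty(X)}\Gamma_X\mathcal{N}\xrightarrow{\cong}\Gamma_X(\mathcal{M}\otimes_{C^\infty_X}\mathcal{N})$ is precisely Lemma~\ref{lem:Gamma-vs-otimes}, and $\Gamma_X(C^\infty_X)=C^\infty(X)$ gives the unit isomorphism. It remains to check the coherence conditions (compatibility with the associators, unitors, and the symmetry), but these follow because the isomorphism of Lemma~\ref{lem:Gamma-vs-otimes} is induced by the universal property of tensor products sending $\sigma\otimes\tau$ (global sections) to the global section of the presheaf tensor product, and all the coherence cells are then verified by checking agreement on elementary tensors of global sections, which is routine and essentially the same Yoneda-style argument used to prove the lemma. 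The only genuine subtlety — the reason the statement is not completely trivial — is that for general sheaves of modules $\Gamma_X$ is \emph{not} monoidal because sheafification intervenes; the content here is that fineness of $C^\infty_X$-modules (already exploited in the proof of Lemma~\ref{lem:Gamma-vs-otimes}) is exactly what makes the comparison map an isomorphism, so the main obstacle is simply invoking that lemma correctly and noting that local freeness is preserved. Hence no further work beyond bookkeeping is needed.
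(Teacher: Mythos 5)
Your proof is correct and matches the paper's (implicit) reasoning: the paper states this corollary immediately after Lemma~\ref{lem:Gamma-vs-otimes} with no further argument, treating it as a direct consequence, and your write-up supplies precisely the routine steps being suppressed — local trivialization to see that the tensor product stays locally free of bounded rank, closure of $\Lfb(X)$ under $\otimes$ and the unit, and Lemma~\ref{lem:Gamma-vs-otimes} furnishing the lax monoidal structure map on $\Gamma_X$. One small point worth making explicit in your first paragraph is that sheafification commutes with restriction to open subsets, which is what licenses replacing the restricted sheafified tensor product by the presheaf tensor product on $U$; you implicitly use this but it is the one place a careful reader might pause.
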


\begin{prp}
The functor $\Gamma_\loc\colon \Bun^{\text{iso}}(X) \to \Lfb^{\text{iso}}(X)$ from~(\ref{eq:bun-lfb-fpg}) is symmetric monoidal.
\end{prp}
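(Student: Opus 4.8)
The plan is to show that $\Gamma_{\loc}$ carries the monoidal structure on $\Bun^{\text{iso}}(X)$ (tensor product of vector bundles) to the monoidal structure on $\Lfb^{\text{iso}}(X)$ (tensor product of sheaves of $C^\infty_X$-modules), compatibly with the associativity, unit, and symmetry constraints. The key observation is that for vector bundles $E, F \to X$, the sheaf of local sections of the tensor product bundle $E \otimes F$ is canonically isomorphic to $\Gamma_{\loc}(E) \otimes_{C^\infty_X} \Gamma_{\loc}(F)$.

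First I would construct the comparison morphism. For any open $U \subseteq X$, there is a natural $C^\infty(U)$-bilinear map $\Gamma(U;E) \times \Gamma(U;F) \to \Gamma(U; E\otimes F)$ sending $(s,t)$ to the section $x \mapsto s(x)\otimes t(x)$, inducing a presheaf map $\Gamma_{\loc}(E)\otimes_{C^\infty_X}^{\text{pre}} \Gamma_{\loc}(F) \to \Gamma_{\loc}(E\otimes F)$ and hence, after sheafification on the left, a morphism $\tau_{E,F}\colon \Gamma_{\loc}(E)\otimes_{C^\infty_X}\Gamma_{\loc}(F) \to \Gamma_{\loc}(E\otimes F)$ of sheaves of $C^\infty_X$-modules. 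Naturality in $E$ and $F$ is immediate from the construction. Next I would check that $\tau_{E,F}$ is an isomorphism: this is a local statement, and over a trivializing open $U$ on which $E\vert_U \cong \underline{\C}^m$ and $F\vert_U \cong \underline{\C}^n$, both sides restrict to the free $C^\infty_U$-module of rank $mn$ and $\tau$ becomes the evident isomorphism, so $\tau_{E,F}$ is a stalkwise (equivalently, local) isomorphism, hence an isomorphism of sheaves. I would also note that $\Gamma_{\loc}$ of the trivial line bundle $\underline{\C}$ is the unit object $C^\infty_X$ of $\Lfb^{\text{iso}}(X)$, and the obvious unit isomorphism is respected.

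Then I would verify the coherence conditions making $(\Gamma_{\loc}, \tau)$ a symmetric monoidal functor: that the diagrams relating $\tau$ to the associator (built from $E\otimes(F\otimes G)\cong(E\otimes F)\otimes G$ on one side and the associator of $\otimes_{C^\infty_X}$ on the other), to the left and right unitors, and to the symmetry (the flip $E\otimes F \cong F\otimes E$ versus the braiding of $\otimes_{C^\infty_X}$) all commute. Each of these is a local check on trivializing opens, where everything reduces to the standard multilinear-algebra coherences for $\otimes_{\C}$, so they hold. Since $\Gamma_{\loc}$ is already known to be an equivalence of categories by Proposition~\ref{thm:serre-swan}, it is in fact a symmetric monoidal equivalence, though only the monoidal structure is asserted here.

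The main obstacle, and the only genuinely delicate point, is controlling the sheafification in the definition of $\otimes_{C^\infty_X}$: a priori the left-hand side of $\tau_{E,F}$ is a sheafification, not the naive presheaf tensor product, so one must argue that the comparison map is still an isomorphism after sheafifying. This is handled exactly as in Lemma~\ref{lem:Gamma-vs-otimes} and its corollary — fineness of the sheaves (partitions of unity) means the presheaf tensor product of locally free sheaves is already a sheaf, or at worst that the sheafification does not change stalks — so the local computation on trivializing charts suffices. Everything else is routine verification of coherence diagrams, each reducing locally to the corresponding identity for finite-dimensional complex vector spaces.
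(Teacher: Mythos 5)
Your proof is correct and takes essentially the same route as the paper: construct the canonical comparison map $\tau_{E,F}\colon\Gamma_\loc(E)\otimes_{C^\infty_X}\Gamma_\loc(F)\to\Gamma_\loc(E\otimes F)$ from the fibrewise tensor map, then verify it is an isomorphism by a local computation. The only substantive difference is in how that local verification is carried out: the paper passes to stalks, using that $\otimes$ commutes with the filtered colimit defining stalks and that $\Gamma_\loc(E)_x\cong C^\infty_{X,x}\otimes E_x$, whereas you restrict to a trivializing open $U$ on which both sides become the free sheaf $C^\infty_U^{mn}$. Both are valid and interchangeable; restriction to an open commutes with sheaf tensor product since $(-)\vert_U$ is exact and preserves sheafification, so your local-trivialization check needs no further justification. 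You go further than the paper in spelling out that the coherence diagrams (associator, unitors, symmetry) commute, which is a reasonable thing to record even though it is routine. One small remark: your closing appeal to fineness is not actually needed here. Sheafification never changes stalks, and restriction to opens commutes with $\otimes$ regardless of fineness, so the ``delicate point'' about the presheaf tensor product already being a sheaf dissolves; fineness is used elsewhere in the appendix (for Lemma~\ref{lem:Gamma-vs-otimes}, comparing global sections of the tensor product with the tensor product of global sections), but the present proposition is purely local and does not require it.
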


\begin{proof}
Let $E, F \in \Bun^{\text{iso}}(X)$. The natural bilinear map $E \times F \to E \otimes F$ induces, on local sections, a map $\Gamma_\loc(E) \otimes_{C^\infty_X} \Gamma_\loc(F) \to \Gamma_\loc(E \otimes F)$. We need to show that this is an isomorphism.  Taking stalks, this gives a map
\begin{equation}
\label{eq:1}
\Gamma_\loc(E)_x \otimes_{C^\infty_{X, x}} \Gamma_\loc(F)_x
\cong
(\Gamma_\loc(E) \otimes_{C^\infty_X} \Gamma_\loc(F))_x
\to
\Gamma_\loc(E \otimes F)_x.
\end{equation}
The isomorphism on the left-hand side follows from the fact that the tensor product preserves colimts. Next, by taking stalks at $x \in X$, we get $\Gamma_\loc(E)_x \cong C^\infty_{X, x} \otimes E_x$, where $E_x$ is the fiber of $E$ at $x$ and all other subscripts $x$ denote stalks.  Using this, and the corresponding fact for $F$ and $E \otimes F$, we see that \eqref{eq:1} is an isomorphism.  A map of sheaves inducing isomorphisms on stalks is an isomorphism, as desired.
\end{proof}

\begin{cor}
\label{cor:1}
The functor $\Gamma\colon \Bun^{\text{iso}}(X) \to \Fgp^{\text{iso}}(X)$ is symmetric monoidal. In particular, given vector bundles $E$ and $F$ over $X$, there is an isomorphism
\begin{equation*}
  \Gamma(E\otimes F) \cong \Gamma(E) \otimes_{C^\infty(X)} \Gamma(F)
\end{equation*}
of $C^\infty(X)$-modules.
\end{cor}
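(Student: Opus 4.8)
The plan is to deduce Corollary~\ref{cor:1} by combining the previous proposition (that $\Gamma_{\loc}\colon\Bun^{\text{iso}}(X)\to\Lfb^{\text{iso}}(X)$ is symmetric monoidal) with the fact that $\Gamma_X\colon\Lfb(X)\to\Fgp^{\text{iso}}(X)$ is symmetric monoidal (established in the Corollary following Lemma~\ref{lem:Gamma-vs-otimes}), using the commutativity of the triangle~\eqref{eq:bun-lfb-fpg}. Since a composite of symmetric monoidal functors is symmetric monoidal, and $\Gamma = \Gamma_X\circ\Gamma_{\loc}$ by~\eqref{eq:bun-lfb-fpg}, the functor $\Gamma$ is symmetric monoidal. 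This is essentially a one-line argument once the two halves are in place.

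More explicitly, first I would recall that $\Gamma = \Gamma_X \circ \Gamma_\loc$ as functors $\Bun^{\text{iso}}(X)\to\Fgp^{\text{iso}}(X)$, which is exactly the content of the commuting triangle~\eqref{eq:bun-lfb-fpg}. Next I would invoke the previously established monoidal structures: $\Gamma_\loc$ carries the tensor product of vector bundles to the tensor product of sheaves of $C^\infty_X$-modules compatibly with the associativity, unit, and symmetry constraints, and $\Gamma_X$ carries the tensor product of sheaves of $C^\infty_X$-modules to the tensor product of $C^\infty(X)$-modules compatibly with the same constraints (this last point being precisely Lemma~\ref{lem:Gamma-vs-otimes} together with its corollary). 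The structure isomorphism for the composite is then the evident pasting
\[
\Gamma(E)\otimes_{C^\infty(X)}\Gamma(F) = \Gamma_X\big(\Gamma_\loc(E)\big)\otimes_{C^\infty(X)}\Gamma_X\big(\Gamma_\loc(F)\big)\xrightarrow{\cong}\Gamma_X\big(\Gamma_\loc(E)\otimes_{C^\infty_X}\Gamma_\loc(F)\big)\xrightarrow{\cong}\Gamma_X\big(\Gamma_\loc(E\otimes F)\big)=\Gamma(E\otimes F),
\]
where the first isomorphism is the monoidal structure map of $\Gamma_X$ and the second is $\Gamma_X$ applied to the monoidal structure map of $\Gamma_\loc$. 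Compatibility with the coherence data for the composite follows formally from compatibility for each factor.

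The particular isomorphism claimed in the statement is then just the evaluation of this structure map, and the unit is handled by noting $\Gamma$ sends the trivial line bundle $\underline{\C}$ to $C^\infty(X)$. I do not anticipate a genuine obstacle here: the only thing to be careful about is that $\Gamma_{\loc}$ and $\Gamma_X$ have indeed been shown to be \emph{symmetric} monoidal (not merely lax monoidal), so that the symmetry isomorphism is also preserved by the composite; both of these facts are available from the immediately preceding results. Thus the proof is essentially a formal invocation of ``a composite of symmetric monoidal functors is symmetric monoidal,'' specialized along~\eqref{eq:bun-lfb-fpg}.
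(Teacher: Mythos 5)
Your proposal is correct and coincides with the paper's own proof: both argue that the natural isomorphism $\Gamma \cong \Gamma_X\circ\Gamma_\loc$ from the commuting triangle~\eqref{eq:bun-lfb-fpg}, together with the previously established symmetric monoidality of $\Gamma_\loc$ and $\Gamma_X$, makes $\Gamma$ symmetric monoidal as a composite of symmetric monoidal functors. Your explicit pasting of the structure maps is simply a spelled-out version of the same one-line argument.
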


\begin{proof}
The natural isomorphism \eqref{eq:bun-lfb-fpg} between $\Gamma$ and the monoidal functor $\Gamma_X \circ \Gamma_{\loc}$ gives the former the structure of a monoidal functor. For a more direct proof, not involving sheaf tensor products, see Nestruev~\cite[Theorem~11.39]{Nes}.
\end{proof}

Recall that a connection on a vector bundle $E \to X$ is a $\mathbb
C$-linear map
\begin{equation*}
  \nabla\colon \Gamma(E) \to \Gamma(E \otimes T^*X)
\end{equation*}
satisfying the familiar version of the Leibniz rule. By Corollary \ref{cor:1} (with $F$ being the cotangent bundle $T^*X$ of $X$), $\nabla$ determines a connection on the $C^\infty(X)$-module $\Gamma(E)$. This assignment extends to a functor
\begin{equation}\label{EQN.GammaCon}
  \Gamma^\con\colon \BunD^{\text{iso}}(X) \to \FgpD^{\text{iso}}(X),
\end{equation}
which on morphisms does the same as the functor $\Gamma$ of
\eqref{eq:bun-lfb-fpg}.

Similarly, applying Lemma \ref{lem:Gamma-vs-otimes} to the case $\mathcal N = \Omega^1_X$, we see that a connection on a $C^\infty_X$-module $\mathcal M$ determines a connection on its global sections $\Gamma_X(\mathcal M)$. Thus, we get a functor $\Gamma_X^\con\colon \LfbD(X) \to \FgpD^{\text{iso}}\big(C^\infty(X)\big)$.

\begin{lem}
\label{lem:buncontofgpcon}
Let $\mathcal M$ be a sheaf of $C^\infty_X$-modules and $M = \mathcal M(X)$ the $C^\infty(X)$-module of global sections. Then a connection $D$ on $M$ uniquely determines a connection $\mathcal D$ on $\mathcal M$ characterized by the property that $\mathcal{D}_{X}=D$.
\end{lem}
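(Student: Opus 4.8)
The plan is to produce $\mathcal{D}$ by a partition-of-unity construction, imitating the surjectivity argument in the proof of Lemma~\ref{lem:Gamma-vs-otimes}, and to obtain uniqueness directly from the injectivity of the map~\eqref{eq:Hom-vs-Gamma}. First note that, by Lemma~\ref{lem:Gamma-vs-otimes} applied with $\mathcal{N}=\Om^1_X$, there is a canonical isomorphism $(\mathcal{M}\otimes_{C^\infty_X}\Om^1_X)(X)\cong M\otimes_{C^\infty(X)}\Om^1(X)$, so the condition $\mathcal{D}_X=D$ typechecks. No local freeness of $\mathcal{M}$ is needed; only that $C^\infty_X$ is a fine sheaf of rings, which supplies both the cutoff functions used below and the injectivity of~\eqref{eq:Hom-vs-Gamma}.

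For existence I would define $\mathcal{D}$ section by section. Fix an open $U\subseteq X$ and $\sigma\in\mathcal{M}(U)$. For an open $U'$ with $\overline{U'}\subseteq U$, choose (smooth Urysohn) a function $\rho\in C^\infty(X)$ with $\rho\equiv 1$ on a neighborhood of $\overline{U'}$ and $\mathrm{supp}(\rho)\subseteq U$; then $\rho\sigma$, extended by zero, is a well-defined global section of $\mathcal{M}$, and one sets $\mathcal{D}_U(\sigma)|_{U'}:=D(\rho\sigma)|_{U'}\in(\mathcal{M}\otimes_{C^\infty_X}\Om^1_X)(U')$. The key point is independence of the choice of $\rho$: if $\rho'$ is another admissible cutoff and $\eta:=\rho-\rho'$, then $\eta$ vanishes on a neighborhood of $\overline{U'}$, so one may pick $\xi\in C^\infty(X)$ with $\xi\equiv 1$ on $U'$ and $\mathrm{supp}(\xi)\subseteq\{x\in X: \eta(x)=0\}$; by the Leibniz rule, $\xi\cdot D(\eta\sigma)=D(\xi\eta\sigma)-(\eta\sigma)\otimes d\xi=-(\eta\sigma)\otimes d\xi$, which restricts to $0$ on $U'$ since $\eta|_{U'}=0$, whence $D(\eta\sigma)|_{U'}=\xi|_{U'}\cdot D(\eta\sigma)|_{U'}=0$, i.e. $D(\rho\sigma)|_{U'}=D(\rho'\sigma)|_{U'}$. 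Applying this on double overlaps $U_1'\cap U_2'$ shows the locally defined pieces are compatible, and since such $U'$ cover $U$ the sheaf axiom for $\mathcal{M}\otimes_{C^\infty_X}\Om^1_X$ produces a well-defined element $\mathcal{D}_U(\sigma)\in(\mathcal{M}\otimes_{C^\infty_X}\Om^1_X)(U)$.

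It then remains to check the routine properties, each of which can be verified locally where $\rho\equiv 1$: that $\mathcal{D}_U$ is $\C$-linear; that it obeys $\mathcal{D}_U(\sigma f)=\mathcal{D}_U(\sigma)f+\sigma\otimes df$ for $f\in C^\infty(U)$ (reducing to the Leibniz rule for $D$); and that $\mathcal{D}_U(\sigma)|_V=\mathcal{D}_V(\sigma|_V)$ for $V\subseteq U$, so that the $\mathcal{D}_U$ assemble into a sheaf morphism $\mathcal{D}\colon\mathcal{M}\to\mathcal{M}\otimes_{C^\infty_X}\Om^1_X$, i.e. a connection in the sense of Definition~\ref{def:connection-on-a-sheaf}. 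Finally one checks $\mathcal{D}_X=D$: for $\sigma\in M$ and $U'$ with $\overline{U'}\subseteq X$, choosing $\rho$ as above one has $D(\sigma)-D(\rho\sigma)=D((1-\rho)\sigma)$, which vanishes on $U'$ by the independence argument applied to $\eta=1-\rho$; since such $U'$ cover $X$, the sheaf axiom gives $\mathcal{D}_X(\sigma)=D(\sigma)$.

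For uniqueness: if $\mathcal{D}$ and $\mathcal{D}'$ are connections on $\mathcal{M}$ with $\mathcal{D}_X=\mathcal{D}'_X=D$, then $\Delta:=\mathcal{D}-\mathcal{D}'$ is $C^\infty_X$-linear (the Leibniz terms cancel), hence $\Delta\in\Hom_{C^\infty_X}(\mathcal{M},\mathcal{M}\otimes_{C^\infty_X}\Om^1_X)$, and its image under the injective map~\eqref{eq:Hom-vs-Gamma} is $\Delta_X=D-D=0$; therefore $\Delta=0$. The main obstacle in this argument is the independence-of-cutoff claim in the existence step: because a connection is not a module map, one cannot simply factor the function out, and the argument must be routed through the Leibniz rule together with the carefully chosen secondary cutoff $\xi$, exactly as in the surjectivity part of the proof of Lemma~\ref{lem:Gamma-vs-otimes}.
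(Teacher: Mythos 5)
Your proof is correct and follows essentially the same route as the paper's: both define $\mathcal{D}_U$ locally by applying the global $D$ (transported via Lemma~\ref{lem:Gamma-vs-otimes}) to a global section agreeing with $\sigma$ near a small open $U'\subseteq U$, establish independence of the auxiliary choices via the Leibniz rule together with a secondary cutoff, and then glue via the sheaf axiom. Your explicit uniqueness argument---observing that the difference of two such connections is a $C^\infty_X$-linear map killed by the injective map~\eqref{eq:Hom-vs-Gamma}---is a cleaner formalization of a step the paper leaves largely implicit.
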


\begin{proof}
Set $\mathcal D_X\colon\mathcal M(X) \xrightarrow D \mathcal M(X) \otimes_{C^{\infty}(X)} \Omega^{1}(X) \cong (\mathcal M \otimes_{C^\infty_X} \Omega_X^1)(X)$ to be $D$ followed by the isomorphism from Lemma \ref{lem:Gamma-vs-otimes}. Our goal is to show this extends to a unique connection $\mathcal D_U\colon \mathcal M(U) \to  \mathcal M(U) \otimes_{C^{\infty}(U)} \Omega^{1}(U) \cong (\mathcal M \otimes_{C^\infty} \Omega^1)(U)$ on each open $U\subseteq X$.

Let $U \subseteq X$ be open and $\sigma \in \mathcal M(U)$.  Assume for the moment that $\sigma$ admits an extension $\sigma'\in \mathcal M(X)$ to a global section.  Then we set
$\mathcal D_U \sigma := (\mathcal D_X \sigma')\vert_U.$ Note that this is independent of the choice of $\sigma'$. To see this, pick a different extension $\sigma'' \in \mathcal M(X)$ of $\sigma$, so that $\sigma' - \sigma'' = 0$ on $U$. For each $x \in U$, we can choose a function $\rho \in C^\infty(X)$ with support in $U$ and such that $\rho = 1$ in a neighborhood of $x$, so that $\rho (\sigma' - \sigma'') = 0$ on all of $X$. Then, in a neighborhood of $x$, we have
  \begin{equation*}
    0 = \mathcal D_X \rho (\sigma' - \sigma'')
    = d\rho \otimes (\sigma' - \sigma'')
    + \rho \mathcal D_X (\sigma' - \sigma'')
    = \mathcal D_X (\sigma' - \sigma'').
  \end{equation*}
By the sheaf property (or, more specifically, the fact that sections agree if they agree locally), we see that $\mathcal D_U\sigma$ is well-defined.

Now, in general, a section $\sigma \in \mathcal E(U)$ may not admit a global extension. Nevertheless, for each $x\in U$, there exists an open neighborhood $U'\subseteq U$ of $x$ such that $\sigma|_{U'}$ has a global extension to $X$.
Thus, using global extensions of restrictions $\sigma\vert_{U'}$ for small open disks $U' \subset U$ and using the sheaf condition, we get a unique $\mathcal D_U$ with the property that $\mathcal D_U (\sigma\vert_U) = (\mathcal D_X \sigma)\vert_{U}$ for all global sections $\sigma \in \mathcal M(X)$.

It remains to check that $\mathcal D_U$ satisfies the Leibniz rule.  This holds if and only if it holds for all sections with compact support in $U$.  Now, the condition $\mathcal D_U (\sigma\vert_U) = (\mathcal D_X \sigma)\vert_{U}$, and the fact that $\mathcal D_X$ satisfies the Leibniz rule, shows that the same is true of $\mathcal D_U$.
\end{proof}

We now define a functor $\Gamma_\loc^\con\colon \BunD^{\text{iso}}(X) \to \LfbD(X)$. Given a vector bundle with connection $(E, \nabla)$, we set $\Gamma_\loc^\con(E, \nabla) := (\mathcal E, \mathcal D)$, where $\mathcal E := \Gamma_\loc(E)$ is the sheaf of sections of $E$, and $\mathcal D$ is the connection obtained via Lemma~\ref{lem:buncontofgpcon} from the connection on $\Gamma^\con(E, \nabla)$. At the level of morphism, $\Gamma_\loc^\con$ performs the same assignments as its counterpart in diagram \ref{eq:bun-lfb-fpg}. One can check that this is compatible with the connections constructed in Lemma~\ref{lem:buncontofgpcon}.

To sum up, we obtained a diagram of groupoids and functors
\begin{equation}
\label{eq:bun-lfb-fgp-conn}
\vcenter{\hbox{
\xymatrix{
\BunD^{\text{iso}}(X) \ar[d]_{\Gamma_{\mathrm{loc}}^\con} \ar[r]^{\Gamma^\con} & \FgpD^{\text{iso}}(X)\\
\LfbD(X) \ar[ur]_{\Gamma_X^\con} &
}
}},
\end{equation}
which commutes up to isomorphism.
The reason this diagram does not commute on the nose is because the definition of a connection involves the tensor product and these functors are only monoidal up to isomorphism.

\begin{prp}[Differential Serre--Swan theorem] \label{PRP.Differential.SS} Each functor in \eqref{eq:bun-lfb-fgp-conn} is an equivalence of categories.
\end{prp}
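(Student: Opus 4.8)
The plan is to establish the equivalence of all three functors in \eqref{eq:bun-lfb-fgp-conn} by leveraging the non-differential Serre--Swan theorem (Proposition~\ref{thm:serre-swan}) together with Lemma~\ref{lem:buncontofgpcon}, which handles the extra connection data. Since the outer triangle commutes up to natural isomorphism, the two-out-of-three property for equivalences means it suffices to prove that two of the three functors are equivalences; I would prove $\Gamma^\con\colon \BunD^{\text{iso}}(X)\to\FgpD^{\text{iso}}(X)$ and $\Gamma_X^\con\colon\LfbD(X)\to\FgpD^{\text{iso}}(X)$ are equivalences, and deduce the claim for $\Gamma_\loc^\con$.

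First I would treat $\Gamma_X^\con\colon\LfbD(X)\to\FgpD^{\text{iso}}(X)$. On underlying objects without connection, $\Gamma_X\colon\Lfb(X)\to\Fgp^{\text{iso}}(C^\infty(X))$ is an equivalence by Proposition~\ref{thm:serre-swan}. I claim $\Gamma_X^\con$ is fully faithful: a morphism of sheaves with connection is just a morphism of sheaves of $C^\infty_X$-modules that is additionally connection-compatible, and under the isomorphism \eqref{eq:Hom-vs-Gamma} a sheaf morphism $F$ is connection-compatible if and only if $F_X$ is, because both connections are determined by their values on global sections (Lemma~\ref{lem:buncontofgpcon} applied to $\cN=\Omega^1_X$, together with the naturality square restricted to $U$). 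For essential surjectivity, given $(M,D)\in\FgpD^{\text{iso}}(X)$, choose by Proposition~\ref{thm:serre-swan} a sheaf $\mathcal M$ with $\Gamma_X(\mathcal M)\cong M$; then Lemma~\ref{lem:buncontofgpcon} produces a unique connection $\mathcal D$ on $\mathcal M$ with $\mathcal D_X=D$ (transported along the isomorphism $\Gamma_X(\mathcal M)\cong M$), giving $(\mathcal M,\mathcal D)\in\LfbD(X)$ with $\Gamma_X^\con(\mathcal M,\mathcal D)\cong(M,D)$. Hence $\Gamma_X^\con$ is an equivalence.

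Next I would treat $\Gamma^\con\colon\BunD^{\text{iso}}(X)\to\FgpD^{\text{iso}}(X)$. Full faithfulness follows from that of $\Gamma\colon\Bun^{\text{iso}}(X)\to\Fgp^{\text{iso}}(X)$ (Proposition~\ref{thm:serre-swan}) by the same reasoning: a fiberwise-isomorphism bundle map is $\nabla$-compatible precisely when the induced module map is $D$-compatible, and Corollary~\ref{cor:1} ensures the identification $\Gamma(E\otimes T^*X)\cong\Gamma(E)\otimes_{C^\infty(X)}\Omega^1(X)$ is natural, so the compatibility squares correspond. For essential surjectivity: given $(M,D)\in\FgpD^{\text{iso}}(X)$, pick $E\in\Bun^{\text{iso}}(X)$ with an isomorphism $\alpha\colon\Gamma(E)\xrightarrow{\cong}M$ (Proposition~\ref{thm:serre-swan}); transport $D$ to a connection on $\Gamma(E)$, which under Corollary~\ref{cor:1} is a $\mathbb C$-linear map $\Gamma(E)\to\Gamma(E\otimes T^*X)$ satisfying the Leibniz rule, i.e.\ a connection $\nabla$ on the bundle $E$ in the classical sense. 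Then $\Gamma^\con(E,\nabla)\cong(M,D)$.

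Finally, since $\Gamma^\con\cong\Gamma_X^\con\circ\Gamma_\loc^\con$ up to natural isomorphism and both $\Gamma^\con$ and $\Gamma_X^\con$ are equivalences, the two-out-of-three property of equivalences of categories forces $\Gamma_\loc^\con\colon\BunD^{\text{iso}}(X)\to\LfbD(X)$ to be an equivalence as well, completing the proof. The main subtlety — already isolated as a lemma — is Lemma~\ref{lem:buncontofgpcon}: the fact that a connection on the global sections extends uniquely to a sheaf-level connection, which relies on the fine-sheaf/partition-of-unity arguments and the global-sections comparison of \eqref{eq:Hom-vs-Gamma}. Beyond invoking that lemma, the only thing to be careful about is bookkeeping the up-to-isomorphism (rather than strict) commutativity, which is why I phrase everything in terms of the two-out-of-three property rather than chasing strict identities.
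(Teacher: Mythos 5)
Your proposal is correct and follows essentially the same route as the paper: both rest on the non-differential Serre--Swan theorem (Proposition~\ref{thm:serre-swan}), both use Lemma~\ref{lem:buncontofgpcon} to handle the connection data, and both finish with the two-out-of-three property to get $\Gamma_\loc^\con$. The only organizational difference is that the paper verifies essential surjectivity and fullness only for $\Gamma^\con$ directly and then \emph{deduces} these for $\Gamma_X^\con$ from the factorization $\Gamma^\con\cong\Gamma_X^\con\circ\Gamma_\loc^\con$, whereas you prove $\Gamma_X^\con$ is an equivalence independently via Lemma~\ref{lem:buncontofgpcon} and the isomorphism \eqref{eq:Hom-vs-Gamma}; that is a bit more work but equally valid.
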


\begin{proof} The forgetful functors mapping the triangle \eqref{eq:bun-lfb-fgp-conn} to its version \eqref{eq:bun-lfb-fpg} without connection are faithful, since morphisms in the former are morphisms in the latter satisfying additional conditions (to preserve connections, in each of the three different senses). It follows that all functors in \eqref{eq:bun-lfb-fgp-conn} are faithful.

By a similar argument, $\Gamma^\con$ is full.  We now verify that $\Gamma^\con$ is essentially surjective. Take any $(M,D)\in\FgpD^{\text{iso}}(X)$. The essential surjectivity of the horizontal map in \eqref{eq:bun-lfb-fpg} gives us an isomorphism $\al:\Ga(E)\srl{\isom}\ra M$ for some $E\in \Bun^{\text{iso}}(X)$. We define a connection $\na$ on $E$ by the composite
\begin{equation*}
\na:\Ga(E)\srl{\al}\ra M\srl{D}\ra M\tsr_{\Cinf(X)}
\Om^1(X)\xrightarrow{\al^{-1}\tsr\text{id}} \Ga(E)
\tsr_{\Cinf(X)}\Om^1(X)\srl{\isom}\ra\Ga(E\tsr T^*X).
\end{equation*}
Since $D$ satisfies the Leibniz rule, so does $\nabla$. Furthermore, $\alpha$ defines an isomorphism of modules with connection $\alpha:(\Gamma(E),\nabla)\to(M,D)$ by construction of $\na$. This proves that $\Gamma^\con$ is essentially surjective.

We conclude that $\Gamma_X^\con$ is essentially surjective and full since $\Gamma^\con$ is essentially surjective and full and the former factors through the latter. We already know they are faithful, and it follows that $\Gamma^\con$ and $\Gamma_X^\con$ are equivalences. Therefore, $\Gamma_\loc^\con$ is an equivalence as well.
\end{proof}

\begin{dfn}\label{DFN.Grassmann.conn} Let $M\in\Fgp(A)$ and $\Omega_{\bullet}(A)$ a DGA on top of $A$. A \define{Grassmann connection} on a $M$ is a connection $D$ on $M$ for which there exists an $n\in\N$, an embedding $i:M\hookrightarrow A^{n}$, and a projection $p:A^{n}\to M$ such that $D:=(p\otimes\mathrm{id})\circ d\circ i$, where $d$ is the standard connection on $A^{n}$.
\end{dfn}

\begin{rmk}
A Grassmann connection is also called a \define{Berry connection} in the physics literature~\cite{Si83} and a \define{Levi-Civita connection} by Karoubi~\cite{Ka1}.
\end{rmk}

\begin{cor}[Narasimhan and Ramanan]\label{COR.all.connections.are.grassmann.conn.for.smooth.functions}
Let $X$ be a manifold and $A=\Cinf(X;\C)$. Then any connection on a finitely generated projective $A$-module is a Grassmann connection.
\end{cor}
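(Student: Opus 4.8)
The plan is to deduce this corollary from the differential Serre--Swan theorem (Proposition~\ref{PRP.Differential.SS}) together with the concrete description of connections on modules of sections of vector bundles coming from Example~\ref{exa:KaroubiChernformforGrassman} and the classical Narasimhan--Ramanan construction. Fix $X$ a smooth manifold, $A=\Cinf(X;\C)$, and let $(M,D)\in\FgpD(A)$ be an arbitrary finitely generated projective $A$-module with connection (with respect to the de~Rham DGA $\Om_\bl^{\dR}(X;\C)$). First I would invoke the essential surjectivity of $\Gamma^\con:\BunD^{\text{iso}}(X)\to\FgpD^{\text{iso}}(X)$ from Proposition~\ref{PRP.Differential.SS}: there is a smooth complex vector bundle with connection $(E,\na)$ over $X$ and an isomorphism of modules with connection $\al:(\Gamma(E),\na)\xrightarrow{\cong}(M,D)$. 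Since being a Grassmann connection is manifestly preserved under pulling back along a module isomorphism (if $D'=(p\otimes\mathrm{id})\circ d\circ i$ on $M'$ and $\al:M\to M'$ is an isomorphism, then $\al^*D'=(\al^{-1}p\otimes\mathrm{id})\circ d\circ(i\al)$ exhibits $\al^*D'$ as a Grassmann connection for the embedding $i\al:M\hookrightarrow A^n$ and projection $\al^{-1}p:A^n\to M$), it suffices to prove the statement for $(\Gamma(E),\na)$.

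The core of the argument is then the standard Narasimhan--Ramanan fact that every connection on a smooth vector bundle is a pullback of the universal connection, or — in the form closest to what we need — that $\na$ can be realized as a Grassmann connection. Concretely: choose a smooth embedding of $E$ as a subbundle of a trivial bundle $\underline{\C}^n=X\times\C^n$ such that the orthogonal (or any complementary) projection $\underline{\C}^n\to E$ intertwines the flat connection $d$ on $\underline{\C}^n$ with $\na$ on $E$; such an embedding exists by a partition-of-unity argument combined with a parallel-transport/averaging trick (this is exactly the content of Narasimhan--Ramanan for vector bundles with connection, or one can cite the Grassmannian universal-bundle formulation). On the level of sections this says precisely that $\na=(p\otimes\mathrm{id})\circ d\circ i$ where $i:\Gamma(E)\hookrightarrow \Gamma(\underline{\C}^n)=A^n$ and $p:A^n\to\Gamma(E)$ are the induced module maps and $d$ is the standard connection on $A^n$ — i.e.\ $\na$ is a Grassmann connection in the sense of Definition~\ref{DFN.Grassmann.conn}.

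The main obstacle is supplying (or citing) the geometric input that a connection on a vector bundle can be written as a Grassmann/Berry connection: this is a real theorem (Narasimhan--Ramanan), and the cleanest route is to cite it directly rather than reprove it. If one wants a self-contained argument, the key steps are: embed $E\hookrightarrow\underline{\C}^N$ for $N$ large using compactness or properness and partitions of unity so that $\na$ becomes $d$ composed with projection along a fixed metric — the subtlety being that a generic embedding will \emph{not} have this intertwining property, so one must adjust the embedding (e.g.\ by composing local trivializations adapted to $\na$ with a partition of unity, à la the standard proof that the universal connection is universal). Once the embedding is arranged, identifying the resulting formula with Definition~\ref{DFN.Grassmann.conn} via Corollary~\ref{cor:1} (which identifies $\Gamma(E\otimes T^*X)$ with $\Gamma(E)\otimes_{A}\Om^1(X)$) is routine. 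Combining this with the reduction of the first paragraph completes the proof.
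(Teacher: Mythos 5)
Your proposal is correct and takes essentially the same approach as the paper: reduce to the vector-bundle case via essential surjectivity of $\Gamma^\con$ from Proposition~\ref{PRP.Differential.SS}, invoke the classical Narasimhan--Ramanan theorem to present the induced connection $\na$ on $\Gamma(E)$ as a Grassmann connection $p\circ d\circ i$, and then transport this presentation back through the module isomorphism $\al$. The paper compresses the transport step into the single line $D=(\al\otimes\mathrm{id})\circ p\circ d\circ i\circ\al^{-1}$, which is exactly the calculation you spell out when observing that Grassmann-ness is stable under pullback along module isomorphisms.
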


\begin{proof} Suppose $(M,D)\in\FgpD^{\text{iso}}(\Cinf(X;\C))$. From the proof of Proposition \ref{PRP.Differential.SS} on essential surjectivity of $\Ga^\con$, there exists $(E,\na)\in \BunD^{\text{iso}}(X)$ together with an isomorphism $\alpha:\Gamma(E)\srl{\isom}\ra M$ such that $\na=(\al^{-1} \tsr \text{id})\circ D \circ \al$. By the theorem of Narasimhan and Ramanan \cites{NR1, NR2,Qu}, $\na= (p \tsr \text{id})\circ D \circ i$, where $p:X\times \C^N\ra E$ is a projection such that $\text{Im}(p)=E$ and $i:E\ra X\times \C^N$ is an embedding. Therefore $D= (\al\tsr\text{id})\circ p \circ d \circ i\circ \al^{-1}$, which is a Grassmann connection.
\end{proof}

\section{On Grothendieck fibrations and the extension of scalars}
\label{sec:fibrations}

Naturality of the noncommutative Chern character necessitates the need to introduce the extension of scalars. Since naturality is a categorical notion, it is important to describe the extension of scalars from a functorial viewpoint, and, for this reason, it is helpful to review Grothendieck fibrations and opfibrations. This justifies some of the terminology we have been using, such as \emph{cocartesian morphisms}. We also make everything explicit since taking opposite categories can be quite confusing due to the many ways in which it can occur. Our references include~\cites{St18,MoVa18,Ma19}.

\begin{dfn}
\label{defn:cartesian}
Let $\mathcal{E}$ and $\mathcal{X}$ be two categories and let $\mathcal{E}\xrightarrow{\pi}\mathcal{X}$ be a functor. A morphism $u\xrightarrow{\beta}v$ in $\mathcal{E}$ is \define{cartesian} iff for any morphism $x\xrightarrow{f}\pi(u)$ in $\mathcal{X}$ and any morphism $w\xrightarrow{\gamma}v$ in $\mathcal{E}$ such that $\pi(\beta)\circ f=\pi(\gamma)$, there exists a unique morphism $w\xrightarrow{\alpha}u$ in $\mathcal{E}$ such that
\[
\pi(\alpha)=f
\qquad\text{ and }\qquad
\beta\circ\alpha=\gamma.
\]
\end{dfn}

A visualization of the data in Definition~\ref{defn:cartesian} is often helpful:%
\footnote{The morphism $\beta$ has been bolded to emphasize that it is the morphism that is cartesian.}
\[
\xy0;/r.25pc/:
(0,10)*+{\mathcal{E}}="E";
(0,-10)*+{\mathcal{X}}="X";
{\ar"E";"X"_{\pi}};
\endxy
\qquad\qquad\qquad
\xy0;/r.25pc/:
(0,5)*+{u}="u";
(25,5)*+{v}="v";
(-20,15)*+{w}="w";
{\ar"u";"v"|-{\beta}};
{\ar@<0.075ex>"u";"v"|-{\phantom{\beta}}};
{\ar@<-0.075ex>"u";"v"|-{\phantom{\beta}}};
{\ar@<0.05ex>"u";"v"|-{\phantom{\beta}}};
{\ar@<-0.05ex>"u";"v"|-{\phantom{\beta}}};
{\ar@<0.025ex>"u";"v"|-{\phantom{\beta}}};
{\ar@<-0.025ex>"u";"v"|-{\phantom{\beta}}};
{\ar@/^0.75pc/"w";"v"^{\gamma}};
{\ar@{-->}@[blue]"w";"u"_{\color{blue}\exists!\alpha}};
(0,-15)*+{\pi(u)}="pu";
(25,-15)*+{\pi(v)}="pv";
(-20,-5)*+{x}="pw";
{\ar"pu";"pv"|-{\pi(\beta)}};
{\ar@/^0.75pc/"pw";"pv"^{\pi(\gamma)}};
{\ar"pw";"pu"_{f}};
{\ar@{=}"pu";(4,-9)};
\endxy
\]
Note that the conditions necessitate $\pi(w)=x$ so that one can think of the morphisms in $\mathcal{E}$ as lying above the morphisms in $\mathcal{X}$ via the functor $\pi$. In Example~\ref{exa:VectManFibration}, we will see that a cartesian morphism is a fibrewise isomorphism in the setting of vector bundles over smooth manifolds.

A closely related definition is that of a cocartesian morphism.

\begin{dfn}
Let $\mathcal{E}$ and $\mathcal{X}$ be two categories and let $\mathcal{E}\xrightarrow{\pi}\mathcal{X}$ be a functor. A morphism $u\xleftarrow{\beta}v$ in $\mathcal{E}$ is \define{cocartesian} iff for any morphism $x\xleftarrow{f}\pi(u)$ in $\mathcal{X}$ and any morphism $w\xleftarrow{\gamma}v$ in $\mathcal{E}$ such that $f\circ\pi(\beta)=\pi(\gamma)$, there exists a unique morphism $w\xleftarrow{\alpha}u$ in $\mathcal{E}$ such that
\[
\pi(\alpha)=f
\qquad\text{ and }\qquad
\alpha\circ\beta=\gamma.
\]
\end{dfn}

Cocartesian morphisms are visualized as:
\[
\xy0;/r.25pc/:
(0,10)*+{\mathcal{E}}="E";
(0,-10)*+{\mathcal{X}}="X";
{\ar"E";"X"_{\pi}};
\endxy
\qquad\qquad\qquad
\xy0;/r.25pc/:
(0,5)*+{u}="u";
(25,5)*+{v}="v";
(-20,15)*+{w}="w";
{\ar"v";"u"|-{\beta}};
{\ar@<0.075ex>"v";"u"|-{\phantom{\beta}}};
{\ar@<-0.075ex>"v";"u"|-{\phantom{\beta}}};
{\ar@<0.05ex>"v";"u"|-{\phantom{\beta}}};
{\ar@<-0.05ex>"v";"u"|-{\phantom{\beta}}};
{\ar@<0.025ex>"v";"u"|-{\phantom{\beta}}};
{\ar@<-0.025ex>"v";"u"|-{\phantom{\beta}}};
{\ar@/_0.75pc/"v";"w"_{\gamma}};
{\ar@{-->}@[blue]"u";"w"^{\color{blue}\exists!\alpha}};
(0,-15)*+{\pi(u)}="pu";
(25,-15)*+{\pi(v)}="pv";
(-20,-5)*+{x}="pw";
{\ar"pv";"pu"|-{\pi(\beta)}};
{\ar@/_0.75pc/"pv";"pw"_{\pi(\gamma)}};
{\ar"pu";"pw"^{f}};
{\ar@{=}"pu";(4,-9)};
\endxy
\]
In other words, a cocartesian morphism in $\mathcal{E}\xrightarrow{\pi}\mathcal{X}$ is a cartesian morphism in $\mathcal{E}^{\op}\xrightarrow{\pi^{\op}}\mathcal{X}^{\op}$.

\begin{dfn}
\label{defn:fibration}
Let $\pi:\mathcal{E}\to\mathcal{X}$ be as in Definition~\ref{defn:cartesian}. Let $\mathcal{E}_{x}$ be the subcategory of $\mathcal{E}$ consisting of the objects $u$ in $\mathcal{E}$ such that $\pi(u)=x$ and $\pi(\beta)=\mathrm{id}_{x}$ for all morphisms $u\xrightarrow{\beta}v$ with $\pi(u)=x=\pi(v)$. The category $\mathcal{E}_{x}$ is called the \define{fibre} of $\pi$ over $x$ and the morphisms in $\mathcal{E}_{x}$ are called \define{vertical morphisms} of $\pi$ over $x$.
Given a morphism $x\xrightarrow{f}y$ in $\mathcal{X}$ and an object $v$ in $\mathcal{E}_{y}$, a \define{cartesian lifting of $f$ with target $v$} is a cartesian morphism $u\xrightarrow{\beta}v$ such that $\pi(\beta)=f$.
A functor $\pi:\mathcal{E}\to\mathcal{X}$ is a \define{fibration} iff for any morphism  $x\xrightarrow{f}y$ in $\mathcal{X}$ and an object $v$ in $\mathcal{E}_{y}$, a cartesian lifting exists.
Dually, given a morphism $x\xleftarrow{f}y$ in $\mathcal{X}$ and an object $v$ in $\mathcal{E}_{y}$, a \define{cocartesian lifting of $f$ with source $v$} is a cocartesian morphism $u\xleftarrow{\beta}v$ such that $\pi(\beta)=f$.
A functor $\pi:\mathcal{E}\to\mathcal{X}$ is a \define{opfibration} iff for any morphism  $x\xleftarrow{f}y$ in $\mathcal{X}$ and an object $v$ in $\mathcal{E}_{y}$, a cocartesian lifting exists.
When $\pi$ is either a fibration or an opfibration, $\mathcal{E}$ is called the \define{total category} and $\mathcal{X}$ is called the \define{base}.
Given a fibration (opfibration) $\mathcal{E}\xrightarrow{\pi}\mathcal{X}$, the collection of cartesian (cocartesian) morphisms form a fibration (opfibration) $\mathbf{cart}(\mathcal{E})\xrightarrow{\pi}\mathcal{X}$ ($\mathbf{cocart}(\mathcal{E})\xrightarrow{\pi}\mathcal{X}$).
\end{dfn}

\begin{exa}
\label{exa:VectManFibration}
Let $\Man$ denote the category of smooth manifolds and smooth maps. Let $\Bun$ denote the category of smooth vector bundles over manifolds. Namely, an object of $\Bun$ is a triple $(X,E,\pi)$, with $X$ a smooth manifold and $E\xrightarrow{\pi}X$ a smooth vector bundle. A morphism $(X,E,\pi)\to(Y,F,\rho)$ consists of a pair of smooth maps $(X\xrightarrow{f}Y,E\xrightarrow{\varphi}F)$ such that $\varphi$ is a fibrewise linear map and such that the diagram
\[
\xy0;/r.25pc/:
(-12.5,7.5)*+{E}="1";
(12.5,7.5)*+{F}="2";
(-12.5,-7.5)*+{X}="3";
(12.5,-7.5)*+{Y}="4";
{\ar"1";"2"^{\varphi}};
{\ar"1";"3"_{\pi}};
{\ar"2";"4"^{\rho}};
{\ar"3";"4"_{f}};
\endxy
\]
commutes. Then the projection map $\Bun\to\Man$, sending an object $(X,E,\pi)$ to $X$ and a morphism $(X,E,\pi)\xrightarrow{(f,\varphi)}(Y,F,\rho)$ to $f$, is a Grothendieck fibration. Indeed, to every morphism $X\xrightarrow{f}Y$ in $\Man$, and to every vector bundle $(Y,F,\rho)$ over $Y$, the \emph{pullback bundle}, defined by the categorical pullback
\[
\xy0;/r.25pc/:
(-12.5,7.5)*+{f^{*}F}="1";
(12.5,7.5)*+{F}="2";
(-12.5,-7.5)*+{X}="3";
(12.5,-7.5)*+{Y}="4";
(-8.5,5)*{\lrcorner};
{\ar"1";"2"};
{\ar"1";"3"_{f^{*}\rho}};
{\ar"2";"4"^{\rho}};
{\ar"3";"4"_{f}};
\endxy
\]
in the category $\Man$, is a cartesian morphism over $f$. This follows immediately from the universal property of the pullback---see the visualization after Definition~\ref{defn:cartesian}. This same universal property shows that all cartesian morphisms are of this form. Thus, $\mathbf{cart}(\Bun)$ can be identified with the category of vector bundles as objects and smooth maps on the base with fibrewise isomorphisms on the vector bundles as morphisms.
\end{exa}

\begin{exa}
\label{exa:ringsmodulesopfibration}
Let $\Ring$ denote the category of unital rings and unital ring homomorphisms (henceforth, all rings will be assumed unital in this appendix). Let $\Mod$ denote the category of modules. Namely, an object of $\Mod$ is a pair $(R,M)$ consisting of a ring $R$ and a right $R$-module $M$. A morphism $(S,N)\to(R,M)$ is a pair $(\psi,\Psi)$ with $S\xrightarrow{\psi}R$ a ring homomorphism and $N\xrightarrow{\Psi}M$ a $\Z$-linear map such that $\Psi(ns)=\Psi(n)\psi(s)$ for all $n\in N$ and $s\in S$. Then the forgetful functor $\Mod\to\Ring$ sending a pair $(R,M)$ to $R$ and $(\psi,\Psi)$ to $\psi$ is an opfibration. Indeed, to every ring homomorphism $S\xrightarrow{\psi}R$ and to every right $S$-module $N$, a cocartesian lift is given by the extension of scalars right $R$-module $N\otimes_{S}R:=N\otimes_{\Z}R/_{\sim}$, together with the $\Z$-linear map $N\xrightarrow{\Psi}N\otimes_{S}R$ defined by sending $n\in N$ to $n\otimes1_{R}$. This is cocartesian because to any other ring $Q$ and right $Q$-module $L$ together with morphisms as in
\[
\xy0;/r.25pc/:
(0,5)*+{N\otimes_{S}R}="u";
(25,5)*+{N}="v";
(-20,15)*+{L}="w";
{\ar"v";"u"|-{\Psi}};
{\ar@<0.075ex>"v";"u"|-{\phantom{\Psi}}};
{\ar@<-0.075ex>"v";"u"|-{\phantom{\Psi}}};
{\ar@<0.05ex>"v";"u"|-{\phantom{\Psi}}};
{\ar@<-0.05ex>"v";"u"|-{\phantom{\Psi}}};
{\ar@<0.025ex>"v";"u"|-{\phantom{\Psi}}};
{\ar@<-0.025ex>"v";"u"|-{\phantom{\Psi}}};
{\ar@/_0.75pc/"v";"w"_{\Phi}};
{\ar@{-->}@[blue]"u";"w"^{\color{blue}\exists!\Xi}};
(0,-15)*+{R}="pu";
(25,-15)*+{S}="pv";
(30,-16)*+{,};
(-20,-5)*+{Q}="pw";
{\ar"pv";"pu"|-{\psi}};
{\ar@/_0.75pc/"pv";"pw"_{\phi}};
{\ar"pu";"pw"^{\xi}};
{\ar@{=}"pu";(4,-9)};
\endxy
\]
the unique morphism $N\otimes_{S}R\xrightarrow{\Xi}L$ in $\Mod$ is uniquely determined by the assignment
\[
N\otimes_{\K}R\ni n\otimes r\mapsto \Phi(n)\xi(r).
\]

A quick calculation shows that this descends to a well-defined morphism $N\otimes_{S}R\xrightarrow{\Xi}L$ in $\Mod$ satisfying the required commutativity conditions. Its uniqueness follows from the fact that it is actually completely determined by its action on elements of the form $n\otimes 1_{R}$, which must be given by $\Phi(n)$ in order for the top diagram in $\Mod$ to commute.
Furthermore, all cocartesian morphisms are of this form. More precisely, if $N\xrightarrow{\Sigma}M$ is another cocartesian lift of $\psi$, then $M$ is canonically isomorphic to $N\otimes_{S}R$ in the category $\Mod$.
This follows from the fact that \emph{any} two target objects of cocartesian lifts are canonically isomorphic (this follows from the usual arguments of canonical uniqueness of objects defined via universal properties).

Thus, the category of cocartesian morphisms $\mathbf{cocart}(\Mod)$ is equivalent to the category of modules and morphisms where the morphisms always map into a module that is canonically isomorphic to an extension of scalars of the first module along the ring homomorphism in the base.
\end{exa}

Fibrations and opfibrations are occasionally constructed from functors via the \emph{Grothendieck construction}, though elements of this construction were known earlier by Yoneda and Mac~Lane (cf.~\cite[page 44]{MaMo12}).

\begin{lem}
\label{lem:grothendieckconstruction}
Let $\mathcal{X}$ be a category and let $\mathcal{X}^{\op}\xrightarrow{\mathfrak{E}}\mathbf{Cat}$ be a pseudofunctor, also called a weak 2-functor (for definitions, see~\cites{Pa19,KeSt74}). Then the following structure defines a category $\int\mathfrak{E}$.
\begin{enumerate}
\item
An object of $\int\mathfrak{E}$ is a pair $(X,E)$ with $X$ an object of $\mathcal{X}$ and $E$ an object of $\mathfrak{E}_{X}$.
\item
A morphism from $(X,E)$ to $(Y,F)$ is a pair $(f,\alpha)$, with $X\xrightarrow{f}Y$ a morphism in $\mathcal{X}$ and $E\xrightarrow{\alpha}\mathfrak{E}_{f}(F)$ a morphism in $\mathfrak{E}_{X}$.
\item
The composite of a composable pair of morphisms $(X,E)\xrightarrow{(f,\alpha)}(Y,F)\xrightarrow{(g,\beta)}(Z,G)$ is given by the composite $X\xrightarrow{g\circ f}Z$ in $\mathcal{X}$ and
\[
E\xrightarrow{\alpha}\mathfrak{E}_{f}(F)\xrightarrow{\mathfrak{E}_{f}(\beta)}\mathfrak{E}_{f}\big(\mathfrak{E}_{g}(G)\big)\xrightarrow{c_{(f,g)}}\mathfrak{E}_{g\circ f}(G)
\]
in $\mathfrak{E}_{X}$,
where $c_{(f,g)}$ is the compositor natural isomorphism from the data of a pseudofunctor.
\item
The identity morphism associated to a pair $(X,E)$ in $\int\mathfrak{E}$ is given by $E\xrightarrow{\epsilon_{X}}\mathfrak{E}_{\mathrm{id}_{X}}(E)$, where $\epsilon$ is the unitor natural isomorphism from the data of the pseudofunctor $\mathfrak{E}$.
\end{enumerate}
Furthermore, the assignment
\[
\begin{split}
\int\mathfrak{E}&\xrightarrow{\pi_{\mathfrak{E}}}\mathcal{X}\\
(X,E)&\mapsto X\\
\Big((X,E)\xrightarrow{(f,\alpha)}(Y,F)\Big)&\mapsto\Big(X\xrightarrow{f}Y\Big)
\end{split}
\]
defines a functor. This functor is a fibration in which a cartesian lift of $X\xrightarrow{f}Y$ with target $F$ in $\mathfrak{E}_{Y}$ is the pair $(f,\mathrm{id}_{\mathfrak{E}_{f}(F)})$. Finally, a morphism $(X,E)\xrightarrow{(f,\alpha)}(Y,F)$ is cartesian if and only if $E\xrightarrow{\alpha}\mathfrak{E}_{f}(F)$ is an isomorphism in $\mathfrak{E}_{X}$.
\end{lem}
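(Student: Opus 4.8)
The proof is a systematic verification that the proposed data satisfies the axioms of Definitions~\ref{defn:cartesian} and~\ref{defn:fibration}, relying throughout on the coherence axioms of the pseudofunctor $\mathfrak{E}$: naturality of the compositors $c_{(f,g)}$, the associativity coherence relating $c_{(f,g)}$, $c_{(g,h)}$, $c_{(f,g\circ h)}$ and $c_{(g\circ f,h)}$, and the triangle coherence relating the unitors $\epsilon$ to the compositors. First I would check that $\int\mathfrak{E}$ is a category. Well-definedness of the composite is immediate, since $\mathfrak{E}_{f}$ is a functor and $c_{(f,g)}$ is a morphism in $\mathfrak{E}_{X}$. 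For associativity, given $(X,E)\xrightarrow{(f,\alpha)}(Y,F)\xrightarrow{(g,\beta)}(Z,G)\xrightarrow{(h,\gamma)}(W,H)$, I would write out both iterated composites: their base components are both $h\circ g\circ f$ by associativity in $\mathcal{X}$, and their $\mathfrak{E}_{X}$-components are identified by combining naturality of $c_{(f,g)}$ (applied to the relevant morphism built from $\beta$) with the associativity coherence axiom for $\mathfrak{E}$. The two unit laws follow similarly from the triangle coherence axioms, which are exactly the statement that inserting $\epsilon$ and composing with $c_{(\mathrm{id},f)}$ or $c_{(f,\mathrm{id})}$ returns the original morphism.

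That $\pi_{\mathfrak{E}}$ is a functor is immediate from the construction: the base component of a composite in $\int\mathfrak{E}$ is the composite of base components, and the base component of the identity of $(X,E)$ is $\mathrm{id}_{X}$.

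Next I would show $\pi_{\mathfrak{E}}$ is a fibration. Fix $X\xrightarrow{f}Y$ in $\mathcal{X}$ and an object $F$ of $\mathfrak{E}_{Y}$, and consider the candidate $(f,\mathrm{id}_{\mathfrak{E}_{f}(F)})\colon(X,\mathfrak{E}_{f}(F))\to(Y,F)$, which visibly lies over $f$. To verify the universal property of Definition~\ref{defn:cartesian}, take any object $(Z,G)$, any morphism $Z\xrightarrow{h}X$ in $\mathcal{X}$, and any morphism $(Z,G)\xrightarrow{(k,\gamma_{0})}(Y,F)$ with $f\circ h=k$; one must produce a unique $(Z,G)\xrightarrow{(h,\delta)}(X,\mathfrak{E}_{f}(F))$ whose composite with $(f,\mathrm{id}_{\mathfrak{E}_{f}(F)})$ equals $(k,\gamma_{0})$. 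Unwinding the composition formula, the $\mathfrak{E}_{Z}$-component of the composite is $c_{(h,f)}\circ\delta$, where $c_{(h,f)}\colon\mathfrak{E}_{h}(\mathfrak{E}_{f}(F))\xrightarrow{\cong}\mathfrak{E}_{f\circ h}(F)=\mathfrak{E}_{k}(F)$; setting this equal to $\gamma_{0}$ forces $\delta=c_{(h,f)}^{-1}\circ\gamma_{0}$, which is uniquely determined since $c_{(h,f)}$ is an isomorphism, and one checks this choice indeed makes the diagram commute. Hence cartesian liftings exist with the stated form, so $\pi_{\mathfrak{E}}$ is a fibration.

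Finally, for the characterization of cartesian morphisms: if $\alpha\colon E\to\mathfrak{E}_{f}(F)$ is an isomorphism in $\mathfrak{E}_{X}$, then $(f,\alpha)$ factors as a vertical isomorphism $(X,E)\to(X,\mathfrak{E}_{f}(F))$ (whose $\mathfrak{E}_{X}$-component is $\alpha$ corrected by the unitor so as to land in the right hom-set) followed by the canonical cartesian lift $(f,\mathrm{id}_{\mathfrak{E}_{f}(F)})$; since the composite of a cartesian morphism with a vertical isomorphism is again cartesian (a short chase from Definition~\ref{defn:cartesian}), $(f,\alpha)$ is cartesian. Conversely, if $(f,\alpha)$ is cartesian, then it and the canonical lift $(f,\mathrm{id}_{\mathfrak{E}_{f}(F)})$ are two cartesian liftings of $f$ with the same target, hence are intertwined by a vertical isomorphism $(\mathrm{id}_{X},\theta)$; tracing this through the composition formula exhibits $\alpha$ as $\theta$ composed with an isomorphism, so $\alpha$ is an isomorphism. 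The main obstacle is the coherence bookkeeping in verifying that $\int\mathfrak{E}$ is a category — getting the associativity square to close requires combining naturality of $c$ with the pentagon-type coherence axiom, and the unit laws need the precise triangle axioms; everything else is routine once the cartesian lift has been correctly identified as $(f,\mathrm{id}_{\mathfrak{E}_{f}(F)})$.
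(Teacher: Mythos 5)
Your proof is correct, and the paper states Lemma~\ref{lem:grothendieckconstruction} without proof, treating the Grothendieck construction as standard and deferring to the references listed at the start of Appendix~\ref{sec:fibrations}; your verification --- the category axioms for $\int\mathfrak{E}$ from the pseudofunctor coherence data, the unique fill-in $\delta=c_{(h,f)}^{-1}\circ\gamma_0$ showing $(f,\mathrm{id}_{\mathfrak{E}_f(F)})$ is cartesian, and the two directions of the isomorphism criterion via factoring through the canonical lift and the uniqueness-up-to-vertical-isomorphism of cartesian liftings --- is exactly the standard argument. One small terminological slip: the associativity coherence you invoke for a pseudofunctor is a commuting \emph{square} relating $c_{(f,g)}$, $c_{(g\circ f,h)}$, $c_{(g,h)}$, and $c_{(f,h\circ g)}$, not a pentagon (the pentagon is the monoidal-category axiom); this does not affect the substance of the argument.
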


\begin{dfn}
\label{defn:grothendieckconstruction}
Given a pseudofunctor $\mathcal{X}^{\op}\xrightarrow{\mathfrak{E}}\mathbf{Cat}$, the fibration $\int\mathfrak{E}\xrightarrow{\pi_{\mathfrak{E}}}\mathcal{X}$ from Lemma~\ref{lem:grothendieckconstruction} is called the \define{Grothendieck construction/fibration} associated to $\mathfrak{E}$.
\end{dfn}

\begin{exa}
\label{exa:extensionscalarsgrothendieck}
The extension of scalars briefly described in Example~\ref{exa:ringsmodulesopfibration} can be described as a pseudofunctor $\mathfrak{E}:\Ring\to\mathbf{Cat}$ sending a ring $R$ to the category of right $R$-modules $\mathfrak{E}(R):=\Mod(R)$ and sending a ring homomorphism $S\xrightarrow{\psi}R$ to a functor $\Mod(S)\xrightarrow{\mathfrak{E}_{\psi}}\Mod(R)$ defined as follows. To each $S$-module $N$, the functor assigns the $R$-module $N\otimes_{S}R$. To each morphism of $S$-modules $N\xrightarrow{\kappa}N'$, it assigns the canonical extension $N\otimes_{S}R\xrightarrow{\wtl{\kappa}}N'\otimes_{S}R$ determined uniquely by the $R$-action on elements of the form $n\otimes1_{R}$. The reason $\mathfrak{E}$ is a pseudofunctor, as opposed to a (strict) functor, is that to a pair of composable ring homomorphisms $S\xrightarrow{\psi}R\xrightarrow{\xi}Q$, there is a natural isomorphism
\[
\xy0;/r.25pc/:
(-20,-7.5)*+{\Mod(Q)}="3";
(0,7.5)*+{\Mod(R)}="2";
(20,-7.5)*+{\Mod(S)}="1";
{\ar"1";"2"_{\mathfrak{E}_{\psi}}};
{\ar"2";"3"_{\mathfrak{E}_{\xi}}};
{\ar"1";"3"^{\mathfrak{E}_{\xi\circ\psi}}};
{\ar@{=>}(0,3);(0,-6)_{c_{(\xi,\psi)}}};
\endxy
\]
that is not necessarily the identity natural transformation. Indeed, to an $S$-module $N$, it provides the isomorphism
\[
\begin{split}
(N\otimes_{S}R)\otimes_{R}Q&\to N\otimes_{S}Q\\
n\otimes r\otimes q&\mapsto n\otimes\xi(r)q
\end{split}
\]
in $\Mod(Q)$.
The inverse is given by sending $n\otimes q$ to $n\otimes1_{R}\otimes q$. Secondly, given a ring $R$, there is a natural isomorphism
\[
\xy0;/r.25pc/:
(-17.5,0)*+{\Mod(R)}="1";
(17.5,0)*+{\Mod(R)}="2";
{\ar@/^1.25pc/"1";"2"^{\mathrm{id}_{\Mod(R)}}};
{\ar@/_1.25pc/"1";"2"_{\mathfrak{E}_{\mathrm{id}_{R}}}};
{\ar@{=>}(0,3);(0,-3)^{\epsilon_{R}}};
\endxy
\]
given on a right $R$-module $M$ by the isomorphism $M\to M\otimes_{R}R$ sending $m\in M$ to $m\otimes1_{R}$.
We leave the other verifications of the definition of a pseudofunctor to the reader.

We now apply the Grothendieck construction to $\mathfrak{E}:(\Ring^{\op})^{\op}\to\mathbf{Cat}$, where the double ``op'' is taken to more directly apply Definition~\ref{defn:grothendieckconstruction}. This gives a fibration $\int\mathfrak{E}\xrightarrow{\pi_{\mathfrak{E}}}\Ring^{\op}$, where the objects of $\int\mathfrak{E}$ are given by pairs $(R,M)$ with $R$ a ring and $M$ a right $R$-module. A morphism from $(R,M)$ to $(S,N)$ in $\int\mathfrak{E}$ is a pair $(\psi^{\op},\alpha)$ with $R\xrightarrow{\psi^{\op}}S$ a morphism in $\Ring^{\op}$, which corresponds to a ring homomorphism $S\xrightarrow{\psi}R$, and a right $R$-module morphism $M\xrightarrow{\alpha}N\otimes_{S}R$.
The projection $\int\mathfrak{E}\xrightarrow{\pi_{\mathfrak{E}}}\Ring^{\op}$ takes an object $(R,M)$ to $R$ and a morphism $(\psi^{\op},\alpha)$ to $\psi^{\op}$.
\end{exa}

In what follows, we will compare the three fibrations from Example~\ref{exa:VectManFibration}, Example~\ref{exa:ringsmodulesopfibration}, and Example~\ref{exa:extensionscalarsgrothendieck}. To do this, we need to make sense of morphisms of fibrations.

\begin{dfn}
Let $\mathcal{E}\xrightarrow{\pi}\mathcal{X}$ and $\mathcal{F}\xrightarrow{\rho}\mathcal{Y}$ be two fibrations. A \define{fibred functor} from $\pi$ to $\rho$ consists of a pair of functors $\big(\mathcal{E}\xrightarrow{\Gamma}\mathcal{F},\mathcal{X}\xrightarrow{\gamma}\mathcal{Y}\big)$ such that the diagram
\[
\xy0;/r.25pc/:
(-12.5,7.5)*+{\mathcal{E}}="1";
(12.5,7.5)*+{\mathcal{F}}="2";
(-12.5,-7.5)*+{\mathcal{X}}="3";
(12.5,-7.5)*+{\mathcal{Y}}="4";
{\ar"1";"2"^{\Gamma}};
{\ar"1";"3"_{\pi}};
{\ar"2";"4"^{\rho}};
{\ar"3";"4"_{\gamma}};
\endxy
\]
commutes and such that every cartesian lifting in $\pi$ gets sent to a cartesian lifting in $\rho$.
\end{dfn}

\begin{exa}
In Example~\ref{exa:extensionscalarsgrothendieck}, we showed that the extension of scalars $\Ring\xrightarrow{\mathfrak{E}}\mathbf{Cat}$ defines a fibration $\int\mathfrak{E}\xrightarrow{\pi_{\mathfrak{E}}}\Ring^{\op}$. Let $\Man\xrightarrow{C^{\infty}}\Ring^{\op}$ be the functor that takes the ring of smooth functions. More precisely, it takes a smooth manifold $X$ to the ring of smooth functions $C^{\infty}(X)$ on $X$. It sends a smooth map $X\xrightarrow{f}Y$ to the pullback ring homomorphism $C^{\infty}(Y)\xrightarrow{f^*}C^{\infty}(X)$. Second, let $\Bun\xrightarrow{\Gamma}\int\mathfrak{E}$ be the functor that sends a smooth vector bundle $(X,E,\pi)$ to the pair $\big(C^{\infty}(X),\Gamma(\pi)\big)$, where $\Gamma(\pi)$ is the $C^{\infty}(X)$-module of smooth sections of the vector bundle $E\xrightarrow{\pi}X$. To describe what this functor assigns to a morphism $(X,E,\pi)\xrightarrow{(f,\varphi)}(Y,F,\rho)$, we first note that associated to any such morphism in $\Bun$ is a uniquely determined morphism of bundles $(X,E,\pi)\xrightarrow{(\mathrm{id}_{X},\xi)}(X,f^*F,f^*\rho)$ coming from the universal property of the pullback
\[
\xy0;/r.25pc/:
(-25,17.5)*+{E}="0";
(-12.5,7.5)*+{f^{*}F}="1";
(12.5,7.5)*+{F}="2";
(-12.5,-7.5)*+{X}="3";
(12.5,-7.5)*+{Y}="4";
(-8.5,5)*{\lrcorner};
{\ar@/^1.0pc/"0";"2"^{\varphi}};
{\ar@/_1.0pc/"0";"3"_{\pi}};
{\ar"0";"1"|-(0.40){\xi}};
{\ar"1";"2"|-{\psi}};
{\ar"1";"3"|-{f^{*}\rho}};
{\ar"2";"4"^{\rho}};
{\ar"3";"4"_{f}};
\endxy
\]
This map induces a map of $C^{\infty}(X)$-modules $\Gamma(\pi)\xrightarrow{\Xi}\Gamma(f^*\rho)$ given by sending a section $s$ to $\xi\circ s$. This does not yet define a morphism in $\int\mathfrak{E}$. To obtain such a morphism, note that there is a natural $C^{\infty}(X)$-module map
\[
\begin{split}
\Gamma(\rho)\otimes_{C^{\infty}(Y)}C^{\infty}(X)&\xrightarrow{\Upsilon}\Gamma(f^*\rho)\\
t\otimes\eta&\mapsto\bigg(X\ni x\mapsto \psi^{-1}\Big(t\big(f(x)\big)\Big)\eta(x)\bigg),
\end{split}
\]
which is obtained by taking a section $t$ of $\rho$ to the section of $f^*\rho$ by composing the appropriate morphisms in the diagram
\[
\xy0;/r.25pc/:
(-10,7.5)*+{f^*F}="1";
(10,7.5)*+{F}="2";
(-10,-7.5)*+{X}="3";
(10,-7.5)*+{Y}="4";
{\ar"1";"2"^{\psi}};
{\ar"3";"4"_{f}};
{\ar"1";"3"_{f^*\rho}};
{\ar"2";"4"_{\rho}};
{\ar@/_1.0pc/@{-->}"4";"2"_{t}};
{\ar@/_1.0pc/@{-->}"3";"1"_{\Upsilon(t)}};
\endxy
\]
and using the fact that $\psi$ is a fibrewise isomorphism (and then just multiplying by the function $\eta$). This module map is an isomorphism satisfying many convenient properties, which are described in~\cite[Theorem~11.54]{Nes}.
For example, by sending the morphism $(X,E,\pi)\xrightarrow{(f,\varphi)}(Y,F,\rho)$ in $\Bun$ to the pair
\[
\Gamma(\pi)\xrightarrow{\Xi}\Gamma(f^*\rho)\xrightarrow{\Upsilon^{-1}}\Gamma(\rho)\otimes_{C^{\infty}(Y)}C^{\infty}(X)
\]
in $\int\mathfrak{E}$, the assignment $\Bun\xrightarrow{\Gamma}\int\mathfrak{E}$ defines a functor for which the diagram
\[
\xy0;/r.25pc/:
(-12.5,7.5)*+{\Bun}="1";
(12.5,7.5)*+{\int\mathfrak{E}}="2";
(-12.5,-7.5)*+{\Man}="3";
(12.5,-7.5)*+{\Ring^{\op}}="4";
{\ar"1";"2"^{\Gamma}};
{\ar"1";"3"};
{\ar"2";"4"^{\pi_{\mathfrak{E}}}};
{\ar"3";"4"_{\gamma}};
\endxy
\]
commutes. Furthermore, a cartesian lift in $\Bun\to\Man$, which corresponds to the pullback bundle, gets sent to the extension of scalars module by~\cite[Theorem~11.54]{Nes}. Thus, $\Gamma$ defines a fibred functor.

A similar situation happens when working with (unital) algebras and right modules over them. In particular, the analogue of this fibred functor is fully faithful when one takes the subcategory of finitely generated projective modules.
\end{exa}

\begin{exa}
In Example~\ref{exa:ringsmodulesopfibration}, we showed that $\Mod\to\Ring$ is an \emph{opfibration}. Hence, $\Mod^{\op}\to\Ring^{\op}$ is a \emph{fibration}. The subcategory of cartesian morphisms in this fibration is equivalent to the subcategory of cartesian morphisms of $\int\mathfrak{E}\xrightarrow{\pi_{\mathfrak{E}}}\Ring^{\op}$ from Example~\ref{exa:extensionscalarsgrothendieck} by the comments in Example~\ref{exa:ringsmodulesopfibration}. More precisely, if $N\xrightarrow{\Psi}M$ is a cocartesian lift of $S\xrightarrow{\psi}R$, then there exists a canonical $R$-module isomorphism $M\xrightarrow{\Xi}N\otimes_{S}R$. This module isomorphism defines a cartesian morphism in $\int\mathfrak{E}$.
\end{exa}

\begin{bibdiv}
\begin{biblist}

\bib{AAS}{article}{
   author={Antonini, Paolo},
   author={Azzali, Sara},
   author={Skandalis, Georges},
   title={Flat bundles, von Neumann algebras and $K$-theory with
   $\Bbb{R}/\Bbb{Z}$-coefficients},
   journal={J. K-Theory},
   volume={13},
   date={2014},
   number={2},
   pages={275--303},
   issn={1865-2433},
   doi={10.1017/is014001024jkt253},
}


	
\bib{Ba15}{article}{
  title={Green-hyperbolic operators on globally hyperbolic spacetimes},
  author={B{\"a}r, Christian},
  journal={Commun. Math. Phys.},
  volume={333},
  number={3},
  pages={1585--1615},
  year={2015},
  publisher={Springer},
  doi={10.1007/s00220-014-2097-7},
  url={https://doi.org/10.1007/s00220-014-2097-7},
}

\bib{Ba}{article}{
   author={Basu, Devraj},
   title={$K$-theory with $R/Z$ coefficients and von Neumann algebras},
   journal={$K$-Theory},
   volume={36},
   date={2005},
   number={3-4},
   pages={327--343 (2006)},
   issn={0920-3036},
   doi={10.1007/s10977-006-7110-2},
}


\bib{BG}{article}{
   author={Bunke, Ulrich},
   author={Gepner, David},
   title={Differential function spectra, the differential Becker-Gottlieb transfer, and applications to differential algebraic K-theory},
   journal={arXiv preprint arXiv:1306.0247v2 [math.KT]},
   date={2013},
}

\bib{BNV}{article}{
   author={Bunke, Ulrich},
   author={Nikolaus, Thomas},
   author={V\"{o}lkl, Michael},
   title={Differential cohomology theories as sheaves of spectra},
   journal={J. Homotopy Relat. Struct.},
   volume={11},
   date={2016},
   number={1},
   pages={1--66},
   issn={2193-8407},
   doi={10.1007/s40062-014-0092-5},
}

\bib{BS}{article}{
   author={Bunke, Ulrich},
   author={Schick, Thomas},
   title={Smooth $K$-theory},
   language={English, with English and French summaries},
   journal={Ast\'erisque},
   number={328},
   date={2009},
   pages={45--135 (2010)},
   issn={0303-1179},
   isbn={978-2-85629-289-1},
  }

\bib{BS2}{article}{
   author={Bunke, Ulrich},
   author={Schick, Thomas},
   title={Uniqueness of smooth extensions of generalized cohomology
   theories},
   journal={J. Topol.},
   volume={3},
   date={2010},
   number={1},
   pages={110--156},
   issn={1753-8416},
   doi={10.1112/jtopol/jtq002},
}

\bib{Co}{article}{
   author={Connes, Alain},
   title={Noncommutative differential geometry},
   journal={Inst. Hautes \'{E}tudes Sci. Publ. Math.},
   number={62},
   date={1985},
   pages={257--360},
   issn={0073-8301},
}

\bib{CQ}{article}{
   author={Cuntz, Joachim},
   author={Quillen, Daniel},
   title={Algebra extensions and nonsingularity},
   journal={J. Amer. Math. Soc.},
   volume={8},
   date={1995},
   number={2},
   pages={251--289},
   issn={0894-0347},
   doi={10.2307/2152819},
}

\bib{F}{article}{
   author={Freed, Daniel S.},
   title={Dirac charge quantization and generalized differential cohomology},
   conference={
      title={Surveys in differential geometry},
   },
   book={
      series={Surv. Differ. Geom., VII},
      publisher={Int. Press, Somerville, MA},
   },
   date={2000},
   pages={129--194},
 }

\bib{FL}{article}{
   author={Freed, Daniel S.},
   author={Lott, John},
   title={An index theorem in differential $K$-theory},
   journal={Geom. Topol.},
   volume={14},
   date={2010},
   number={2},
   pages={903--966},
   issn={1465-3060},
   doi={10.2140/gt.2010.14.903},
}


\bib{GS1}{article}{
   author={Grady, Daniel},
   author={Sati, Hisham},
   title={Spectral sequences in smooth generalized cohomology},
   journal={Algebr. Geom. Topol.},
   volume={17},
   date={2017},
   number={4},
   pages={2357--2412},
   issn={1472-2747},
   doi={10.2140/agt.2017.17.2357},
}

\bib{GS2}{article}{
   author={Grady, Daniel},
   author={Sati, Hisham},
   title={Twisted differential generalized cohomology theories and their Atiyah--Hirzebruch spectral sequence},
   volume={19},
   ISSN={1472-2747},
   DOI={10.2140/agt.2019.19.2899},
   number={6},
   journal={Algebraic \& Geometric Topology},
   publisher={Mathematical Sciences Publishers},
   year={2019},
   pages={2899--2960},
   url={http://dx.doi.org/10.2140/agt.2019.19.2899},
}


\bib{HS}{article}{
   author={Hopkins, M. J.},
   author={Singer, I. M.},
   title={Quadratic functions in geometry, topology, and M-theory},
   journal={J. Differential Geom.},
   volume={70},
   date={2005},
   number={3},
   pages={329--452},
   doi={10.4310/jdg/1143642908},
}

\bib{Ka1}{article}{
   author={Karoubi, Max},
   title={Homologie cyclique et $K$-th\'{e}orie},
   language={French, with English summary},
   journal={Ast\'{e}risque},
   number={149},
   date={1987},
   pages={147},
}

\bib{Ka2}{article}{
   author={Karoubi, Max},
   title={Th\'{e}orie g\'{e}n\'{e}rale des classes caract\'{e}ristiques secondaires},
   language={French, with English summary},
   journal={$K$-Theory},
   volume={4},
   date={1990},
   number={1},
   pages={55--87},
   doi={10.1007/BF00534193},
}

\bib{KeSt74}{inproceedings}{
  author={Kelly, G. Max},
  author={Street, Ross},
  booktitle={Category seminar},
  title={Review of the elements of 2-categories},
  series={Lecture Notes in Mathematics},
  pages={75--103},
  year={1974},
  volume={420},
  publisher={Springer, Berlin, Heidelberg},
  doi={10.1007/BFb0063101},
}


\bib{Kl}{book}{
   author={Klonoff, Kevin Robert},
   title={An index theorem in differential K-theory},
   note={Thesis (Ph.D.)--The University of Texas at Austin},
   publisher={ProQuest LLC, Ann Arbor, MI},
   date={2008},
   pages={119},
   isbn={978-0549-70973-2},
}

\bib{La03}{book}{
  title={An introduction to noncommutative spaces and their geometries},
  author={Landi, Giovanni},
  volume={51},
  year={2003},
  publisher={Springer Science \& Business Media},
  doi={10.1007/3-540-14949-X},
}
		

 \bib{Lo}{book}{
   author={Loday, Jean-Louis},
   title={Cyclic homology},
   series={Grundlehren der Mathematischen Wissenschaften [Fundamental
   Principles of Mathematical Sciences]},
   volume={301},
   note={Appendix E by Mar\'\i a O. Ronco},
   publisher={Springer-Verlag, Berlin},
   date={1992},
   pages={xviii+454},
   isbn={3-540-53339-7},
}

\bib{Lott}{article}{
   author={Lott, John},
   title={Secondary analytic indices},
   conference={
      title={Regulators in analysis, geometry and number theory},
   },
   book={
      series={Progr. Math.},
      volume={171},
      publisher={Birkh\"{a}user Boston, Boston, MA},
   },
   date={2000},
   pages={231--293},
   doi={10.1007/978-1-4612-1314-7\_10}, 
}
		
\bib{MaMo12}{book}{
  author={Mac~Lane, Saunders},
  author={Moerdijk, Ieke},
  title={Sheaves in geometry and logic: A first introduction to topos theory},
  year={2012},
  publisher={Springer Science \& Business Media},
  doi={10.1007/978-1-4612-0927-0},
}

\bib{MS}{article}{
   author={Mathai, Varghese},
   author={Stevenson, Danny},
   title={On a generalized Connes--Hochschild--Kostant--Rosenberg theorem},
   journal={Adv. Math.},
   volume={200},
   date={2006},
   number={2},
   pages={303--335},
   issn={0001-8708},
   doi={10.1016/j.aim.2004.11.006},
}

\bib{Ma19}{article}{
   author={Mazel-Gee, Aaron},
   title={A user's guide to co/cartesian fibrations},
   journal={Graduate J. Math},
   volume={4},
   year={2019},
   pages={42--53}
}

\bib{MM}{article}{
   author={Minasian, Ruben},
   author={Moore, Gregory},
   title={$K$-theory and Ramond--Ramond charge},
   journal={J. High Energy Phys.},
   date={1997},
   volume={1997},
   number={11},
   pages={002},
   doi = {10.1088/1126-6708/1997/11/002},
}

\bib{MoVa18}{article}{
  title={Monoidal Grothendieck construction},
  author={Moeller, Joe},
  author={Vasilakopoulou, Christina},
  year={2020},
   JOURNAL = {Theory Appl. Categ.},
  volume={35},
  number={31},
  pages={1159--1207},
}


\bib{Mor}{article}{
   author={Morye, Archana S.},
   title={Note on the Serre--Swan theorem},
   journal={Math. Nachr.},
   volume={286},
   date={2013},
   number={2-3},
   pages={272--278},
   issn={0025-584X},
   doi={10.1002/mana.200810263},
}

\bib{Mo01}{book}{
  title={Geometry of differential forms},
  author={Morita, Shigeyuki},
  number={201},
  series={Translations of Mathematical Monographs},
  year={2001},
  publisher={American Mathematical Soc.}
}


\bib{NR1}{article}{
   author={Narasimhan, M. S.},
   author={Ramanan, S.},
   title={Existence of universal connections},
   journal={Amer. J. Math.},
   volume={83},
   date={1961},
   pages={563--572},
   issn={0002-9327},
   doi={10.2307/2372896},
}
\bib{NR2}{article}{
   author={Narasimhan, M. S.},
   author={Ramanan, S.},
   title={Existence of universal connections. II},
   journal={Amer. J. Math.},
   volume={85},
   date={1963},
   pages={223--231},
   issn={0002-9327},
   doi={10.2307/2373211},
 }

 \bib{Nes}{book}{
   author={Nestruev, Jet},
   title={Smooth manifolds and observables},
   series={Graduate Texts in Mathematics},
   volume={220},
   publisher={Springer-Verlag, New York},
   date={2003},
   pages={xiv+222},
   isbn={0-387-95543-7},
}


\bib{Pa2}{article}{
   author={Byungdo Park},
   title={A smooth variant of Hopkins--Singer differential $K$-theory},
   journal={New York J. Math.},
   volume={23},
   date={2017},
   number={23},
   pages={655--670},
}

\bib{Pa19}{article}{
  title={Two-dimensional algebra in lattice gauge theory},
  author={Parzygnat, Arthur J.},
  journal={J. Math. Phys.},
  volume={60},
  number={4},
  pages={043506},
  year={2019},
  doi={10.1063/1.5078532},
}




\bib{Qu}{article}{
   author={Quillen, Daniel},
   title={Superconnection character forms and the Cayley transform},
   journal={Topology},
   volume={27},
   date={1988},
   number={2},
   pages={211--238},
   issn={0040-9383},
   doi={10.1016/0040-9383(88)90040-7},
}

\bib{Ro}{book}{
   author={Rosenberg, Jonathan},
   title={Algebraic $K$-theory and its applications},
   series={Graduate Texts in Mathematics},
   volume={147},
   publisher={Springer-Verlag, New York},
   date={1994},
   pages={x+392},
   isbn={0-387-94248-3},
}

\bib{Sch}{article}{
  author={Schreiber, Urs},
  title={Differential cohomology in a cohesive infinity-topos},
  journal={arXiv preprint arXiv:1310.7930v1 [math-ph]},
  date={2013},
}

\bib{Se}{article}{
   author={Serre, Jean-Pierre},
   title={Faisceaux alg\'{e}briques coh\'{e}rents},
   journal={Ann. Math.},
   volume={61},
   number={2},
   YEAR = {1955},
   PAGES = {197--278},
   ISSN = {0003-486X},
   DOI = {10.2307/1969915},
}

\bib{Si83}{article}{
  title = {Holonomy, the Quantum Adiabatic Theorem, and {B}erry's Phase},
  author = {Simon, Barry},
  journal = {Phys. Rev. Lett.},
  volume = {51},
  issue = {24},
  pages = {2167--2170},
  year = {1983},
  publisher = {American Physical Society},
  doi = {10.1103/PhysRevLett.51.2167},
}

\bib{SS}{article}{
   author={Simons, James},
   author={Sullivan, Dennis},
   title={Structured vector bundles define differential $K$-theory},
   conference={
      title={Quanta of maths},
   },
   book={
      series={Clay Math. Proc.},
      volume={11},
      publisher={Amer. Math. Soc., Providence, RI},
   },
   date={2010},
   pages={579--599},
}

\bib{St18}{article}{
      author = {Streicher, Thomas},
        title = {Fibred Categories a la Jean Benabou},
     journal={arXiv preprint arXiv:1801.02927v11 [math.CT]},
         year = {2018},
}

\bib{Sw}{article}{
    AUTHOR = {Swan, Richard G.},
     TITLE = {Vector bundles and projective modules},
   JOURNAL = {Trans. Amer. Math. Soc.},
    VOLUME = {105},
      YEAR = {1962},
     PAGES = {264--277},
      ISSN = {0002-9947},
       DOI = {10.2307/1993627},
}

\bib{Sz}{article}{
   author={Szabo, Richard J.},
   title={D-branes and bivariant K-theory},
   conference={
      title={Noncommutative geometry and physics. 3},
   },
   book={
      series={Keio COE Lect. Ser. Math. Sci.},
      volume={1},
      publisher={World Sci. Publ., Hackensack, NJ},
   },
   date={2013},
   pages={131--175},
   doi={10.1142/9789814425018\_0005}, 
}
	
\bib{TWZ1}{article}{
  author={Tradler, Thomas},
  author={Wilson, Scott O.},
  author={Zeinalian, Mahmoud},
  title={An elementary differential extension of odd K-theory},
  journal={J. K-Theory},
  volume={12},
  date={2013},
  number={2},
  pages={331--361},
  issn={1865-2433},
  doi={10.1017/is013002018jkt218},
  }

  \bib{TWZ2}{article}{
  author={Tradler, Thomas},
  author={Wilson, Scott O.},
  author={Zeinalian, Mahmoud},
  title={Differential K-theory as equivalence classes of maps to Grassmannians and unitary groups},
  journal={New York J. Math.},
  volume={22},
  date={2016},
  number={2},
  pages={527-581},
  issn={1076-9803},
  }

\bib{Tr16}{book}{
  title={Topological Vector Spaces, Distributions and Kernels},
  author={Treves, Francois},
  series={Pure and Applied Mathematics},
  volume={25},
  year={2016},
  publisher={Elsevier}
}

\bib{Wi}{article}{
   author={Witten, Edward},
   title={D-branes and $K$-theory},
   journal={J. High Energy Phys.},
   date={1999},
   volume={1998},
   number={12},
   pages={019},
   doi={10.1088/1126-6708/1998/12/019},
}
\end{biblist}
\end{bibdiv}

\end{document}